\newtheorem{theorem}{Theorem}[section]
\newtheorem{cor}[theorem]{Corollary}
\newtheorem{property}[theorem]{Property}
\newtheorem{proposition}[theorem]{Proposition}
\newtheorem{lemma}[theorem]{Lemma}
\newtheorem{definition}[theorem]{Definition}
\newtheorem{fact}[theorem]{Fact}
\newtheorem*{decotheorem}{Decoration Theorem}
\newcommand{\disk}{\overline{{\mathbb D}}}
\newcommand{\opdisk}{{\mathbb D}}
\newcommand{\uncirc}{{\mathbb S}^1}
\newcommand{\sph}{{\mathbb S}^2}
\newcommand{\mcup}{\cup_{ \mathbb{S}^1}}
\newcommand{\Z}{\mathbb{Z}}
\newcommand{\C}{\mathbb{C}}
\newcommand{\LL}{\mathcal L}
\newcommand{\MM}{\mathcal M}
\newcommand{\WW}{\mathcal W}
\newcommand{\UU}{\mathcal U}
\newcommand{\FF}{\mathcal F}
\newcommand{\TT}{\mathcal T}
\newcommand{\DD}{\mathcal D}
\newcommand{\cMM}{\widehat{\mathcal M}}
\newcommand{\RR}{\mathcal R}
\newcommand{\VV}{\mathcal V}
\newcommand{\EE}{\mathcal E}
\newcommand{\XX}{\mathcal X}
\newcommand{\YY}{\mathcal Y}
\newcommand{\ZZ}{\mathcal Z}
\newcommand{\PP}{\mathcal P}
\newcommand{\HH}{\mathcal H}
\newcommand{\BB}{\mathcal B}
\newcommand{\ff}{\mathbf{f}}
\newcommand{\basil}{\mathfrak{B}}
\newcommand{\ovl}{\overline}
\newcommand{\intr}{\operatorname{int}}
\newcommand{\reminder}[1]{\textsf{#1}}
\begin{document}

\title[Matings with laminations]{Matings with laminations}
\author{Dzmitry Dudko}
\maketitle

\begin{abstract}
We give a topological description of the space of
quadratic rational maps with superattractive
two-cycles: its ``non-escape locus'' $\MM_2$ (the
analog of the Mandelbrot set $\MM$) is locally connected, it is the
continuous image
of $\MM$ under a canonical map, and it can be described as
$\MM$ (minus the $1/2$-limb), mated with the lamination of
the basilica. The latter statement is a refined version of a
conjecture of Ben Wittner, which in its original version requires
local connectivity of $\MM$ to even be stated.
Our methods of mating with a lamination also
apply to dynamical matings of certain non-locally connected Julia
sets.
\end{abstract}

\section{Introduction}
A convenient way to describe some rational maps is by means of
matings. If a rational map is a mating of two polynomials, then the
dynamics of this rational map is a sum of polynomial dynamics. In
addition, dynamical matings usually, if not always, have
consequences to the parameter spaces. A classical example is the space of quadratic rational functions
with superattractive $n$--cycles; and, in particular, the case $n=2$ which will be the main objet of our study.

In this paper we introduce a surgery ``matings with laminations''
that generalizes a classical mating construction for certain cases when
one of the sets is not locally connected (some infinitely
renormalizable quadratic polynomials) or local connectivity is an
open question (the Mandelbrot set). Our results could be easily
generalized, for example, to certain components of the space of quadratic rational functions
with superattractive $n$--cycles.

\subsection{Wittner's conjecture}
Consider the space $\VV_1$ of quadratic
polynomials parametrized by $c\in\C$ via
\[\{f_c(z)=z^2+c:\ c\in\C\}.\] Many important dynamical properties of $\VV_1$ are encoded in
the \textit{Mandelbrot set}
\begin{equation}
\label{eq:Mand} \MM=\{c\in\C:\text{ the polynomial }f_c
:z\rightarrow z^2 + c\text{ has connected Julia set}\}.
\end{equation}
For instance, the boundary of the Mandelbrot set is the bifurcation
locus: $z^2+c$ is structurally stable if and only if $c\not\in\partial \MM$.

 McMullen's motto ``the Mandelbrot set is
universal'' \cite{McUniversal} states that there are infinitely
many copies of the Mandelbrot set in every non-trivial one dimensional parameter space
of rational functions with one ``active'' (and non--degenerate) critical point. The Douady-Hubbard straightening theorem \cite{DH}
gives a canonical dynamically meaningful homeomorphism between the
copies and $\MM$. In particular, there are infinitely many copies of
$\MM$ within the Mandelbrot set itself, even within every open set intersecting $\partial \MM$.

\begin{figure}
\includegraphics[width=6cm]{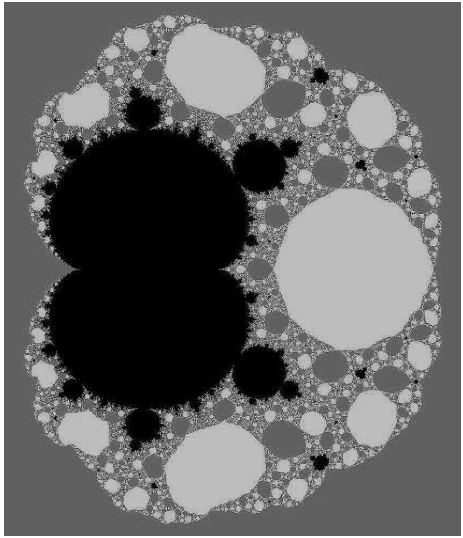}
  \caption{The parameter slice $\VV_2$ of
quadratic rational functions with superattractive two cycles parametrized as $\{\frac{a}{z^2+2}:\ a\in\C\setminus\{0\}\}$.
The set $\MM_2$ (the black set on the figure) is the set of
parameters such that the second critical
  point is not in the attracting basin of the two-cycle. Figure from
  \cite{T2}.
  }
  \label{figure:V2Slice}
\end{figure}

The Mandelbrot set may occur in more ways than predicted by the
straightening theorem. Following the notation of Mary Rees, denote
by $\VV_2$ the space (slice) of quadratic rational maps that have
superattractive two-cycles and by $\MM_2\subset\VV_2$ the set of
non-escaping parameters (see Figure \ref{figure:V2Slice} and its
caption). As in $\VV_1$, a map $g\in \VV_2$ is structurally stable if and only if
$g\not\in
\partial \MM_2$. Further, denote by $K_B$ the
filled-in Julia set of the Basilica polynomial $f_B(z)=z^2-1$. Maps in
$\MM_2$ should be thought as matings of
polynomials in $\MM$ with $f_B$; this has been already worked out in
many cases \cite{Re1}, \cite{Tan}, \cite{Lu}, \cite{AY}, \cite{T} but, in general, is not correct
as there are quadratic polynomials with non locally connected Julia sets. In 1988 Wittner conjectured
\cite{Wi} a parameter counterpart to the dynamical matings:
\begin{equation}
\label{eq:MatingMK} \VV_2\ \  \text{ is the mating }\ \ \MM\mcup
K_B.
\end{equation} More precisely, remove the $1/2$-limb
(the Basilica limb) from the Mandelbrot set, remove the $1/2$-limb
from $K_B$, and glue these sets along the boundaries parametrized by parameter and dynamic rays;
this construction requires $\partial \MM$ (and $\partial K_B$) to be locally
connected. After the
mating, hyperbolic components of the Mandelbrot set become
hyperbolic components of mating type, Fatou components of $K_B$
become hyperbolic components of capture type, and the boundary of
the Mandelbrot set (or, equivalently, $\partial K_B$) becomes the
bifurcation locus $\partial \MM_2$ of $\VV_2$.

\subsection{Main results}
We give here a refinement of Wittner's construction that is
unconditional to the local connectivity of $\partial \MM$. Briefly, we write
$K_B=\disk/L_B$, see the next paragraph, so $ \MM\mcup
K_B=\MM\mcup  (\disk/L_B)$. Then we change the order of operations:
\[\MM\mcup L_B:=(\MM\mcup \disk)/L_B=\VV_1/L_B,\]
where $\MM\mcup \disk=\VV_1$ is a trivial mating (just extending $\MM$ into $\hat\C$).
We call $\MM\mcup L_B$ the
\textit{mating with the lamination}. If $\partial \MM$ is locally connected, then
$\MM\mcup L_B$ is the same as $\MM\mcup K_B$.

\begin{figure}
\includegraphics[width=5cm]{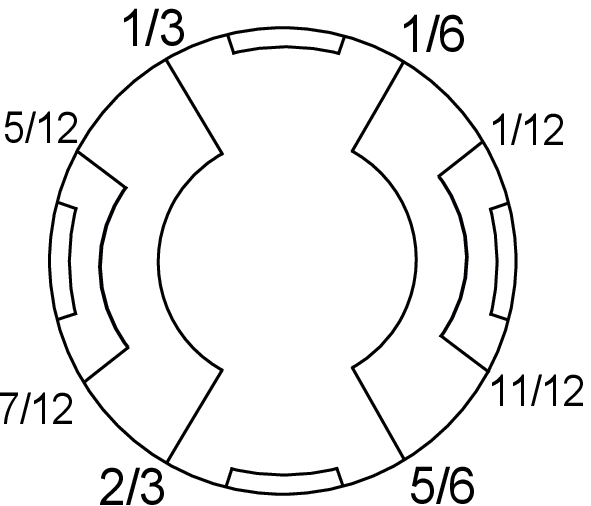}
\includegraphics[width=6cm]{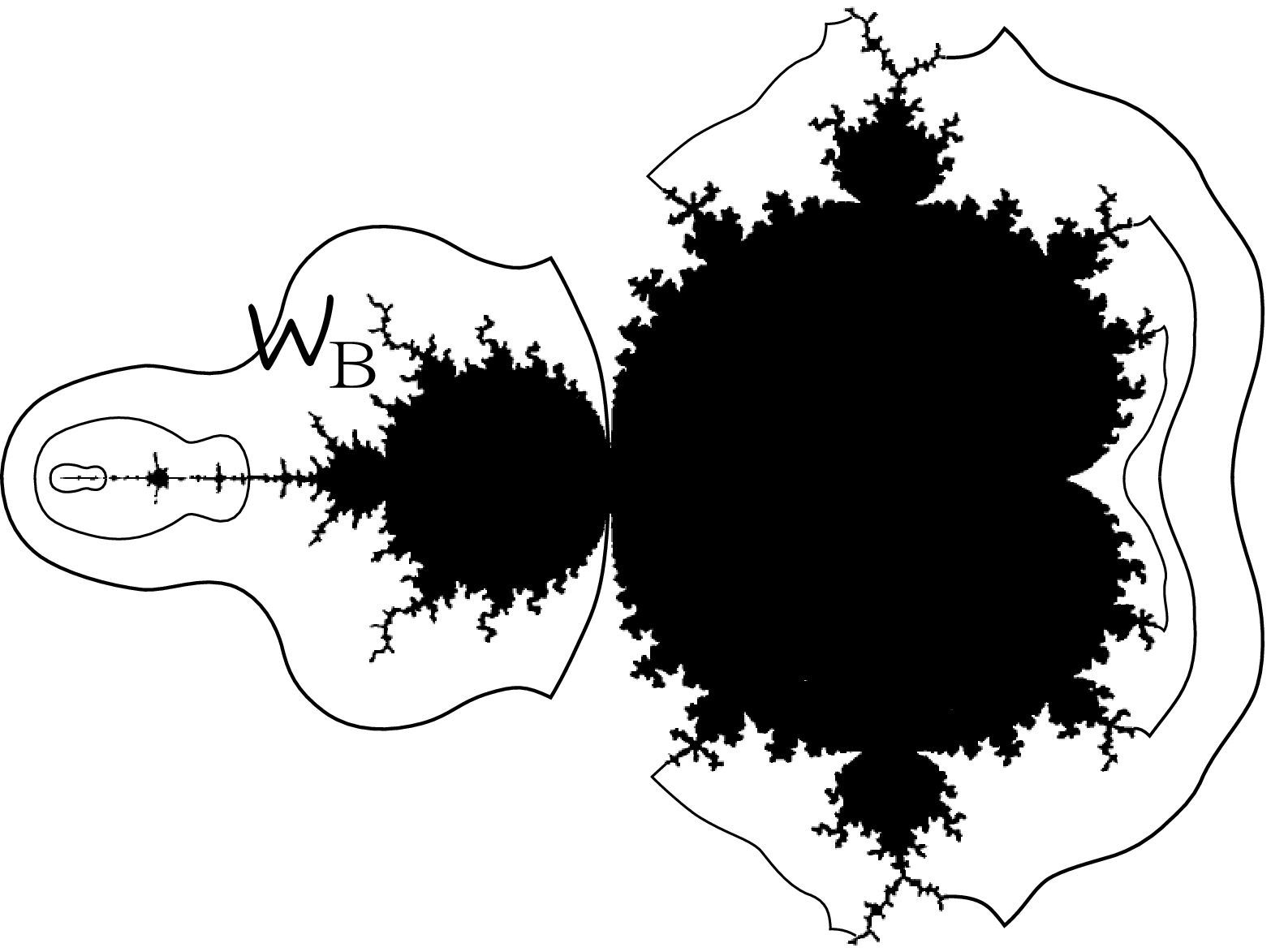}
  \caption{Left: the lamination $L_B$ of the Basilica. The $\langle 1/3,2/3\rangle$
  and $\langle5/6,1/6\rangle$ leaves are symmetric; any other leaf is an iterated pre-image
  of the $\langle5/6,1/6\rangle$ leaf  under $z^{2}$. Right: the embedding $\LL_B$ of $L_B$
  into $\VV_1$; the \textit{Basilica domain} $\WW_B$ is a closed disc surrounded by the $\langle1/3,2/3\rangle$--leaf
 and containing the Basilica limb.
  }
  \label{figure:BasLamin}
\end{figure}

Let us explain the construction in more detail. Consider the
\textit{lamination} $L_B$ of the Basilica (Figure \ref{figure:BasLamin});
$L_B$ is a family of disjoint curves, called \textit{leaves}, in the closed unit disc $\disk$ connecting points on $\partial \disk$
such that $L_B$ is closed as a subset of $\disk$ and the equivalence relation induced by $L_B$
\[x\sim_{L_B}y \ \text{ if and only if }\ x,y \text{ are in a leaf of }L_B\text{ or }x=y\]  models
$K_B$; i.e. $K_B=\disk/L_B.$

Recall that the parameter B\"{o}ttcher function $\BB$ maps $\C\setminus \MM$
conformally to $\C\setminus \disk$. Denote by
\[\LL_B=\{\BB^{-1}\circ (1/z) (l):\ l\in L_B\}\]
the embedding of $L_B$ into $\VV_1\setminus \MM$; let us remark
that $\BB^{-1}\circ (1/z) (l)$ is well defined and is a closed curve as the ends of $\BB^{-1}\circ (1/z) (l)$ land at Misiurewicz or
parabolic parameters
at which $\MM$ has triviality of the fiber.

 We say that two non-equal points are equivalent under $\LL_B$ if these points are either
 in the closure of a leaf in $\LL_B$ or in the Basilica domain
 $\WW_B$ (see Figure \ref{figure:BasLamin} and its caption) which is a closed neighborhood of the Basilica limb.

Define $\VV'_2$ to be the quotient $\VV_1/\LL_B$, define
 $m:\VV_1\to \VV_1/\LL_B=\VV'_2$ to be the quotient map, and define $\MM'_2:=m(\MM)$. A priori,
 the construction of $\VV'_2$ depends on the choice of leaves in $L_B$.

\begin{theorem}[The mating $\MM\mcup
L_B$ is well defined]
\label{th:main1} The restriction $m|_\MM:\MM\to m(\MM)=\MM'_2$ is
canonical (independent of $L_B$). The space $\MM'_2$ is locally
connected.
All spaces $\VV'_2=\VV'_2(L_B)$ for different choices of leaves in $L_B$ are homeomorphic by
homeomorphisms preserving $\MM'_2$.
\end{theorem}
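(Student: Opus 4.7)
The plan is to prove the three assertions in sequence, with the local connectedness of $\MM'_2$ as the technical core.

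\textbf{Canonicity of $m|_{\MM}$.} Because the Basilica has a periodic critical point, every leaf $l\in L_B$ connects two angles $\theta_1,\theta_2$ which are preperiodic under angle doubling. For such rational angles the Douady--Hubbard landing theorem provides unique landing points $\gamma(\theta_i)\in\partial\MM$ of the parameter rays at $\theta_i$, depending only on $\theta_i$. The curve $\BB^{-1}\circ(1/z)(l)$ has exactly $\gamma(\theta_1)$ and $\gamma(\theta_2)$ as its endpoints, so the identification it induces on $\MM$ depends only on the angle pair $\{\theta_1,\theta_2\}$. Likewise $\WW_B\cap\MM$ is the closed Basilica limb, separated from the rest of $\MM$ by the parabolic parameter $c=-3/4$, independently of the particular bounding loop chosen. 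Hence $m|_{\MM}$ is determined by the abstract lamination $L_B$.

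\textbf{Independence of the realization.} Given two admissible realizations $\LL_B,\LL_B'$ of $L_B$ with matching endpoints on $\partial\MM$, I would build an ambient self-homeomorphism $h$ of $\hat\C$ which fixes $\MM$ pointwise and carries $\LL_B$ onto $\LL_B'$. In the B\"ottcher coordinate on $\hat\C\setminus\MM$ both families are disjoint arcs with the same endpoint combinatorics, so $h$ can be built by a leaf-by-leaf ambient isotopy, organized by the tree of gaps of $L_B$ (treating the bounding loop of $\WW_B$ as a single root move). Since $h$ is the identity on $\MM$, it descends through the quotients to a homeomorphism $\VV'_2(L_B)\to\VV'_2(L_B')$ that is the identity on $\MM'_2$.

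\textbf{Local connectedness of $\MM'_2$.} The approach proceeds in two stages. First, I would apply Moore's decomposition theorem to $m:\VV_1\to\VV'_2$: the non-trivial fibers are the arc-closures of leaves of $\LL_B$ and the closed disc $\WW_B$, each a non-separating compact continuum, and the upper semi-continuity of the decomposition follows from $L_B$ being closed in $\disk\times\disk$. This yields $\VV'_2\cong\sph$, so $\MM'_2$ sits inside a topological sphere. Second, I would show that each complementary component $U$ of $\MM'_2$ in $\VV'_2$ is a Jordan domain: $U$ is the $m$-image of a gap of $\LL_B$, and under the correspondence $\disk/L_B=K_B$ gaps of $L_B$ are in bijection with Fatou components of the locally connected Basilica. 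Once every $\partial U$ is a simple closed curve in $\MM'_2$, the Carath\'eodory--Torhorst theorem (a planar continuum whose complementary components are all Jordan domains is locally connected) gives the conclusion.

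\textbf{Main obstacle.} The crux is establishing the Jordan-boundary property for each $U$. In $\VV_1$ the boundary of a gap of $\LL_B$ consists of countably many leaf-arcs accumulating on a ``landing set'' inside $\partial\MM$; after collapsing the leaves and $\WW_B$, one has to show the resulting boundary curve has no self-pinching, despite the potential non-local-connectedness of $\partial\MM$. The decisive input will be a continuity/compactness statement for preperiodic parameter rays landing at angles approaching the arc boundary of a gap of $L_B$, combined with the arithmetic regularity of the endpoint set of $L_B$. This is precisely the mechanism by which mating with a lamination is expected to bypass MLC on the Mandelbrot side.
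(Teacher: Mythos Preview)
Your canonicity argument is essentially correct and matches the paper's. The independence argument via an ambient isotopy is plausible, though the paper instead proves local connectedness of $\MM'_2$ first and then invokes a Carath\'eodory-type extension (Proposition~\ref{prop:ExtensOfhomeom}); both routes need the same diameter control on leaves.

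The genuine gap is in the local connectedness step. The ``Carath\'eodory--Torhorst theorem'' you invoke is false as stated: a planar continuum whose complementary components are all Jordan domains need \emph{not} be locally connected. The comb $\{0\}\times[0,1]\cup\bigcup_n\{1/n\}\times[0,1]\cup[0,1]\times\{0,1\}$ has rectangular complementary components yet fails local connectedness along $\{0\}\times(0,1)$. The correct characterization (Whyburn, cited in the paper) requires in addition that for every $\varepsilon>0$ only finitely many complementary components have diameter $\ge\varepsilon$. Your proposal never addresses this null-sequence condition, and it is where essentially all of the work lies.

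Moreover, your ``decisive input'' does not point at the actual mechanism. The paper's engine is a combinatorial observation: every \emph{secondary} copy of $\MM$ in $\MM\setminus\WW_B$ is bounded on both sides by leaves of $\LL_B$ (Propositions~\ref{prop:bound} and~\ref{prop:bound2}). This forces infinitely renormalizable parameters off the boundary of every gap of $\LL_B$, so Yoccoz's triviality of fibers applies along those boundaries and yields their local connectedness. For the null-sequence condition on both leaves and gaps, the paper constructs a generalized decoration tiling of each secondary copy so that any leaf or gap meets at most three tiles (Proposition~\ref{prop:DecTilWithBasil}), and then invokes the Decoration Theorem that tile diameters tend to $0$. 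None of this is about ``landing continuity of preperiodic rays'' or ``arithmetic regularity''; those are the easy ingredients already used for canonicity. Without the separation-of-secondary-copies insight and the decoration-theorem control, you have no handle on either the Jordan property at infinitely renormalizable boundary points or the shrinking of gap diameters.
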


 The next theorem says that  $\MM\mcup
L_B$ is topologically and combinatorially isomorphic to $\VV_2$.

\begin{theorem}[Refined Wittner's Conjecture]
\label{th:main2} The space $\MM\mcup L_B$ is homeomorphic to $\VV_2$
by a homeomorphism that preserves the combinatorics of parapuzzle pieces and coincides with
straightening maps on all small copies of the Mandelbrot set.
This homeomorphism is canonical (unique) over $\MM_2$.
\end{theorem}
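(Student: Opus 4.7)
The plan is to construct a candidate homeomorphism $\overline\Phi:\MM\mcup L_B \to \VV_2$ by first building a continuous map $\Phi:\VV_1 \to \VV_2$ that is constant on every $\LL_B$-equivalence class, and then showing that the induced map on the quotient is bijective, combinatorially correct, and matches straightening on Mandelbrot copies. I would define $\Phi$ piecewise. On $\C\setminus\MM$, I use parameter rays: identify the ray at angle $\theta$ in $\VV_1$ with the parameter ray at angle $\theta$ in $\VV_2$, using an analogous B\"{o}ttcher-type uniformization of $\VV_2\setminus\MM_2$ modeled on the Basilica Julia set. On the Basilica domain $\WW_B$, send everything to the closure of the distinguished capture-type hyperbolic component of $\MM_2$, with its interior mapped by the standard parametrization by the escaping critical value. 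On $\MM\setminus\WW_B$, send $c$ to the dynamical mating $f_c\mcup f_B$ where it exists by the cited works \cite{Re1,Tan,Lu,AY,T}, and, in the remaining combinatorially specified (possibly non-locally-connected) cases, to the rational map produced by the mating-with-laminations surgery developed in the earlier sections of the paper. By construction, two points of $\VV_1$ in the same leaf of $\LL_B$ lie on paired parameter rays whose angles form a leaf of $L_B$, so they land at the same point of $\partial K_B$ and hence are sent to the same map in $\VV_2$; and all of $\WW_B$ collapses to the chosen capture component. Hence $\Phi$ descends to $\overline\Phi:\VV'_2\to\VV_2$.

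Once $\overline\Phi$ is in place, I would upgrade it to a homeomorphism in two stages. First, Theorem~\ref{th:main1} gives local connectivity of $\MM'_2$, and local connectivity of $\partial\MM_2$ (from the structural stability theory of $\VV_2$ via Yoccoz-type puzzles) upgrades the continuous surjection $\overline\Phi|_{\MM'_2}\to\MM_2$ to a homeomorphism as soon as injectivity is established. Injectivity follows from combinatorial rigidity of $\MM_2$ together with density of hyperbolic parameters: the parapuzzle partition of $\VV_1$ pushed through the quotient matches the parapuzzle partition of $\VV_2$ level by level, so distinct points of $\MM'_2$ have distinct combinatorial addresses and therefore distinct images. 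Second, outside $\MM'_2$, $\overline\Phi$ restricts to a biholomorphism on each hyperbolic component (either of mating or of capture type) and extends continuously to the closure by equivariance under the holomorphic motion of parabolic and Misiurewicz boundary points.

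For the refinements in the statement: parapuzzle pieces on both sides are bounded by parameter rays and equipotentials at the same angles, so preservation of combinatorics is built into the construction. On a small copy $M'\subset\MM\setminus(\text{Basilica limb})$, the restriction $\overline\Phi|_{M'}$ arises from the polynomial-like renormalization underlying the mating of $f_c$ with $f_B$, so uniqueness of the Douady--Hubbard straightening identifies $\overline\Phi|_{M'}$ with the straightening to $\MM$. Uniqueness of $\overline\Phi$ over $\MM_2$ follows because any candidate satisfying the hypotheses agrees with $\overline\Phi$ at the dense set of hyperbolic centers (each determined by its parapuzzle combinatorics), hence on all of $\MM_2$ by continuity. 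The principal obstacle is continuity of $\Phi$ across $\partial\MM$: one must verify that a pair of parameters in $\partial\MM$ identified by a leaf of $\LL_B$, together with the rational map to which the mating-with-lamination surgery assigns them, vary continuously through this identification. This reduces to a precise dictionary between the combinatorics of parameter rays of $\VV_1$ belonging to a common $L_B$-leaf, the dynamical pinching of $f_B$, and the landing of parameter rays of $\VV_2$, provided by the parapuzzle machinery and the mating construction of the earlier sections.
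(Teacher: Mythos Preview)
Your approach has a real circularity problem and misses the key analytic tool for continuity. You propose to define $\Phi$ on $\MM\setminus\WW_B$ via ``the dynamical mating $f_c\mcup f_B$ where it exists\ldots\ and, in the remaining cases, the rational map produced by the mating-with-laminations surgery.'' But the mating-with-laminations surgery is Theorem~\ref{th:main3}, which is proved \emph{after} Theorem~\ref{th:main2} and is formulated in terms of the map $m(c)$ that Theorem~\ref{th:main2} supplies. The paper avoids this circularity: it does not define $\omega$ on infinitely renormalizable parameters via any mating. Instead, it first builds the small copies $\MM_\HH^B\subset\MM_2$ directly from bubble-parapuzzles and the straightening theorem (Section~\ref{subsec:DecorTilV2}), and then sets $\omega(\pi(c))=\chi_\HH(c)$ via the canonical straightening $\chi_\HH:\MM_\HH\to\MM_\HH^B$. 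This is the step that makes the statement ``coincides with straightening maps'' immediate rather than something to check a posteriori.

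Second, your continuity argument glosses over the hardest case. Yoccoz-type puzzle shrinking handles only the non-infinitely-renormalizable parameters (this is Theorem~\ref{YoccResInV2} in the paper). The delicate point is continuity of $\omega$ at infinitely renormalizable $c_\infty$ when the approximating sequence $c_n$ lies \emph{outside} the relevant copy $\MM_\HH$. The paper's mechanism here is the decoration theorem in $\VV_2$ (Proposition~\ref{prop:DecorTilV2}): it transports the decoration tiling of $\MM_\HH^B$ back to a generalized decoration tiling of $\MM$ via a tubing for the bubble renormalization, and then invokes the shrinking of decoration pieces on \emph{both} sides to force $\omega(c_n)\to\omega(c_\infty)$. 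Nothing in your outline substitutes for this; ``combinatorial rigidity plus density of hyperbolics'' gives injectivity but not continuity at these parameters.

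Two smaller points. The equivalence class $\WW_B$ is a single class, so any $\Phi$ that descends must send it to a single point of $\VV_2$ (in fact, to the root of the main hyperbolic component), not to a whole capture component. And the extension off $\MM'_2$ is not via biholomorphisms of hyperbolic components: the paper shows $\omega:\MM'_2\to\MM_2$ is a homeomorphism of locally connected compacta preserving the orientation of the complement, and then extends to the spheres by Carath\'eodory (Proposition~\ref{prop:ExtensOfhomeom}). Matching components of $\VV'_2\setminus\MM'_2$ with components of $\VV_2\setminus\MM_2$ is done combinatorially through Theorem~\ref{thm:ParDynIsomV2}, not analytically.
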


\begin{cor}
The set $\MM_2$ is locally connected and is a continuous image of $\MM$.
\end{cor}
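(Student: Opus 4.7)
The plan is to deduce both parts of the corollary directly from the two preceding theorems, so that the corollary really just packages their content.

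First I would verify that the homeomorphism $h:\VV'_2=\MM\mcup L_B\to\VV_2$ produced by Theorem \ref{th:main2} carries $\MM'_2$ onto $\MM_2$. On the $\VV_2$ side, $\MM_2$ is the complement of the union of the hyperbolic components of capture type together with the unbounded component corresponding to escape; on the $\VV'_2$ side, $\MM'_2=m(\MM)$ plays the analogous role, since the complement of $\MM$ in $\VV_1=\MM\mcup\disk$ is foliated by leaves of $\LL_B$ that, after quotienting, produce precisely the capture-type hyperbolic components of $\VV'_2$. Because $h$ preserves the combinatorics of parapuzzle pieces, agrees with straightening on small copies of $\MM$, and is canonical over $\MM_2$, it must send $\MM'_2$ to $\MM_2$.

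Once this identification $h(\MM'_2)=\MM_2$ is in hand, the first assertion of the corollary is immediate: local connectivity is a topological invariant, and Theorem \ref{th:main1} states that $\MM'_2$ is locally connected. For the second assertion, recall that by construction the quotient map $m:\VV_1\to\VV_1/\LL_B=\VV'_2$ is continuous, so its restriction to the compact set $\MM$ yields a continuous surjection $m|_\MM:\MM\to m(\MM)=\MM'_2$. Composing with the homeomorphism $h|_{\MM'_2}:\MM'_2\to\MM_2$ then gives a continuous surjection $h\circ m|_\MM:\MM\to\MM_2$, exhibiting $\MM_2$ as a continuous image of $\MM$.

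The only non-formal ingredient is the identification $h(\MM'_2)=\MM_2$; I expect this to be the main (and only) point requiring attention, but it should already be implicit in how $h$ is constructed in Theorem \ref{th:main2}, rather than an independent obstacle. Everything else is a routine transfer of properties through a homeomorphism and a continuous quotient.
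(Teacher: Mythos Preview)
Your proposal is correct and matches the paper's intended derivation: the corollary is stated without a separate proof and is meant to follow immediately from Theorems~\ref{th:main1} and~\ref{th:main2}, exactly as you outline. Your one point of concern, the identification $h(\MM'_2)=\MM_2$, is not an obstacle: in the paper's proof of Theorem~\ref{th:main2} the homeomorphism is first constructed as a bijection $\omega:\MM'_2\to\MM_2$ and only afterwards extended to all of $\VV'_2$, so $h(\MM'_2)=\MM_2$ holds by construction rather than needing to be deduced from the listed properties of $h$.
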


 By Theorem \ref{th:main2}
we have $\MM'_2\thickapprox \MM_2$ (canonically) and we view the map $m|_\MM$ in Theorem \ref{th:main1} as a map form $\MM$ to $\MM_2$.
In fact, $m|_\MM$ has a simple description: it collapses the basilica limb to a point and pinches (identifies)
Misiurewicz parameters with external angles $\phi_1,\phi_2$ if and only if $\phi_1,\phi_2$ are identified by $L_B$.

Similarly to $\MM\mcup L_B$ we define the dynamical mating
$f_c\mcup L_B$, see details in Section \ref{sec:Thm3}.
The last theorem is the dynamical counterpart to Theorem
\ref{th:main2}.
\begin{theorem}[$f_c\mcup L_B \thickapprox g_{m(c)}$]
\label{th:main3} If $c\in\MM\setminus \WW_B$ is not on the boundary of a hyperbolic component or $c$ is at least $4$ times renormalizable, then
$f_c\mcup L_B$ is well defined and is topologically conjugate to
$g_{m(c)}$.

If the Julia set of $f_c$ is locally connected, then
\[f_c\mcup L_B  = f_c\mcup f_B. \]
\end{theorem}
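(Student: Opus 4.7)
The plan is to carry out the dynamical analog of the parameter construction. First, the embedded lamination $\LL_{B,c}$ in $\hat\C\setminus K_c$ would be defined by using the dynamical B\"ottcher coordinate of $f_c$ to push each leaf $l\in L_B$ to an arc in the basin of infinity. For this to yield closed arcs, every external ray of $f_c$ at an endpoint angle of a leaf of $L_B$ must land at a point of $K_c$. Each such angle is preperiodic under doubling and eventually enters the $2$-cycle $\{1/3,2/3\}$; by the Douady--Hubbard landing theorem the rays land provided $f_c$ has no parabolic cycle of period $1$ or $2$, which is precisely what the hypothesis ``$c\not\in\partial H$ for a hyperbolic component $H$'' guarantees. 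In the infinitely renormalizable regime the ``at least $4$ times renormalizable'' hypothesis produces the required landing via shrinking of puzzle neighborhoods around each relevant (pre)periodic point. Next, define the dynamical basilica domain $\WW_{B,c}$ as the closed topological disc bounded by the embedded $\langle 1/3,2/3\rangle$--leaf together with the corresponding piece of $K_c$, and set
\[ f_c\mcup L_B := \hat\C/{\sim}, \]
where $\sim$ collapses each closed leaf of $\LL_{B,c}$ to a point and $\WW_{B,c}$ to a single point. A Moore-type argument (closed equivalence with simply connected classes having connected complement) shows the quotient is a topological $2$-sphere.

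Second, the map $f_c$ descends to a continuous degree--$2$ branched self-cover $\tilde g_c$ of the quotient sphere. Angle doubling preserves $L_B$, so $f_c$ permutes the leaves of $\LL_{B,c}$; and $f_c(\WW_{B,c})$ is the partner disc bounded by the image $\langle 2/3,1/3\rangle$--leaf, which returns to $\WW_{B,c}$ after one more iterate. Hence $\tilde g_c$ has critical points at $[0]$ and at the image of $\WW_{B,c}$ (the latter periodic of period~$2$), giving the topological portrait of an element of $\VV_2$.

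Third, $\tilde g_c$ must be identified with $g_{m(c)}$. The parameter Theorem \ref{th:main2} already shows that $c\mapsto m(c)$ matches parapuzzle combinatorics; since the dynamical quotient is built from exactly the same combinatorial data (leaves of $L_B$ and the basilica domain), $\tilde g_c$ and $g_{m(c)}$ carry identical external-ray and parapuzzle combinatorics. Upgrading this combinatorial equivalence to a topological conjugacy is \emph{the main obstacle}. For $c$ not on $\partial H$ and not infinitely renormalizable the step should reduce to Yoccoz puzzle rigidity transferred from $\VV_1$ to $\VV_2$; for $c$ at least $4$ times renormalizable one uses \textit{a priori} bounds on the innermost renormalizations (four levels supplying the necessary quantitative control) and pulls back a conjugacy level by level.

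For the second statement, suppose $J_c$ is locally connected, so $K_c=\disk/L_c$ for the Douady--Hubbard lamination $L_c$. The classical topological mating then satisfies
\[ f_c\mcup f_B = (\disk/L_c)\mcup(\disk/L_B). \]
The two collapses act on disjoint hemispheres of $\disk\mcup\disk=\hat\C$ and so commute with the $\uncirc$-gluing; performing them in the opposite order---first glue to form the trivial mating, then collapse $L_B$---produces $f_c\mcup L_B$, and the induced maps agree on the nose.
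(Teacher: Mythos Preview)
Your outline has the right overall shape, but two of the key technical inputs are misidentified, and one step of the construction is simply wrong.

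First, there is no dynamical basilica domain $\WW_{B,c}$ to collapse. In the parameter plane $\WW_B$ is needed because the entire $1/2$-limb of $\MM$ must be sent to a single point of $\MM_2$; in the dynamical plane of $f_c$ with $c\notin\WW_B$ the rays $R^{1/3}$ and $R^{2/3}$ land at two \emph{distinct} points of a period-two cycle, the embedded $\langle 1/3,2/3\rangle$-leaf is just one leaf among many, and the equivalence relation $L^c_B$ the paper uses collapses leaves only (see the definition preceding Corollary~\ref{cor:DynEquivRelatLBIsClosed}). Your proposed disc ``bounded by the leaf together with a piece of $K_c$'' is not canonically defined and is not compatible with the dynamics in the way you claim; correspondingly, the second critical point of the quotient map does not arise from collapsing such a disc but from the extension of $g'_{m(c)}$ over the complement $\sph\setminus K'^B_{m(c)}$ by a superattractive $2$-cycle.

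Second, and more seriously, you misread the role of ``at least $4$ times renormalizable''. It is not there to invoke \emph{a priori} bounds on deep renormalizations; the paper never uses such bounds. What is actually needed is a chain $c\in\MM_{\HH''}\subsetneq\MM_{\HH'}\subsetneq\MM_\HH$ with $\MM_\HH$ bounded by $\LL_B$ from both sides (Proposition~\ref{prop:bound2} guarantees any secondary copy is so bounded). The outermost level supplies a cycle of small filled Julia sets $K_1,\dots,K_p$ disjoint from every leaf of $L^c_B$; one then runs a hyperbolic contraction argument on $\C\setminus\bigcup_iK_i$ (Proposition~\ref{prop:DynamLeaves}) to show that leaves of $L^c_B$ and complementary components of $\widehat{K_c}$ shrink. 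This is what makes $L^c_B$ a closed Moore-type relation and $\widehat{K_c}$ locally connected --- the dynamical analog of Corollary~\ref{cor:main}, and the step your outline skips entirely. The further levels $\MM_{\HH'},\MM_{\HH''}$ feed the dynamical Yoccoz and decoration statements in Proposition~\ref{prop:DynamYoccResAndDecorThm}, again via hyperbolic contraction relative to $\bigcup_i K''^B_i$.

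Third, the conjugacy is not obtained by ``pulling back a conjugacy level by level''. It is built directly (Proposition~\ref{prop:ConjBetwJulSets}) as a bijection $\omega:K'^B_{m(c)}\to K^B_{m(c)}$ defined piecewise: on each small filled Julia set $\pi(K_i)$ it is the dynamical straightening map to $K^B_i$; on landing points of $\basil$-rays it is read off from the conjugacy $h_a$; on all remaining Julia points it is the unique point in the nested intersection $\bigcap_{Y\ni x}Y^B$ of bubble puzzle pieces. Continuity of $\omega$ is then proved by the same three-case argument as in the proof of Theorem~\ref{th:main2}, with the dynamical decoration theorem (Proposition~\ref{prop:DynamYoccResAndDecorThm}) replacing the parameter one. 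The extension to the full sphere is a separate soft step at the end.

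Your argument for the second assertion (the locally connected case) is essentially the one in the paper.
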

This gives a topological description of $g_{m(c)}$ modulo $f_c$.
For instance, $f_c:J_c\to J_c$ is semi-conjugate to
$g_{m(c)}:J_{m(c)}\to J_{m(c)}$; the semi-conjugacy is unique if it preserves the combinatorics of puzzle pieces
and coincides with straightening maps on small filled in Julia sets.

It has been a long standing problem to describe maps in $\VV_2$ in terms of surgeries
over quadratic polynomials. Hyperbolic maps in $\MM_2$ (resp.\ in $\VV_2\setminus \MM_2$) are matings
(resp.\ captures) with the Basilica polynomial \cite{Re1}, \cite{Tan}, \cite{Lu}; this result is an application of Thurston's topological characterization
of postcritically finite rational functions \cite{DH3}. Maps on the external boundary of $\MM_2$
are described in \cite{T} in terms of anti-matings. In \cite{AY} existence and uniqueness of the matings of the Basilica with
non-renormalizable Yoccoz's polynomials in $\MM\setminus \WW_B$ are shown; this result
could be easily generalized for the finite-renormalizable case. Theorem \ref{th:main3} contributes
the infinitely renormalizable case. Because $J_c$ can be non-locally connected the mating $f_c\mcup L_B$ in Theorem \ref{th:main3}
can not be always replaced by
$f_c\mcup f_B$.
\subsection{Parameter slices $\VV_n$}
Similarly to $\VV_1$ and $\VV_2$, denote
by $\VV_n$ the space (slice) of quadratic rational maps that have
superattractive $n$-cycles and by $\MM_n\subset\VV_n$ the set of
non-escaping parameters; again, $\partial \MM_n$ is the bifurcation locus.

For $n\ge 3$ the slice $\VV_n$ consists of finitely many components; some of these components have similar descriptions
as $\VV_2$. For example, $\VV_3$ (Figure \ref{figure:V3Slice})
consists of the Rabbit part $\VV_R$, the Corabbit part $\VV_C$, the Airplane part $\VV_A$, and two main hyperbolic capture components.
The proofs of Theorem \ref{th:main1}, \ref{th:main2}, \ref{th:main3} work for $\VV_R$ and $\VV_C$ with slight modifications so
$\VV_R$ and $\VV_C$ are the matings of the Mandelbrot set with the laminations of the Rabbit and Corabbit respectively. In general, for an
$n$--rabbit $f_{R'}$ (i.e. $f_{R'}$ is the center of a hyperbolic component attached to the main hyperbolic component of $\MM$), define $\VV_{R'}$
to be the part of $\VV_n$ consisting of ``matings and captures with $f_{R'}$''. Then $\VV_{R'}$ is $\MM\mcup L_{R'}$.

\begin{figure}
\includegraphics[width=8cm]{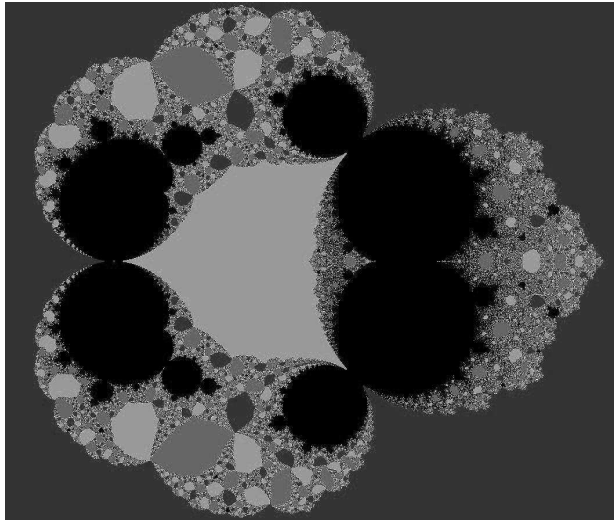}
  \caption{The parameter slice $\VV_3$ consists of $\VV_R$, $\VV_C$ (symmetric to
$\VV_R$), and two main hyperbolic capture component. Figure from
  \cite{T2}.
  }
  \label{figure:V3Slice}
\end{figure}

\subsection{Outline of the paper and conventions}
\textbf{In Section \ref{sec:BasDef}} (Background: $\VV_1$) we review properties of
$\VV_1$.

A puzzle piece (Subsection \ref{sec1}) is a closed topological disc in dynamical plane bounded by (finitely many) periodic
and pre-periodic dynamic rays and truncated by an equipotential such
that the forward orbit of the boundary of a puzzle piece does not
intersect its interior.

Using B\"{o}ttcher functions we combinatorially identify puzzle pieces in
different dynamical planes: puzzle pieces
$Y(c_1),Y(c_2)$ in the dynamical planes of $f_{c_1}$ and $f_{c_2}$
are equivalent (simply, $Y(c_1)=Y(c_2)=Y$) if the composition
$B_{c_2}^{-1}\circ B_{c_1}$ of B\"{o}ttcher functions induces a
homeomorphism between $\partial Y(c_1)\setminus J_{c_1}$ and $\partial Y(c_2)\setminus J_{c_2}$
that extends to a homeomorphism between $\partial Y(c_1)$ and $\partial Y(c_2)$. Equivalently, there
is a homeomorphism between $\partial Y(c_1)$ and $\partial Y(c_2)$
preserving angles and heights of rays and equipotentials.

A closed disc $\YY$ in the parameter plane bounded by parameter rays and truncated by
a parameter equipotential is a parapuzzle piece if $\YY$ is the parameter counterpart to
a dynamic puzzle piece, denoted by $Y$: the composition
$\BB^{-1}\circ B_{c}$ of B\"{o}ttcher functions induces a
homeomorphism between $\partial Y(c)\setminus J_{c}$ and $\partial \YY\setminus \MM$ as above.

  \textit{We will use the following convention}: objects in the parameter
plane will be denoted by calligraphic capital letters (such as
$\ZZ^n_i$, $\RR^\phi$) while those in dynamical planes will
be denoted by Roman capitals (such as $Z^n_i$, $R^\phi$).

In Subsection \ref{subset:QuadrLike} we recall the construction of
straightening maps; small copies of the Mandelbrot set are
discussed Subsection \ref{subsec:SmallCopies}. In Subsection
\ref{subsec:DecorTiling} we review the decoration theorem. In Subsection \ref{subs:LocalConnSubs}
we discuss compact connected locally connected subsets of the $2$--sphere. In
Subsection \ref{subs:lamination} we fix some conventions regarding
laminations.

 \textbf{In Section \ref{sec:ParSp}} (Mating construction) we will prove Proposition
\ref{prop:main} which is a refined version of Theorem \ref{th:main1}.
In Subsection \ref{subsec:CombinOfLeaves} we discuss the relation
between small copies of the Mandelbrot set in $\MM$ and leaves in
$\LL_B$. We will show that points on the boundary of a
component $U$ of $\VV_1\setminus(\MM\cup \LL_B\cup \WW_B)$ are not infinitely renormalizable;
this will imply that $\partial U$ is
locally connected (the first part of Proposition \ref{prop:main}).

Further, using the decoration theorem (Subsection
\ref{subsect:DecorTilingWithLb}) we will show that \textit{big} leaves
(with diameters at least a fixed $\varepsilon>0$) of $\LL_B$ and big
components of $\VV_1\setminus (\MM\cup\LL_B\cup\WW_B)$ can not
accumulate at secondary copies of the Mandelbrot set (Propositions
\ref{prop:bound}, \ref{prop:bound2}, and
\ref{prop:DecTilWithBasil}). Combined with Yoccoz's results this, on
one hand, shows that $\LL_B$ is a closed equivalence relation in
$\VV_1$ (and canonical on $\MM$), and, on the other hand, leads to
local connectivity of $\MM\cup\LL'_B\cup \WW_B$.

 Finally, we
establish local connectivity of $\VV'_2$ and show that $\VV'_2$ does
depend (up to homeomorphism preserving combinatorics) on the choice
 of leaves in $L_B$.

\textbf{In Section \ref{sec:V2}} we review properties of $\VV_2$. We
parameterize $\VV_2$ as \[\{g_a(z)=a/(z^2+2z):a\in\C\setminus\{0\}\}\cup\{g_\infty=1/z^2\},\]
 so that $-1, \infty$ are the
critical points of $g_a$ and $g_a(0)=\infty,$ $g_a(\infty)=0$.

In Subsection \ref{subsec:RayInBubble} we define \textit{rays in bubbles} for $g_a$; using these rays we define
bubble puzzle pieces. There is a natural identification of rays in bubbles
for different dynamical planes; this allows us to combinatorially identify bubble puzzle pieces in different planes.
Bubble parapuzzle pieces are ``the parameter counterparts'' of dynamic bubble pieces.

A special map in $\VV_2$ is $g_1= 1/(z^2+2)$; this map also belongs to $\VV_1$: it is conjugate by $-1/(z+1)$ to $f_B$.
Using the dynamical plane of  $g_1\sim f_B$ we define ``$\VV_2$--twins'' (Definition \ref{defn:V2Dupl} and Subsection \ref{subsect:BubblPuzzle})
for many structures in $\VV_1$ such as rays, puzzle and parapuzzle pieces, small copies of the Mandelbrot set.

\textit{We will often use an upper index $B$} (for example:
$Y^B_\HH,$ $\YY^B_{n\HH}$, and $\MM^B_\HH$) to indicate the
$\VV_2$-twins of objects in $\VV_1$ (respectively: $Y^p_\HH,$
$\YY^{np}_{n\HH}$, and $\MM_\HH$).

\textbf{In Section \ref{sec:ThmParBubblRay}} (Landing of rays in $\VV_2$) we will prove theorem
Theorem \ref{thm:ParBubblRay}. This will imply that for a parapuzzle
piece $\YY$ in $\VV_1\setminus\WW_B$ there is its
\textit{$\VV_2$--twin} $\YY^B$ and every bubble parapuzzle piece is
a $\VV_2$--twin of a parapuzzle piece in $\VV_1\setminus\WW_B$.
Moreover, a dynamic puzzle piece $X$ exists in $\intr(\YY)$ if and only if the $\VV_2$--twin $X^B$ exists in
$\YY^B$.  In other words, the parapuzzle partition in $\VV_1\setminus \WW_B$ is combinatorially the
same as the bubble parapuzzle partition in $\VV_2$; this reflects the
fact that hyperbolic maps in $\MM_2$ are the mating of hyperbolic
polynomials in $\MM\setminus\WW_B$ with the Basilica polynomial..

\textbf{In Section \ref{sec:ProofsOfMainThms}} we prove Theorem
\ref{th:main2}.  By a \textit{combinatorial disc} in $\VV_2$ we mean a closed
topological disc $\DD$ such that $\partial \DD$ is is in a finite union
of parameter bubble rays with rational angles and simple arcs in
   closures of hyperbolic components. A sequence $a_n$\textit{ tends to $a_\infty$ combinatorially}  if for
every combinatorial disc $\DD$ such that $a_\infty\in \intr(\DD)$ all but finitely many
$a_n$ are in $\DD$. Similarly, combinatorial convergence in dynamical planes is defined.

As in $\VV_1$, a
parameter $a\in\VV_2$ is \textit{non-infinitely renormalizable} if
$a$ belongs to at most finitely many copies of $\MM$.
Theorem \ref{YoccResInV2} (Yoccoz's results in $\VV_2$) shows that for a non-infinitely renormalizable parameter $a$ a
sequence $a_n$ tends to $a$ in the combinatorial topology if and
only if $a_n$ tends to $a$ in the usual topology. In Subsection
\ref{subsec:DecorTilV2} we establish the
decoration theorem for $\VV_2$ (Proposition \ref{prop:DecorTilV2}) using the straightening theorem.

An isomorphism between $\VV_2$ and $\VV'_2$ is constructed in
Subsection \ref{subsect:ThmMain2}.

\textbf{In Section \ref{sec:Thm3}} we will prove Theorem \ref{th:main3}.

\medskip\noindent\textbf{Acknowledgements}.
 I am very grateful to Dierk Schleicher, Vladlen Timorin, Laurent Bartholdi, Nikita Selinger,
Yauhen Mikulich, Magnus Aspenberg for very useful discussions.

I am grateful to Dierk Schleicher for his help
in writing this paper.

\section{Background: $\VV_1$}
\label{sec:BasDef}

\subsection{B\"{o}ttcher functions, rays, and equipotentials}
\label{subsec:BFunct} For a polynomial $f_c(z)=z^2+c$ let
\begin{equation}
\rho_c =\lim_{n\to\infty} \frac{\log |f_c^{n}(0)|}{2^n}
\end{equation}
be the escape rate of the critical point $0$. There is a unique holomorphic
\textit{B\"{o}ttcher function} $B_c(z)$ such that
\begin{itemize}
\item $B_c(z)$ is a conformal conjugation between $f_c$ in a neighborhood
of infinity with the dynamics of $z^2$ in the disc $\{z|
\log|z|>\rho_c\}$; and
\item $B_c(z)/z\rightarrow 1$ when $z\rightarrow\infty$.
\end{itemize}
If $c\in\MM$, then the B\"{o}ttcher function maps the basin of
attraction of infinity conformally to the exterior of the unit disc;
if $c\not \in \MM$, then $0$ and all its pre-images are
critical points for the B\"{o}ttcher function.

 The parameter B\"{o}ttcher function is defined as
\begin{equation}
\label{eq:boettcher} \BB(c)=B_c(c).
\end{equation}

\begin{theorem}[\cite{DH1}]
\label{th:conf_isom}
 The map $\BB(c)$ is a conformal isomorphism from $\C\setminus \MM$ to the
 exterior of the unit disc.
\end{theorem}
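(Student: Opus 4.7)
The plan is to verify four standard properties in sequence: that $\BB$ is well-defined on $\C\setminus\MM$, is holomorphic there, extends to a proper map $\hat{\C}\setminus\MM\to\hat{\C}\setminus\disk$, and that this proper map has degree one.

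For well-definedness, observe that for $c\notin\MM$ the critical point $0$ escapes, so the Green's function $G_c$ of the basin of infinity satisfies $G_c(0)>0$. The B\"ottcher coordinate $B_c$ extends univalently from a neighborhood of $\infty$ to the superlevel set $\{z:G_c(z)>G_c(0)\}$, mapping it conformally onto $\{w:|w|>\exp(G_c(0))\}$. Because $c=f_c(0)$ and $G_c\circ f_c=2G_c$, we have $G_c(c)=2G_c(0)>G_c(0)$, so $c$ lies in the domain of $B_c$ and $\BB(c)=B_c(c)$ is intrinsically single-valued, with $|\BB(c)|=\exp(2G_c(0))>1$.

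For holomorphy, I would use the telescoping product representation $B_c(z)=z\prod_{n\ge 0}(1+c/(f_c^n(z))^2)^{1/2^{n+1}}$ with principal branches, starting the product at an index $N=N(c)$ large enough that every factor lies inside the disc of convergence of the principal branch. The same tail estimate persists for $c'$ in a small neighborhood of $c$, giving uniform convergence of a holomorphic series in $c'$; evaluating at $z=c'$ shows $\BB$ is holomorphic near $c$. For properness, the identity $|\BB(c)|=\exp(2G_c(0))$ together with continuity of $c\mapsto G_c(0)$ and the vanishing $G_c(0)=0$ on $\MM$ gives $|\BB(c)|\to 1$ as $c\to\partial\MM$. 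The normalization $B_c(z)\sim z$ at infinity translates into $\BB(c)/c\to 1$ as $c\to\infty$, so setting $\BB(\infty)=\infty$ yields a continuous proper extension $\hat{\C}\setminus\MM\to\hat{\C}\setminus\disk$.

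For the degree, a proper holomorphic map between Riemann surfaces has a well-defined integer degree, and since $\BB$ is tangent to the identity at $\infty$, the preimage of $\infty$ is the single point $\infty$ with multiplicity one; hence $\BB$ has degree one and is a biholomorphism. The main obstacle is the holomorphy step: ensuring that the product expansion converges uniformly on compact subsets of $\C\setminus\MM$ with coherent branch choices, especially as $c$ approaches $\partial\MM$ where $G_c(0)\to 0$ and the convergence slows. The resolution is to fix any compact $K\subset\C\setminus\MM$, obtain a uniform lower bound $G_c(0)\ge\delta>0$ on $K$ that forces super-exponential growth of $|f_c^n(0)|$, and thereby control the tail of the product uniformly on $K$.
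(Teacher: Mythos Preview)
The paper does not give its own proof of this statement: Theorem~\ref{th:conf_isom} is quoted from Douady--Hubbard \cite{DH1} as background, and the paper moves on immediately to its corollary that $\MM$ is connected. So there is no ``paper's proof'' to compare against.

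Your argument is correct and is essentially the classical Douady--Hubbard proof. A couple of small remarks. First, the passage from ``$B_c(z)\sim z$ as $z\to\infty$'' to ``$\BB(c)/c\to 1$ as $c\to\infty$'' is not automatic, since the map $B_c$ itself varies with $c$; you should justify it by the product formula (for large $|c|$ one has $f_c^n(c)\asymp c^{2^n}$, so each factor tends to $1$). Second, in the degree step you implicitly use that the total degree of a proper holomorphic map is well-defined even if the domain is not yet known to be connected; since $\BB^{-1}(\infty)=\{\infty\}$ with multiplicity one, the total count of preimages of any value is one, which simultaneously forces $\hat\C\setminus\MM$ to be connected and $\BB$ to be a bijection. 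It is worth making that explicit, since connectedness of $\MM$ is precisely the punchline.
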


In particular, it follows from Theorem \ref{th:conf_isom} that $\MM$
is connected.

Dynamic (resp.\ parameter) \textit{rays and equipotentials} are
defined as pre-images under $B_c(z)$ (resp.\ under $\BB(c)$) of
straight rays and concentric circles. Rays and equipotentials are parametrized
by \textit{angles} and \textit{heights} respectively.

Consider a dynamic ray $R^\phi=R^\phi(t),t>\rho_c\ge 0$. If the ray
does not hit any critical point of the B\"{o}ttcher function
$B_c(z)$ (i.e. either $0$ or its pre-images), then $B_c(z)$ is
analytically extendable along the ray $R^\phi(t)$ towards the Julia
set; this also extends the ray $R^\phi(t)$ to all $t \in
(0,\rho_c]$. Further, if the limit $\lim_{t\to 0}R^\phi(t)=p$
exists, then it is said that \textit{$R^\phi$ lands at $p$}.
Similarly, landing of parameter rays is defined. It is known that
parameter rays and (extendable) dynamic rays with rational (i.e.
periodic or pre-periodic) angles \textit{always land} (see for
example \cite{Mi}). For our convenience, we say that a ray does not
land if it goes through a critical point.

\subsection{Puzzle and parapuzzle pieces} \label{sec1}
The combinatorics of dynamic and parameter rays are closely related;
the cornerstone of this connection is expressed by the following
classical result:
\begin{theorem}[Correspondence Theorem]
\label{th:char_ray_pair} Assume that in the dynamical plane of
$f_c=z^2+c$ rays $R^{\phi_1}, R^{\phi_2}$ with rational angles land
together, separate the critical value $c$ from the critical point
$0$; and assume that no forward iterate of $R^{\phi_1}, R^{\phi_2}$
under $f_c$ separate the critical value $c$ from the pair
$R^{\phi_1}, R^{\phi_2}$. Then the parameter rays $\RR^{\phi_1},
\RR^{\phi_2}$ land together and separate $c$ from $0$.

In the inverse direction: if parameter rays $\RR^{\phi_1},
\RR^{\phi_2}$ with rational angles land together, then for every $c$
from the interior of the sector $\WW(\phi_1,\phi_2)$ bounded by
$\RR^{\phi_1}, \RR^{\phi_2}$ and not containing $0$ the rays
$R^{\phi_1}, R^{\phi_2}$ land together in the dynamical plane of
$f_c$ and have the above property. If $\phi_1,\phi_2$ are periodic
angles (i.e. they are of the form $m/(2n-1)$) and $c\not\in
\WW(\phi_1,\phi_2)$, then the rays $R^{\phi_1}, R^{\phi_2}$ do not
land together in the dynamical plane of $f_c$.
\end{theorem}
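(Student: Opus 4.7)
The plan is to prove both directions by exploiting the classical duality between parameter rays and dynamic rays at (pre-)periodic parameters, together with a holomorphic motion argument. The key bridge is the well-known fact that for a rational angle $\phi$, the parameter ray $\RR^\phi$ lands at $c_0$ if and only if, in the dynamical plane of $f_{c_0}$, the dynamic ray $R^\phi$ lands at the critical value $c_0$. Thus, in both directions the task is essentially to promote co-landing in one plane to co-landing in the other, via stability of repelling (pre-)periodic points under perturbation.

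For the direction ``dynamics $\Rightarrow$ parameter'', suppose $R^{\phi_1}, R^{\phi_2}$ co-land at a point $z(c)$ in the dynamical plane of $f_c$ under the stated hypotheses. Co-landing of two rays with rational angles forces $z(c)$ to be a (pre-)periodic repelling point, so it admits a holomorphic motion to nearby parameters. I would trace this motion along a path in the sector $\WW(\phi_1,\phi_2)$ bounded by $R^{\phi_1} \cup R^{\phi_2}$ and not containing $0$, toward a parameter $c_0$ at which $z(c_0)=c_0$ (i.e.\ where the critical value lands on the pinching orbit). The non-separation hypothesis is crucial here: it guarantees that the two rays, during the motion, never meet the critical orbit and that no forward iterate of the ray pair cuts $c$ off from the pinching point, so the motion extends without collision through the sector until $c_0$ is reached. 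Once $c_0$ is located, the duality fact gives that $\RR^{\phi_1}, \RR^{\phi_2}$ co-land at $c_0$, which lies on the boundary of the parameter sector separating $c$ from $0$.

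For the direction ``parameter $\Rightarrow$ dynamics'', assuming $\RR^{\phi_1}, \RR^{\phi_2}$ co-land at $c_0$, the duality fact gives that $R^{\phi_1}(c_0), R^{\phi_2}(c_0)$ co-land at $c_0$ in the dynamical plane of $f_{c_0}$. I would then propagate the co-landing throughout $\intr(\WW(\phi_1, \phi_2))$: the pinching point is a repelling (pre-)periodic point of $f_{c_0}$, whose holomorphic motion is defined on a Zariski-open subset of $\WW(\phi_1, \phi_2)$. Openness of the co-landing set among such parameters is standard, since for repelling periodic rays co-landing is preserved under small perturbation. Closedness inside $\intr(\WW(\phi_1,\phi_2))$ follows because the only way for the co-landing to break is for the pinching orbit to become parabolic, and the corresponding parabolic parameters all lie on $\partial \WW(\phi_1,\phi_2)$. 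For periodic $\phi_i$ and $c \not\in \WW(\phi_1,\phi_2)$, the pinching periodic orbit bifurcates across $\partial \WW(\phi_1, \phi_2)$ into two distinct orbits carrying $R^{\phi_1}$ and $R^{\phi_2}$ respectively, so co-landing fails.

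The main obstacle I anticipate is controlling the holomorphic motion across parameters where hyperbolic components of $\MM$ meet the sector, since along such components the pinching orbit undergoes parabolic bifurcations and one must verify persistence of co-landing through parabolic phases. The classical resolution is to treat the periodic case first, where the wake structure $\WW(\phi_1,\phi_2)$ is explicit and bounded by parabolic parameters on $\partial \WW$, and then bootstrap to pre-periodic angles using that a pre-periodic ray pair is characterized combinatorially by the periodic pair it eventually maps onto under $f_c$.
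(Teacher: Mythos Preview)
The paper does not contain a proof of this theorem. Theorem~\ref{th:char_ray_pair} is stated in Section~\ref{sec:BasDef} as classical background material, with the attribution ``characteristic ray pair'' referenced to Milnor~\cite{Mi} and the full combinatorial description deferred to Schleicher~\cite{Sch4}. There is nothing in the paper to compare your proposal against.

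That said, your outline is broadly in the spirit of Milnor's original argument in~\cite{Mi}, which is the standard reference. A few comments on the proposal itself. First, your ``key bridge'' fact---that $\RR^\phi$ lands at $c_0$ if and only if $R^\phi$ lands at the critical value in the plane of $f_{c_0}$---is not something you can invoke as a black box here; in Milnor's treatment this equivalence is essentially one of the outputs of the analysis, not an input, and the periodic and preperiodic (Misiurewicz) cases require separate handling. Second, in the dynamics-to-parameter direction your plan to ``trace a holomorphic motion along a path toward a parameter $c_0$ where $z(c_0)=c_0$'' presupposes that such a $c_0$ exists and lies on the boundary of the relevant wake; establishing this is again part of the content of the theorem, not a step you can assume. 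The standard route is rather to define the set of parameters for which the dynamic co-landing persists, show it is open, identify its boundary as consisting of parabolic (or Misiurewicz) parameters, and then argue that at those boundary parameters the parameter rays land. Your open--closed sketch for the parameter-to-dynamics direction is closer to the mark, though the assertion that ``the corresponding parabolic parameters all lie on $\partial\WW(\phi_1,\phi_2)$'' is precisely what needs to be shown and cannot be assumed.
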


A ray pair $R^{\phi_1}, R^{\phi_2}$ satisfying the condition of the
above theorem is called \emph{characteristic} \cite{Mi}; the combinatorics of
 dynamic and parameter rays will be described in full generality
 in \cite{Sch4}.

\textit{A puzzle piece} $X(c)$ in the dynamical plane of $z^2+c$ is
a closed topological disc bounded by periodic and pre-periodic rays
and truncated by an equipotential such that the forward orbit of
$\partial X(c)$ does not intersect $\intr(X(c))$.
 By construction, the boundary of $X(c)$ consists
 of finitely many segments which belong to rays or an equipotential. \textit{We
will use floor brackets to untruncate} $X(c)$; i.e.\ $\lfloor
X(c)\rfloor$
 is the closed set containing $X(c)$ and bounded by the rays that form the boundary of
$X(c)$.

  Let $X(c_1)$ be a puzzle
piece in the dynamical plane of $z^2+c_1$. We will say that $X(c_1)$
\textit{exists} for $c_2\not=c_1$ if there exists a puzzle piece
$X(c_2)$ in the dynamical plane of $z^2+c_2$ such that the
boundaries of $X(c_1)$ and $X(c_2)$ are \textit{combinatorially
equivalent} in the following sense: there is an analytic
continuation of the composition $B_{c_2}^{-1}\circ B_{c_1}$ of the
B\"{o}ttcher functions from a neighborhood of infinity (where the
composition is defined canonically) to a neighborhood of
\[\left(\partial \lfloor X(c_1)\rfloor\cup \partial X(c_1)\right)\setminus\{\text{Julia set of } z^2+c_1\}\] such
that $B_{c_2}^{-1}\circ B_{c_1}$ maps $\partial X(c_1)$
homeomorphically onto $\partial X(c_2)$ except finitely many points
in the Julia sets that are landing points of periodic and
pre-periodic rays. Equivalently, there is a homeomorphism between
the boundaries $\partial X(c_1)$ and $\partial X(c_2)$ preserving
angles and heights of rays and equipotentials. \textit{To simplify,
we will often write} $X$ for $X(c_1)$ or $X(c_2)$; this convention
allows us to use the notion of puzzle pieces without referring to a
particular dynamical plane.

\textit{A parapuzzle piece} $\XX$ is a closed topological disc in the parameter plane bounded by
parameter rays and an equipotential such that $\XX$ is the parameter counterpart of a dynamic
puzzle piece, denoted by $X$ in the following sense: the composition $\BB^{-1}\circ
B_{c}$ of the B\"{o}ttcher functions  extends from a neighborhood of
infinity to a neighborhood of $(\partial X\cup
\partial \lfloor X\rfloor)\setminus J_c$ and maps $\partial X$
homeomorphically onto $\partial \XX'$ except finitely many points as
above. \textit{We will write} $\XX=\XX'$ if $\XX'$ is the parameter
counterpart to $X$.

 The following property is classical (it
follows from Theorem \ref{th:char_ray_pair}).
\begin{property}
\label{prop:existence_of_parapieces} If the interior of a puzzle
piece $X_n$ contains the critical value, then the parameter
counterpart $\XX_n$ exists. Moreover, the puzzle piece $X_n$
exists in $\intr(\XX_n)$.
\end{property}

\subsection{Generalized puzzle and parapuzzle pieces}
\label{subsec:DynParCoun}

 In this section we define generalized puzzle and
parapuzzle pieces; these pieces are often appear as the images of
puzzle and parapuzzle pieces in other slices under straightening
maps; see the right part of Figure \ref{figure:ParamTil} for an
example.

 We say that
a closed topological disc $X(c)$ in the dynamical plane of $z^2+c$
is a \textit{generalized puzzle piece} if:
\begin{itemize}
\item $\partial X(c)$ is a Jordan
curve such that $\partial X(c) \cap K_{c}$ consists of finitely many
periodic and preperiodic points;
\item there is a connected graph $\Gamma=\Gamma(X(c))$ in $\widehat{\C}\setminus K_c$, to which we will refer as \textit{marking}, such that
for every component $\beta$ of $\partial X(c)\setminus K_c$ there is
a simple curve $\gamma$ in $\Gamma\setminus \{\infty\}$
connecting $\beta$ and infinity;
\item the forward iteration of $\partial X(c)\cup \Gamma(X(c))$ does not intersect
$\intr(X(c))$.
\end{itemize}

 Let us emphasize that the exact choice of markings will be irrelevant
for us. We need markings only to distinguish generalized puzzle
pieces in the dynamical planes of polynomials with non-connected
Julia sets. We will usually omit the reference to markings.

 A \textit{pre-image}
$X^2(c)$ of a generalized puzzle piece $X(c)$ is the closure of a
connected component of $f_c^{-1}(\intr(X(c)))$. We define the marking
$\Gamma(X^2(c))$ of $X^2(c)$ to be the closure of the union of all
components in $f^{-1}_c(\Gamma(X(c)))\setminus  \{\infty\}$
intersecting $\partial X^2(c)$. By definition, a pre-image
of a generalized puzzle piece is a generalized puzzle piece.

 Two generalized puzzle pieces $X(c_1)$, $X(c_2)$ are
 \emph{combinatorially equivalent} if there is an analytic
continuation of $B_{c_2}^{-1}\circ B_{c_1}$ from a neighborhood of
infinity (where the composition is defined canonically) to a
neighborhood of
\[\partial X(c_1)\cup \Gamma(X(c_1))\setminus K_c\] such
that $B_{c_2}^{-1}\circ B_{c_1}$ maps $\partial
X(c_1)\cup\Gamma(X(c_1))\setminus K_{c_1}$ homeomorphically onto
$\partial X(c_2)\cup\Gamma(X(c_2))\setminus K_{c_1}$ and this map
extends to a homeomorphism between $\partial
X(c_1)\cup\Gamma(X(c_1))$ and $\partial X(c_2)\cup\Gamma(X(c_2))$.

A closed topological disc $\XX$ is a \textit{generalized parapuzzle piece} if $\XX$ is the parameter
counterpart to a dynamic generalized puzzle piece $X=X(c_1)$: the
composition $\BB^{-1}\circ B_{c_1}$ extends from a neighborhood of
infinity to a neighborhood of $\partial X(c_1)\cup
\Gamma(X(c_1))\setminus K_c$ and maps $\partial X(c_1)$
homeomorphically onto $\partial \XX$ except at finitely many points
as above.

Similar to Property
\ref{prop:existence_of_parapieces} we have:

\begin{property}
\label{prop:existence_of_paradiscs} If in the dynamical plane of $f_{c_0}$ the critical value $c_0$ is in the
interior of a generalized puzzle piece $X$, then $X$ has the parameter counterpart $\XX$ and $X$ exists for all
$c\in\intr(\XX)$.
\end{property}
\begin{proof}
The argument is classical, so we will give a sketch of the proof.

Define
\[\XX^o=\{c\in \VV_1:\ X \text{ exists for }f_c\text{ and }c\in\intr(X(c))\}.\]
By assumption, $c_0\in \XX^o$. If $c'\in\XX^o$, then for $c$ in a small neighborhood of $c'$
the set $\cup_{n\ge0}f_c^{-n}(c)$ is disjoint from $\partial X(c)$ because the forward orbit of
$\partial X(c)$ does not intersect $\intr(X)\ni c$. This implies that $\XX^o$ is open.

It follows that $c\in \partial \XX^o$ if and only if $\partial X(c)$ hits $c$ or
there is a parabolic point in $\partial X\cap K_c$.
 It is easy to conclude that $\XX=\ovl{\XX^o}$ is the parameter counterpart of $X$.
\end{proof}

We say that a generalized puzzle piece $X$ \textit{behaves periodically around the Julia set}, if there is a neighborhood $U$ of $K_c$
such that every connected curve $\gamma$ in  $\partial X \cap U$ is eventually periodic:
\[f^{n+p}_c(\gamma)\cap U\subset f^{n}_c(\gamma)\cap U\]for some $n\ge0$ and $p>0$. It is clear that this definition is independent on $c$.

\subsection{Analytic families of quadratic-like mappings}
\label{subset:QuadrLike} Let us recall the construction of the
canonical homeomorphisms between copies of the Mandelbrot set
\cite{DH}.

A branched covering $f_\lambda:U'_\lambda\to U_\lambda$ of degree $2$ is \textit{quadratic--like}
if $U'_\lambda,U_\lambda$ are open subsets of $\C$ and
the closure of $U'_\lambda$ is in $U_\lambda$.

Let \[\ff=(f_\lambda:U'_\lambda\to U_\lambda),\ \ \
\lambda\in \Lambda\] be a family of quadratic--like maps
parametrized by a complex analytic $1$-manifold $\Lambda$.

Define $\mathbf{U}=\{(\lambda,z):z\in U_\lambda\}$,
$\mathbf{U'}=\{(\lambda,z):z\in U'_\lambda\}$. We assume that
$\mathbf{U}, \mathbf{U}'$ are homeomorphic over $\Lambda$ to
$\Lambda\times \disk$. Denote by $\MM_\ff$ the set of parameters in
$\Lambda$ such that $f_\lambda:U'_\lambda\to U_\lambda$ has a
connected Julia set.

Let us outline that in the below construction the
straightening map $\chi:\Lambda\to \VV_1$ will be canonical over
$\MM_\ff$ (independent of the choice of a tubing $T$) and will depend on $T$ over
$\Lambda\setminus \MM_\ff$.

A \textit{tubing} is a diffeomorphic embedding $T$ over $\Lambda$ of
the family of annuli
\[Q_R\times \Lambda=\{z:R^{1/2}\le |z|\le R\}\times \Lambda\] into
$\mathbf{U}$ such that $T_\lambda(Q_R)$ surrounds the filled-in
Julia set of $f_\lambda$ and
\[f_\lambda(T_\lambda(z))=T_\lambda(z^2)\ \ \text{ for }   |z|=R^{1/2}.\]

Fixing $T$ we get a continuous map \[\chi=\chi(T):\Lambda\to \VV_1,\ \ \ \chi(f_\lambda)=\{z\to z^2+\chi(\lambda)\}\]
\cite[Theorem 2]{DH}. In dynamical planes
we have quasiconformal conjugacies, which we also denote by $\{\chi(\lambda)\}_\lambda$, between $\{f_\lambda\}_\lambda$
restricted to the closed discs bounded by the outer boundaries of $\{T_\lambda(Q_R)\}_\lambda$ and
$\{z\to z^2+\chi(\lambda)\}_\lambda$ restricted to
neighborhoods of the filled-in Julia sets. Moreover, the tubing $T$ commutes with
the B\"{o}ttcher coordinates in the following sense:
\begin{equation}
\label{eq:TubBottch}\chi(\lambda_2)\circ T_{\lambda_2}\circ
T_{\lambda_1}^{-1}= B_{\chi(\lambda_2)}^{-1}\circ
B_{\chi(\lambda_1)}\circ \chi(\lambda_1)\ \text{ on } Q_R.
\end{equation}

\cite[Theorem 4]{DH} states that if $\chi$ is not constant, then it is topologically holomorphic
 over $\MM$ (for instance, $\chi$ is locally a homeomorphism over $\MM$ except a discrete set); for some $T$ the map $\chi$ is topologically
holomorphic on a neighborhood of $\MM_\ff$ (see examples in
\cite{L3}). If, in addition, the degree of $\chi$ on $\MM_f$ is $1$, then
$\chi$ is a homeomorphism over $\MM_\ff$.

Let us remark that using the puzzle technique we often get a family
$\ff=(f_\lambda:U'_\lambda\to U_\lambda)$ with $U'_\lambda\subset
U_\lambda$ but $\partial U'_\lambda\cap\partial
U_\lambda\not=\emptyset $. However, in many cases it is possible to
perturb $\partial U'_\lambda$, $\partial U_\lambda$ and obtain
quadratic-like maps. These ``thickening'' constructions are well known (see for example \cite{Mi3}, \cite{L1}) and we will
not concentrate on details.

\begin{figure}
\includegraphics[width=11cm]{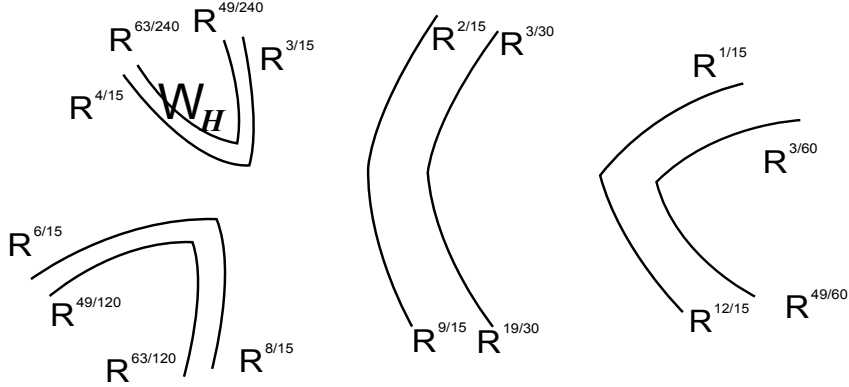}
  \caption{Ray configuration in the dynamical plane of
  $z^2+c$ for $c\in\WW_\HH$ and $\HH$ is the hyperbolic component with external
  angles $\{3/15,4/15\}$.
  }
  \label{figure:raypattern}
\end{figure}

\subsection{Small copies of the Mandelbrot set}
\label{subsec:SmallCopies} Small copies of the Mandelbrot set are in one-to-one
 correspondence with hyperbolic components: for every hyperbolic component $\HH\subset \MM$,
  there is a unique small copy $\MM_\HH\supset\HH$ so that the canonical homeomorphism of
  $\MM_\HH$ sends $\HH$ to the main hyperbolic component of $\MM$,
  and every small copy of $\MM$ is of this type for a unique component $\HH$.

Consider a hyperbolic component $\HH\subset\MM$. There are two
parameter rays $\RR^{\phi_1},\RR^{\phi_2}$ landing at the root of
$\HH$, where $a,b\in\mathbb Q /\Z$. The angles $\phi_1, \phi_2$ are
called the \textit{external angles} of $\HH$. Rays
$\RR^{\phi_1},\RR^{\phi_2}$ separate the closed sector $\WW_\HH$ from the
main hyperbolic component of $\MM$.

 The small copy $\MM_\HH$ is in $\WW_\HH$ and is described in the following way.

Let $p\in\mathbb{N}$ be the period of $\HH$; it means that $2^p\phi_1=\phi_1$,
$2^p\phi_2=\phi_2$, and $p$ is minimal with this property.

 Assume $c$ is in $\intr(\WW_\HH)$. In the dynamical plane of $f_c(z)=z^2+c$
 the rays $R^{\phi_1},R^{\phi_2}$ land together (Theorem \ref{th:char_ray_pair})
and bound the sector $W_{\HH}$ (the dynamical counterpart to $\WW_\HH$)
containing the critical value $c$. This sector contains no forward
images of $R^{\phi_1}$ and $R^{\phi_2}$. The pre-image $f_c^{-1}(W_\HH)$ is a strip
bounded by four rays; two of them are $R^{2^{p-1}\phi_1}$ and $R^{2^{p-1}\phi_2}$. Pulling back this strip
along the orbit of the rays $R^{\phi_1}$ and $R^{\phi_2}$ we get the
strip within $W_\HH$, we call it \textit{renormalization strip} $Y_\HH$ (see Figure
\ref{figure:raypattern}). The parameter renormalization strip
$\YY_{\HH}$ is defined as the parameter counterpart to $Y_{\HH}$.

Let $W_\HH^0$ be $W_\HH$ truncated by an equipotential and $Y_\HH^p$ be the pre-image of $W^0_\HH$ under
$f_c^p:Y_\HH\to W_\HH$; we have $f^p_c:Y_\HH^p\to W_\HH^0$ (see Figure \ref{figure:renormal}).

\begin{fact}
\label{fact:CopiesV1} The small copy $\MM_\HH$ is the closure of the set of
parameters $c\in\intr(\YY_\HH^p)$ such that in the dynamical plane the
critical value $c$ does not escape from $Y_\HH^p$ under iteration of
$f^p_c:Y_\HH^p\rightarrow W_\HH^0$.
\end{fact}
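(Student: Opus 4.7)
The plan is to recognize this as a direct instance of the Douady--Hubbard straightening construction recalled in Subsection \ref{subset:QuadrLike}, applied to the family of proper degree--$2$ branched coverings
\[
\ff_\HH \;=\; \bigl(f_c^{\,p}:Y_\HH^p\to W_\HH^0\bigr)_{c\,\in\,\intr(\YY_\HH^p)}.
\]
First I would check that, after the standard thickening alluded to at the end of Subsection \ref{subset:QuadrLike}, $\ff_\HH$ is an analytic family of quadratic-like maps parametrized by $\Lambda=\intr(\YY_\HH^p)$. The degree is $2$ because $W_\HH$ is simply connected, the iterated pre-image $Y_\HH\subset W_\HH$ is obtained by pulling back along the (critical-value-free) orbit of $R^{\phi_1}\cup R^{\phi_2}$, and the only branching happens at the last pull-back through $0$; so $f_c^{\,p}:Y_\HH^p\to W_\HH^0$ is a two-to-one branched cover with $0$ as its unique critical point. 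Compact containment (modulo thickening) and holomorphic dependence on $c$ are built into the construction of the renormalization strip.

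Next I would invoke the straightening map $\chi=\chi(T)\colon\intr(\YY_\HH^p)\to\VV_1$ from \cite[Theorem 2]{DH}. By definition of the Mandelbrot set, the connectedness locus $\MM_{\ff_\HH}$ is exactly the set of $c\in\intr(\YY_\HH^p)$ for which the critical value $c=f_c(0)$ (equivalently, $f_c^{\,p}(0)$) does not escape from $Y_\HH^p$ under iteration of $f_c^{\,p}$, and $\MM_{\ff_\HH}=\chi^{-1}(\MM)$. So the fact amounts to proving that $\ovl{\chi^{-1}(\MM)}=\MM_\HH$.

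The main work, and the expected main obstacle, is to verify that $\chi$ is topologically holomorphic of degree $1$ over $\MM$; once this is shown, \cite[Theorem 4]{DH} gives a homeomorphism $\chi\colon\chi^{-1}(\MM)\to\MM$ carrying the center of each hyperbolic subcomponent to the center of the corresponding hyperbolic component of $\MM$, and carrying the unique hyperbolic subcomponent of period $p$ on $\chi^{-1}(\MM)$ to the main component of $\MM$; that subcomponent is exactly $\HH$ by the construction of $\YY_\HH^p$ from the characteristic ray pair $\RR^{\phi_1},\RR^{\phi_2}$ (Theorem \ref{th:char_ray_pair}). To prove degree $1$, I would combine two ingredients: (i) non-constancy, which follows because $\chi$ sends the superattracting parameter of $\HH$ to $0\in\MM$, so $\chi$ is non-trivial; and (ii) injectivity at combinatorially distinguished points --- e.g.\ centers of hyperbolic components inside $\YY_\HH^p$ correspond bijectively to centers of hyperbolic components of $\MM$ via the tuning combinatorics determined by $\phi_1,\phi_2$, using that the formula \eqref{eq:TubBottch} forces $\chi$ to respect the external angle structure modulo the $\times 2^p$ action. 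Standard arguments (as in \cite{DH}) then upgrade this to degree $1$ on $\MM_{\ff_\HH}$.

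Finally, to match the closure in the statement, I would note that $\chi^{-1}(\MM)$ is contained in $\intr(\YY_\HH^p)$ while hyperbolic components of $\MM_\HH$ may touch the boundary rays $\RR^{\phi_1},\RR^{\phi_2}$ at the root of $\HH$; taking the closure fills in exactly this root point (and any other parabolic boundary identifications), yielding $\MM_\HH=\ovl{\chi^{-1}(\MM)}$. The canonicity of $\chi|_{\MM_{\ff_\HH}}$ (independence of the tubing $T$) noted in Subsection \ref{subset:QuadrLike} guarantees that $\MM_\HH$ defined in this way agrees with the canonical small copy mentioned at the start of Subsection \ref{subsec:SmallCopies}.
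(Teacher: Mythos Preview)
The paper states this as a \emph{Fact} without proof; it is treated as a standard consequence of the Douady--Hubbard straightening theory recalled in Subsection~\ref{subset:QuadrLike} and is simply invoked as background. Your proposed argument is exactly the standard derivation the paper is silently relying on: recognize $(f_c^{\,p}:Y_\HH^p\to W_\HH^0)_{c\in\intr(\YY_\HH^p)}$ as (after thickening) an analytic family of quadratic-like maps, apply \cite[Theorems~2 and~4]{DH} to obtain the straightening homeomorphism $\chi$ onto $\MM$, verify degree~$1$ via the combinatorial/tuning bijection on centers, and finally take the closure to recover the parabolic root of $\HH$ on $\partial\YY_\HH^p$. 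This is correct and there is nothing in the paper to compare against.
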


We will refer to $f^p_c:Y_\HH^p\rightarrow W_\HH^0$ as \textit{the
first renormalization map}. Let us also remark that for a puzzle
piece, say $\YY_i^n$, of a map in $\VV_1$ the upper index usually
denotes the \textit{depth} of the puzzle piece; i.e. $\YY_i^n$ is
a pre--image under $f^n_c$ of a puzzle piece with depth $0$.

\begin{figure}
\includegraphics[width=9cm]{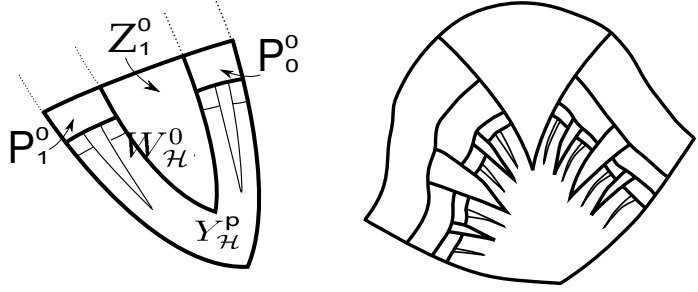}
  \caption{Left: the pieces $P_0^0$, $P_1^0$, $Z_1^0$ and their pre-images under
  $f^p_c:Y_\HH^p\rightarrow W_\HH^0$.
  Right: a scheme for the decoration tiling: pieces in the decoration tiling are iterated pre-images of $P_0^0$, $P_1^0$, $Z_1^0$
  under $f^p_c:Y_\HH^p\rightarrow W_\HH^0$.
  }
  \label{figure:renormal}
\end{figure}

\subsection{Decoration tiling}
\label{subsec:DecorTiling}

A decoration tiling is a certain partition of a neighborhood of a given small copy of the Mandelbrot set. We will first describe
a decoration tiling consisting of parapuzzle pieces (the left part of Figure \ref{figure:ParamTil}), after that we will define a generalized
decoration tiling (the right part of Figure \ref{figure:ParamTil}). In all cases we will first define
a dynamical partition, its parameter counterpart will be a (generalized) decoration tiling.

Choose a non-parabolic non-Misiurewicz parameter $c\in\MM_\HH$; then $c$ always stays in $\intr(Y^p_\HH)$
under iteration of $f_c^p:Y^p_\HH\to W^0_\HH$.

 Truncating $\WW_\HH$, $W_\HH$, and
$\YY_\HH$ at the equipotential at height $t$, we obtain puzzle
pieces $\WW^0_\HH$, $W_\HH^0$, and $\YY_\HH^0$; similarly, let
$Y^p_\HH$ be the truncation of $Y_\HH$ at the equipotential at
height $t/2^p$.  We decompose
$\ovl{W^0_\HH\setminus Y_\HH^p}$ into topological disks $Z_1^0\cup
P^0_0\cup P^0_1$ with a puzzle piece
$Z^0_1:=\overline{W_\HH^0\setminus Y_H^0}$ and two ``parallelogram''
disks $P_0^0$ and $P_1^0$ that are the components of $\overline
{Y_\HH^0\setminus Y_\HH^p}$, see Figure \ref{figure:renormal}. Let $T$ be the set of pullbacks of
$Z^0_1$, $P^0_0$, and $P^0_1$ under iteration of
$f^p_{c}:Y_\HH^p\rightarrow W_\HH^0$. The \textit{the decoration tiling} $\TT=\TT(\WW^0_\HH)$ is the
set of the parameter counterparts of pieces in $T$ (the left part of Figure \ref{figure:ParamTil}).

In \cite{D} we proved (see \cite{PR} for a different proof) the
decoration theorem that says, in a strong form, that the diameter of
pieces in the decoration tiling tends to $0$:
\begin{decotheorem}
\label{th:decor}
 For any $\varepsilon >
0$, there are at most finitely many pieces in the decoration tiling of
$\MM_\HH$ with diameter at least $\varepsilon$.
\end{decotheorem}

Consider a more general situation. Let us include the case
$\MM_\HH=\MM$ (i.e. $\HH$ is the main hyperbolic component of $\MM$) into the
consideration. For non-parabolic non-Misiurewicz $c_0\in\MM=\MM_\HH$ we define $W^0_\HH$ to be the
degenerate puzzle piece bounded by only the $0$-ray $R^0$ and truncated
by some equipotential. Then $Y_\HH^1$ is the pre-image of $W_\HH^0$
under $f_{c_0}$. For $c\in\intr(\WW_\HH^0)$ we have a (degenerate)
renormalization map $f_c:Y_\HH^1\to W_\HH^0$ so that $c\in\MM$ if
and only if $c$ does not escape from $Y_\HH^1$ under iteration of
$f_c:Y_\HH^1\to W_\HH^0$.

\begin{figure}
\includegraphics[width=11cm]{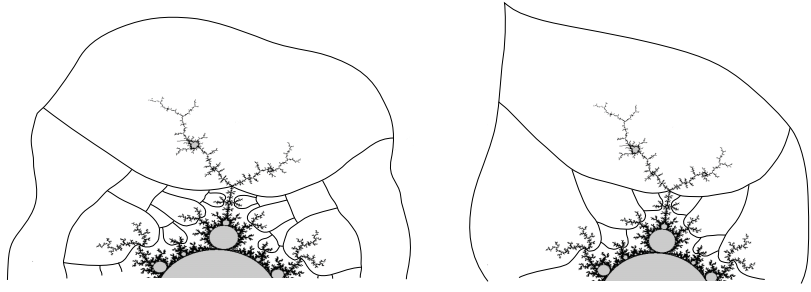}
  \caption{Left: the decoration tiling associated with the first renormalization map
  $f^p_c:Y_\HH^p\rightarrow W_\HH^0$; the scheme for the tiling is in Figure \ref{figure:renormal}.
  Right: an example of a generalized decoration tiling; this is a usual tiling for $\VV_2$.
  }
  \label{figure:ParamTil}
\end{figure}

Let  $\MM_\HH$ be a small copy of $\MM$, the case $\MM=\MM_\HH$ is
allowed, and let $c_0$ be a non-parabolic non-Misiurewicz point in $\MM_\HH$. Define
inductively $Y^{np}_{n\HH}$ as the
pullback of $Y^{(n-1)p}_{(n-1)\HH}$ under $f^p_{c_0}:Y^p_\HH\to
W^0_\HH$, where $Y^{p}_{1\HH}=Y^{p}_{\HH}$.

It follows from Fact \ref{fact:CopiesV1} that a parameter
$c\in \intr(\YY^{np}_{n\HH})$ is in $\MM_\HH$ if and only if $c$
does not escape from $Y^{(n+1)p}_{(n+1)\HH}$ under iteration of $f^p_c:
Y^{(n+1)p}_{(n+1)\HH}\to Y^{np}_{n\HH}$. We will refer to
$f^p_c:Y^{(n+1)p}_{(n+1)\HH}\to Y^{np}_{n\HH}$ as the $n$-th
renormalization map. (Let us note that
$\intr(\YY^{np}_{n\HH})\setminus \MM_\HH$ consists of finitely many
points: one of them is parabolic, all other points are Misiurewicz).

\begin{definition}[Generalized renormalization map]
\label{defn:GenRenormMap} Let $X^{np}$ be a generalized puzzle piece that behaves periodically around
the Julia set such that $\partial X^{np}$ is isotopic to
$\partial Y^{np}_{n\HH}$ relative the Julia set. Define $X^{(n+1)p}$ to be a unique pre-image of $X^{np}$ under $f^p_{c_0}$
such that $\partial X^{(n+1)p}$ is isotopic to $\partial
Y^{(n+1)p}_{(n+1)\HH}$ relative the Julia set.

If $X^{(n+1)p}\subset X^{np}$ and every component of $\partial X^{(n+1)p}\cap \partial X^{np}$ intersects $K_{c_0}$, then $f^p_{c_0}:X^{(n+1)p}\to
X^{np}$ is a \emph{generalized renormalization map}.
\end{definition}

It follows that $c_0\in\intr\left(X^{(n+1)p}\right)\subset\intr(X^{np}) $ and so
the parameter counterparts $\XX^{np}$ and $\XX^{(n+1)p}$ of $X^{np}$ and $X^{(n+1)p}$
exist (Property \ref{prop:existence_of_paradiscs}). For all $c\in \intr(\XX^{np})$ the map
$f^p_{c}:X^{(n+1)p}\to X^{np}$ is well defined; and $c\in\MM_\HH$ if
and only if $c$ does not escape from $X^{(n+1)p}$ under
iteration of $f^p_c:X^{(n+1)p}\to X^{np}$; in particular, the
last map encodes $\MM_\HH$.

It follows that all components of  $\ovl{X^{np}\setminus
X^{(n+1)p}}$ are (up to fixing markings) generalized puzzle pieces.
Let $T$ be the set of pre-images of $\ovl{X^{np}\setminus
X^{(n+1)p}}$ under $f^p_{c_0}:X^{(n+1)p}\to X^{np}$. The
\textit{generalized decoration tiling} $\TT$ associated with
$f^p_{c}:X^{(n+1)p}\to X^{np}$ is the parameter counterpart to $T$.
We have:

\begin{theorem}
\label{th:decor2} The diameter of pieces in the decoration tiling
associated with $f^p_{c}:X^{(n+1)p}\to X^{np}$ tends to $0$.
\end{theorem}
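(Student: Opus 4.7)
The plan is to reduce Theorem \ref{th:decor2} to the original Decoration Theorem by two successive comparisons: one via straightening (to replace the possibly deep small copy $\MM_\HH$ by the main copy $\MM$), and one via an isotopy argument (to replace the generalized renormalization $f^p_c\colon X^{(n+1)p}\to X^{np}$ by the canonical one $f_c\colon Y^1_\MM\to W^0_\MM$).

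For the first step, I would consider the analytic family
\[
\ff=\bigl(f^p_c\colon X^{(n+1)p}(c)\to X^{np}(c)\bigr)_{c\in U}
\]
over a suitable parameter neighborhood $U$ of $\MM_\HH$ (the puzzle-piece boundaries can be perturbed off $J_c$ to obtain genuine quadratic-like maps, as remarked at the end of Subsection \ref{subset:QuadrLike}). Applying Douady--Hubbard straightening (Subsection \ref{subset:QuadrLike}) yields a map $\chi\colon U\to\VV_1$ with $\chi(\MM_\HH)=\MM$; because the degree is $1$ on $\MM_\HH$, $\chi$ is a homeomorphism of a neighborhood of $\MM_\HH$ onto a neighborhood of $\MM$. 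The tubing identity \eqref{eq:TubBottch} shows that $\chi$ intertwines the exterior B\"{o}ttcher coordinates on both sides, hence carries parameter rays to parameter rays. In particular, a generalized puzzle piece $X$ in the dynamical plane of $f^p_{c_0}\colon X^{(n+1)p}\to X^{np}$ has its parameter counterpart $\XX$ mapped by $\chi$ to the parameter counterpart of the straightened piece $\chi(X)$ in the dynamical plane of $z^2+\chi(c_0)$. Applying this pullback by pullback, $\chi$ sends the generalized decoration tiling of $\MM_\HH$ bijectively onto the generalized decoration tiling of $\MM$ associated with $\chi(f^p_{c_0})\colon\chi(X^{(n+1)p})\to\chi(X^{np})$, with depth preserved. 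Since $\chi^{-1}$ is uniformly continuous on compact sets, it suffices to establish the diameter-to-zero property for generalized decoration tilings of the main copy $\MM$.

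For the second step (now working in $\VV_1$ with the main copy $\MM$), I would compare the generalized tiling to the standard one. The generalized puzzle pieces $X^0$ and $X^1$ are, by Definition \ref{defn:GenRenormMap}, isotopic rel $K_{c_0}$ to $W^0_\MM$ and $Y^1_\MM$, respectively; moreover $X$ behaves periodically around the Julia set, so the isotopy can be taken equivariant under $f_{c_0}$ near $K_{c_0}$ (forcing the isotopy to extend to all pullbacks consistently). Because the graphs $\partial X^0\cup\Gamma(X^0)$ and $\partial W^0_\MM$ differ only off the Julia set, and their analytic continuation across parameters via $\BB^{-1}\circ B_c$ depends continuously on $c$ on any compact set in $U\setminus \MM$ away from the landing angles, one obtains a parameter-space homeomorphism $H$ between a neighborhood of $\MM$ and itself that is the identity on $\MM$ and sends pieces of the generalized decoration tiling to pieces of the standard decoration tiling, matching depth and combinatorics. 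Since $H$ is uniformly continuous on compacta, the Decoration Theorem applied to the standard tiling implies the conclusion.

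The main obstacle is the second step: verifying that the isotopy between $X^{np}$ and $Y^{np}_{n\HH}$ extends to all iterated pullbacks in a controlled way, and that this extension assembles into a continuous family over parameters to yield the homeomorphism $H$. The periodicity hypothesis on $X$ around the Julia set is what makes this feasible, because it forces the dynamical differences between $\partial X^{np}$ and $\partial Y^{np}_{n\HH}$ to be confined to a neighborhood of a finite set of periodic rays, where the tubing construction of Subsection \ref{subset:QuadrLike} provides the needed quasiconformal control. All remaining verifications (that the combinatorial indexing of pieces matches through $\chi$ and $H$, and that depth is preserved by pullback) are bookkeeping with the identities from Subsection \ref{subsec:DynParCoun}.
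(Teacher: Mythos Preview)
The paper does not actually give a proof of Theorem~\ref{th:decor2}; it is stated as a known result, presumably established in the reference \cite{D} at the level of generality needed here. So there is no ``paper's own proof'' to compare against.

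That said, your proposal has a genuine gap in Step~1. You claim that because the degree of $\chi$ is $1$ on $\MM_\HH$, the straightening map $\chi$ is a homeomorphism of a \emph{neighborhood} of $\MM_\HH$ onto a neighborhood of $\MM$, and then you invoke uniform continuity of $\chi^{-1}$. But Douady--Hubbard only guarantee that $\chi$ is a homeomorphism on $\MM_\HH$ itself; off the copy, $\chi$ depends on the tubing and need not be injective (the paper says this explicitly in the proof of Proposition~\ref{prop:DecorTilV2}: ``Note that $\chi$ may not be a homeomorphism''). So you cannot simply pull back diameter bounds via $\chi^{-1}$.

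The correct way to use straightening here---and the paper carries it out verbatim in the proof of Proposition~\ref{prop:DecorTilV2}, which reduces the $\VV_2$ decoration theorem to Theorem~\ref{th:decor2}---is a contradiction argument that only needs $\chi$ to be continuous everywhere and a homeomorphism on the copy. If the diameters of pieces $\TT_i$ did not tend to zero, a subsequence would accumulate on two distinct points $a_1\neq a_2$; one checks these must lie in $\MM_\HH$ (finite escaping time is an open condition), so $\chi(a_1)\neq\chi(a_2)$; then by continuity the images $\chi(\TT_i)$ accumulate on both $\chi(a_1)$ and $\chi(a_2)$, contradicting the target decoration theorem. This avoids both your Step~1 gap and your Step~2 isotopy construction entirely, at the cost of needing the target result (the decoration theorem for the straightened family) already at the generalized level---which is why the paper simply imports Theorem~\ref{th:decor2} from \cite{D} rather than deriving it from the classical statement.
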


\subsection{Compact connected locally connected  subsets of the sphere}
\label{subs:LocalConnSubs}
Compact connected locally connected sets are characterized by the following theorem, see \cite[Theorem 4.4, p. 113]{Wh}.
\begin{theorem}
A connected compact set $K\subset\sph$ is locally connected if
and only if it satisfies the following properties:
\begin{itemize}
\item For every
$\varepsilon>0$ there are only finitely many connected components of $\sph\setminus K$
with diameter at least $\varepsilon$.
\item The boundary of every connected component of $\sph\setminus K$ is locally connected.
\end{itemize}
\end{theorem}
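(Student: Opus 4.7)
The equivalence separates into two implications, which I would treat independently.

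For $(\Rightarrow)$, assume $K$ is locally connected. The first bullet is a classical consequence of uniform local connectedness (Property~S): a compact locally connected continuum in a compact metric space has complementary components whose diameters tend to zero. The argument rests on covering $K$ by finitely many connected closed pieces of arbitrarily small diameter, which in turn restricts how far any complementary component can extend before its boundary forces it into a small region. The second bullet is Torhorst's theorem: for $p \in \partial U$, I would show that a small connected neighborhood $N$ of $p$ in $K$ yields, after forming $N$ together with those components of $U \setminus N$ touching $\partial N$ and re-intersecting with $\partial U$, a connected neighborhood of $p$ in $\partial U$ of arbitrarily small diameter.

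For $(\Leftarrow)$, assume both bullets, and fix $p \in K$ together with $\varepsilon > 0$. By the first bullet, only finitely many complementary components $U_1, \dots, U_m$ have diameter $\geq \varepsilon/8$. I would form the ``filled-in'' continuum
\[ K' \; := \; \sph \, \setminus \, (U_1 \cup \cdots \cup U_m), \]
obtained from $K$ by adjoining every small complementary component. Since $\sph \setminus K'$ is a finite union of open sets whose boundaries lie in $K$ and are therefore locally connected by the second bullet, a direct argument shows $K'$ itself is locally connected, so $K'$ admits a connected neighborhood $V$ of $p$ of diameter $<\varepsilon$. To descend back to $K$, I would shrink $V$ further using the local connectedness of each $\partial U_i$ that meets $V$, arranging that $V \cap \partial U_i$ is connected for every such $i$. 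Then $V \cap K = V \setminus \bigcup_i U_i$ is a connected relatively open subset of $K$ containing $p$ with diameter $< \varepsilon$, yielding local connectedness at $p$.

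The main obstacle is the reverse direction: passing from the filled-in continuum $K'$ back to $K$ itself risks disconnecting $V$ when the $U_i$ are removed. This is exactly where both hypotheses are jointly essential. The finiteness in (1) reduces the problem to a finite combinatorial one, and the locally connected boundaries in (2) provide the connected arcs $V \cap \partial U_i$ along which the pieces of $V \cap K$ on either side of each $U_i$ can be matched up. Either hypothesis alone is insufficient, as standard examples of infinite oscillation show.
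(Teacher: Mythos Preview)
The paper does not prove this statement; it is quoted as a classical fact with a reference to Whyburn's \emph{Analytic Topology} \cite[Theorem~4.4, p.~113]{Wh}, so there is no argument in the paper to compare against. Your forward direction is a correct outline, invoking Property~S and Torhorst's theorem, which are indeed the standard tools.

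The reverse direction has two gaps. First, the assertion that the filled-in continuum $K'$ is locally connected is exactly the special case of the theorem in which the complement has finitely many components, and this special case already carries essentially the full difficulty; calling it a ``direct argument'' hides the real work (one needs, for instance, Carath\'eodory's theorem together with an analysis of how a simply connected domain with locally connected boundary sits inside $\sph$). Second, the descent step is mis-aimed: since $V \subset K' = \sph \setminus (U_1 \cup \cdots \cup U_m)$, the set $V$ is already disjoint from the large $U_i$, so passing to $V \cap K$ means removing the infinitely many \emph{small} components $U_j$ (those with $j > m$) from $V$, not the large ones. You cannot shrink $V$ finitely many times to control all of the intersections $V \cap \partial U_j$, and even granting such control, connectedness of $V \setminus \bigcup_{j>m} U_j$ does not follow automatically. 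The standard route (as in Whyburn) is to work with Property~S directly, or to argue contrapositively that failure of local connectivity forces one of the two bullets to fail, rather than going through an intermediate filled-in set.
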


Let is also recall that the
continuous image of a compact, connected, locally connected set is
locally connected \cite[Theorem 3--22]{HY}.

Let $K_1,K_2\subset \sph$ be compact connected locally connected
subsets of the $2$--dimensional sphere $\sph$. Let $h:K_1\to K_2$ be a homeomorphism
and $U$ be a component of $\sph\setminus K_1$. Then
$h$ \textit{preserves the orientation of $U$}
if for every point $a\in
U$, every choice of different points $x,y,z\in \partial U$, and non-intersecting curves
$\gamma_x,\gamma_y, \gamma_z$ in $U\cup\{x,y,z\}$ connecting $a$ to
$x,y,z$ respectively there is a component $U'$ of $\sph\setminus
K_2$, a point $a'$ in $U'$, and non-intersecting curves
$\gamma'_x,\gamma'_y, \gamma'_z$ in $U'\cup\{h(x),h(y),h(z)\}$
connecting $a'$ to $h(x),h(y),h(z)$ respectively such that the
orientation of $\gamma_x,\gamma_y, \gamma_z$ around $a$ is the same
as the orientation of $\gamma'_x,\gamma'_y, \gamma'_z$ around $a'$.

\begin{lemma}
\label{lem:ORientOfCompl}
There is at most one component $U'$ of $\sph\setminus K_2$ satisfying the above condition.
\end{lemma}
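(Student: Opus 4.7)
The plan is to argue by contradiction. Suppose $U',U''$ are two distinct components of $\sph\setminus K_2$ that both satisfy the orientation condition. I would pick $a'\in U'$ with non-intersecting curves $\gamma'_x,\gamma'_y,\gamma'_z\subset U'\cup\{h(x),h(y),h(z)\}$ from $a'$ to $h(x),h(y),h(z)$ witnessing the condition for $U'$, and analogously $a''\in U''$ with curves $\gamma''_x,\gamma''_y,\gamma''_z\subset U''\cup\{h(x),h(y),h(z)\}$ witnessing it for $U''$. Since $U'\cap U''=\emptyset$, the curves in $U'$ and in $U''$ meet only at the shared endpoints $h(x),h(y),h(z)$, so the three concatenations
\[
\alpha_w:=\gamma'_w\cup(\gamma''_w)^{-1},\qquad w\in\{x,y,z\},
\]
form simple arcs joining $a'$ to $a''$ in $\sph$, pairwise disjoint apart from the endpoints. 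Their union $\Theta=\alpha_x\cup\alpha_y\cup\alpha_z$ is a theta graph embedded in the sphere.

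The heart of the argument is the topological fact that in any such theta graph the cyclic order of the three edges at one vertex is the reverse of the cyclic order at the other vertex, both taken with respect to the fixed orientation of $\sph$. I would prove this by applying the Jordan curve theorem to $\alpha_x\cup\alpha_y$: it separates $\sph$ into two discs, and $\alpha_z$ lies entirely in one of them, say $D$. A coherent orientation of $\partial D$ traverses $\alpha_x$ from $a'$ to $a''$ and $\alpha_y$ from $a''$ to $a'$, so the sector of $D$ at $a'$ lies ``to the left of $\alpha_x$'' whereas the sector of $D$ at $a''$ lies ``to the left of $\alpha_y$''; these sectors are on opposite sides of the ordered pair $(\alpha_x,\alpha_y)$ in the cyclic orders at the two vertices. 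Since $\alpha_z$ emerges into the $D$-sector at both vertices, it is inserted between $\alpha_x$ and $\alpha_y$ in opposite senses at $a'$ and at $a''$.

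Combining these observations finishes the proof. Near $a'$ the arc $\alpha_w$ coincides with its initial segment $\gamma'_w$, so the cyclic order of $\alpha_x,\alpha_y,\alpha_z$ at $a'$ equals that of $\gamma'_x,\gamma'_y,\gamma'_z$ at $a'$; the analogous statement at $a''$ involves the $\gamma''_w$. By the orientation hypothesis both cyclic orders match the cyclic order of $\gamma_x,\gamma_y,\gamma_z$ at $a\in U$, so they are equal to each other---contradicting the theta-graph reversal above, since a cyclic order on three distinct elements is never its own reverse. Thus $U'=U''$.

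The main technical obstacle I foresee is making the theta-graph reversal rigorous when the arcs are only continuous. I would circumvent this by fixing small Jordan-disc neighborhoods $V'\ni a'$ and $V''\ni a''$ disjoint from each other and from $\{h(x),h(y),h(z)\}$, and defining cyclic order combinatorially via the three first-exit points of the arcs on $\partial V'$ and on $\partial V''$; the Jordan curve theorem applied to $\alpha_x\cup\alpha_y$ then dictates on which side $\alpha_z$ lies, and a standard orientation check supplies the required reversal without any smoothness hypothesis.
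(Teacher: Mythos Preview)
Your argument is correct and is essentially a fleshed-out version of the paper's proof. The paper's proof is a two-line sketch: it asserts (invoking ``planarity'') that the three points $h(x),h(y),h(z)$ lie on the boundaries of at most two components of $\sph\setminus K_2$, and that when they lie on two such components the induced cyclic orders are opposite. Your theta-graph argument is precisely the topological content behind that second assertion, and it simultaneously obviates the need to separately bound the number of components. So the two proofs share the same core idea; yours supplies the Jordan-curve details that the paper leaves to the reader.
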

\begin{proof}
By planarity, the set $\{h(x),h(y),h(z)\}$ is on the boundaries of at most two
components of $\sph\setminus K_2$.
If $h(x),h(y),h(z)\in \partial U_1$ and $h(x),h(y),h(z)\in \partial U_2$ for $U_1\not=U_2$, then
$h(x),h(y),h(z)$ have different orientations with respect to $U_1$ and $U_2$.
\end{proof}

A homeomorphism
$h:K_1\to K_2$ \textit{preserves the orientation of the complement}
if it preserves the orientation of every component of $\sph\setminus K_1$.
As a corollary of Carath\'{e}odory's theorem we get:

\begin{proposition}
\label{prop:ExtensOfhomeom} Let $h:K_1\to K_2$ be a homeomorphism
between compact, connected, locally connected subsets of $\sph$
preserving the orientation of the complement. Then $h$ extends to a
homeomorphism of $\sph$.
\end{proposition}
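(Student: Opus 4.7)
The plan is to extend $h$ separately over each complementary component and then patch the pieces together, using local connectivity to guarantee continuity at points of $K_1$. First I would use the orientation hypothesis together with Lemma~\ref{lem:ORientOfCompl} to set up a bijection $U\leftrightarrow U'$ between the components of $\sph\setminus K_1$ and those of $\sph\setminus K_2$. The symmetric argument applied to $h^{-1}$ shows this is indeed a bijection. A short separation argument using triples of points on $\partial U$ then shows $h(\partial U)=\partial U'$: any point in $\partial U$ is approximable by triples witnessing the orientation, and these triples must end up on $\partial U'$ by Lemma~\ref{lem:ORientOfCompl}.

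Next, I would extend $h$ across a fixed component $U$ by transporting to the unit disc via Carath\'{e}odory. Since $\partial U$ is locally connected (being the boundary of a complementary component of the locally connected continuum $K_1$), the Riemann maps $\phi\colon\opdisk\to U$ and $\phi'\colon\opdisk\to U'$ admit continuous extensions $\overline{\phi},\overline{\phi}'$ to $\disk$. The orientation condition, applied to triples of prime ends, implies that whenever $\overline{\phi}(\zeta_1)=\overline{\phi}(\zeta_2)$ the images $h\circ\overline{\phi}(\zeta_1)=h\circ\overline{\phi}(\zeta_2)$ have matching $\overline{\phi}'$-preimages in the same cyclic order; concretely, this is what allows one to lift $h|_{\partial U}\colon\partial U\to\partial U'$ to an orientation-preserving homeomorphism $\tilde{h}\colon\uncirc\to\uncirc$ with $\overline{\phi}'\circ\tilde{h}=h\circ\overline{\phi}$ on $\uncirc$. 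Then I would extend $\tilde{h}$ to a homeomorphism of $\disk$ (the radial Alexander trick suffices) and push it forward to a homeomorphism $h_U\colon\overline{U}\to\overline{U'}$ agreeing with $h$ on $\partial U$.

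Piecing these $h_U$ together with $h$ on $K_1$ yields a bijection $\tilde h\colon\sph\to\sph$. The delicate point is continuity at $K_1$: suppose $x_n\to x\in K_1$ with $x_n\in U_n$. Choose $y_n\in\partial U_n$ with $y_n\to x$; by local connectivity of $K_1$ we have $\operatorname{diam}(U_n)\to 0$. I need $\operatorname{diam}(U_n')\to 0$, which follows by a pigeonhole argument: if some subsequence satisfied $\operatorname{diam}(U_n')\ge\varepsilon$, then by local connectivity of $K_2$ only finitely many components of $\sph\setminus K_2$ have diameter $\ge\varepsilon$, so along a further subsequence $U_n'$ is constant; by the bijection $U_n$ is also constant, contradicting $\operatorname{diam}(U_n)\to 0$. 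Then $\tilde h(x_n)\in\overline{U_n'}$ sits within $\operatorname{diam}(U_n')$ of $h(y_n)\to h(x)$, giving $\tilde h(x_n)\to h(x)$.

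The main obstacle I expect is the second paragraph: one has to verify cleanly that the orientation hypothesis genuinely forces the boundary homeomorphism $h|_{\partial U}$ to be compatible with the prime-end identifications, so that it lifts to a circle homeomorphism via Carath\'{e}odory. Without that compatibility, no radial extension exists. Once this is in place, the remaining steps (bijection of components, Alexander extension on each disc, and the pigeonhole continuity argument at $K_1$) are routine, and the resulting continuous bijection $\tilde h$ of the compact Hausdorff space $\sph$ is automatically a homeomorphism.
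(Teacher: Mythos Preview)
Your proposal is correct and follows essentially the same approach as the paper: use Lemma~\ref{lem:ORientOfCompl} to pair complementary components, extend over each via Carath\'{e}odory, and invoke the shrinking of complementary diameters for global continuity. Your version is in fact more careful than the paper's terse proof, which simply asserts the Carath\'{e}odory extension and the final homeomorphism conclusion without spelling out the prime-end lifting or the bijectivity of the component correspondence that you rightly flag.
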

\begin{proof}
By Lemma \ref{lem:ORientOfCompl} the map $h$ induces the surjective map $h'$ from the set of components of
$\sph\setminus K_1$ to the set of components of $\sph\setminus K_2$.
By Carath\'{e}odory's theorem for every component $U$ of
$\sph\setminus K_1$ the map $h$ extends to a homeomorphism from
$K_1\cup U$ onto $K_2\cup h'(U)$. Applying Carath\'{e}odory's
theorem for all components of $\sph\setminus K_1$ we get a
surjective map, call it $\widetilde{h}$, from $\sph$ to $\sph$ such that
$\widetilde{h}|_{K_1}=h$ and $\widetilde{h}|_{\ovl U}$ is a
homeomorphism for every component $U$ of $\sph\setminus K_1$. As the
diameter of components in $\sph\setminus K_1$ and in $\sph\setminus
K_2$ tends to $0$ the map $\widetilde{h}$ is continuous. Therefore,
$\widetilde{h}$ is a homeomorphism.
\end{proof}

\subsection{Laminations}
\label{subs:lamination}

 Laminations were introduced by
Thurston to describe connected, locally connected Julia sets of
polynomials. Consider the closed unit disc $\disk$, denote by
$\uncirc$ the boundary of $\disk$; we parametrize $\uncirc$ by
angles between $0$ and $1$ and we denote by ``$<$'' the cyclic order
in $\uncirc$.

Below we will recall the definition and the construction of a lamination of the Basilica.

\begin{definition}[A lamination of the Basilica]
\label{def:BasLamin} A lamination $L_B$ of the Basilica is a
collection of curves, called leaves, in $\disk$ such that
\begin{itemize}
\item every leaf $l\in L$ connects two points of the form $a/(3\cdot 2^n)$, $b/(3\cdot 2^n)$ on $\uncirc$, i.e.
$l\cap \uncirc =\{a/(3\cdot2^n), b/(3\cdot2^n)\}$ and $a,b$ are
coprime to $6$; for every point of the form $a/(3\cdot2^n)$ there is
exactly one leaf that contains this point;
\item pairs $\{1/3,2/3\}$ and $\{5/6,1/6\}$ are connected by leaves;
\item if a pair $\{a,b\}\not=\{1/3,2/3\}$ is connected by a leaf, then
pairs in  $\{a,b,a/2,b/2\}$ that are on the same component of
 $\uncirc\setminus \{1/3,5/6\}$ are also connected by leaves;
\item the diameter of leaves in $L_B$ tends to $0$.
\end{itemize}
\end{definition}
Note that this definition gives an algorithm to construct $L_B$. Let
us remark that the last condition in Definition \ref{def:BasLamin} is
a special property of the Basilica; this implies, in particularly,
that $L_B$ is a closed set. For a
general polynomial the last condition in Definition
\ref{def:BasLamin} is replaced by weaker conditions.

Two points $a\not=b$ in $\disk$ are \textit{equivalent} under $L_B$ if they are in the same leaf of $L_B$. It follows that
$L_B$ is a closed equivalence relation and the restriction of $L_B$ to $\uncirc$ is invariant under $z\to z^2$: if $a,b\in \uncirc$ and
$a\sim_{L_B}b$, then $a^2\sim_{L_B} b^2$.
Define $K'_B=\disk/L_B$,  $J'_B=\partial K'_B=\uncirc/L_B$, and $f'_B:J'_B\to J'_B$ to be the quotient dynamics:
\[z^2/L_B: \uncirc/L_B \to \uncirc/L_B.\]
Extend $f'_B$ to the interior of $K'_B$ such that $f'_B$ has one superattractive periodic cycle with period $2$ that attracts all points in
$\intr(K'_B)$, and locally at the superattractive periodic cycle $f'^2_B$ is conjugate to $z\to z^2$. It follows from Thurston's theorem
that $f'_B:K'_B\to K'_B$ and $f_B:K_B\to K_B$  are topologically conjugate.

We will usually denote a leaf $l\in L_B$ by $\langle a,b\rangle$
such that $a,b$ are points in $\uncirc$ connected by $l$ and the
length of the arc \[\{x\in\uncirc :\ b\ge x \ge a\}\] is less than
$1/2$. For a leaf $l=\langle a/(3\cdot 2^n),b /(3\cdot 2^n)\rangle$
with $a$, $b$ coprime to $6$ we call $n$ the \textit{depth} of $l$.

 \textit{For simplicity, we will work
with $z^2$-invariant laminations.}

\begin{definition}[$z^2$-invariant laminations]
\label{def:BasRepr} The lamination $L_B$ of the Basilica is
$z^2$-invariant if all leaves are simple curves and
any leaf $\langle a,b\rangle\not=\langle 1/3,2/3\rangle$ is an iterated pre-image
of the $\langle 5/6,1/6\rangle$-leaf under $z\to z^{2}$.
\end{definition}
An example of $z^2$-invariant lamination is in Figure \ref{figure:BasLamin}.

\subsection{Moore's theorem}
\label{sec:Moore'sTh} The following theorem is useful in the mating
construction.

\begin{theorem}[Moore's theorem, \cite{Mu}]
\label{th:moor} Let $\sim$ be a closed equivalence relation on the
$2$-sphere $\mathbb{S}^2$ such that all equivalence classes are
connected and non-separating, and not all points are equivalent.
Then the quotient space $\mathbb{S}^2 /\sim$ is homeomorphic to the
$2$-sphere.
\end{theorem}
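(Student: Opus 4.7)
The plan is to prove Moore's theorem by verifying that $X := \sph/\sim$ satisfies the Zippin--Bing topological characterization of the 2-sphere: a nondegenerate Peano continuum (compact, connected, metrizable, locally connected) in which every simple closed curve separates into exactly two components is homeomorphic to $\sph$. Thus the work splits into showing that $X$ is a Peano continuum and that it has the Jordan curve property.

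First I would check the basic topology of $X$. Compactness and connectedness pass to quotients automatically. Hausdorffness follows from closedness of $\sim$ together with compactness of equivalence classes: disjoint saturated compacta in $\sph$ can be separated by disjoint saturated open neighborhoods by upper semicontinuity of the decomposition. Second countability descends from $\sph$, so Urysohn's metrization theorem gives metrizability, and the hypothesis that not all points are equivalent guarantees that $X$ has at least two points. For local connectedness, given a saturated open $U$ containing an equivalence class $C$, the non-separating hypothesis lets me produce a disc-like open $W$ with $C\subset W\subset U$, because a compact non-separating continuum in $\sph$ admits a neighborhood basis of open topological disks. Upper semicontinuity of the decomposition together with compactness of $\sph\setminus U$ then lets me saturate $W$ in finitely many steps while remaining inside $U$, yielding a saturated connected open $V$ with $C\subset V\subset U$; projecting produces the required connected neighborhood in $X$.

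For the Jordan curve property, the crucial step, I would consider a simple closed curve $\gamma\subset X$ and its preimage $K:=\pi^{-1}(\gamma)$, which is a continuum obtained from a topological circle by inflating each of its points to its (non-separating) equivalence class. The plan is to argue, by an inductive collapsing argument that repeatedly replaces an equivalence class in $K$ by a single point without changing the complement count, that $\sph\setminus K$ has the same number of components as the complement of a genuine Jordan curve in $\sph$, namely two; passing to the quotient then gives the same count for $X\setminus \gamma$. Combined with the previous step, the Zippin--Bing theorem concludes that $X$ is homeomorphic to $\sph$. The main obstacle is this Jordan curve verification: $K$ is generally not a simple closed curve in $\sph$, and one must check that inflating points of $\gamma$ to arbitrary non-separating continua creates no extra complementary components. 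This is exactly where the non-separation hypothesis on equivalence classes is doing its essential work, and it must be handled by a careful inductive or transfinite collapsing argument that reduces back to the classical Jordan curve theorem in $\sph$.
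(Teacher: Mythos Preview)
The paper does not prove Moore's theorem; it is quoted with a reference to Moore's original 1925 paper and then used as a black box. So there is no ``paper's proof'' to compare against, and your proposal should be judged on its own merits.

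Your overall strategy --- verify that $X=\sph/\!\sim$ is a nondegenerate Peano continuum and then invoke a topological characterization of the $2$-sphere --- is a legitimate and well-known route to Moore's theorem. Two points, however, need attention.

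First, the characterization you invoke is incomplete as stated. Bing's (Kline sphere) characterization requires, in addition to ``every simple closed curve separates,'' the condition that \emph{no arc} (equivalently, no pair of points) separates $X$. Without this you cannot distinguish $\sph$ from, say, a closed $2$-disc. You must therefore also check: if $A\subset X$ is an arc, then $\pi^{-1}(A)$ does not separate $\sph$, hence $A$ does not separate $X$. This is where the non-separating hypothesis on equivalence classes is again essential, and it is not subsumed by your Jordan-curve step.

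Second, your ``inductive/transfinite collapsing'' argument for counting the components of $\sph\setminus K$, where $K=\pi^{-1}(\gamma)$, is too vague to be a proof. There are uncountably many equivalence classes in $K$, and you give no mechanism for carrying the component count through a limit stage. A much cleaner way to handle both separation statements at once is via Alexander duality together with the Vietoris--Begle mapping theorem: since $\pi|_K:K\to\gamma$ is a surjection with acyclic point-inverses (each fiber is a non-separating plane continuum, hence has trivial reduced \v{C}ech cohomology), one gets $\check H^1(K)\cong\check H^1(\gamma)\cong\Z$, and Alexander duality then gives exactly two complementary components; the same argument with $\gamma$ replaced by an arc gives $\check H^1(\pi^{-1}(A))=0$ and hence a connected complement. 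If you want to avoid algebraic topology, you will need to replace the hand-wave by a genuine argument (for instance, approximating $K$ from outside by Jordan domains and controlling the limit), which is substantially more work than your sketch suggests.

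Finally, a minor simplification: local connectedness of $X$ follows immediately from the general fact that any quotient of a locally connected space is locally connected (using that fibers are connected to see that preimages of components are unions of components). Your disk-neighborhood argument is correct in spirit but unnecessary.
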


\section{Mating construction}
\label{sec:ParSp}

Consider a lamination $L_B$ of the Basilica; we assume that $L_B$ is
$z^2$-invariant. For our convenience, let us complete $\VV_1$ by adding a special point at $\infty$ so that $\VV_1$ is a
Riemann sphere. Then the composition $(1/z) \circ \BB$ of the parameter
B\"{o}ttcher function $\BB$ and $1/z$ maps $\VV_1\setminus\MM$
conformally onto the open unit disc $\mathbb{D}$.

 We denote by \[\LL_B=\bigcup_{l\in L_B} \ovl{(1/z \circ \BB)^{-1}(l|_{\mathbb{D}})}\] the
embedding of $L_B$ into $\VV_1$. By construction, leaves in $\LL_B$
land either at Misiurewicz parameters or at the Basilica parabolic
point, see Figure \ref{figure:BasLamin}.

We keep the same notation for leaves in $\LL_B$ as for $L_B$. This
implies that a leaf $l=\langle a,b\rangle\in \LL_B$ has the same
accesses to the Mandelbrot set as the parameter rays $\RR^a$ and
$\RR^b$.

The parameter leaf $\langle 1/3,2/3\rangle$ surrounds the Basilica
limb. Let $\WW_B$ be the closed topological disc containing the
Basilica limb and bounded by $\langle 1/3,2/3\rangle$.

\begin{definition}
\label{def:BasRelation} Two point $c_1\not=c_2$ in $\VV_1$ are
\emph{equivalent under $\LL_B$} if either $c_1$ and $c_2$ are in
$\WW_B$ or $c_1$ and $c_2$ are in the same leaf in $\LL_B$.
\end{definition}

\subsection{Combinatorics of leaves}
\label{subsec:CombinOfLeaves}

Consider a small copy $\MM_\HH\not\subset \WW_B$ and let $\YY_\HH$
be the renormalization strip of $\MM_\HH$. The boundary of $\YY_\HH$
consists of four rays, call them
$\RR^{\phi_1},\RR^{\psi_1},\RR^{\psi_2},\RR^{\phi_2}$. We assume
that $\RR^{\phi_1}$ and $\RR^{\phi_2}$ are the periodic (\textit{external})
rays of $\HH$ and $0<\phi_1<\psi_1<\psi_2<\phi_2<0$ is the cyclic
order.

We say $\LL_B$ \textit{bounds} $\MM_\HH$ from the right (resp from
the left) if there is a leaf $\langle a,b\rangle\in\LL_B$ such that
$a<\phi_1<\psi_1<b$ (resp.\ $a<\psi_2<\phi_2<b$); in particular, the
leaf $\langle a,b\rangle$ truncates the strip $\YY_\HH$ from the
right (resp. from the left).

\begin{proposition}
\label{prop:bound} The copy $\MM_\HH$ as above is bounded from the
left (resp.\ from the right) by $\LL_B$ if and only if there is an
$n$ such that $5/6<2^n\phi_1<1/6$ (resp.\ $5/6<2^n\phi_2<1/6$).
\end{proposition}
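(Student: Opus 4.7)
My plan is to translate both sides of the biconditional into a dynamical condition on the forward orbit of $\phi_1$ (respectively $\phi_2$) under $z\mapsto z^2$ on $\uncirc$, exploiting the inductive structure of $L_B$ from Definitions~\ref{def:BasLamin} and~\ref{def:BasRepr}. The two cases (``left'' and ``right'') are handled symmetrically, so I focus on one. The central combinatorial input is that every depth-$m$ leaf $\ell=\langle a,b\rangle\in L_B$ other than $\langle 1/3,2/3\rangle$ arises as a unique iterated preimage of $\langle 5/6,1/6\rangle$ with $z^{2^{m-1}}(\ell)=\langle 5/6,1/6\rangle$; since $L_B$ is non-crossing, the short arc of $\ell$ is mapped bijectively by $z^{2^{m-1}}$ onto one of the two arcs cut off by $\langle 5/6,1/6\rangle$. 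An induction on depth, using the non-crossing rule and tracking which side of $\langle 1/3,2/3\rangle$ the leaf lies on, identifies this image arc for each leaf.

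For the direction $(\Leftarrow)$, I pick the smallest $n\ge 0$ with $5/6<2^n\phi_1<1/6$ and pull back the short arc of $\langle 5/6,1/6\rangle$ under $z^{-2^n}$ along the unique branch containing $\phi_1$. This yields a leaf $\langle a,b\rangle\in L_B$ whose short arc contains $\phi_1$. It remains to show that $\psi_1$ also lies in this short arc; I would derive this from the fact that $\RR^{\phi_1}$ and $\RR^{\psi_1}$ share a common landing point at the $\phi_1$-corner of $\YY_\HH$ (on $\partial \YY_\HH\cap \MM$). Hence no leaf of $\LL_B$, whose endpoints are Misiurewicz or parabolic parameters, can separate $\phi_1$ from $\psi_1$ inside $\YY_\HH$, and the minimality of $n$ prevents any shallower separating leaf.

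For the direction $(\Rightarrow)$, starting from a leaf $\langle a,b\rangle\in\LL_B$ with $a<\phi_1<\psi_1<b$ of depth $m$, I apply $z^{2^{m-1}}$. By the combinatorial dictionary above, $2^{m-1}\phi_1$ lies in the distinguished image arc on $\langle 5/6,1/6\rangle$. If this image is the short arc through $0$, set $n=m-1$; otherwise I exploit the periodicity of $\phi_1$ under doubling (period equal to the period of $\HH$), whose orbit is a finite set which --- by the non-crossing structure of $L_B$ and the position of the leaf $\langle a,b\rangle$ relative to $\langle 5/6,1/6\rangle$ --- must visit the short arc of $\langle 5/6,1/6\rangle$ at some iterate, yielding the required $n$.

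The principal obstacle is the depth-induction that identifies which arc of $\langle 5/6,1/6\rangle$ receives the short arc of each leaf of $L_B$ under $z^{2^{m-1}}$, together with the verification in $(\Rightarrow)$ that the orbit of $\phi_1$ must indeed visit the short arc of $\langle 5/6,1/6\rangle$; both rely on careful bookkeeping of the inductive construction of $L_B$ and of the position of $\phi_1$ relative to the fixed leaves $\langle 1/3,2/3\rangle$ and $\langle 5/6,1/6\rangle$.
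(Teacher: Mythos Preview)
Your $(\Rightarrow)$ direction is essentially fine, but the worry about ``which arc'' is unnecessary: a depth-$m$ leaf has short arc of length $1/(3\cdot 2^{m-1})$, and since every intermediate image under $z\mapsto z^2$ has length $<1/2$, the map $z^{2^{m-1}}$ is injective on this arc and the image has length exactly $1/3$; the only arc of length $1/3$ with endpoints $\{5/6,1/6\}$ is the one through $0$. So $2^{m-1}\phi_1\in(5/6,1/6)$ automatically, and your ``otherwise'' branch never arises.

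The real problem is your $(\Leftarrow)$ direction. Your key claim that $\RR^{\phi_1}$ and $\RR^{\psi_1}$ share a landing point on $\partial\YY_\HH\cap\MM$ is \emph{false}: $\phi_1$ is periodic, so $\RR^{\phi_1}$ lands at a parabolic parameter (the root of $\HH$), while $\psi_1$ is strictly preperiodic, so $\RR^{\psi_1}$ lands at a Misiurewicz parameter. These are distinct. Even if you retreat to the dynamical plane (where $R^{\phi_1},R^{\psi_1}$ \emph{do} co-land), that still would not forbid a leaf of $L_B$ from having endpoints $a,b$ with $a<\phi_1<b<\psi_1$ on $\uncirc$: leaves are curves in the complement of $K_c$ and may freely cross external rays, so co-landing of $R^{\phi_1},R^{\psi_1}$ gives no obstruction at the level of angles.

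The paper closes this gap differently. After taking the minimal $n$ with $2^n\phi_1\in(5/6,1/6)$ (so $n<p$ by periodicity of $\phi_1$), one shows $2^n\psi_1\in(5/6,1/6)$ as well, using a dynamical fact: the orbit $Y_\HH,f_c(Y_\HH),\dots,f_c^{p-1}(Y_\HH)$ never contains $R^{1/6}$ or $R^{5/6}$. Indeed, if $R^{1/6}\subset f_c^k(Y_\HH)$ for some $0\le k\le p-1$, then applying $f_c^{p-k}$ gives $R^{2^{p-k}/6}\subset W_\HH$; but $2^{p-k}/6\equiv 1/3$ or $2/3\pmod 1$ for $p-k\ge1$, contradicting $\MM_\HH\not\subset\WW_B$. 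Since the pair $R^{2^k\phi_1},R^{2^k\psi_1}$ bounds one side of $f_c^k(Y_\HH)$, neither $1/6$ nor $5/6$ lies between $2^k\phi_1$ and $2^k\psi_1$, so these two angles always lie in the same component of $\uncirc\setminus\{1/6,5/6\}$. In particular $2^n\psi_1\in(5/6,1/6)$, and pulling back $\langle 5/6,1/6\rangle$ along the orbit of $\phi_1$ yields a leaf whose short arc contains both $\phi_1$ and $\psi_1$. This is the missing ingredient in your argument: you need to use $\MM_\HH\not\subset\WW_B$ somewhere, and it enters precisely here.
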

\begin{proof}
We will prove the case of being bounded from the left; the second
case is proved in a similar way.

Moreover, the proposition is equivalent to the following statement
involving $L_B$, where the equivalence follows from the definition
of the B\"{o}ttcher function: there is a leaf $l\in L_B$ separating
both $\phi_1,\psi_1$ from the center of the unit disc if and only if
there is an $n\ge 0$ such that $5/6<2^n\phi_1<1/6$. Let us prove the
last claim.

 If there is a leaf $l=\langle a,b\rangle\in L_B$
separating $\phi_1,\psi_1$ from the center of the unit disc, then
$\langle a,b\rangle\not=\langle 1/3,2/3\rangle$ and it follows that there is an $n$ such
that $\langle 2^n a,2^n b\rangle =\langle 5/6,1/6\rangle$.  By Definition
\ref{def:BasRepr} we have $5/6<2^n\phi_1<1/6$.

Suppose now that there is an $n$ such that $5/6<2^n\phi_1<1/6$. Let
us choose the minimal $n$ with the above property; it follows that
$n\le p$, where $p$ is the period of $\HH$; i.e. $2^p\phi_1 =\phi_1$
and $p$ is minimal.

 We claim that $5/6<2^n\psi_1<1/6$; in other words,
 $\phi_1,\psi_1\in\uncirc$ escape to the arc bounded by
 $\langle 5/6,1/6\rangle$ in the same pattern. Indeed, the above claim is a consequence
of the following trivial statement concerning the combinatorics of
 dynamic rays: for $c\in \MM_\HH\not\subset \WW_B$ the orbit $Y_\HH, f_c(Y_\HH) \dots,
f^{p-1}_c(Y_\HH)$ of the renormalization strip does not contain the rays
$R^{1/6}$ and $R^{5/6}$ (if it did happen, then the sector $\WW_\HH$
would contain $\RR^{1/3}$ or $\RR^{2/3}$, which would contradict to
$\MM_\HH\subset \MM\setminus\WW_B$).

 Pulling
back $\langle 5/6,1/6\rangle$-leaf $n$ times by $z\to z^2$ along the orbit
\newline$\phi_1, 2\phi_1,\dots,2^n\phi_1$ we get a leaf that separates
$\phi_1,\psi_1$ from the center of the unit disc.
\end{proof}

\begin{proposition}
\label{prop:bound2} Every copy $\MM_\HH\not\subset\WW_B$ is bounded
from at least one side. If $\MM_\HH$ is a subcopy of another copy of
the Mandelbrot set or is a satellite copy, then $\MM_\HH$ is bounded
from both sides.
\end{proposition}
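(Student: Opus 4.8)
The plan is to combine Proposition \ref{prop:bound} with the combinatorics of the angles $\phi_1,\phi_2$ of the hyperbolic component $\HH$. By Proposition \ref{prop:bound}, being bounded from the left (resp.\ right) is equivalent to the existence of $n$ with $5/6<2^n\phi_1<1/6$ (resp.\ $5/6<2^n\phi_2<1/6$). So the statement ``bounded from at least one side'' translates into the purely combinatorial claim: for every hyperbolic component $\HH\not\subset\WW_B$, the forward orbit under doubling of at least one of $\phi_1,\phi_2$ enters the arc $A:=\{x\in\uncirc:\ 5/6<x<1/6\}$ (the arc of length $1/3$ centered at $0$, bounded by the endpoints $5/6,1/6$ of the $\langle5/6,1/6\rangle$-leaf). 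First I would recall that $\phi_1,\phi_2$ are the two angles landing at the root of $\HH$; since $\HH\not\subset\WW_B$, the sector $\WW_\HH$ does not contain the $1/2$-limb, equivalently $\RR^{1/3},\RR^{2/3}$ are not in the closed sector $\WW_\HH$. In the dynamical plane of any $c\in\intr(\WW_\HH)$ this says the characteristic ray pair $R^{\phi_1},R^{\phi_2}$ (and its forward images) never equals $R^{1/3},R^{2/3}$, hence the critical value's itinerary never ``sees'' the $\langle1/3,2/3\rangle$-leaf; this is what lets us reduce everything to $\langle5/6,1/6\rangle$.

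The key step is then the following dynamical argument. Suppose, for contradiction, that neither orbit $\{2^k\phi_1\}$ nor $\{2^k\phi_2\}$ ever enters the open arc $A$. Both orbits are periodic of period $p$ (the period of $\HH$). The complement $\uncirc\setminus A$ is the closed arc from $1/6$ to $5/6$; the doubling map restricted to angles avoiding $A$ forever must keep the whole orbit in the ``good'' part of the circle. Now I would use the fact that $R^{\phi_1},R^{\phi_2}$ bound the characteristic sector $W_\HH$ containing the critical value $c$, together with the structure of the first renormalization map $f^p_c:Y_\HH^p\to W_\HH^0$. The renormalization strip $Y_\HH$ is obtained by pulling the strip $f_c^{-1}(W_\HH)$ back along the orbit of $R^{\phi_1},R^{\phi_2}$; if this orbit never hits $R^{1/6}$ or $R^{5/6}$, then — and this is the heart of the matter — the obstruction to the existence of an $L_B$-leaf truncating $Y_\HH$ must come from the orbit sitting entirely in the ``Basilica side'' $\WW_B$, forcing $\HH\subset\WW_B$, contrary to hypothesis. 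I would make this precise by tracking which side of the $\langle1/3,2/3\rangle$-leaf the angles $\phi_1,\phi_2$ and their doublings lie on: if both orbits avoid $A$ for all time, one shows inductively that the pair $\{\phi_1,\phi_2\}$ together with all its doublings stays on one side of $\{1/3,2/3\}$, which by the Correspondence Theorem (Theorem \ref{th:char_ray_pair}) puts $\RR^{\phi_1},\RR^{\phi_2}$ — hence $\MM_\HH$ — inside $\WW_B$, a contradiction.

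For the second assertion, I would argue separately for the two cases. If $\MM_\HH$ is a satellite copy, then $\HH$ is attached to some hyperbolic component $\HH'$ at an internal angle $q/r$ with $r\ge 2$; the angles $\phi_1,\phi_2$ of $\HH$ are obtained from the angle(s) of $\HH'$ by the standard satellite formula, and one checks directly from Proposition \ref{prop:bound} that the doubling orbits of \emph{both} $\phi_1$ and $\phi_2$ enter $A$ (the satellite construction symmetrizes the two sides of $\YY_\HH$ relative to the $\langle5/6,1/6\rangle$-leaf). If $\MM_\HH$ is a proper subcopy of some $\MM_{\HH''}\supsetneq\MM_\HH$ (with $\MM_{\HH''}\not=\MM$), then I would use the straightening map $\chi:\MM\to\MM_{\HH''}$: a leaf of $\LL_B$ truncating $\YY_{\HH''}$ from one side pulls back under the renormalization to a leaf bounding $\MM_\HH$ on that same side, while on the other side one invokes the first part of the proposition applied inside the copy $\MM_{\HH''}$ together with the fact that a subcopy sits strictly inside the renormalization strip, so both bounding leaves are present.

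I expect the main obstacle to be the first assertion — proving that \emph{every} $\MM_\HH\not\subset\WW_B$ is bounded from at least one side. The combinatorial reduction via Proposition \ref{prop:bound} is clean, but the contradiction argument requires carefully controlling how the entire doubling orbit of the angle pair $\{\phi_1,\phi_2\}$ interacts with \emph{both} leaves $\langle1/3,2/3\rangle$ and $\langle5/6,1/6\rangle$ simultaneously, and ruling out the ``degenerate'' possibility that the orbit threads between them forever without ever entering $A$; this is where one must use periodicity of the orbit and the precise geometry of the Basilica lamination (in particular that $\langle5/6,1/6\rangle$ is the unique preimage-generating leaf, Definition \ref{def:BasRepr}) rather than just soft arguments.
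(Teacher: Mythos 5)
Your reduction of the first assertion, via Proposition \ref{prop:bound}, to the claim that the doubling orbit of at least one of $\phi_1,\phi_2$ enters the arc $A=(5/6,1/6)$ is exactly right and is how the paper proceeds. But your proof of that claim has a genuine gap. Its engine is the inductive assertion that if both orbits avoid $A$ forever, then the pair $\{\phi_1,\phi_2\}$ together with all its doublings stays on one side of $\{1/3,2/3\}$, forcing $\MM_\HH\subset\WW_B$. You do not say how the induction runs, and the natural single-orbit version of the assertion is false: the orbit of $\phi_1=3/15$ (an external angle of the period-$4$ component of Figure \ref{figure:raypattern}) is $\{3/15,\,6/15,\,12/15,\,9/15\}$, which never meets $(5/6,1/6)$, yet $3/15$ and $12/15$ lie outside $[1/3,2/3]$. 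So avoiding $A$ does not confine an orbit to the Basilica side; any correct argument must use the two angles \emph{as a pair}, and your sketch never does. The paper's actual proof is a short direct computation that avoids the contradiction argument entirely: since $\HH\not\subset\WW_B$, either $0<\phi_1<\phi_2<1/3$ or $2/3<\phi_1<\phi_2<1$, so the preimages $\phi_1/2,\phi_2/2$ (resp.\ $\phi_1/2+1/2,\phi_2/2+1/2$) lie in $(0,1/6)$ (resp.\ in $(5/6,1)$), hence in $A$; and from the structure of the strip $f_c^{-1}(W_\HH)$ one knows that $\{2^{p-1}\phi_1,2^{p-1}\phi_2\}$ equals either $\{\phi_1/2,\ \phi_2/2+1/2\}$ or $\{\phi_2/2,\ \phi_1/2+1/2\}$, so it always contains one of these angles in $A$. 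Proposition \ref{prop:bound} with $n=p-1$ then gives boundedness from at least one side.

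The second assertion is also handled more simply in the paper than in your sketch. For a satellite copy, $R^{\phi_1}$ and $R^{\phi_2}$ lie in a single periodic cycle, so the two doubling orbits coincide and the first assertion immediately yields boundedness from both sides; no ``satellite formula'' computation (nor the vague symmetrization you invoke) is needed. For a subcopy $\MM_{\HH'}\subset\MM_\HH$, the paper does not pull leaves back through the straightening map; it observes that the forward orbits of $R^{\phi'_1}$ and $R^{\phi'_2}$ never leave $Y_\HH$ and each visits both sides of that strip, hence $a<2^k\phi'_1<b$ and $a<2^m\phi'_2<b$ for a leaf $\langle a,b\rangle$ bounding $\MM_\HH$, and pushing forward $n$ more steps (where $\langle 2^na,2^nb\rangle=\langle5/6,1/6\rangle$) places both orbits in $A$. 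Your pullback idea might be made to work, but as stated the key claims (``pulls back to a leaf bounding $\MM_\HH$ on that same side'', ``invoke the first part inside the copy'') are unverified and would require precisely the orbit combinatorics you are trying to bypass.
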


\begin{proof}
Let $\RR^{\phi_1},\RR^{\phi_2}$ be the external rays for $\HH$;
together these rays form the boundary of $\WW_\HH$. Since
$\HH\not\in \WW_B$, either $0<\phi_1<\phi_2<1/3$ or
$2/3<\phi_1<\phi_2<1$.

We have:
\begin{itemize}
\item if $0<\phi_1<\phi_2<1/3$, then $5/6<0<\phi_1/2<\phi_2/2<1/6$;
\item if $2/3<\phi_1 <\phi_2<1$, then
$5/6<\phi_1/2+1/2<\phi_2/2+1/2<0<1/6$.
\end{itemize}

Let $p>0$ be the period of $\phi_1$, $\phi_2$; i.e.
$\phi_1=2^p\phi_1$, $\phi_2=2^p\phi_2$, and $p$ is the minimal. It
is well known (follows from the construction of $Y_\HH$, see Figure
\ref{figure:raypattern} for illustration) that either
$\phi_1/2=2^{p-1}\phi_1$, $\phi_2/2 +1/2=2^{p-1}\phi_2$ or
$\phi_2/2=2^{p-1}\phi_2$, $\phi_1/2 +1/2=2^{p-1}\phi_1$. It follows
from
\[0<\phi_1/2,\phi_2/2<1/6\ \ \text{ or } \ \ 5/6<\phi_1/2+1/2,\phi_2/2+1/2<0,\]
and Proposition \ref{prop:bound} that $\MM_\HH$ is bounded from at
least one side.

If $\MM_\HH$ is a satellite copy, then $R^{\phi_1}$ and $R^{\phi_2}$
are in one periodic cycle, thus $\MM_\HH$ is bounded from both
sides.

Let $\MM_{\HH'}$ be a subcopy of $\MM_\HH$ and let
$\RR^{\phi'_1},\RR^{\phi'_2}$ be the external rays of $\HH'$.
Consider the dynamical plane of $f_c$ for $c\in \MM_{\HH'}$.

We have just proved that there is a leaf $\langle a,b\rangle$ that
bounds $\MM_\HH$ from at least one side. As
$\MM_{\HH'}\subset\MM_\HH$ the forward orbits of
$R^{\phi'_1},R^{\phi'_2}$ will not escape from the strip $Y_\HH$
under iteration of $f^p_c:Y_\HH\rightarrow W_\HH$. Moreover,
the forward orbits of $R^{\phi'_1}$ and $R^{\phi'_2}$ visit both
sides of $Y_\HH$, whence $a<2^k\phi'_1<b$ and $a<2^m\phi'_2<b$ for
some $k,m\ge 0$. This implies that $5/6<2^{k+n}\phi'_1<1/6$ and
$5/6<2^{m+n}\phi'_2<1/6$ for some $n\ge0$. Therefore, $\MM_{\HH'}$
is bounded from both sides (Proposition \ref{prop:bound}).
\end{proof}

\subsection{Decoration tiling associated with $\LL_B$}
\label{subsect:DecorTilingWithLb}

Let $\MM_\HH\not\subset\WW_B$ be a small copy of the Mandelbrot set
bounded by $\LL_\HH$ from both sides. The goal of this subsection is
to prove the following proposition.
\begin{proposition}
\label{prop:DecTilWithBasil} There is a generalized decoration tiling $\TT$ of
$\MM_\HH$ such that a component of $\VV_1\setminus \cMM$ or a leaf in $\LL_B\setminus\WW_B$
intersects at most three pieces in $\TT$.
\end{proposition}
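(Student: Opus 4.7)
The plan is to build the generalized decoration tiling out of a generalized puzzle piece whose boundary has already absorbed the dynamic counterparts of the two leaves of $\LL_B$ that bound $\MM_\HH$. Once these ``bounding'' leaves are part of the tiling skeleton, dynamics forces all other leaves of $\LL_B$ (which are iterated $z^2$-preimages of the $\langle 5/6,1/6\rangle$-leaf) and all components of $\VV_1\setminus\cMM$ to meet the tiling transversally in a combinatorially controlled way.

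First, I would apply Proposition~\ref{prop:bound2} to fix leaves $l_L=\langle a_L,b_L\rangle$ and $l_R=\langle a_R,b_R\rangle$ in $\LL_B$ bounding $\MM_\HH$ from the two sides, with $a_L<\phi_1<\psi_1<b_L$ and $a_R<\psi_2<\phi_2<b_R$, and pick a non-parabolic non-Misiurewicz $c_0\in\MM_\HH$. In the dynamical plane of $f_{c_0}$, the angles $a_L,b_L$ (respectively $a_R,b_R$) correspond to rays whose landing points are the endpoints of dynamic arcs that I will call $l_L^{\mathrm{dyn}},l_R^{\mathrm{dyn}}$, obtained by pulling back the $\langle 5/6,1/6\rangle$-arc along the orbit of $\partial Y_\HH$ (this is exactly the construction used in the proof of Proposition~\ref{prop:bound}). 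Next I would modify $Y^p_\HH$ into a generalized puzzle piece $X^p$ whose boundary is isotopic to $\partial Y^p_\HH$ relative $K_{c_0}$ but whose outer side at height $0$ is replaced by arcs going along $l_L^{\mathrm{dyn}}$ and $l_R^{\mathrm{dyn}}$ between the appropriate Misiurewicz landing points and the periodic rays of $\HH$. Analogously one gets $X^{np}$ isotopic to $Y^{np}_{n\HH}$, and Definition~\ref{defn:GenRenormMap} produces the generalized renormalization map $f^p_{c_0}\colon X^{(n+1)p}\to X^{np}$; the associated generalized decoration tiling $\TT$ exists by Property~\ref{prop:existence_of_paradiscs} and has diameters tending to $0$ by Theorem~\ref{th:decor2}. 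The three ``building blocks'' $P^0_0,P^0_1,Z^0_1$ of the standard tiling become their obvious replacements $\widetilde P^0_0,\widetilde P^0_1,\widetilde Z^0_1$ in the decomposition of $\ovl{X^p\setminus X^{2p}}$.

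Finally I would verify the ``at most three'' count. Fix a leaf $\ell\in\LL_B\setminus\WW_B$. Outside a neighborhood of $\MM_\HH$ the claim is vacuous, so assume $\ell$ enters the support of $\TT$. Since every leaf of $\LL_B$ is an iterated $z^2$-preimage of $\langle 5/6,1/6\rangle$, its dynamic counterpart is an iterated $f_{c_0}$-preimage of $l_L^{\mathrm{dyn}}$ or $l_R^{\mathrm{dyn}}$; equivalently, $\ell$ is itself absorbed into $\partial X^{kp}$ at some depth $k$ and then transits only the three building-block pieces $\widetilde P^0_0,\widetilde P^0_1,\widetilde Z^0_1$ that are adjacent to $X^{kp}$ inside $X^{(k-1)p}$, pulled back into the appropriate tile of $\TT$ of one level higher. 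An analogous argument handles a component $U$ of $\VV_1\setminus\cMM$: by local-connectivity considerations from Subsections~\ref{subsec:CombinOfLeaves}--\ref{subs:lamination}, $\partial U$ lies in finitely many rays and leaves already sitting in the tiling skeleton, so $U$ is trapped in at most three adjacent pieces of $\TT$.

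The main obstacle is the last step: verifying that the constant $3$ (and not a larger bound) is actually achieved. This is essentially a careful planar case analysis that tracks how a pre-image of $l_L$ or $l_R$ can cross the boundary $\partial\widetilde Z^0_1\cup\partial\widetilde P^0_0\cup\partial\widetilde P^0_1$ inside one fundamental annulus $\ovl{X^{np}\setminus X^{(n+1)p}}$. The key combinatorial input is that the pre-images of $l_L,l_R$ of depth strictly greater than the level defining $\TT$ nest down to Misiurewicz landing points by Theorem~\ref{th:decor2}, so each such leaf can only straddle the boundary between two of the three building blocks and then descend into one of them, giving the count of at most three.
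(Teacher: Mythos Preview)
Your overall plan---truncate the renormalization strip by the dynamic counterparts of the two bounding leaves and build a generalized decoration tiling from the result---matches the paper's approach. However, the justification for the ``at most three'' bound has a genuine gap.

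The assertion that a leaf $\ell\in\LL_B$ ``is itself absorbed into $\partial X^{kp}$ at some depth $k$'' is not correct: a general leaf of $L_B^c$ is an $f_c$-preimage of $\langle 5/6,1/6\rangle$, not an $f_c^p$-preimage of the two specific bounding leaves $l_L^{\mathrm{dyn}},l_R^{\mathrm{dyn}}$, so most leaves never appear in the tiling skeleton at all. The paper does not try to absorb individual leaves; instead it passes to the coarser decomposition $S=\C\setminus\bigl(K_c\cup\bigcup_{n\ge0}f_c^{-n}(l_L^{\mathrm{dyn}}\cup l_R^{\mathrm{dyn}})\bigr)$ and shows that each component $V$ of $S$ is bounded by exactly three such preimage-leaves and is crossed by exactly two of the ray-like boundary curves of the tiling. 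For this last point the paper needs a technical ingredient you omit: the periodic and preperiodic rays $R^{\phi_1},R^{\psi_1},R^{\psi_2},R^{\phi_2}$ are replaced by homotopic curves $\cup_i\alpha_i,\cup_i\beta_i,\cup_i\gamma_i,\cup_i\delta_i$ chosen to be \emph{disjoint} from $\bigcup_{n\ge0}f_c^{-n}(l_L^{\mathrm{dyn}}\cup l_R^{\mathrm{dyn}})$ except at the single crossing points $a_0,b_0,c_0,d_0$. Without this disjointness, a component $V$ could be crossed by many ray-like boundary arcs and the count would fail. Finally, your argument for a component $U$ of $\VV_1\setminus\cMM$ invokes ``local-connectivity considerations,'' but local connectivity of $\partial U$ is precisely what Proposition~\ref{prop:DecTilWithBasil} is used to establish (via Corollary~\ref{cor:main} and Proposition~\ref{prop:main}), so this is circular; the paper handles $U$ by the same $S$-decomposition, observing $U\subset V$.
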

\begin{proof}
Let $c$ be a non-parabolic non-Misiurewicz point in $\MM_\HH$. The composition
$B_{c}^{-1}\circ \BB$ of B\"{o}ttcher functions embeds $\LL_B$ into
the dynamical plane of $f_c=z^2+c$; denote by
\[L^c_B=B_{c}^{-1}\circ \BB(\LL_B)=B_{c}^{-1}\circ (1/z)(L_B)=\bigcup_{l\in L_B} \ovl{(1/z \circ B_c)^{-1}(l|_{\mathbb{D}})}\] the embedding.

By construction (Definition \ref{def:BasRepr}), $L^c_B$ has the following description:
\[L^c_B=\langle 1/3,2/3\rangle\cup_{n\ge0}f_c^{-n}(\langle 5/6,1/6\rangle).\]
For instance,  if $l$ is a leaf of $L^c_B$ and $l\not=\langle 1/3,2/3\rangle$,
then pre-images of $l$ under $f^n_c$ are also leaves of $L^c_B$.

Let $\langle a,b\rangle$, $\langle c,d\rangle$ be the leaves of
$L^c_B$ of the maximal depth such that $\langle a,b\rangle$,
$\langle c,d\rangle$ bound the renormalization strip $Y_\HH$ from
the right and left respectively. Consider the second renormalization
map $f_c^p:Y^{2p}_{2\HH}\to Y^p_\HH$ of $\MM_\HH$. By definition,
the leaves $\langle a,b\rangle$, $\langle c,d\rangle$ truncate the
strip $Y_\HH=\lfloor Y^p_\HH\rfloor$.

We will now construct a generalized renormalization map $f^p_c:X^{2p}\to
X^p$ for $\MM_\HH$ with required properties\footnote{If $L_B$ is a
lamination as in Figure \ref{figure:BasLamin}, then it is enough to
define $X^p$ to be $Y_\HH$ truncated by $\langle a,b\rangle$ and $\langle c,d\rangle$}; see Figure \ref{figure:DecTilLaminn} for illustration.

Denote by $R^{\phi_1}, R^{\phi_2}$ and by $R^{\psi_1}, R^{\psi_2}$
the periodic and pre-periodic rays in $\partial Y_\HH$; we assume
that $0<\phi_1<\psi_1<\psi_2<\phi_2<0$ is the cyclic order.

By assumption, there are points $a_0, b_0$ on $\langle a,b\rangle$,
resp.\ points $c_0,d_0$ on $\langle c,d\rangle$, such that $a_0\in R^{\phi_1}$, $b_0\in R^{\psi_1}$,
and $a_0$ is between $a$
and $b_0$ on $\langle a,b\rangle$, resp.\ $c_0\in R^{\psi_2}$,
$d_0\in R^{\phi_2}$, and $c_0$ is between $c$ and $d_0$ on
$\langle c,d\rangle$.
 We denote by $\langle a_0,b_0\rangle$, resp.\
$\langle c_0,d_0\rangle$, the set of points on $\langle a,b\rangle$,
resp.\ on $\langle c,d\rangle$ that are between $a_0$ and $b_0$,
resp.\ $c_0$ and $d_0$. Define inductively $a_n$, $c_n$, resp.\
$b_n$, $d_n$ to be the pre-images of $a_{n-1}$, resp.\ of $d_{n-1}$
under $f_c^{p}:Y^{2p}_{2\HH}\to Y^p_\HH$ such that $a_n\in
R^{\phi_1}$ and $c_n\in R^{\psi_2}$, resp.\ $d_n\in R^{\phi_2}$ and $b_n\in R^{\psi_1}$. Choose simple curves $\alpha_0$, $\beta_0$,
$\gamma_0$, $\delta_0$ connecting the pairs $\{a_0,a_1\}$,
$\{b_0,b_1\}$, $\{c_0,c_1\}$, $\{d_0,d_1\}$ respectively such that
$\alpha_0$, $\beta_0$, $\gamma_0$, $\delta_0$ do not intersect
\[\cup_{n\ge0}f^{-n}_{c}(\langle
a,b\rangle)\cup_{n\ge0}f^{-n}_{c}(\langle c,d\rangle)\cup K_c.\]
 Define inductively $\alpha_n\ni a_n$ and $\gamma_n\ni c_n$, resp.\ $\delta_n\ni d_n$, $\beta_n\ni b_n$
to be the lifts of $\alpha_{n-1}$, resp.\ of $\delta_{n-1}$ under
$f_c^{p}:Y^{2p}_{2\HH}\to Y^p_\HH$.

 By construction, curves $\cup_i\alpha_i$,
$\cup_i\beta_i$, $\cup_i\gamma_i$, $\cup_i\delta_i$ are homotopic to
$R^{\phi_1}$, $R^{\psi_1}$, $R^{\psi_2}$, $R^{\phi_2}$ relative the
Julia set. Furthermore, $\cup_i\alpha_i$, $\cup_i\beta_i$
intersect $\langle a,b\rangle$ at $a_0,b_0$ and
$\cup_i\gamma_i$, $\cup_i\delta_i$ intersect $\langle c,d\rangle$
at $c_0,d_0$.

\begin{figure}
\includegraphics[width=7cm]{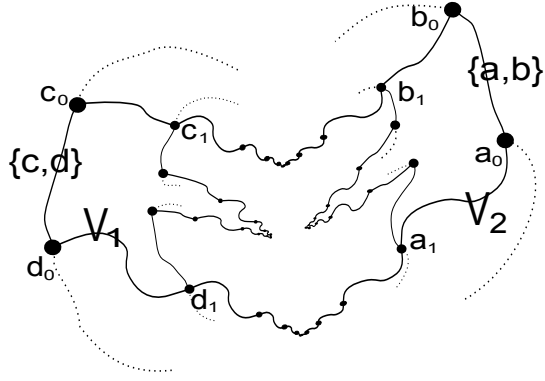}
  \caption{Illustration to the Proof of Proposition \ref{prop:DecTilWithBasil}. The domain $X^p$ is bounded by
  the curves $\cup_{i}\alpha_i$, $\cup_{i}\beta_i$, $\cup_{i}\gamma_i$, $\cup_{i}\delta_i$
  and is truncated by the leaves $\langle a,b\rangle$ and $\langle c,d\rangle$. The sets $V_1$, $V_2$ are
  bounded by $\langle a,b\rangle,$ $\langle c,d\rangle,$ $f^{-p}_c(\langle a,b\rangle\cup \langle a,b\rangle)$, and $K_c$;
  $V_1$, $V_2$ intersect at most three pieces in the generalized decoration tiling associated with $f^p_c:X^{2p}\to X^p$.
  }
  \label{figure:DecTilLaminn}
\end{figure}

Define $X^{p}$ to be the Jordan dick bounded by
\[\left(\cup_{i}\alpha_i\right)\bigcup\langle a_0,b_0\rangle \bigcup\left(\cup_{i}\beta_i\right)\bigcup\left(\cup_{i}
\gamma_i\right)\bigcup \langle
c_0,d_0\rangle\bigcup\left(\cup_{i}\delta_i\right);\] define
$X^{2p}$ to be the pre-image of $X^p$ under $f_c^p$ such that
$X^{2p}\subset X^p$. We obtain a generalized renormalization map
$f_c^p:X^{2p}\to X^p$.

Let $\TT$ be the decoration tiling associated with $f_c^p:X^{2p}\to
X^p$ and $T$ be the dynamical counterpart of $\TT$. Let us show that
$\TT$ satisfies the claims of the proposition; this is equivalent to
the following dynamical version: a component $U\in\C\setminus
(K_c\cup L^c_B)$ intersects at most three pieces in $T$; a leaf
$l\in L^c_B$ intersects at most three pieces in $T$.

Consider the set
\[S= \C \setminus \left(K_c \bigcup_{n\ge0} f^{-n}(\langle a,b\rangle\cup \langle c,d\rangle) \right).\]
By definition, $U$ is in a component, say $V$, of $S$. It follows
that $V$ is bounded by three leaves in $\cup_{n\ge 0}f^{-n}(\langle
a,b\rangle\cup \langle c,d\rangle)$ (see Figure
\ref{figure:DecTilLaminn}); and there are exactly two curves in
\[\Gamma=\bigcup_{n\ge0}f^{-n}\left(\cup_{i}\alpha_i\right)
\bigcup_{n\ge0}f^{-n}\left(\cup_{i}\beta_i\right)
\bigcup_{n\ge0}f^{-n}\left(\cup_{i}\gamma_i\right)
\bigcup_{n\ge0}f^{-n}\left(\cup_{i}\delta_i\right)\] intersecting
$V$. Recall that pieces in $T$ are bounded by curves in $\Gamma$ and
in $\cup_{n\ge 0}f^{-n}(\langle a,b\rangle\cup \langle c,d\rangle)$;
therefore, $V$ intersects at most three pieces in $T$ and so is
$U$.

Consider a leaf $l$. Then ether $l$ is in $\cup_{n\ge
0}f^{-n}(\langle a,b\rangle\cup \langle c,d\rangle)$ or $l$ is in a
component, say $V$, of $S$. If $l\subset V$, then $V$ intersects at most
three pieces in $T$ so the same is true for $l$. If $l$ is in
$\cup_{n\ge 0}f^{-n}(\langle a,b\rangle\cup \langle c,d\rangle)$,
then, by construction, $l$ forms the boundary of two pieces in $T$
and intersects at most one additional piece in $T$.
\end{proof}

As corollaries we have:

\begin{cor}
\label{cor:main}
 The diameter of components in $\VV_1\setminus (\WW_B\cup \LL_B\cup\MM)$
  tends to $0$. The diameter of leaves in
  $\LL_B\setminus \WW_B$ tends to $0$.
\end{cor}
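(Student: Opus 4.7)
The plan is to argue by contradiction: assume there exist infinitely many components $U_n$ of $\VV_1 \setminus (\WW_B \cup \LL_B \cup \MM)$ or infinitely many leaves $l_n \in \LL_B \setminus \WW_B$ all having diameter at least some fixed $\varepsilon > 0$. By compactness of $\VV_1$ as a Riemann sphere, these must accumulate at some point $p_\infty \in \VV_1$, and I would derive a contradiction by distinguishing whether $p_\infty$ lies in $\MM$.

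First I would treat the case $p_\infty \notin \MM$. A neighborhood $V$ of $p_\infty$ is disjoint from $\MM$, and the map $\phi = (1/z) \circ \BB$ is a bi-Lipschitz homeomorphism from $V$ onto an open set whose closure lies in the interior of $\opdisk$. Under $\phi$, the set $\LL_B$ corresponds to $L_B$ and the components of $\VV_1 \setminus (\MM \cup \LL_B \cup \WW_B)$ correspond to non-main Fatou components of $K_B = \disk / L_B$. Since the diameters of leaves of $L_B$ tend to $0$ by Definition \ref{def:BasLamin}, and since the Basilica is postcritically finite so that $K_B$ is locally connected with Fatou components of diameter tending to $0$, the point $\phi(p_\infty)$ lies on at most one leaf of $L_B$ and can be accumulated by at most finitely many (in fact at most two) Fatou components of $K_B$. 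Pulling this back through the bi-Lipschitz $\phi^{-1}$ contradicts the existence of infinitely many $U_n$ or $l_n$ of diameter $\geq \varepsilon$ accumulating at $p_\infty$.

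Next I would treat the case $p_\infty \in \MM$. If $p_\infty$ is infinitely renormalizable, it lies in a nested sequence of small copies $\MM_{\HH_1} \supset \MM_{\HH_2} \supset \cdots$ with diameters shrinking to $0$; by Proposition \ref{prop:bound2}, each $\MM_{\HH_i}$ with $i \geq 2$ is a subcopy of the proper copy $\MM_{\HH_{i-1}}$ and is therefore bounded from both sides by $\LL_B$. Proposition \ref{prop:DecTilWithBasil} then produces a decoration tiling $\TT_i$ of $\MM_{\HH_i}$ in which every component of $\VV_1 \setminus \cMM$ and every leaf of $\LL_B \setminus \WW_B$ meets at most three pieces. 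By Theorem \ref{th:decor2} the diameters of pieces in $\TT_i$ tend to $0$, so for $i$ chosen large enough all pieces of $\TT_i$ whose closures meet a small enough neighborhood of $p_\infty$ have diameter less than $\varepsilon/3$. Since each connected $U_n$ or $l_n$ accumulating at $p_\infty$ meets at most three such pieces, its diameter is less than $\varepsilon$, a contradiction. When $p_\infty$ is non-infinitely renormalizable but still lies in some proper small copy bounded from both sides (a satellite or a subcopy of a proper copy), the same argument applies verbatim to that copy.

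The hard part will be the residual subcase where $p_\infty$ is non-infinitely renormalizable and lies only in $\MM$ itself or in one-sided primary primitive copies, since Proposition \ref{prop:DecTilWithBasil} does not apply directly. I would handle this by combining Yoccoz's shrinking of parapuzzle pieces around non-infinitely renormalizable parameters with the fact that leaves of $\LL_B$ cannot cross parameter rays in $\C \setminus \MM$: this forces the angles of the endpoints of any $l_n$ accumulating at $p_\infty$ to be confined to the narrowing angular ranges of the shrinking parapuzzle pieces, and the disk argument of the first case then bounds the diameters of the corresponding leaves and of the complementary components near $p_\infty$.
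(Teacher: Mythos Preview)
Your overall strategy matches the paper's two-case split: Yoccoz's results for non-infinitely renormalizable accumulation points, and Proposition~\ref{prop:DecTilWithBasil} together with the decoration theorem for infinitely renormalizable ones. Your treatment of the infinitely renormalizable case is essentially the paper's argument (note that you do not need ``$i$ chosen large enough'' or the copies to shrink: a single $i\ge 2$ suffices, since $p_\infty$ is not a Misiurewicz tip and therefore lies at positive distance from the finitely many big pieces of $\TT_i$). Your case $p_\infty\notin\MM$ is fine and the paper does not even isolate it.

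There is, however, a genuine error in your ``hard part.'' The claim that \emph{leaves of $\LL_B$ cannot cross parameter rays in $\C\setminus\MM$} is false. In the disk model a leaf $\langle a,b\rangle\in L_B$ is a curve joining $a,b\in\partial\disk$ and separates the short arc from the center; hence the radial ray at any angle $\phi$ in the short arc must cross it. For instance the leaf $\langle 1/3,2/3\rangle$ crosses the ray at angle $1/2$, so in $\VV_1$ the corresponding leaf of $\LL_B$ crosses $\RR^{1/2}$. Consequently a leaf (or a complementary component) can enter and exit a parapuzzle piece through its ray boundary, and your angular-confinement step does not follow from non-crossing.

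The paper avoids this by not splitting the finitely renormalizable case at all: it invokes Yoccoz's theorem uniformly for every finitely renormalizable $p_\infty$, in a single sentence. The mechanism (left implicit) is that the leaves $\tilde l_n\in L_B$ corresponding to $l_n$ have diameters tending to $0$, so $\tilde l_n$ eventually lies in an arbitrarily thin boundary annulus of $\disk$ and its two endpoint angles $a_n,b_n$ coalesce; since some point $z_n\in l_n$ tends to $p_\infty$, the angle of $\BB(z_n)$ is trapped near $a_n$ and hence converges. Triviality of the fiber at $p_\infty$ then forces both Misiurewicz endpoints of $l_n$, and with them the whole leaf $l_n$ (which lies between the equipotential at small height and those endpoints), into any prescribed shrinking parapuzzle neighborhood of $p_\infty$. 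The same reasoning handles the complementary components. So your extra subcase ``$p_\infty$ non-infinitely renormalizable but inside a two-sided copy'' is superfluous, and the remaining ``hard part'' should be replaced by this direct Yoccoz argument rather than a non-crossing claim.
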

\begin{proof}
By Yoccoz's theorem big pieces (with diameters at least a fixed
$\varepsilon>0$) in $\VV_1\setminus (\WW_B\cup \LL_B\cup\MM)$ can
not accumulate at finitely renormalizable parameters of the
Mandelbrot set. But by Proposition \ref{prop:DecTilWithBasil} and
the decoration theorem big components can not accumulate at
infinitely renormalizable parameters.

Similarly, the diameter of leaves in $\LL_B\setminus \WW_B$ tends to $0$.
\end{proof}
\begin{cor}
\label{cor:LamRe2} Leaves in $\LL_B\setminus \WW_B$ can only
accumulate at the Mandelbrot set.
\end{cor}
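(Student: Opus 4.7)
The plan is to derive Corollary \ref{cor:LamRe2} as an almost immediate consequence of Corollary \ref{cor:main}, combined with the fact recorded immediately after the definition of $\LL_B$ that every leaf in $\LL_B$ lands at its two endpoints at Misiurewicz or parabolic parameters, and hence on $\MM$.

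I will argue the contrapositive: fix a point $x \in \VV_1 \setminus \MM$ and exhibit a neighborhood of $x$ meeting only finitely many leaves of $\LL_B \setminus \WW_B$. Set $d := \operatorname{dist}(x, \MM)$, which is strictly positive because $\MM$ is closed.

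By Corollary \ref{cor:main}, only finitely many leaves $l \in \LL_B \setminus \WW_B$ satisfy $\operatorname{diam}(l) \ge d/2$. For any remaining leaf $l$, both endpoints lie in $\MM$; picking one such endpoint $p$, we get $l \subset B(p, d/2)$, so $l$ is contained in the open $d/2$-neighborhood of $\MM$, which is disjoint from $B(x, d/2)$. Consequently, the ball $B(x, d/2)$ meets only the exceptional finite collection of large-diameter leaves, and $x$ is not an accumulation point of $\LL_B \setminus \WW_B$. Note that no case split in terms of whether $x$ lies in $\WW_B$ or not is required: the estimate uses only distance to $\MM$, and leaves outside $\WW_B$ whose diameters are tiny cannot reach any $x$ outside $\MM$, whether or not $x$ sits inside $\WW_B$.

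The only non-trivial ingredient here is Corollary \ref{cor:main} itself; once it is in hand, the argument reduces to a one-line diameter--distance estimate, and I do not anticipate any substantial obstacle.
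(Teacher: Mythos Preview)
Your argument is correct and is exactly the intended one: the paper gives no explicit proof of this corollary, placing it immediately after Corollary~\ref{cor:main} as a direct consequence, and your diameter--distance estimate using that the endpoints of each leaf lie in $\MM$ is precisely how one cashes this out.
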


\begin{cor}
\label{cor:LamRel} The equivalence relation $\LL_B$ is closed and
satisfies the conditions of Moore's theorem.

The restriction of $\LL_B$ to $\MM$ is canonical (independent
of the choice of $L_B$).
\end{cor}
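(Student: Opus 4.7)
The plan is to verify the three assertions in order: (i) $\LL_B$ is a closed equivalence relation, (ii) every equivalence class is connected and non-separating (and classes are non-trivial), and (iii) the restriction to $\MM$ is independent of the choice of $L_B$. The two preceding corollaries do essentially all the work.

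For closedness, I would argue by contradiction. Suppose $c_n\to c$, $c_n'\to c'$, $c\neq c'$, and $c_n\sim_{\LL_B}c_n'$ for all $n$. By passing to subsequences, one of the following holds: (a) each pair lies in $\WW_B\times \WW_B$, (b) each pair lies on a common leaf $l_n\in\LL_B$, with the $l_n$ eventually equal to some fixed leaf $l$, or (c) each pair lies on leaves $l_n$ that are pairwise distinct. In case (a) closedness of $\WW_B$ forces $c,c'\in \WW_B$, so $c\sim_{\LL_B}c'$. In case (b) closedness of $l$ gives $c,c'\in l$. In case (c), Corollary \ref{cor:main} (diameters of leaves in $\LL_B\setminus\WW_B$ tend to $0$) implies $\mathrm{diam}(l_n)\to 0$ — and one should treat the $\langle 1/3,2/3\rangle$-leaf separately, but it is a single leaf so it falls in case (b) — whence $|c_n-c_n'|\to 0$ and $c=c'$, contradicting the assumption.

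For Moore's hypotheses, each equivalence class is either a singleton, a single leaf of $\LL_B$, or the closed topological disc $\WW_B$. A leaf is a Jordan arc in the sphere $\VV_1$ (with endpoints at Misiurewicz or parabolic parameters, as described in Subsection~1.1), so it is connected and its complement in $\VV_1$ is connected; $\WW_B$ is a closed disc, hence connected and non-separating; singletons are trivially fine. Finally, $\LL_B$ plainly has more than one equivalence class, so not all points are identified.

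For canonicity on $\MM$, I would note that two distinct points $c_1,c_2\in\MM$ are identified by $\LL_B$ in exactly one of two ways: either both lie in $\WW_B\cap \MM$, which coincides with the Basilica limb and is independent of $L_B$; or both are landing points of a parameter ray pair $\RR^{\phi_1},\RR^{\phi_2}$ with $\{\phi_1,\phi_2\}$ a pair joined by a leaf of $L_B$ in the sense of Definition \ref{def:BasLamin}. The latter is purely combinatorial data — only the angle pairings at $\uncirc$ matter — and by Definition \ref{def:BasLamin} these pairings are determined uniquely. Hence $\LL_B|_\MM$ does not depend on how one draws the individual leaves inside $\disk$.

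The only genuinely delicate step is the closedness argument, and there the potential trap is the case where the pairs $(c_n,c_n')$ mix between $\WW_B$ and leaves outside $\WW_B$, or accumulate near $\partial \WW_B$ along infinitely many distinct leaves; both are handled by passing to subsequences and invoking Corollary \ref{cor:main} together with the observation that $\WW_B$ is closed. Everything else is a routine consequence of the topology set up in Subsections 2.7 and 2.8.
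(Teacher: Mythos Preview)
Your proposal is correct and follows essentially the same approach as the paper's proof. The paper is terser --- it simply cites Corollary~\ref{cor:main} for closedness, notes that distinct leaves of $\LL_B\setminus\WW_B$ land at distinct points to get non-separation, and characterizes the restriction to $\MM$ exactly as you do --- but your case analysis for closedness is precisely the unpacking of that citation, and your treatment of Moore's hypotheses and canonicity matches the paper's reasoning.
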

\begin{proof}
By Corollary \ref{cor:main} the equivalence relation $\LL_B$ is
closed. As different leaves of $\LL_B\setminus\WW_B$ land at
different points, equivalence classes in $\LL_B$ do not separate
points. Finally, points $a,b\in\MM$ are equivalent under $\LL_B$ if
and only if $a=b$, or $a,b$ are the end of a leave in $\LL_B$
(Corollary \ref{cor:main}), or $a,b\in \WW_B$.
\end{proof}

\subsection{The mating $\MM\mcup L_B$}
\label{subsect:MatingDefin}

Denote by $\VV'_2=\VV'_2(\LL_B)$ the quotient $\VV_1/\LL_B$ and by
$m:\VV_1\to \VV'_2$ the induced quotient map. By Moore's theorem $\VV'_2$ is topologically
a sphere.

Recall that a (continuous) map $m:\VV_1\rightarrow\VV'_2$ depends on
$\LL_B$, but the restriction $m:\MM\rightarrow\MM'_2$ is canonical,
i.e. it is independent of $\LL_B$.

Define \[\cMM:=\MM\bigcup\LL_B\bigcup \WW_B\] (recall that $\WW_B$
depends on $\LL_B$); by construction, $m^{-1}(\MM'_2)=\cMM$.

\begin{proposition}
\label{prop:main} The following holds:
\begin{itemize}
\item The boundary of a component of $\VV_1\setminus\cMM$
contains no infinitely renormalizable parameter.

\item The boundary of a component of $\VV_1\setminus\cMM$ is locally connected.

\item The diameter of components in
 $\VV_1\setminus\cMM$ tends to $0$. The set $\cMM$ is locally connected.

 \item The set $\MM'_2$ is locally connected.
All spaces $\VV'_2=\VV'_2(\LL_B)$ are homeomorphic by
homeomorphisms preserving $\MM'_2$.
\end{itemize}
\end{proposition}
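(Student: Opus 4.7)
The plan is to prove the four assertions in order, combining the combinatorial work of Section~\ref{sec:ParSp} with Yoccoz's theorem, Whyburn's criterion for local connectivity, and Proposition~\ref{prop:ExtensOfhomeom}. The key technical input is the first assertion, from which the remaining three follow by standard arguments.

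For the first assertion, I would suppose for contradiction that $c\in\partial U\cap\MM$ is infinitely renormalizable, where $U$ is a component of $\VV_1\setminus\cMM$. Since $c$ is infinitely renormalizable we have $c\notin\WW_B$, and $c$ lies in a strictly nested sequence $\MM_{\HH_1}\supsetneq\MM_{\HH_2}\supsetneq\cdots$ of small copies with $\MM_{\HH_n}\not\subset\WW_B$ for all large $n$. By Proposition~\ref{prop:bound2} each such subcopy is bounded from both sides by $\LL_B$, so Proposition~\ref{prop:DecTilWithBasil} supplies a generalized decoration tiling $\TT_n$ of $\MM_{\HH_n}$ in which $U$ meets at most three pieces. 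Theorem~\ref{th:decor2} forces every piece of $\TT_n$ touching a small enough neighborhood of $c$ to be uniformly small; combined with the three-pieces bound and the connectedness of $U$, this localizes $U$ around $c$ into an arbitrarily small set as $n\to\infty$, contradicting the fact that $U$ is a nondegenerate open set with $c$ on its boundary.

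The second assertion follows quickly: by the first assertion every $c\in\partial U\cap\MM$ is finitely renormalizable, so by Yoccoz's theorem $\partial\MM$ is locally connected at $c$. Combined with Corollary~\ref{cor:main} (only finitely many leaves of $\LL_B$ of diameter at least $\varepsilon$ accumulate near any given point), $\partial U$ is locally a finite union of arcs from leaves of $\LL_B$ together with a locally connected arc of $\partial\MM$, hence is itself locally connected. The third assertion then follows from Corollary~\ref{cor:main} and the second assertion via Whyburn's characterization of locally connected continua in $\sph$.

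For the fourth assertion, note that $\MM'_2=m(\cMM)$, since each leaf of $\LL_B$ and the disc $\WW_B$ collapses under $m$ to a point of $m(\MM)$. Hence $\MM'_2$ is the continuous image of the compact locally connected set $\cMM$ and is therefore itself compact and locally connected. For the homeomorphism statement, Corollary~\ref{cor:LamRel} shows that the restriction of $\LL_B$ to $\MM$ is independent of the choice of leaves, so $\MM'_2(L_B)$ and $\MM'_2(L'_B)$ are canonically identified. This canonical identification preserves the orientation of every complementary component, since both $\VV'_2(L_B)$ and $\VV'_2(L'_B)$ arise from $\VV_1$ via orientation-respecting quotient maps, so Proposition~\ref{prop:ExtensOfhomeom} extends it to a homeomorphism of the ambient $2$-sphere $\VV'_2$. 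The main obstacle will be the first assertion, where one must convert the qualitative ``at most three pieces'' bound into a quantitative statement forcing $U$ to shrink uniformly in $n$.
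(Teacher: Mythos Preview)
Your overall strategy matches the paper's. The chief difference is in the first assertion, where the paper dispatches the claim in one line: by Proposition~\ref{prop:bound2} every secondary copy is bounded from both sides by leaves of $\LL_B\subset\cMM$, hence separated from any component $\EE$ of $\VV_1\setminus\cMM$; so no point of $\partial\EE$ can lie in a secondary copy, and in particular cannot be infinitely renormalizable. No decoration theorem and no limit in $n$ are used.

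Your route via Proposition~\ref{prop:DecTilWithBasil} can also be made to work, but already for a \emph{single} copy $\MM_\HH$ and without Theorem~\ref{th:decor2}: if $c_0\in\partial U\cap\MM_\HH$ and $U$ meets only the tiles $\TT_1,\TT_2,\TT_3$, then points of $U$ near $c_0$ lie in $\intr(\XX^p)$, hence in $\TT_1\cup\TT_2\cup\TT_3$, so $c_0\in(\TT_1\cup\TT_2\cup\TT_3)\cap\MM_\HH$. But each closed tile meets $\MM_\HH$ only at finitely many Misiurewicz or parabolic parameters, contradicting infinite renormalizability of $c_0$. Your proposed $n\to\infty$ argument is unnecessary, and the step ``localizes $U$ around $c$ into an arbitrarily small set'' is exactly where it breaks: the three-pieces bound constrains only $U\cap\XX^{p_n}$, not $U$ itself, so nothing forces $U$ to shrink.

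For the second assertion your sketch is too optimistic. It is not true that $\partial U$ is ``locally a finite union of arcs from leaves together with a locally connected arc of $\partial\MM$'': infinitely many small leaves typically accumulate at each $c_0\in\partial U\cap\MM$. The paper's argument is more careful. It fixes an access angle $\phi$ from $\EE$ to $c_0$, takes Yoccoz parapuzzle pieces $\PP_i$ shrinking to $c_0$, and shows that the set $S_\varepsilon$ of leaves on $\partial\EE$ whose endpoint-angles lie $\varepsilon$-close to $\phi$ is eventually contained in $\PP_i$ (triviality of fibers at $c_0$ plus Corollary~\ref{cor:main}), while leaves outside $S_\varepsilon$ cannot accumulate on $c_0$ (else a second access would exist). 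The closure $\overline{\bigcup_{l\in S_\varepsilon}l}\subset\PP_i$ is then a connected neighborhood of $c_0$ in $\partial\EE$. The case of two accesses and the case where $c_0$ is itself a leaf-endpoint are handled separately. Assertions three and four are treated exactly as you describe.
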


\begin{proof}
Let $\EE$ be a component of $\VV_1\setminus\cMM$ and
$c_0\in \partial\EE$. By Proposition \ref{prop:bound2} every
secondary copy of the Mandelbrot set is separated from $\EE$; thus
$c_0$ is not infinitely renormalizable.

Let us prove local connectivity of $\partial \EE$ at $c_0$. If
$c_0\not\in \MM$, then $\partial \EE$ is locally connected at $c_0$
as leaves in $\LL_B$ can only accumulate at the Mandelbrot set
(Corollary \ref{cor:LamRe2}).

Assume that $c_0\in \MM$. Let us also suppose that $c_0$ is not the end
of a leaf in $\LL_B$ and there is exactly one access from $\EE$
to $c_0$.

Let $\PP_1,\PP_2,\dots$ be a sequence of parapuzzle pieces shrinking
to $c_0$ \cite{Hu}; i.e. $\PP_i\cap \MM$ is connected, $c_0\in\intr
\PP_i$, and $diam (\PP_i)\to 0$. Let $\gamma_i$ be the
component of $\PP_i\cap \partial \EE$ containing $c_0$. As $diam
(\PP_i)\to 0$ we have  $diam (\gamma_i)\to 0$.

Let us fix $i$ and show that $\gamma_i$ is a neighborhood of $c_0$
in $\partial \EE$.

 As $\EE$ is an open topological disc we have the cyclic order for
$\LL_B\cap\partial \EE$: if $\langle a_1,b_1\rangle$, $\langle a_2,b_2\rangle$,
$\langle a_3,b_3\rangle$ are in $\LL_B\cap
\partial \EE$, then up to permuting $\langle a_1,b_1\rangle$ and $\langle a_2,b_2\rangle$:\[a_1<b_1<a_2<b_2<a_3<b_3<a_1.\]
Let $\RR^{\phi}$ be the external ray landing at $c_0$ with the same
access to $c_0$ as $\EE$ relative the Mandelbrot set. Consider the
set
\[S_\varepsilon =\{\langle a,b\rangle\in  \LL_B\cap
\partial \EE: |\phi-a|<\varepsilon \text{ or } |b-\phi|<\varepsilon\}.\]
We will show that leaves in $S_\varepsilon$ are in $\PP_i$ for sufficiently
small $\varepsilon$ and if a leaf $\langle a,b\rangle$ is
sufficiently close to $c_0$ in the metric of $\VV_1$, then $\langle
a,b\rangle\in S_\varepsilon$. This will imply that
\[\ovl{\bigcup_{l\in S_\varepsilon}l}\subset \gamma_i\] is a
neighborhood of $c_0$ because leaves in $\LL_B\cap\partial \EE$ are
dense on $\partial \EE$.

It follows from triviality of fibers at non infinitely
renormalizable parameters that if $\langle a,b\rangle\in
S_\varepsilon$ for small $\varepsilon$, then $\langle a,b\rangle$ is
close to $c_0$ in the metric of $\VV_1$. By Corollary \ref{cor:main}
the diameter of leaves in $\LL_B$ tends to $0$. Therefore, there are
only finitely many leaves in $\LL_B$ that are close to $c_0$ but are
not in $\PP_i$. By choosing sufficiently small $\varepsilon$ we get
$S_\varepsilon\subset \PP_i$.

Assume that there is a sequence $\langle a_i,b_i\rangle$ of leaves
in $(\LL_B\cap
\partial \EE)\setminus S_{\varepsilon}$ accumulating at $c_0$. We may
assume that $a_i$ tends to $\psi\not=\phi$. Then $R^{\psi}$ lands at
$c_0$ because the fiber at $c_0$ is trivial. This contradicts the
assumption that there is only one access to $c_0$.

It is well known (follows from the branch theorem \cite{Sch}) that
there are at most two accesses to $c_0$ from $\VV_1\setminus \MM$;
so there are at most two accesses from $\EE$ to $c_0$. If there are
two accesses\footnote{In fact, it would follow from Theorem
\ref{th:main2} that $\partial \EE$ is a Jordan curve as it is so for
the boundaries of components in $\VV_2\setminus \MM_2$.}, then we
can present $\EE$ as $\EE_1\cup \EE_2$ such that $\EE_1,\EE_2$ are
open topological discs that do no intersect in some neighborhood of
$c_0$. The same argument as above shows local connectivity of
$\partial\EE_1$ and $\partial \EE_2$ at $c_0$; this implies local connectivity of
$\partial \EE$ at $c_0$.

If $c_0$ is an end of a leaf $\langle a,b\rangle\in \LL_B\cap \partial \EE$,
then we can apply the above argument to $\overline{\partial
\EE\setminus \langle a,b\rangle}$ and conclude local connectivity of
$\overline{\partial \EE\setminus \langle a,b\rangle}$ at $c_0$. This implies
local connectivity of $\partial \EE$ at $c_0$.

We have proved that $\partial \EE$ is locally connected. By
Corollary \ref{cor:main} the diameter of components in
$\VV_1\setminus\cMM$ tends to $0$. This implies that $\cMM$ is
locally connected.

Since the continuous image of a compact connected locally
connected set is locally connected \cite[Theorem 3--22]{HY} we
conclude that $\MM'_2$ is locally connected.

Finally, it follows from Proposition \ref{prop:ExtensOfhomeom} that $\VV'_2$ is independent of the choice of leaves in $L_B$ up to a homeomorphism
preserving $\MM'_2$.
\end{proof}

\section{Properties of $\VV_2$}
\label{sec:V2}

The second parameter slice $\VV_2$ (Figure \ref{figure:V2Slice}) is
the set of quadratic rational maps that have a periodic critical
point of period $2$. Every map in $\VV_2$ is conjugate to a unique
map of the form
\begin{equation}
\label{eq:V2Normaliz}
g_a(z)=\frac{a}{z^2+2z},
\end{equation}
where $a\in\widehat{\C}\setminus\{0\}$ and the case $a=\infty$
corresponds to the map $1/z^2$.

In normalization \eqref{eq:V2Normaliz}, infinity is a critical point
of period two and $-1$ is a free critical point (unless $a=\infty$).
The set $\MM_2\subset\VV_2$ consists of maps such that the free
critical point is not in the attracting basin of the
$\{\infty,0\}$-cycle.

For our convenience, let us complete $\VV_2$ by adding a special point at $0$ so that $\VV_2$ is Riemann sphere
and $\MM_2$ is a compact subset of $\VV_2$

The map
\[g_1=\frac{1}{z^2+2z}\]
is conjugate to the Basilica polynomial; we denote by
$\rho=-1/(z+1)$ the unique M\"{o}bius transformation that conjugates
$g_1$ to $f_B=z^2-1$:
\[\rho \circ g_1= f_B \circ \rho.\]

The hyperbolic component containing $g_1$ is \textit{the main hyperbolic component} of $\MM_2$; it is the biggest black
component on Figure \ref{figure:V2Slice}.

\subsection{Bubbles}
The idea to use bubbles to construct rays is due to Luo \cite{Lu}.
He also showed how to define bubble puzzles and parapuzzles.

Let $\Omega=\Omega(a)$ be the attracting basin of the
$0$-$\infty$-cycle and $E_0$, $E_\infty$ be the connected (periodic)
components of $\Omega$ containing $0$, $\infty$ respectively.

A \textit{bubble} of $g_a$ is a component $E$ of $\Omega$. The
generation of a bubble $E$ is the smallest non-negative $n = Gen(E)$
for which $g_a^n(E)=E_\infty$.

By definition, $-a\in \Omega(a)$ if and only if $a \in
\VV_2\setminus \MM_2$. It is easy to see that $-a$ can not be in
$E_0$ \cite[Proposition 2.5]{AY} because a non-fixed periodic Fatou
component can not contain all critical values.

 If
$-a\in E_\infty$, equivalently $-1\in E_0$, then $\Omega=E_0\cup
E_\infty$, and the map $g_a$ is quasi-conformally conjugate in a
neighborhood of the Julia set to $z\rightarrow z^{-2}$. The last
property holds if and only if $a$ is in the main unbounded
component on the Figure \ref{figure:V2Slice} which will be denoted by
$\EE_\infty$.

Furthermore, $-a\in
\partial E_\infty$ if and only if $a\in \partial \EE_\infty$; this
induces a parametrization of $\partial \EE_\infty$, see \cite{T}.

If $-a\not\in E_\infty$ (equivalently $a\not\in \EE_\infty$), then
$\Omega$ is a proper superset of $E_0\cup E_\infty$. Moreover, it follows from the following proposition
that $\ovl
E_0\cap \ovl E_\infty=\alpha=\alpha(g_a)$ is a fixed point of $g_a$.
\begin{proposition}
\label{prop:ExtCompLocalConn} The boundary of each bubble is a locally
connected simple closed curve. If $a\not\in \EE_\infty$, then  $\ovl
E_0\cap \ovl E_\infty$ contains a unique fixed point $\alpha=\alpha(g_a)$.
\end{proposition}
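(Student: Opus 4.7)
The plan is to split into the cases $a\in\EE_\infty$ and $a\notin\EE_\infty$, and in each case reduce to the analysis of $g_a^2$ on the two periodic bubbles $E_0$ and $E_\infty$, with the remaining bubble boundaries obtained by pulling back along $g_a$.

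When $a\in\EE_\infty$, the quasiconformal conjugacy of $g_a$ near its Julia set with $z\mapsto z^{-2}$ (noted just before the proposition) identifies both $\partial E_0$ and $\partial E_\infty$ with the image of $\uncirc$ under a quasiconformal homeomorphism, hence with a locally connected Jordan curve; pulling back through $g_a$, which is a local homeomorphism off its critical set and whose critical set avoids these boundaries, propagates the property to every bubble. The second statement is vacuous in this case, since $\ovl E_0\cap\ovl E_\infty$ is an entire quasicircle.

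For $a\notin\EE_\infty$, the local degree of $g_a$ is $2$ at $\infty$ and $1$ at $0$, so $g_a^2\colon E_\infty\to E_\infty$ and $g_a^2\colon E_0\to E_0$ are proper degree-$2$ maps, each with a single critical point (the superattracting fixed one), because the free critical point $-1$ lies outside $E_0\cup E_\infty$ by hypothesis. Böttcher's theorem therefore extends to conformal isomorphisms $\phi_\infty\colon E_\infty\to\opdisk$ and $\phi_0\colon E_0\to\opdisk$ conjugating $g_a^2$ to $z\mapsto z^2$. By Carath\'{e}odory, local connectivity of $\partial E_\infty$ (resp.\ $\partial E_0$) is equivalent to continuous extension of $\phi_\infty^{-1}$ (resp.\ $\phi_0^{-1}$) to $\ovl\opdisk$. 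I would produce this extension by thickening $E_\infty$ to a Jordan domain $U$ so that the component $V$ of $(g_a^2)^{-1}(U)$ containing $\ovl E_\infty$ satisfies $\ovl V\subset U$, yielding a quadratic-like restriction $g_a^2\colon V\to U$; Douady--Hubbard straightening combined with the bubble-puzzle/Yoccoz arguments of \cite{Lu} and \cite{AY} then delivers local connectivity of the basin boundary. The main obstacle is the non-hyperbolic case $a\in\MM_2$ with recurrent second critical orbit, where the thickening must be chosen compatibly with the bubble puzzle; the unlocking observation is that in sufficiently deep puzzle pieces the orbit of $-1$ stays bounded away from $\ovl E_0\cup\ovl E_\infty$, so the standard shrinking-of-puzzle-pieces analysis applies. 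Local connectivity of every remaining bubble boundary follows by pulling back.

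For the second statement, granting the first, $g_a^2$ on the Jordan curve $\partial E_\infty$ is conjugate to $z\mapsto z^2$ on $\uncirc$ and hence has the unique fixed point $\alpha:=\phi_\infty^{-1}(1)\in\partial E_\infty$; likewise $\alpha':=\phi_0^{-1}(1)$ is the unique fixed point of $g_a^2$ on $\partial E_0$. Since $g_a$ interchanges $\ovl E_0$ with $\ovl E_\infty$ and carries fixed points of $g_a^2$ to fixed points of $g_a^2$, one has $g_a(\alpha)=\alpha'$ and $g_a(\alpha')=\alpha$. Any point of $\ovl E_0\cap\ovl E_\infty$ is fixed by $g_a^2$ and lies in $\partial E_\infty$, so equals $\alpha$ by uniqueness; if the intersection is nonempty this forces $\alpha'=g_a(\alpha)=\alpha$, making $\alpha$ the unique fixed point of $g_a$ in $\ovl E_0\cap\ovl E_\infty$. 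Nonemptiness of the intersection for $a\notin\EE_\infty$ follows from the bubble structure: Riemann--Hurwitz gives $g_a^{-1}(E_\infty)=E_0\sqcup E''$ where $E''$ is the bubble containing the simple preimage $-2$ of $\infty$, and the quadratic-like straightening near $\ovl E_\infty$ matches $g_a^2|_{\ovl E_\infty\cup\ovl E_0}$ with the Basilica picture of $f_B$ around its $\alpha$-fixed point, in which $\ovl E_0$ and $\ovl E_\infty$ touch at $\alpha$.
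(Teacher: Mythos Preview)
Your approach is substantially more complicated than the paper's and contains real gaps.

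The paper's proof for $a\notin\ovl{\EE_\infty}$ is a one-liner: since $-1\notin\ovl{E_0}$, the B\"ottcher conjugacies on $E_0$ and $E_\infty$ are globally defined and depend holomorphically on $a$; this gives a holomorphic motion of $E_0(1)\cup E_\infty(1)$ over $\widehat\C\setminus\ovl{\EE_\infty}$, which by the $\lambda$-lemma extends to the closures. Since for the Basilica $g_1$ the boundaries $\partial E_0(1),\partial E_\infty(1)$ are Jordan curves meeting at the $\alpha$-fixed point, the same holds after applying a quasiconformal motion. The boundary case $a\in\ovl{\EE_\infty}$ is delegated to Timorin's \cite[Theorem B$^{**}$]{T}.

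Your thickening approach has two problems. First, constructing $U\supset\ovl{E_\infty}$ with the quadratic-like property $\ovl V\Subset U$ is not obviously possible without already knowing $\partial E_\infty$ is a Jordan curve; your ``unlocking observation'' that the orbit of $-1$ stays bounded away from $\ovl{E_0}\cup\ovl{E_\infty}$ is simply false (for $\basil$-Misiurewicz parameters the orbit of $-1$ lands on $\partial E_\infty$, and for generic $a\in\partial\MM_2$ it is dense in $J_a\supset\partial E_\infty$). Second, even granting the thickening, you massively overcomplicate the conclusion: the quadratic-like map $g_a^2\colon V\to U$ has a \emph{superattracting} fixed point at $\infty$, so Douady--Hubbard straightening makes it hybrid equivalent to $z\mapsto z^2$ itself, whence $\partial E_\infty$ is a quasicircle --- no Yoccoz puzzle, no recurrent-orbit analysis, no \cite{AY} needed. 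Separately, your case split $a\in\EE_\infty$ versus $a\notin\EE_\infty$ leaves $a\in\partial\EE_\infty$ (where $-1\in\partial E_0$) uncovered by either argument.

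For the second statement, your claim that ``any point of $\ovl E_0\cap\ovl E_\infty$ is fixed by $g_a^2$'' is unjustified: the intersection is a closed $g_a^2$-invariant subset of $\partial E_\infty$, but closed invariant subsets of $z\mapsto z^2$ on $\uncirc$ need not consist of fixed points. The paper bypasses this entirely: the holomorphic motion from $g_1$ carries the fixed point $\alpha(g_1)\in\ovl{E_0(1)}\cap\ovl{E_\infty(1)}$ to a fixed point in $\ovl{E_0(a)}\cap\ovl{E_\infty(a)}$, and uniqueness follows from the single fixed point of $z^2$ on $\uncirc$ as you note.
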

\begin{proof}
The case $a\in \ovl{\EE_\infty}$ follows from \cite[Theorem B$^{**}$]{T}.

If $a\in \widehat{\C}\setminus\ovl{\EE_\infty}$, then $\partial
E_0\cup
\partial E_\infty$ depends holomorphically on $a$ by the $\lambda$--lemma (see \cite{MC} for reference). In
particularly,  $\partial E_0$, $\partial E_\infty$ are locally
connected simple closed curves that intersect at a fixed point
because it is so for hyperbolic maps
in $\widehat{\C}\setminus\ovl{\EE_\infty}$. Since every bubble $E$ is
a preimage of $E_\infty$ we see that $\partial E$ is locally
connected simple closed curve.
\end{proof}

\begin{proposition}
\label{prop:Conj} Assume that $-a\not\in E_\infty$. Then there is a unique
conformal map $h_a: E_0(a)\cup E_{\infty}(a)\to E_0(1)\cup
E_{\infty}(1)$ that conjugates $g_a$ and $g_1$
 \begin{equation}
 \label{diagr:PropConj}
 \begin{array}[c]{ccc}
  E_0(a)\cup E_{\infty}(a)&\stackrel{}\longrightarrow^{g_a} &E_0(a)\cup E_{\infty}(a)\\
  \downarrow\scriptstyle{h_a}&&\downarrow\scriptstyle{h_a}\\
  E_0(1)\cup E_{\infty}(1)&\stackrel{}\longrightarrow^{g_1}&E_0(1)\cup
  E_{\infty}(1),
  \end{array}
\end{equation}
with normalization $h'_a(0)=h'_a(\infty)=1$. Moreover, $h_a$
extends homeomorphically to the $\alpha$-fixed point:
$h_a(\alpha(g_a))=\alpha(g_1)$.

Let $\gamma$ be a curve in \[\Omega(a)\cup_{n\ge 0}g_a^{-n}(\alpha(g_a))\] starting at $\infty$.
If $\gamma$ does not contain (hit) $-1$ or its preimage  under $g_a$, then $h_a$
extends uniquely along $\gamma$.
\end{proposition}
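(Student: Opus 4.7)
The hypothesis $-a\notin E_\infty(a)$, equivalent to $-1\notin E_0(a)$, says the free critical point $-1$ of $g_a$ lies outside $E_0(a)\cup E_\infty(a)$. Consequently $g_a\colon E_0(a)\to E_\infty(a)$ is an unbranched conformal isomorphism, and $g_a^2\colon E_\infty(a)\to E_\infty(a)$ is a proper branched cover of degree two whose only critical point is its superattracting fixed point $\infty$ (of local degree two). The same structure holds for $g_1$, since its $-1$ is a repelling fixed point in the Julia set. B\"ottcher's theorem therefore furnishes conformal isomorphisms $\phi_a\colon E_\infty(a)\to\opdisk$ and $\phi_1\colon E_\infty(1)\to\opdisk$ with $\phi_\bullet(\infty)=0$ conjugating $g_\bullet^2$ to $w\mapsto w^2$; uniqueness of such B\"ottcher coordinates once a leading chart-derivative is fixed holds because the ambiguity is a $(d-1)$-th root of unity with $d=2$. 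I would then set
\[
h_a|_{E_\infty(a)} := \phi_1^{-1}\circ\phi_a, \qquad h_a|_{E_0(a)} := \bigl(g_1|_{E_0(1)}\bigr)^{-1}\circ h_a|_{E_\infty(a)}\circ g_a|_{E_0(a)},
\]
which is a conformal isomorphism $E_0(a)\cup E_\infty(a)\to E_0(1)\cup E_\infty(1)$ intertwining $g_a$ and $g_1$ by construction. The normalization $h'_a(0)=h'_a(\infty)=1$ follows from a short computation in the chart $w=1/z$: the quadratic coefficient of $g_a^2$ at $\infty$ is independent of $a$, so the B\"ottcher coordinates can be normalized to match to leading order, and the equivariance formula propagates this compatibility to the fixed point $0$. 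Uniqueness of $h_a$ is immediate from the uniqueness of B\"ottcher together with the forced equivariance on $E_0$.

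\textbf{Extension to $\alpha$.} By Proposition \ref{prop:ExtCompLocalConn}, $\partial E_\infty(a)$ and $\partial E_0(a)$ are locally connected Jordan curves meeting only at the fixed point $\alpha(g_a)$, and likewise for $g_1$. Carath\'eodory's theorem extends $h_a|_{E_\infty(a)}$ and $h_a|_{E_0(a)}$ continuously to their respective closures, and the conjugacy $h_a\circ g_a=g_1\circ h_a$ sends fixed points to fixed points, so necessarily $h_a(\alpha(g_a))=\alpha(g_1)$. The two boundary extensions therefore agree at $\alpha$ and patch to a single continuous map on $\ovl{E_0(a)\cup E_\infty(a)}$.

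\textbf{Extension along $\gamma$.} I would proceed by induction on the bubbles $E^{(0)}=E_\infty(a),E^{(1)},E^{(2)},\ldots$ that $\gamma$ enters in order, consecutive bubbles meeting at preimages of $\alpha$. Suppose $h_a$ has been extended continuously along $\gamma$ through $E^{(k-1)}$ and into a new bubble $E^{(k)}$ of generation $n=\operatorname{Gen}(E^{(k)})$; then $g_a^n\colon E^{(k)}\to E_\infty(a)$ is a proper branched cover whose critical points are precisely those points of $E^{(k)}$ whose $g_a$-orbit meets $-1$ before leaving $E^{(k)}$. The hypothesis that $\gamma$ avoids $-1$ and its $g_a$-preimages forces $\gamma\cap E^{(k)}$ to be disjoint from this critical set, so the equation $g_1^n\circ h_a=h_a|_{E_\infty(1)}\circ g_a^n$ determines $h_a$ on a neighbourhood of $\gamma\cap E^{(k)}$ as a lift of a holomorphic map through $g_1^n$; continuity at the entry point pins down the correct branch, and the absence of critical values along $\gamma$ allows this branch to be continued uniquely along the entire arc. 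Carath\'eodory applied at the entry and exit preimages of $\alpha$, where $\partial E^{(k)}$ inherits local connectivity as a pullback of $\partial E_\infty(a)$ under $g_a^n$, glues this piece to the neighbouring ones. The main difficulty is exactly this lifting step: a single hit of $\gamma$ against a $g_a$-preimage of $-1$ would place a critical value of some iterate of $g_a$ on $\gamma$ and destroy single-valuedness of the lift; the hypothesis excludes this, and the identity theorem then delivers uniqueness of the whole extension.
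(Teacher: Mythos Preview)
Your proof is correct and follows essentially the same approach as the paper: B\"ottcher coordinates furnish $h_a$ on $E_0\cup E_\infty$ (with the extension to $\alpha$), and the continuation along $\gamma$ is obtained by lifting through the dynamics. The paper compresses your bubble-by-bubble induction into a single step, writing $\gamma=\bigcup_{n\ge0}\gamma_{2n}$ with $g_a^{2n}(\gamma_{2n})\subset E_0(a)\cup E_\infty(a)\cup\{\alpha(g_a)\}$ and observing that the hypothesis on $-1$ makes $g_a^{2n}|_{\gamma_{2n}}$ a covering onto its image so that $h_a$ lifts; your induction on generations is the same argument unpacked.
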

\begin{proof}
The first part of the proposition follows from the existence of B\"{o}ttcher coordinates and the definition
of $\alpha(g_a)$.

Let us prove the second part. We can present $\gamma$ as a union $\cup_{n\ge0} \gamma_{2n}$, where $\gamma_{2n}\subset\gamma_{2n+2}$, such that $\gamma_{2n}$ is a curve starting at infinity and
$g_a^{2n}(\gamma_{2n})\subset E_0(a)\cup E_{\infty}(a)\cup\alpha(g_a)$. Then $h_a$ is defined over $g_a^{2n}(\gamma_{2n})$. As $\gamma$
does not hit $-1$ or its preimages the map $g_a^{2n}:\gamma_{2n}\to g_a^{2n}(\gamma_{2n})$ is a covering.
 Hence $h_a$ extends along $\gamma_{2n}$.
\end{proof}

Assume that $-a\in E_\infty$ but $a\not =\infty$ (i.e.
$g_a\not=1/z^2$). Then there is a unique fixed under $g_a^2$ internal ray $\gamma_\infty$ emerging from infinity.
If $\gamma_\infty$ does not hit $-a$, then it lands at a fixed point of $J_a$; we define \emph{$\alpha(g_a)$ to be the landing
point of $\gamma_\infty$}. If $\gamma_\infty$ hits $-a$, then we say that $\alpha(g_a)$ is not defined; it will follow from Theorem \ref{thm:ParDynIsomV2} that
$\alpha(g_a)$ is defined if and only if the parameter $a$ is not on the parameter ray $\BB^0$.

Note that if $a\not\in\infty$, then $h_a$ is defined in neighborhoods of $0$ and
$\infty$; if $\alpha(g_a)$ is defined, then $h_a$ extends along $\gamma_\infty\cup g_a(\gamma_\infty)\cup \alpha(g_a)$.
It is easy to verify the following statement.
\begin{proposition}
\label{prop:Conj2}
Assume that $-a\in E_\infty$ and $\alpha(g_a)$ is defined.  Then $h_a$ exists in neighborhoods of $0$ and
$\infty$ containing $-a$.

Assume further that $\gamma$ is a curve in  \[\Omega(a)\cup_{n\ge 0}g_a^{-n}(\alpha(g_a))\] starting at $\infty$ such that
$\gamma$ is a concatenation $\gamma(1)\#\gamma(2)\#\dots \#\gamma(n)\#\beta  $ or $\gamma(1)\#\gamma(2)\#\gamma(3)\#\dots$, where
$\gamma(i)$ is a preimage of $\gamma_\infty$ and $\beta$ is a curve in $\Omega(a)$. If $\gamma$ does not contain (hit) $-1$ or its preimage  under $g_a$, then $h_a$
extends uniquely along $\gamma$.
\end{proposition}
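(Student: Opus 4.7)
The plan is to follow the same strategy as in Proposition \ref{prop:Conj}, using the B\"{o}ttcher coordinates of $g_a^2$ and $g_1^2$ at the super-attracting fixed points $0$ and $\infty$ (each of local degree $2$), normalized by $h_a'(0) = h_a'(\infty) = 1$. Composing these coordinates defines $h_a$ on small topological disks around $0$ and $\infty$. Since $\alpha(g_a)$ is assumed defined, the fixed internal ray $\gamma_\infty \subset E_\infty$ lands at $\alpha(g_a)$ without passing through $-a$, and its image $g_a(\gamma_\infty) \subset E_0$ is a $g_a^2$-fixed ray at $0$ landing at the same point $\alpha(g_a)$. The B\"{o}ttcher coordinates therefore extend conformally along $\gamma_\infty$ and $g_a(\gamma_\infty)$, giving $h_a$ on simply connected neighborhoods of $\{\infty\} \cup \gamma_\infty$ in $E_\infty$ and $\{0\} \cup g_a(\gamma_\infty)$ in $E_0$; the extension to $\alpha(g_a)$ with $h_a(\alpha(g_a)) = \alpha(g_1)$ is justified as in Proposition \ref{prop:Conj}.

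To enlarge the neighborhood of $\infty$ to contain $-a$, I would use the functional equation $h_a \circ g_a = g_1 \circ h_a$. The B\"{o}ttcher coordinate at $0$ extends along a path in $E_0$ from $0$ to the critical point $-1$, chosen to avoid the branch cut $g_a(\gamma_\infty)$, giving $h_a$ on a neighborhood of $-1$. Since $-1$ is a critical point of local degree $2$ for both $g_a$ and $g_1$, the formula $h_a(w) := g_1(h_a(z))$, where $z$ is the unique point in a neighborhood of $-1$ mapping to $w$ under the appropriate inverse branch of $g_a$, defines a local biholomorphism from a neighborhood of $-a$ onto a neighborhood of $g_1(h_a(-1))$ in $E_\infty(1)$, and this extension satisfies the conjugacy equation by construction. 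This yields the first statement.

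For the second statement, I would extend $h_a$ along $\gamma = \gamma(1) \# \gamma(2) \# \cdots$ by induction on the pieces. Each $\gamma(i)$ is a preimage of $\gamma_\infty$ under some iterate $g_a^{n_i}$, and since $\gamma$ avoids $-1$ and all its preimages, $g_a^{n_i}$ is unbranched on a neighborhood of $\gamma(i)$. Therefore the extension of $h_a$ along $\gamma_\infty$ lifts canonically along $\gamma(i)$ via $h_a \circ g_a^{n_i} = g_1^{n_i} \circ h_a$. Consecutive pieces meet at a preperiodic preimage of $\alpha(g_a)$, and the two lifts agree there by uniqueness of analytic continuation. The terminal curve $\beta \subset \Omega(a)$ in the finite case is handled analogously via the B\"{o}ttcher coordinate of the bubble containing $\beta$; in the infinite case the successive pieces lie in Fatou components of shrinking diameter, so the concatenated extension is well-defined at every point of $\gamma$.

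The hardest step is to verify consistency of the inductive lifts at the preperiodic gluing points where adjacent bubbles meet at preimages of $\alpha(g_a)$. This is ensured by the base case $h_a(\alpha(g_a)) = \alpha(g_1)$ established in the first part, together with the canonical choice of inverse branches forced by the avoidance of the critical orbit of $-1$; the combinatorial data on which branches to pick is rigid once the germ of $h_a$ at $\infty$ has been fixed by the normalization $h_a'(\infty) = 1$, which also delivers uniqueness of the continuation.
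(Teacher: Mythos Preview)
The paper omits a proof here (``It is easy to verify the following statement''), so there is no detailed argument to compare against. Your treatment of the second claim---extending $h_a$ inductively along the pieces $\gamma(i)$ by lifting through the covering $g_a^{n_i}$, and gluing at preimages of $\alpha(g_a)$---is correct and is what the paper's hint before the proposition intends.

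Your argument for the first claim, however, has a genuine error. You propose to continue $h_a$ along a path in $E_0(a)$ to the critical point $-1$, and then push forward to $-a$ via $h_a(w):=g_1(h_a(z))$ with $g_a(z)=w$. This relies on your assertion that ``$-1$ is a critical point of local degree $2$ for both $g_a$ and $g_1$'', so that the two-fold ambiguity in the preimage $z$ near $-1$ is cancelled by a matching two-fold branching of $g_1$ near $h_a(-1)$. But $h_a(-1)$ is \emph{not} a critical point of $g_1$: for $g_1$ the free critical point $-1$ is a superattracting fixed point lying in its own Fatou component, not in $E_0(1)$, and $g_1$ restricted to $E_0(1)$ is univalent onto $E_\infty(1)$. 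Thus the two preimages $z_1,z_2$ of $w$ near $-1$ are sent by $h_a$ to distinct non-critical points of $E_0(1)$, and $g_1(h_a(z_1))\neq g_1(h_a(z_2))$; your formula does not define a single-valued map on any full neighborhood of $-a$.

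The intended (and much shorter) argument is the standard B\"ottcher-disk estimate. The B\"ottcher coordinate $\phi_a$ of $g_a^2$ at $\infty$ extends univalently to the region of $E_\infty(a)$ bounded by the equipotential through the critical points of $g_a^2\big|_{E_\infty}$ other than $\infty$; these are exactly the two points $c_{1,2}\in g_a^{-1}(-1)\cap E_\infty(a)$. Since $g_a^2(c_i)=g_a(-1)=-a$ and $\phi_a\circ g_a^2=\phi_a^2$, one has $|\phi_a(-a)|=|\phi_a(c_i)|^2<|\phi_a(c_i)|$, so $-a$ lies strictly inside the B\"ottcher domain. Hence $h_a=\phi_1^{-1}\circ\phi_a$ is already defined on a neighborhood of $\infty$ containing $-a$, with no need to pass through $-1$. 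This is the same mechanism that makes $\BB(c)=B_c(c)$ well-defined for quadratic polynomials.
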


\subsection{Rays in bubbles}
\label{subsec:RayInBubble} Recall that $g_1$ is conjugate by $\rho$
to the Basilica polynomial. Let $\gamma_0$, resp.\ $\gamma_\infty$,
be the straight internal ray in $E_0(1)$, resp.\ $E_\infty(1)$, that
connects $0$ and the $\alpha$-fixed point, resp.\ $\infty$ and
$\alpha$. We have:
\[g_1(\gamma_0\cup \gamma_\infty) = \gamma_\infty \cup \gamma_0.\]
Define $T$ to be \[\bigcup_{n\ge0} g_a^{-n}(\gamma_0\cup
\gamma_\infty);\] it is clear that $T$ is a tree. Furthermore, $T$
intersects $\partial \Omega$ at the pre-images of the
$\alpha$--fixed point; these pre-images will be called
\textit{$\basil$-points} (if we conjugate $g_1$ by $\rho$, then
$\basil$-points are the landing points of the rays $R^{n/(3\cdot
2^k)}$ for $k\ge 0$ and $n$ coprime to $6$). For $x\in T$ we define
$\gamma(x)$ to be a unique geodesic within $T$ connecting $\infty$
and $x$.

Consider a component $E$ of $\Omega$, denote by $n\ge0$
the generation of $E$. Let $x$ be a unique pre-image of $\infty$ in $E$
 and $\gamma''$ be an internal ray in $E_\infty$
connecting $\infty$ and a non-$\basil$ point on $\partial E_\infty$.
Then $g_1^n(x) =\infty$ and $\gamma''$ lifts conformally along the
orbit of $x$ to the curve, say $\gamma'$ in $E$ connecting $x$ and a
non-$\basil$ point on $E$; we call
\begin{equation}
\label{eq:InnFinRay} \gamma=\gamma(1)=\gamma' \cup \gamma(x)
\end{equation}
a \textit{finite ray in bubbles}.

An \textit{infinite ray in bubbles} is an infinite geodesic, say
$\gamma$, in $T$: \[\gamma:[0,\infty)\to T,\ \  \gamma(0)=\infty\ \
\text{ and }\ \gamma(t_1)\not = \gamma(t_2)\ \text{ if }\
t_1\not=t_2.\]

For $g_a$ define a \textit{finite} (resp.\ \textit{infinite})
\textit{ray in bubbles} $\gamma(a)$ to be a pre-image of a finite
(resp.\ infinite) ray in bubbles $\gamma(1)$ under $h_a$ provided
that $h^{-1}_a$ extends uniquely along $\gamma(1)$; if this is the
case, \textit{we will write} $\gamma=\gamma(1)=\gamma(a)$ and say
that $\gamma$ \textit{exists} for $g_a$.

The following lemma is a consequence of Propositions \ref{prop:Conj} and \ref{prop:Conj2} and the local connectivity of the closure
of a bubble (Proposition \ref{prop:ExtCompLocalConn}).
\begin{lemma}
\label{lem:RayInBubbleExists}
A ray in bubbles $\gamma$ exists for $g_a$ if and only if $\gamma$
does not hit the critical point $-1$ or its pre-image.
 If a
finite ray in bubbles exists, then it lands.
\end{lemma}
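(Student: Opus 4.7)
The plan is to reduce the existence of $\gamma(a)$ to the question of whether the inverse conjugacy $h_a^{-1}$ extends conformally along the corresponding ray $\gamma(1)$ of $g_1$, and then to read off the critical-point obstruction directly from Propositions~\ref{prop:Conj} and \ref{prop:Conj2}. Landing of finite rays will be a consequence of the local connectivity of bubble boundaries from Proposition~\ref{prop:ExtCompLocalConn}.

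First I would observe that, by the construction recalled just before the lemma, a ray in bubbles $\gamma(1)$ of $g_1$ is either an infinite geodesic in the tree $T$ starting at $\infty$, or such a geodesic followed by a single internal ray $\gamma'$ inside a terminal bubble. In either case $\gamma(1)$ lies in $\overline{\Omega(1)\cup\bigcup_{n\ge 0}g_1^{-n}(\alpha(g_1))}$ and issues from $\infty$. This is exactly the class of curves addressed by Propositions~\ref{prop:Conj} and \ref{prop:Conj2}: applying those propositions with the roles of $g_a$ and $g_1$ interchanged (which is legitimate because $h_a$ is a conformal conjugacy with the symmetric normalization $h_a'(0)=h_a'(\infty)=1$) shows that $h_a^{-1}$ extends uniquely along $\gamma(1)$ precisely when $\gamma(1)$ does not pass through the critical point $-1$ of $g_1$ or any of its pre-images. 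When this holds, $\gamma(a):=h_a^{-1}(\gamma(1))$ satisfies the definition of a ray in bubbles for $g_a$.

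Next I would translate the critical-point condition from one plane to the other. Because $h_a$ is a dynamical conjugacy, wherever it is defined it sends critical points of $g_a$ to critical points of $g_1$; the unique free critical point on each side is $-1$, so $h_a(-1)=-1$ whenever this makes sense, and pre-images go to pre-images. Consequently $\gamma(1)$ hits $-1$ or a pre-image in $g_1$'s dynamics if and only if the same combinatorial point of the prospective $\gamma(a)$ is $-1$ or a pre-image in $g_a$'s dynamics, yielding the stated equivalence. For the landing statement, a finite ray $\gamma(a)=\gamma'(a)\cup\gamma(x(a))$ terminates in an internal ray $\gamma'(a)$ inside some bubble $E$ of $g_a$. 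By Proposition~\ref{prop:ExtCompLocalConn}, $\partial E$ is a locally connected Jordan curve, so Carath\'{e}odory's theorem implies that every internal ray of $E$ lands on $\partial E$; in particular $\gamma'(a)$ lands, and the landing point is non-$\basil$ because its $h_a$-image is the non-$\basil$ landing point of the terminal internal ray of $\gamma(1)$. Hence $\gamma(a)$ lands.

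The step I expect to be the main obstacle is the second one: the phrase ``$\gamma(a)$ hits $-1$'' is a priori ambiguous, since $\gamma(a)$ itself is only defined once the extension exists. The cleanest resolution is to phrase the whole argument inside $g_1$'s plane --- where $\gamma(1)$, the critical point $-1$, and its pre-images are all unambiguously defined from the start --- and then transport the statement to $g_a$ using that $h_a$ carries critical points to critical points along its domain of extension. With this bookkeeping in place, the lemma reduces directly to the conclusions of Propositions~\ref{prop:Conj}, \ref{prop:Conj2}, and \ref{prop:ExtCompLocalConn}.
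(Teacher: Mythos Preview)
Your overall plan---reduce to Propositions~\ref{prop:Conj}, \ref{prop:Conj2} for existence and to Proposition~\ref{prop:ExtCompLocalConn} for landing---matches the paper exactly, and your landing argument is fine. The existence argument, however, has a real gap.

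The step that fails is the ``interchange of roles of $g_a$ and $g_1$'' in Proposition~\ref{prop:Conj}. The symmetry you invoke is illusory: for $g_1$ the free critical point $-1$ is a superattracting fixed point (check $g_1(-1)=-1$), so $-1$ lies in its own basin, \emph{not} in $\Omega(1)$. Hence every ray $\gamma(1)$ vacuously avoids $-1$ and all its $g_1$-preimages, and your interchanged proposition would say $h_a^{-1}$ extends along every $\gamma(1)$---which is false whenever $a\notin\MM_2$. The follow-up claim ``$h_a(-1)=-1$'' cannot rescue this: $h_a$ is a conformal conjugacy between pieces of $\Omega(a)$ and $\Omega(1)$, and the critical point $-1$ of $g_a$ is typically not in $\Omega(a)$ at all (and when it is, there is no critical point of $g_1$ in $\Omega(1)$ for it to be sent to). The obstruction to extending $h_a^{-1}$ genuinely lives in the $g_a$-plane, not the $g_1$-plane.

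The correct use of Proposition~\ref{prop:Conj} is direct, not interchanged. Extending $h_a^{-1}$ along $\gamma(1)$ is the same as lifting $\gamma(1)$ to a curve $\gamma(a)\subset\Omega(a)\cup\bigcup_n g_a^{-n}(\alpha(g_a))$ starting at $\infty$ with $h_a(\gamma(a))=\gamma(1)$. One builds this lift bubble by bubble via $h_a^{-1}=g_a^{-2n}\circ h_a^{-1}|_{E_\infty(1)}\circ g_1^{2n}$; the branch of $g_a^{-2n}$ is determined by continuity along the already-constructed portion of $\gamma(a)$, and this succeeds precisely until $\gamma(a)$ would pass through a branch point of $g_a$, namely $-1$ or one of its $g_a$-preimages. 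Proposition~\ref{prop:Conj} gives the forward direction (no hit $\Rightarrow$ $h_a$ extends along $\gamma(a)$, hence $h_a^{-1}$ extends along $\gamma(1)$); the converse is that a conformal $h_a$ cannot carry a critical point of $g_a$ to a regular point of $g_1$, so the extension must fail at such a hit.
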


Define \[G(a)=\{x\in T=T(1):\ h_a^{-1}\text{ extends along }
\gamma(x)\},\] and define $T=T(a)$ to be $g_a^{-1}(G(a))$. It
follows that $T$ is a tree and any finite ray in bubbles is of the
form $\gamma' \cup \gamma(x)$, where $x$ is a pre-image of $\infty$,
the curve $\gamma(x)$ is a geodesic in $T(a)$ connecting $\infty$
and $x$, and $\gamma'$ is a conformal pre-image of an internal ray
in $E_\infty$.

We say that an angle $\phi$ is \textit{of type ``$\basil$''} if it is of the
form $n/(3\cdot 2^k)$ such that $k\ge 0$ and $n$ is coprime to $6$.
We say that a ray $R^{\phi}$ or $\RR^{\phi}$ is \textit{of type ``\textit{$\basil$}}'' (resp.\ ``non-$\basil$'') if
$\phi$ is a $\basil$ angle (resp.\ non-$\basil$ angle). The landing points of $\basil$ (resp. non-$\basil$) rays will also be
called \textit{$\basil$} (resp. non-$\basil$) points.

Recall that $\rho=-1/(z+1)$ conjugates $g_1$ and $f_B$.

\begin{definition}[$\VV_2$-twins]
\label{defn:V2Dupl} Let $\phi$ be a non-$\basil$ angle. A ray in
bubbles $\gamma$ is the \emph{$\VV_2$-twin} of an external ray
$R^\phi$ if $\rho(\gamma)$ and $R^{-\phi}$ land at the same point in
$J_B$.
 \end{definition}

For every ray with non-$\basil$ angle there is a unique
$\VV_2$-twin. \textit{From now on we will write} $\gamma =B^{\phi}$
if $\gamma$ is the $\VV_2$-twin of $R^\phi$. The \textit{angle} of
$\gamma=B^\phi$ is  $\phi$. The following lemma follows directly from the definition.

\begin{lemma}
Every ray in bubbles is a $\VV_2$--twin of a ray of maps in $\VV_1$.
\end{lemma}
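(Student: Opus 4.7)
The plan is to invert the definition of $\VV_2$--twin: given a ray in bubbles $\gamma$, transport it to the dynamical plane of $f_B$ via $\rho$, show that $\rho(\gamma)$ lands at a point $p\in J_B$, and then use local connectivity of $J_B$ to produce an external ray $R^\psi$ landing at $p$ with non--$\basil$ angle $\psi$; the required twin is then $R^{-\psi}$.

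First I would reduce to the case $a=1$. By definition, a ray in bubbles $\gamma$ for $g_a$ is obtained by pulling back a ray in bubbles $\gamma(1)$ for $g_1$ under $h_a$, so it suffices to exhibit a non--$\basil$ angle $\phi$ with $\rho(\gamma(1))$ and $R^{-\phi}$ landing at a common point of $J_B$. Apply $\rho$: since $\rho$ conjugates $g_1$ with $f_B$ and carries $E_0(1)\cup E_\infty(1)$ onto the two periodic Fatou components of $f_B$, the curve $\rho(\gamma(1))$ is a concatenation of internal ray segments inside bounded Fatou components of $f_B$, meeting at preimages of the $\alpha$--fixed point (the $\basil$--points).

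If $\gamma$ is finite, Lemma \ref{lem:RayInBubbleExists} gives that $\gamma$ lands, and by construction of a finite ray in bubbles \eqref{eq:InnFinRay} the landing point $p\in\partial E$ is a non--$\basil$ point on the boundary of some Fatou component. Since $J_B$ is locally connected and $\partial E$ is a Jordan curve (Proposition \ref{prop:ExtCompLocalConn} or directly, Basilica local connectivity), there is a unique external ray $R^\psi$ of $f_B$ landing at $p$, and its angle $\psi$ is of the form $n/(3\cdot 2^k)$ only at $\basil$--points; thus $\psi$ is non--$\basil$, and $\gamma=B^{-\psi}$.

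For an infinite ray in bubbles $\gamma$, the argument is the same once we show that $\rho(\gamma)$ lands. The infinite geodesic in $T(1)$ traverses a sequence of bounded Fatou components $E^{(0)}, E^{(1)}, \ldots$ meeting at successive $\basil$--points $p_1,p_2,\ldots$; since for every $\varepsilon>0$ there are only finitely many Fatou components of $f_B$ of diameter at least $\varepsilon$ (by local connectivity together with standard hyperbolic geometry arguments on the basin), the diameters $\operatorname{diam}(E^{(n)})\to 0$, so $\{p_n\}$ is Cauchy and $\rho(\gamma)$ lands at some $p\in J_B$. Local connectivity of $J_B$ produces an external ray $R^\psi$ landing at $p$, and $\psi$ must be non--$\basil$: the $\basil$--angles land at $\basil$--points, which are interior vertices of $T$ at bounded depth, whereas $p$ is approached through bubbles of unbounded generation. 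Hence $\gamma=B^{-\psi}$ as required. The only delicate point is the shrinking of bubble diameters along the geodesic, which is the main obstacle if one tries to avoid invoking local connectivity of $J_B$; otherwise it is an essentially formal consequence of the setup.
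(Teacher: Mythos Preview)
Your proof is correct and is exactly what the paper leaves implicit when it says the lemma ``follows directly from the definition'': transport via $\rho$, land in $J_B$, and invoke local connectivity of the Basilica to produce the external ray. The one step you flag as delicate---that an infinite geodesic cannot land at a $\basil$ point---is indeed the only item needing care, and it follows cleanly from the lamination picture: a nested sequence of leaves of $L_B$ encloses shrinking minor arcs, none of which can contain a given $\basil$ angle without forcing a crossing with the unique leaf having that angle as an endpoint.
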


\subsection{Parameter rays in bubbles}
\label{subsec:ParamRayInBubble} Assume $-1\in E\subset\Omega$, where
$E\not=E_\infty,$ and $n= Gen(E)$. Then $h_a$ extends uniquely to
\[\bigcup_{Gen(E)< n}\ovl E.\] In particularly, $h_a(-a)$
is defined and canonical (unique). If $-a\in E_\infty$, then $h_a(-a)$ is
defined and canonical by Proposition
\ref{prop:Conj2}.

It is well known that for components $\EE'\not=\EE''$ of
$\VV_2\setminus\MM_2$ the closures $\ovl{\EE'}$, $\ovl{\EE''}$
either do not intersect or $\ovl{\EE'}\cap\ovl{\EE''}$ is a
Misiurewicz parameter; which will be called a \textit{$\basil$ Misiurewicz parameter};
$\basil$ Misiurewicz parameters in $\VV_2$ are the matings of
$\basil$ Misiurewicz polynomials (i.e. the corresponding parameters are the landing points of $\basil$ parameter rays) in $\MM\setminus\WW_B$ and the Basilica polynomial.

\begin{theorem}
\label{thm:ParDynIsomV2}
The map
\begin{equation}
\label{eq:ParDynIsomV2}a\to \rho(h_a(-a))
\end{equation} is a
homeomorphism between \[\Sigma=\bigcup_{\EE\subset V_2\setminus
\MM_2} \ovl\EE\] (the union is taken over all components in
$V_2\setminus \MM_2$) and
\[\bigcup_{E\in \intr (K_B)\setminus\{1/2\text{--limb}\}}\ovl E\]
(the union is taken over all components in $\intr(K_B)
\setminus\{1/2\text{--limb}\}$).

Furthermore, let $c(\phi)\in \MM\setminus \WW_B$ be the landing
point of a $\basil$ ray $\RR^{\phi}$. Then  the mating
$f_{c(\phi)}\mcup f_B$ is in $\Sigma$ and the image of
$f_{c(\phi)}\mcup f_B$ under \eqref{eq:ParDynIsomV2} is the landing
point of $R^{\phi}$.
\end{theorem}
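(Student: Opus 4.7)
I analyse $\Phi\colon a\mapsto \rho(h_a(-a))$ component by component on $\VV_2\setminus\MM_2$, extend continuously to the boundaries $\partial\EE$, and patch to a global homeomorphism. For $a\in\EE\ne\EE_\infty$, Proposition \ref{prop:Conj} extends $h_a$ uniquely along the tree of bubbles from $\infty$ to the bubble $E(a)\ni -a$; for $a\in\EE_\infty$ Proposition \ref{prop:Conj2} does the analogue. Local connectivity of $\partial E(a)$ (Proposition \ref{prop:ExtCompLocalConn}) then extends $h_a$ continuously to $\ovl{E(a)}$, so $\Phi$ is continuous on $\ovl\EE$; and because the bubble combinatorics are constant across $\EE$, $\Phi(\EE)$ sits in a single Fatou component $\rho(E^*)\subset\intr(K_B)$. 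Since every bubble of $g_a$ is a preimage of $E_\infty(g_a)$, its $h_a$-image is a preimage of $E_\infty(g_1)$, and $\rho$ sends these to preimages of $E_0(f_B)$ --- precisely the Fatou components on the $\{0,-1\}$-side of $\alpha(f_B)$, i.e.\ the complement of the $1/2$-limb in $\intr(K_B)$. Injectivity of $\Phi|_\EE$ is the classical parametrization of a capture component by the position of the critical value in its carrier bubble (see \cite{Re1}, \cite{Lu}), yielding a biholomorphism $\EE\to\rho(E^*)$.

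The main obstacle is surjectivity onto $\bigcup_E\ovl E$. The plan is to invoke Thurston's characterization theorem, as in \cite{Re1}, \cite{Tan}, \cite{Lu}: for every Fatou component $E\subset\intr(K_B)\setminus(1/2\text{-limb})$, one writes down a topological degree-two branched cover of $\sph$ with superattracting $2$-cycle and the free critical value placed at the centre of $\rho^{-1}(E)$; this topological capture of the Basilica has no Levy cycle (since $f_B$ has none), hence is Thurston-realised by a unique $g_a\in\VV_2$ which serves as the centre of a capture component $\EE$ with $\Phi(\EE)=\rho(E^*)$. Distinct components $\EE_1\ne\EE_2$ have distinct carrier bubbles, so $\ovl{\EE_1}\cap\ovl{\EE_2}$ can only meet at a $\basil$ Misiurewicz parameter whose image is the corresponding $\basil$-point of $\partial\rho(E^*_1)\cap\partial\rho(E^*_2)$; this passes injectivity to $\Sigma$. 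Continuous extension to the $\basil$ Misiurewicz boundaries follows from the local connectivity of $\partial E(a)$ together with triviality of parameter fibres at Misiurewicz parameters. Compactness of $\Sigma$ and Hausdorffness of the target then upgrade the continuous bijection $\Phi$ to a homeomorphism.

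For the furthermore statement, $c(\phi)\in\MM\setminus\WW_B$ is Misiurewicz with $\basil$-angle $\phi$ and so mateable with $f_B$ by \cite{Tan}; the resulting mating $a_\ast\in\VV_2$ is $\basil$ Misiurewicz, hence lies on the common boundary of two capture components and in $\Sigma$. In the dynamics of $g_{a_\ast}$ the critical value $-a_\ast$ is the landing point of the $\VV_2$-twin ray $B^\phi$; applying $\rho\circ h_{a_\ast}$ and using Definition \ref{defn:V2Dupl}, $\Phi(a_\ast)$ is the landing point of $R^{-\phi}$ in $J_B$ --- which, for the $\basil$ angle $\phi$, coincides with the landing point of $R^\phi$ since the Basilica ray pair $\{R^\phi,R^{-\phi}\}$ co-lands at a $\basil$ point of $J_B$.
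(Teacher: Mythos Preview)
The paper's own proof is a two-line appeal to known results: the hard case $a\in\ovl{\EE_\infty}$ is \cite[Theorem~C]{T}, and on the remaining capture components the map $a\mapsto\rho(h_a(-a))$ is holomorphic with nonvanishing derivative, so the implicit function theorem suffices; the furthermore clause ``follows from the definitions of $\VV_2$-twins and matings.'' Your attempted reconstruction is broadly reasonable in outline but has two concrete errors.

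First, $\Sigma$ is not compact; the paper says so explicitly after Lemma~\ref{lem:MatMisiurPolynTriplingCase} (``$\Sigma$ is neither closed nor open''). So ``compactness of $\Sigma$ \dots\ upgrades the continuous bijection to a homeomorphism'' fails as written. The fix is to argue on each compact $\ovl\EE$ separately and patch at the $\basil$ Misiurewicz parameters. Note also that for $\EE_\infty$ the continuous extension of $\Phi$ to $\partial\EE_\infty$ is not a consequence of local connectivity of the \emph{dynamical} boundary $\partial E_\infty(a)$ alone; you need the \emph{parameter} statement that $\partial\EE_\infty$ is a Jordan curve parametrised by the position of $-a$ on $\partial E_\infty(a)$, which is exactly Timorin's theorem and is what the paper cites.

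Second, your argument for the furthermore clause is wrong. Definition~\ref{defn:V2Dupl} is stated only for non-$\basil$ angles, so invoking it for $\basil$ $\phi$ is illegitimate; and your closing claim that $R^\phi$ and $R^{-\phi}$ co-land in $J_B$ for every $\basil$ angle is false. The first few $\basil$-points of $J_B$ lie on the real axis, where the claim happens to hold by conjugation symmetry, but the $f_B$-preimages of the real $\basil$-point with angles $\{5/12,7/12\}$ are purely imaginary; the angle pair landing at the upper one is $\{5/24,7/24\}$, whereas $-5/24=19/24$ lands at its complex conjugate. The correct argument traces the mating identification and the orientation conventions built into $\rho\circ h_a$ directly, with no co-landing claim needed.
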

\begin{proof}
 This theorem is well known. The (difficult) case
$a\in\ovl{\EE_\infty}$ follows from \cite[Theorem C]{T}. The case
$a\in\Sigma\setminus\ovl{\EE_\infty}$ follows from the implicit
function theorem (it is straightforward that
\eqref{eq:ParDynIsomV2} is invertible for
$a\in\Sigma\setminus\ovl{\EE_\infty}$).

The second part of the theorem follows from the definitions of
$\VV_2$--twins and matings.
\end{proof}
In particular, it follows from Theorem \ref{thm:ParDynIsomV2} that the boundary of a component in
$\VV_2\setminus \MM_2$ is a locally connected simple closed curve.

For a non-$\basil$ angle $\phi$ the pre-image of $B^\phi$ under
\eqref{eq:ParDynIsomV2} is the \textit{parameter ray in bubbles}
$\BB^\phi$. We say that $\BB^\phi$ is \textit{the parameter
counterpart} to $B^\phi$ and is the $\VV_2$-\textit{twin} of
$\RR^\phi$.

\subsection{Rational rays. Rationally accessible points. Ray portraits}
We say that a ray is \textit{rational} if it is either periodic or
pre-periodic. A point in the dynamical plane is \textit{rationally accessible} if
it is either a parabolic or repelling periodic point or a pre-image
of a parabolic or repelling periodic point. A parameter in $\VV_1$
or in $\VV_2$ is \textit{rationally accessible} if it is either a parabolic or
Misiurewicz parameter.

Let $x$ be a rationally accessible point of a map in $\VV_1$. Then the
\textit{ray portrait} $P_x$ of $x$ is the set of rays landing at
$x$. Similarly, we define the \textit{ray portrait} $P_x^B$ of a
rationally accessible point $x$ of a map in $\VV_2$.

We say that a ray portrait is a \textit{non--$\basil$} portrait if it does
not contain a $\basil$ ray.

The \textit{parameter ray portrait} $\PP_c$ of a rationally accessible
parameter $c\in \MM$ is the set of parameter rays landing at $c$.
Similarly, we define the \textit{parameter ray portrait} $\PP^B_a$
of a rationally accessible parameter $a$ in $\MM_2$. A portrait $\PP_c$ is
\textit{non-$\basil$} if it does not contain a $\basil$ ray. Note
that for $c\in\VV_1\setminus \WW_B$ any $\basil$ portrait contains exactly
one ray.

We will show later that the set of non-$\basil$ parameter portraits in
$\VV_1\setminus \WW_B$ is in bijection preserving angles with the set of parameter portraits in $\VV_2$
  (Theorem
\ref{thm:ParBubblRay}).

\subsection{Bubble puzzle and parapuzzle}
\label{subsect:BubblPuzzle} A \textit{bubble puzzle piece} $Y^B(a)$
is a closed topological disc in the dynamical plane of $g_a$ bounded
by rational landing rays in bubbles such that the
forward orbit of $\partial Y^B(a)$ under $g_a$ does not intersect
$\intr(Y^B(a))$. By analogy to $\VV_1$ we define $\lfloor
Y^B(a)\rfloor$ to be the union of all rays that form the boundary of
$Y^B(a)$; because there is no equipotential in $\VV_2$ we have
$\partial Y^B(a) \subset\lfloor Y^B(a) \rfloor.$

Two bubble puzzle pieces $Y^B(a')$, $Y^B(a'')$ are
\textit{combinatorially equivalent} (simply, $Y^B=Y^B(a')=Y^B(a'')$)
if the composition $h^{-1}_{a''}\circ h_{a'}$ extends to $\lfloor
Y^B(a') \rfloor$ and induces a homeomorphism between $\partial
Y^B(a')$ and $\partial Y^B(a'')$ except at finitely many points in
$\partial Y^B(a')$, $\partial Y^B(a'')$ that are the landing points
of rays in bubbles. Equivalently, there is a homeomorphism between
$\partial Y^B(a')$ and $\partial Y^B(a'')$ preserving the angles of
rays.

A closed disc $\YY^B$ bounded by rational landing parameter rays in
bubbles is a \textit{parameter bubble puzzle piece} if $\YY^B$ is a
parameter counterpart to a puzzle piece, denoted by $Y^B$, in the
following sense: Map \eqref{eq:ParDynIsomV2} induces a homeomorphism
between $\partial \YY^B$ and $\partial Y^B$ except finitely many
points as above.

We say that a bubble parapuzzle piece $\YY^B$ is the
$\VV_2$-\textit{twin} of a parapuzzle piece $\YY$ in $\VV_1$ if
$\partial \YY^B$ is combinatorially equivalent to  $\partial \YY$ in
the following sense: a pair $\BB^{\phi_1}$, $\BB^{\phi_2}$ of
landing together rays is in $\lfloor \YY^B\rfloor$ if and only if a
pair $\RR^{\phi_1}$, $\RR^{\phi_2}$ of landing together rays is in
$\lfloor \YY\rfloor$.  Note that the height of the equipotential truncating $\YY$
is irrelevant for the definition of $\YY^B$.

Similarly, the $\VV_2$-twins of dynamic puzzle pieces are defined.
\textit{We will use an upper index} ``$B$'' to indicate the
$\VV_2$-twins of puzzle or parapuzzle pieces.

\section{Landing of rays in $\VV_2$}
\label{sec:ThmParBubblRay}
The goal of this section is to prove Theorem \ref{thm:ParBubblRay}
which is a $\VV_2$--analogy of Theorem \ref{th:char_ray_pair}.

 Assume that rational rays $\BB^{\phi_1},$
$\BB^{\phi_2}$ land together. We denote by
$\YY^B(\phi_1,\phi_2)$ the parameter disc bounded by
$\BB^{\phi_1}$, $ \BB^{\phi_2}$ and not containing $0$; later we will show that $\YY^B(\phi_1,\phi_2)$
is a parapuzzle piece. Similarly,
$Y^B(\phi_1,\phi_2)$ is the puzzle piece bounded by $B^{\phi_1},$ $
B^{\phi_2}$  and not containing
$0$ provided that $B^{\phi_1},$ $
B^{\phi_2}$ land together and the forward orbit of $B^{\phi_1},$ $
B^{\phi_2}$ does not intersect $\intr(Y^B(\phi_1,\phi_2))$. By definition, $\YY^B(\phi_1,\phi_2)$  is the parameter
counterpart to $Y^B(\phi_1,\phi_2)$ if $Y^B(\phi_1,\phi_2)$ exists.

We say that two ray portraits are \emph{equivalent} if the angles of rays in these portraits are in bijection.
We say that two sets $S_1,S_2$ of ray portraits \textit{are in bijection preserving angles} if there is a bijection
$\omega:S_1\rightarrow S_2$ such that for every $t\in S_1$ the portraits $t, \omega(t)$ are equivalent.

\begin{theorem}[Correspondence theorem in $\VV_2$]
\label{thm:ParBubblRay} Every rational parameter ray in
bubbles lands. Furthermore:
\begin{itemize}
\item A rational ray in bubbles $B^\phi$
lands in the dynamical plane of $g_a$ if and only if $a\not\in \cup_{n>0} \BB^{2^n\phi}$.

\item If rational rays $\BB^{\phi_1},\BB^{\phi_2}$ land together, then
$a\in\intr(\YY^B(\phi_1,\phi_2))$ if and only if the dynamic rays
$B^{\phi_1}, B^{\phi_2}$ land together and $-a\in
\intr(Y^B(\phi_1,\phi_2))$.

\item Parameter non--$\basil$ ray portraits in $\VV_1\setminus\WW_B$ and
in $\VV_2$ are in bijection preserving angles.

\item If a rational non-$\basil$ ray $\RR^\phi$ lands at $c\in\MM\setminus \WW_B$, then $B^\phi$ lands
at $m(c)$, where the map $c\to m(c)$ is defined in Definition \ref{defn:Map_cTo_ac}.
\end{itemize}
\end{theorem}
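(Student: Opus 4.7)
The plan is to establish the four bullets in order, with the dynamical statements driving the parameter ones, and using the holomorphic conjugation $h_a$ of Propositions \ref{prop:Conj} and \ref{prop:Conj2} together with the homeomorphism $a\mapsto \rho(h_a(-a))$ of Theorem \ref{thm:ParDynIsomV2} as the main tools. First I would handle the second bullet as a formal consequence of Lemma \ref{lem:RayInBubbleExists} and the identity $g_a(-1)=-a$: the ray $B^\phi$ fails to exist for $g_a$ precisely when some $k$-th preimage of $-1$ lies on $B^\phi$, equivalently $-a\in g_a^{k+1}(B^\phi)=B^{2^{k+1}\phi}$, and by the very definition of parameter rays in bubbles via \eqref{eq:ParDynIsomV2} this means $a\in \BB^{2^{k+1}\phi}$ for some $k\ge 0$.

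The dynamical landing of rational rays comes in two steps. For $g_1$, since $\rho$ conjugates $g_1$ to $f_B$ and rational external rays of $f_B$ land (classical for polynomials with locally connected Julia set, and $J_B$ is locally connected), rays in bubbles for $g_1$ land; combined with Proposition \ref{prop:ExtCompLocalConn} this covers the base case. For an arbitrary $g_a$ with $a\notin \bigcup_{n>0}\BB^{2^n\phi}$, I would run a holomorphic motion / $\lambda$-lemma argument on the open set where $B^\phi$ exists: $h_a$ provides a holomorphic family of conjugations in a neighborhood of each compact piece of the ray, so the landing point moves holomorphically with $a$ and in particular lands for every admissible $a$.

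For parameter landing, the idea is to extend the homeomorphism $F:\Sigma\to\bigcup_E\ovl E$ of Theorem \ref{thm:ParDynIsomV2} continuously to the ideal endpoint of $B^\phi$. The parameter ray $\BB^\phi$ is by construction $F^{-1}(B^\phi\cap\text{image}(F))$; since $B^\phi$ lands at a rationally accessible point $p^\star\in \partial K_B\setminus\{\tfrac12\text{-limb}\}$, the accumulation set of $\BB^\phi$ in $\VV_2$ is a compact connected subset of $\partial \MM_2$. To show it is a single point, I would identify it using the combinatorial picture: by the third bullet (which I prove next from $F$ being a homeomorphism plus the $h_a$-conjugacy), nearby rational parameter rays that bound arbitrarily small combinatorial discs around this accumulation set trap it to a point. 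The third bullet itself is essentially tautological modulo this setup: $a\in\intr(\YY^B(\phi_1,\phi_2))$ iff $F(a)=\rho(h_a(-a))$ is in the corresponding dynamic region of $K_B$, and applying $h_a^{-1}$ translates this to $-a\in\intr(Y^B(\phi_1,\phi_2))$; simultaneous landing of $B^{\phi_1},B^{\phi_2}$ in the dynamical plane follows from holomorphic motion of landing points on the combinatorially constant region.

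For the last two bullets I would combine Theorem \ref{th:char_ray_pair} (the $\VV_1$ correspondence theorem) with the mating picture set up in Section \ref{sec:V2}. A non-$\basil$ parameter portrait $\PP_c=\{\RR^{\phi_i}\}$ landing at $c\in\MM\setminus\WW_B$ determines, via Theorem \ref{th:char_ray_pair}, a dynamic portrait at the critical value of $f_c$; matching it against the Basilica ray portrait at $\rho(h_{g_{m(c)}}(-m(c)))$ through the $\VV_2$-twin identification gives the corresponding parameter bubble portrait $\PP^B_{m(c)}=\{\BB^{\phi_i}\}$, and conversely any such $\VV_2$-portrait arises this way because the set of rationally accessible parameters in $\MM_2$ is surjectively parametrized by non-$\basil$ landing angles through the already-established parameter landing. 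The fifth bullet is then the specialization to a single ray, with $m$ defined precisely so that $m(c(\phi))$ is this common landing point. The main obstacle is the parameter-landing step: $F$ need not a priori extend continuously across $\partial\MM_2$, so one must supplement the bare homeomorphism with a combinatorial tightness argument using the third bullet in parallel, which is why I would prove bullets (2), (3), and parameter landing in an intertwined induction on depth rather than strictly sequentially.
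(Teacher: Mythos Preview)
Your proposal has a genuine structural gap at the parameter-landing step, and the sketch you give there does not close it.

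First, a smaller point: Lemma \ref{lem:RayInBubbleExists} tells you when $B^\phi$ \emph{exists} (i.e.\ when $h_a^{-1}$ extends along it), not when it \emph{lands}. An infinite ray in bubbles can exist and still accumulate on a nontrivial continuum in $J_a$. So your ``formal consequence'' handles only existence; landing needs the additional step the paper supplies at the very end of Subsection \ref{subsect:ProofOfLandThm}, namely that $\ovl{B^\phi}$ moves holomorphically off $\cup_n\ovl{\BB^{2^n\phi}}$ (Lemma \ref{lem:StabOfRaysFromLambdaLemma}) \emph{and} one verifies landing for at least one parameter in each component of the complement. The single base point $g_1$ is not enough: the set $\VV_2\setminus\cup_n\ovl{\BB^{2^n\phi}}$ has many components, and the paper seeds each one via matings (Lemmas \ref{lem:MatHYpPolyn}, \ref{lem:MatMisiurPolyn}, \ref{lem:MatMisiurPolynTriplingCase}).

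The central gap is parameter landing. Your plan is to trap the accumulation set of $\BB^\phi$ using ``nearby rational parameter rays that bound arbitrarily small combinatorial discs,'' refined by an ``intertwined induction on depth.'' This is circular: those nearby combinatorial discs are themselves bounded by parameter rays whose landing is exactly what is at stake, and there is no obvious well-founded depth parameter that makes the induction go (pre-period does not decrease under the passage to ``nearby'' rays needed to squeeze the accumulation set). The paper avoids this circularity by a completely different mechanism. For a pair $\RR^{\phi_1},\RR^{\phi_2}$ landing together in $\VV_1$, it defines the open set $\XX\subset\VV_2$ of parameters where $B^{\phi_1},B^{\phi_2}$ land together at a repelling point with $-a$ on the correct side, and runs a case analysis on $\partial\XX$ (Propositions \ref{prop:IntransCase} and \ref{prop:TransCase}). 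The key non-trivial inputs are: (i) Proposition \ref{prop:LandOfRaysAtRatPts}, guaranteeing that \emph{some} ray in bubbles lands at each rationally accessible point, which is what rules out the ``landing point runs away'' cases (B2), (B3), (D); and (ii) for periodic rays, a detailed parabolic analysis (Proposition \ref{prop:ParabolBirf}) pinning $\partial\XX$ down to $\BB^{\phi_1}\cup\BB^{\phi_2}$ together with a totally disconnected set and a single parabolic parameter $m(h)$, which forces both rays to land at $m(h)$. The transitive/intransitive split is essential here: in the intransitive (primitive) case one cannot exclude (B2) directly and must instead invoke the quadratic-like nests of Propositions \ref{prop:PrimSmallCopies}--\ref{prop:PrimSmallCopiesInV2} to get landing at the troublesome boundary parameter. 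None of this machinery appears in your outline, and the homeomorphism $F$ of Theorem \ref{thm:ParDynIsomV2} alone cannot substitute for it, since $F$ is only defined on $\Sigma$ and carries no a priori information about how $\Sigma$ accumulates on $\partial\MM_2$.
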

\begin{cor}
\label{cor:BubbleParAndPar} A bubble parapuzzle piece $\YY^B_t$
exists if and only if $\YY_t$ exists in $\VV_1\setminus\WW_B$. A
dynamic puzzle piece $X^B_n$ exists in $\intr(\YY^B_t)$ if and only
if $X_n$ exists in $\intr(\YY_t)$.
\end{cor}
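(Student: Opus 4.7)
The plan is to deduce the corollary directly from Theorem \ref{thm:ParBubblRay}, by unpacking existence of (para)puzzle pieces into statements about landing ray pairs and then invoking the ray-correspondence portion of the theorem.

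For the first statement I would observe that a parapuzzle piece is determined topologically by the collection of landing ray pairs forming its boundary; the equipotential arc contributes nothing to the $\VV_2$--twin construction, which records only ray-landing combinatorics. A parapuzzle piece $\YY_t$ existing in $\VV_1 \setminus \WW_B$ means that its defining parameter rays land together in the required pattern and the resulting disc is disjoint from $\WW_B$, so in particular the boundary portraits are non-$\basil$. The third bullet of Theorem \ref{thm:ParBubblRay} then matches each landing pair $\RR^{\phi_i}, \RR^{\phi'_i}$ with a landing pair $\BB^{\phi_i}, \BB^{\phi'_i}$ in $\VV_2$, and assembling these pairs yields the topological disc $\YY^B_t$. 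The converse direction simply reverses the bijection.

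For the dynamic statement I would apply the second bullet of the theorem pairwise. The assertion ``$X_n$ exists in $\intr(\YY_t)$'' unfolds to: for every $c \in \intr(\YY_t)$ the rays forming $\partial X_n$ land together pairwise in the dynamical plane of $f_c$, with $c$ lying in $\intr(X_n)$. The second bullet translates each such ray-pair landing statement into its $\VV_2$-counterpart, with $a \in \intr(\YY^B_t)$ replacing $c \in \intr(\YY_t)$ and $-a \in \intr(X^B_n)$ replacing $c \in \intr(X_n)$; this is exactly existence of $X^B_n$ in $\intr(\YY^B_t)$.

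The only substantive point to verify — and it is bookkeeping rather than a genuine obstacle — is that the non-$\basil$ hypothesis of Theorem \ref{thm:ParBubblRay} is automatically met, which follows because bubble rays $\BB^\phi$ and $B^\phi$ are defined only for non-$\basil$ angles; hence any piece admitting a $\VV_2$--twin has non-$\basil$ boundary, and conversely any $\YY_t$ considered in $\VV_1 \setminus \WW_B$ has non-$\basil$ boundary by construction. With this checked, the corollary is simply a reformulation of the theorem in the language of puzzle pieces.
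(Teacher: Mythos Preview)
Your approach is correct and essentially the same as the paper's: both deduce the corollary directly from Theorem \ref{thm:ParBubblRay} by decomposing a (para)puzzle piece into its constituent landing ray pairs. The paper compresses this into a single formula, writing a bubble parapuzzle piece as $\ovl{\YY^B(\phi_1,\phi_2)\setminus \bigl(\cup_i \YY^B(\psi_{1,i},\psi_{2,i})\bigr)}$ and then invoking the theorem, while you spell out the same decomposition in words; there is no substantive difference.
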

\begin{proof}
Follows from Theorem \ref{thm:ParBubblRay} and the observation that
a babble parapuzzle piece $\YY_t^B$ is of the form
\[\ovl{\YY^B(\phi_1,\phi_2)\setminus \left(\cup_i
\YY^B(\psi_{1,i},\phi_{2,i})\right)}.\]
\end{proof}

The proof of Theorem \ref{thm:ParBubblRay} is organized as follows.
We will first prove a general result that every rationally accessible point is
the landing point of at least one rational ray in a tree of
pre-images (Theorem \ref{thm:TreeOfPreim}). Then we will deduce that every rationally accessible point of a
map in $\MM_2\setminus\{\basil \text{ Misiurewicz parameters}\}$ is the landing point of at least one
rational ray in bubbles (Proposition \ref{prop:LandOfRaysAtRatPts}).

Further, thanks to mating constructions, Theorem
\ref{thm:ParBubblRay} holds for hyperbolic components and
Misiurewicz parameters (Subsection \ref{subsec:MatCaptur}).

We say that a rationally accessible point $x$ of a map in $\VV_2$ is
\textit{transitive} if all rays in bubbles landing at $x$ are in
the same grand orbit. Similarly, we define the transitivity for
rationally accessible points of maps in $\VV_1$.

A rationally accessible parameter $x$ in $\MM$ or in $\MM_2$ is
\textit{transitive} if the angles of the rays landing at $x$ are
in the same grand orbit under the doubling map.

 A rational ray (parameter or dynamic) is \textit{transitive} (resp.\ \textit{intransitive}) if it lands at a transitive
 (resp.\ non-transitive or \textit{intransitive}) point.

In Subsection \ref{subsect:IntrCase} we will prove Theorem
\ref{thm:ParBubblRay} for the transitive case. This will allow us
to consider partitions of $\VV_2$ by transitive rational rays.
Using this we will show that Theorem \ref{thm:ParBubblRay} holds for
the intransitive case.

\subsection{Trees of pre-images}
\label{subsec:TreeOfPreIm}
Trees of pre-images are used in
\cite{nek:book} to give a symbolic description of the Julia sets of
sub-hyperbolic maps.

Consider a rational map $g:\widehat{\C}\to\widehat{\C}$ with
degree $d\ge2$ and let $P_g$ be the postcritical set of $g$. Choose
a base point ``$*$'' in $\widehat{\C}\setminus P_g$; for simplicity, let us assume that $*$ is not periodic. Let
\[V=\bigcup_{n\ge0}g^{-n}(*)\]
be the set of pre-images of $*$. If $v\in g^{-n}(*)$, then we say that the \textit{depth} of $v$ is $n$.
For each $*_i$ in $g^{-1}(*)$ choose a curve $\gamma_i$ in $\widehat{\C}\setminus P_g$ connecting
$*$ and $*_i$. Define
\[E=\bigcup_{n\ge0}g^{-n}(\cup_{i=0}^{d-1}\gamma_i)\]
to be the set of pre-images of $\cup_{i=0}^{d-1}\gamma_i$. The
\textit{tree of pre-images} $T=T(*,\{\gamma_i\})$ consists of the
vertex set $V$ and the edge set $E$. It follows that $T$ is a $d$--rooted tree.

A \textit{ray} $r$ in $T$ is an infinite geodesic
$\cup_{n\ge0}\gamma^{(n)}$ starting at $*$, where $\gamma^{(n)}$ is an edge in $T$. The \textit{image} of
$r$ is the ray $g(r)=\cup_{n\ge0}\gamma^{(n+1)}$. A ray $r$ is
\textit{periodic} if $g^p(r)=r$ for some $p>0$.

\begin{theorem}
\label{thm:TreeOfPreim} Every periodic (resp.\ preperiodic) rationally accessible point is the landing
point of at least one periodic (resp.\ preperiodic) ray in $T$.
\end{theorem}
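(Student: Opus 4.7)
The plan is to reduce to the periodic case and then, for a periodic rationally accessible point $p$ of period $q$, to construct a periodic ray in $T$ landing at $p$ by iterating a contracting inverse branch of $g^q$ on a suitably chosen piece of $T$. For a preperiodic point $p'$ with $g^k(p')=p$ periodic, once the periodic ray $r$ landing at $p$ is built, the $g^{-k}$-lift of the tail of $r$ taking $p$ to $p'$, concatenated with the unique $T$-path from $*$ to the corresponding depth-$k$ vertex of $g^{-k}(*)$, produces a preperiodic ray in $T$ landing at $p'$.

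Assume $p$ is periodic of period $q$ and is either repelling or parabolic. Then there exist a simply connected neighborhood $D$ of $p$ with $D\cap P_g\subseteq\{p\}$ and an analytic inverse branch $h\colon D\to D$ of $g^q$ with $h(p)=p$, $h(D)\Subset D$, and $\bigcap_{k\ge0}h^k(D)=\{p\}$ (concretely, a Koenigs linearization disc in the repelling case, an attracting petal in the parabolic case). The key observation is that $h$ sends edges of $T$ lying in $D$ to edges of $T$, shifting level by $q$: if $\gamma$ is an edge of $T$ at level $n$, i.e.\ a component of $g^{-n}(\gamma_i)$ for some $i$, then $g^q(h(\gamma))=\gamma$ identifies $h(\gamma)$ with a component of $g^{-(n+q)}(\gamma_i)$, hence with an edge of $T$ at level $n+q$.

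Next the goal is to locate, for some sufficiently large $N$, a vertex $v\in V\cap D$ of depth $N$ for which the unique $T$-geodesic $\eta$ from $v$ to $h(v)$ (of length $q$) is entirely contained in $D$. Existence of vertices of $V$ inside $D$ is immediate from density of $V$ in $\widehat{\mathbb C}$ (since $*$ is not exceptional for a rational map of degree $\ge 2$), but forcing $\eta\subset D$ is the main technical point: it follows from the fact that, by expansivity of $g^q$ near $p$ (away from $P_g$) together with Koebe's distortion theorem, the diameters of edges of $T$ of level $\ge N$ intersecting any fixed compactum in $D$ tend to $0$ as $N\to\infty$, so for $N$ large enough all $q$ edges between $v$ and $h(v)$ remain inside $D$. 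The parabolic case is handled analogously by replacing the Koenigs coordinate by a Fatou coordinate on the petal.

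Once $v$ and $\eta\subset D$ are chosen, let $\eta_0$ be the unique $T$-path from $*$ to $v$ and set
\[
r \;:=\; \eta_0\cdot \eta\cdot h(\eta)\cdot h^2(\eta)\cdot \cdots .
\]
By the invariance of $T$ under $h$, each $h^k(\eta)$ is a $T$-path of length $q$ from $h^k(v)$ to $h^{k+1}(v)$ sitting inside $h^k(D)$; consequently $r$ is an infinite geodesic in $T$ starting at $*$, i.e.\ a ray. Since $h^k(D)\to\{p\}$, the ray $r$ lands at $p$, and since $g^q(h^{k+1}(\eta))=h^k(\eta)$ for all $k\ge 0$, $r$ is periodic of period $q$ past the initial transient $\eta_0$. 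The preperiodic case then follows by the reduction described at the outset. The main obstacle is the edge-diameter estimate used to secure $\eta\subset D$; once that is in place the rest of the argument is a formal iteration of $h$ on the tree.
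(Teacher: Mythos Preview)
There is a genuine gap in the step where you assert that the $T$-geodesic $\eta$ from $v$ to $h(v)$ has length $q$. This is equivalent to claiming that $h(v)$ is a \emph{descendant} of $v$ in the rooted tree $T$, and that is not automatic for an arbitrary choice of $v\in V\cap D$. Encode each vertex of depth $n$ by the word $i_1\cdots i_n$ recording which $\gamma_{i_k}$ the $k$-th edge on the path from $*$ is a preimage of. Then the map $g$ acts on $T$ as the \emph{left} shift $i_1 i_2\cdots i_n\mapsto i_2\cdots i_n$, whereas the parent map is the \emph{right} shift $i_1\cdots i_n\mapsto i_1\cdots i_{n-1}$. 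Consequently, if $v$ has word $i_1\cdots i_N$, then $h(v)$ has word $j_1\cdots j_q\, i_1\cdots i_N$ (the prefix $j_1\cdots j_q$ determined by the branch $h$), and its depth-$N$ ancestor has word $j_1\cdots j_q\, i_1\cdots i_{N-q}$, which equals $v$ only when $i_1\cdots i_N$ is periodic of period $q$ with the correct initial block. For a generic $v\in D$ this fails; the tree geodesic from $v$ to $h(v)$ then climbs back toward $*$ before descending, so it is much longer than $q$ and will certainly leave $D$. Your edge-diameter estimate, even if granted, does not address this: making individual edges small does not prevent the geodesic from being long or from exiting $D$. As a result the concatenation $\eta_0\cdot\eta\cdot h(\eta)\cdots$ is neither well-defined (since $h$ need not be defined on all of $\eta$) nor a geodesic in $T$.

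The paper fixes exactly this point, and the fix is short: by Montel's theorem there is an $m$ with $g^{pm}(U)\supset T$, so the inverse branch $h=(g^{pm}|_U)^{-1}$ is defined on \emph{all of $T$}. Setting $v:=h(*)\in U$ (a depth-$pm$ vertex) and $\alpha_0$ the $T$-geodesic from $*$ to $v$, the curve $\alpha_1:=h(\alpha_0)$ lies in $U$, is a path in $T$ of length $pm$ from $h(*)=v$ to $h(v)$, and hence is automatically the geodesic from $v$ to $h(v)$; in particular $h(v)$ is a descendant of $v$. Iterating gives a periodic ray landing at the point, with no edge-diameter estimate needed. In your framework, the missing idea is precisely this passage to a higher iterate via Montel so that the contracting branch is defined on the whole initial geodesic, not just on a small disc.
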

\begin{proof}
Let $x$ be a periodic point with period $p\ge1$. Let us suppose that
$x$ is a repelling point. Then there is a neighborhood $U$ of $x$ so
that the dynamics $g^p$ in $U$ is conjugate to $z\to \lambda z$,
where $|\lambda|>1$. By Montel's theorem there is an $m>0$ such that
$g^{pm}(U)$ contains $T$. It follows that $U$ contains at least one
vertex $v\in V$ with depth $pm$.

Let $\alpha_0$ be the geodesic in $T$ starting at $g^{pm}f(v)=*$ and
ending at $v$. Define inductively $\alpha_n$ to be the lift of
$\alpha_{n-1}$ under \[g^{pm}:U\to g^{pm}(U)\supset T\] to the end
of $\alpha_{n-1}$. It follows that $\alpha_i\subset U$ for $i>0$ and
the diameter of $\alpha_i$ tends exponentially to $0$ because of
local expansion at $x$. We conclude that $\cup_{i\ge0}\alpha_i$
lands at $x$; it is obvious that $\cup_{i\ge0}\alpha_i$ is a
periodic ray ($\alpha_i$ is a preimage of $a_{i-1}$).

If $x$ is a parabolic point, then choose $U$ to be a repelling petal
attached to $x$. The same argument as above shows that $x$ is the
landing point of a periodic ray.

Assume that $x$ is a preperiodic rationally accessible point. Then for some $n>0$
the point $g^{n}(x)$ is a periodic rationally accessible point. We have just
proved that there is a periodic ray $r$ in $T$ landing at
$g^{n}(x)$. The lift of $r$ to $x$ is a preperiodic ray landing at
$x$.
\end{proof}

Recall that a rationally accessible point is non--$\basil$ if it is not a preperiodic point that is on the boundary of a bubble.

\begin{proposition}
\label{prop:LandOfRaysAtRatPts} Let $g_a$ be a map in $\MM_2\setminus\{\basil\text{ Misiurewicz parameters}\}.$
Then every periodic (resp.\ preperiodic) rationally accessible non--$\basil$ point is the
landing point of at least one periodic (resp.\ preperiodic) ray in bubbles.
\end{proposition}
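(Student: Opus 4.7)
The plan is to adapt the argument used for Theorem \ref{thm:TreeOfPreim}, substituting the abstract tree of preimages by the concrete tree $T=T(a)$ of rays in bubbles. Recall that $T$ is a connected subset of $\widehat\C$ containing $\infty$, the $\alpha$-fixed point, and every preimage of $\gamma_0\cup\gamma_\infty$ under iteration of $g_a$. Because $a$ is not a $\basil$ Misiurewicz parameter, all the univalent inverse branches of $g_a$ that we will need are defined on their domains and preserve $T$; in particular, the finite rays in bubbles fit together along $T$ without running into the critical point $-1$.

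Next I would handle the periodic case. Let $x$ be a periodic rationally accessible non-$\basil$ point of minimal period $p$, and choose a linearization neighborhood (or a repelling petal in the parabolic case) $U$ of $x$ for $g_a^p$. By Montel's theorem, for every sufficiently large $m$ the image $g_a^{pm}(U)$ omits at most three points of $\widehat\C$, so it contains $\infty$ together with the initial segments of $T$ emanating from $\infty$. Pick the preimage $v\in U$ of $\infty$ under the univalent branch $g_a^{pm}|_U$; this $v$ is a pole of $g_a^{pm}$ and hence a vertex of $T$. Let $\alpha_0\subset T$ be the geodesic from $\infty$ to $v$ — a finite ray in bubbles. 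Define $\alpha_k$ recursively as the lift of $\alpha_{k-1}$ by the contracting univalent branch of $g_a^{-pm}$ on $g_a^{pm}(U)$, attached to the endpoint of $\alpha_{k-1}$. The local expansion of $g_a^p$ at $x$ forces $\operatorname{diam}(\alpha_k)\to 0$ geometrically, so the concatenation $R=\bigcup_{k\ge 0}\alpha_k$ is a continuous simple curve from $\infty$ landing at $x$. Each $\alpha_k$ lies in $T$, since $T$ is stable under the relevant univalent inverse branches of $g_a$, so $R$ is an infinite geodesic in $T$ starting at $\infty$ — that is, an infinite ray in bubbles that lands at $x$.

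Periodicity follows from $g_a^{pm}(\alpha_k)=\alpha_{k-1}$ for $k\ge 1$: both $R$ and $g_a^{pm}(R)$ are rays in bubbles landing at the periodic point $g_a^{pm}(x)=x$, and since only finitely many rays in bubbles can land at a repelling or parabolic periodic point (by the standard expansion argument), $g_a^{pm}$ permutes these rays and some iterate $g_a^{pmq}$ fixes $R$, making the angle of $R$ periodic under doubling. The preperiodic case reduces to the periodic one: if $g_a^j(x)$ is periodic, pull back a periodic ray in bubbles landing at $g_a^j(x)$ via the univalent branch of $g_a^{-j}$ along the backward orbit of $g_a^j(x)$ ending at $x$; this branch exists because $x$ is non-$\basil$ and the orbit of $x$ avoids $-1$, again thanks to the hypothesis on $a$.

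I expect the main obstacle to be verifying that the curve $R$ just built really is a ray in bubbles in the precise sense of Subsection \ref{subsec:RayInBubble}, i.e.\ that the required extension of $h_a^{-1}$ along the matching geodesic in $T(1)$ is not blocked. This is exactly the role of excluding $\basil$ Misiurewicz parameters: it guarantees that iterated preimages of $\gamma_0\cup\gamma_\infty$ do not hit the critical point $-1$, so that Lemma \ref{lem:RayInBubbleExists} applies and $R$ qualifies as a bona fide ray in bubbles.
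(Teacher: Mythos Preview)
Your approach differs substantively from the paper's, and it has a genuine gap that you do not identify correctly.

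The paper does \emph{not} run the argument of Theorem~\ref{thm:TreeOfPreim} directly in the geometric tree $T(a)$. Instead it applies Theorem~\ref{thm:TreeOfPreim} to the abstract binary tree of preimages $T(*,\gamma_0,\gamma_1)$ with a generic base point $*$ near $\infty$, obtaining a periodic ray $R$ (in that combinatorial tree) landing at $x$. It then pushes $R$ to the plane of the hyperbolic map $g_1$ via $h_a$, reads off the sequence of bubbles $E(i)$ that $h_a(R)$ visits, picks a genuine ray in bubbles $B^\phi$ for $g_1$ landing at the same point $x'$, and finally shows $h_a^{-1}(B^\phi)$ still lands at $x$ by a modulus argument (annuli $A(i)$ of definite modulus around discs $D(i)\supset B^\phi\setminus h_a(R)$). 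The detour through $g_1$ is what produces an honest ray in bubbles and what makes periodicity automatic.

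Your shortcut hides a real difficulty. The concatenation $R=\bigcup_k\alpha_k$ is certainly a path in $T(a)$, but there is no reason it should be an \emph{infinite geodesic} in $T(a)$, which is the definition of an infinite ray in bubbles. Concretely: $\alpha_0$ enters the bubble $E_v$ through a unique $\basil$ point $q$ and ends at $v$; the lift $\alpha_1$ leaves $v$ along the lift of the \emph{first} internal ray of $\alpha_0$, landing at some $\basil$ point $q'\in\partial E_v$. Nothing prevents $q'=q$, in which case $\alpha_0\#\alpha_1$ backtracks. (In the paper's combinatorial tree this cannot happen because depth is strictly increasing along $\alpha_0\#\alpha_1$; the geometric tree $T(a)$ has no such grading.) You could repair this by straightening $R$ to a geodesic $R'$ and arguing that $R'$ still lands at $x$, but you do not do so, and your stated ``main obstacle'' (extension of $h_a^{-1}$) is not the issue: under the hypothesis on $a$ that extension is automatic.

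Your periodicity argument also needs work. The claim that $g_a^{pm}(R)$ is again a ray in bubbles is unjustified: $g_a^{pm}$ has critical points at $\infty$ and at centers of bubbles, which lie on $R$, so applying $g_a^{pm}$ can fold a geodesic in $T(a)$ into a non-simple curve. In the paper's route this never arises: periodicity of $B^\phi$ is inherited from periodicity of $x'$ in the hyperbolic model $g_1$, where all rays landing at a periodic point are periodic.
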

\begin{proof}
Consider a rationally accessible point $x\in J_a$. The case when $x$ is on the
boundary of a bubble follows from local connectivity of the boundary of a bubbles
(Proposition \ref{prop:ExtCompLocalConn}). Let us assume that $x$ is
not on the boundary of a bubble.

Choose a base point $*$ close to $\infty$. Then two pre-images
$g_a^{-1}(*)$ are close to $0$ and $-2$. Denote by $E_{\infty},
E_0, E_{-2}$ the bubbles containing $\infty,0,-2$
respectively. Choose connections
\[\gamma_0\subset \ovl{E_\infty}\cup \ovl{E_0} \text{ and } \gamma_1\subset\ovl{E_\infty}\cup \ovl{E_{-2}}\] from
$*$ to $g_a^{-1}(*)$ so that $\gamma_0,\gamma_1$ coincide with rays
in bubbles except in small neighborhoods of $\{\infty\}$, $ \{0\}$,
and $\{-2\}$. By Theorem \ref{thm:TreeOfPreim} there is a ray $R$ in
$T=T(*,\gamma_0,\gamma_1)$ that lands at $x$. We will now show that there is a ray in bubbles that is ``close''
to $R$ and landing at $x$.

Consider $h_a$ as in Proposition \ref{prop:Conj}; by the assumption
on $g_a$ the map $h_a$ extends to
\[h_a:\bigcup_{E\subset\Omega(a)}\ovl E\to \bigcup_{E\subset\Omega(1)}\ovl E,\]
where the unions are taken over all components in $\Omega(a)$ and
$\Omega(1)$ respectively, and $h_a$ is a homeomorphism over $\cup_{E\subset\Omega(1)}E\cup\{\basil$-Misiurewicz parameters$\}$. Then $h_a(T)$ is a tree of preimages in
the dynamical plane of $g_1$. As $g_1$ is hyperbolic (it is
conjugate to the Basilica polynomial) all rays in $h_a(T)$ lands (see more details in \cite{nek:book}).
Denote by $x'$ the landing point of $h_a(R)$; it is clear that $x'$
is rationally accessible and is not on the boundary of a bubble; otherwise $x$
would also be on the boundary of a bubble.

 By definition, $h_a(R)$ is of the form $\cup_i \alpha_i$ such that
$\alpha_i$ connects vertices, say $*_{i-1}$ and $*_{i}$, of $h_a(T)$
with depth $i-1$ and $i$. Denote by $E(i)$ the bubble containing
$*_i$.  As $h_a(R)$ is rational
there is $m>0$ and $p>0$ such that for all $n>m$ the map $g_1^p$ maps
$E(n+p)$ onto $E(m)$. It follows that every bubble is at most finitely many times
in the sequence $E(0),E(1),E(2),\dots$; otherwise $x'$ would be on
the boundary of a bubble that is infinitely many times in the above sequence.

 Let $B^\phi$ be a ray in bubbles
landing at $x'$; this ray exists by the hyperbolicity of $g_1$. It
is clear that $B^\phi$ is in $\cup_i \ovl{E(i)}$. Moreover, we can
choose a disc $D(i)\subset E(i)$ and an annulus $A(i)\subset E(i)$ such that $B^\phi$
is in $h_a(R)\cup_i D(i)$, the annulus $A(i)$ surrounds $D(i)$, and
the modulus of $A(i)$ is at least a fixed $\varepsilon
>0$. Indeed, we choose some $D(1),\dots D(m)$ and $A(1),\dots A(m)$ and
we define $D(n+p)$, resp.\ $A(n+p)$ to be a conformal pullback of
$D(n)$, resp.\ $A(n)$.

Let us show that $h_a^{-1}(B^\phi)$ lands at $x$. By construction,
\[h_a^{-1}(B^\phi)\subset R\cup_i h^{-1}_a(D(i))\] so it is enough to
show that $diam (h^{-1}_a(D(i))$ tends to $0$.

Observe that the distance between $h^{-1}_a(D(i))$ and $J_a$ tends to
$0$; otherwise $\widehat{\C}$ would have an infinite area. But
$h^{-1}_a(D(i))$ is separated from $J_a$ by $h^{-1}_a(A(i))$ with
modulus at least $\varepsilon>0$. This implies that the diameter of
$h^{-1}_a(D(i))$ tends to $0$.
\end{proof}

\subsection{Properties of dynamic rays in bubbles}

Recall that periodic and preperiodic points of $g_a$ are solutions
of algebraic equations, so these points depends holomorphically on
$a$ unless they collide with other periodic or pre-periodic points
(the parabolic case).

\begin{lemma}[Repelling ray portraits are stable]
\label{lem:StabOfPerOrb}
 Let $B^{\phi}$ be a rational ray in bubbles landing at $x(a')$ such that
 $x(a')$ is a periodic or preperiodic repelling point of $g_{a'}$, then there is a neighborhood $\VV$ of the
parameter $a'$ such that for all $a\in\VV\setminus \cup_{i\ge1}
\BB^{2^i\phi}$ the ray $B^\phi$ lands at $x(a)$.
\end{lemma}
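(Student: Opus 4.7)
The plan is to combine holomorphic persistence of the repelling (pre)periodic point $x$ with a pullback construction of $B^\phi$ using the conjugacy $h_a$. This is analogous to the classical stability of polynomial ray portraits, with Proposition \ref{prop:Conj2} and the parametrization of Theorem \ref{thm:ParDynIsomV2} playing the role of the standard B\"{o}ttcher coordinate.

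First I would observe that, since $x(a')$ is a (pre)periodic repelling point of period $p$, the implicit function theorem extends it to a holomorphic family $a\mapsto x(a)$ of repelling (pre)periodic points on some neighborhood $\VV_0$ of $a'$; after possibly shrinking $\VV_0$, holomorphic linearizing coordinates $\psi_a$ identify a disc $U(a)\ni x(a)$ with a Euclidean disc on which $g_a^p$ is multiplication by $\lambda(a)$, with $|\lambda(a)|>1$ depending holomorphically on $a$. Since $\phi$ is rational, $B^\phi$ is eventually periodic under $g_{a'}$: for some $k\ge 0$ and $q\ge 1$ (both multiples of $p$) one has $g_{a'}^{k+q}(B^\phi)\subset g_{a'}^k(B^\phi)$, and the intersection $B^\phi\cap U(a')$ contains a tail $\tau_{a'}$ with $g_{a'}^q(\tau_{a'})\supset\tau_{a'}$. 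Transporting $\tau_{a'}$ through $\psi_a\circ\psi_{a'}^{-1}$ produces a holomorphic family $\tau_a\subset U(a)$ of segments landing at $x(a)$ and invariant in the same sense under $g_a^q$.

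Next, I would extend $\tau_a$ to the full ray in bubbles by inverse iteration. The ray $B^\phi$ for $g_{a'}$ decomposes, as in Subsection \ref{subsec:RayInBubble}, as a concatenation of bubble pieces terminating in the cyclic tail $\tau_{a'}$; each preceding bubble piece is obtained as a specific preimage branch under some $g_{a'}^j$. By Lemma \ref{lem:RayInBubbleExists} and Proposition \ref{prop:Conj2}, this pullback procedure produces $B^\phi$ for $g_a$ provided $-1$ lies neither on $B^\phi$ nor on any earlier stage of the pullback chain, equivalently provided $-a\notin B^{2^i\phi}$ for every $i\ge 1$. The homeomorphism $a\mapsto \rho(h_a(-a))$ of Theorem \ref{thm:ParDynIsomV2} converts this dynamical condition into the parameter condition $a\notin \bigcup_{i\ge 1}\BB^{2^i\phi}$, which is exactly the hypothesis of the lemma. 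Thus for $a\in \VV_0\setminus\bigcup_{i\ge 1}\BB^{2^i\phi}$ the ray $B^\phi$ exists for $g_a$, its tail is $\tau_a$, and hence it lands at $x(a)$.

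The main obstacle will be verifying that this pullback depends continuously on $a$ along the whole (infinite) chain of bubble pieces, rather than only on each fixed finite stage. The uniform expansion $|\lambda(a)|>1$ from the linearization supplies exponential contraction of the successive preimages of $\tau_a$ toward $x(a)$, so outside the linearization disc $U(a)$ only finitely many preperiodic bubble pieces remain, and continuity in $a$ of the extension of $h_a$ along a fixed curve (Proposition \ref{prop:Conj2}) takes care of those. Shrinking $\VV_0$ further to $\VV$ so that these finitely many pieces move continuously and stay disjoint from $-1$, the claim follows.
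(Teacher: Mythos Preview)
Your proposal is correct and follows essentially the same strategy as the paper: linearize at the repelling (pre)periodic point, verify that a piece of $B^\phi$ persists inside the linearization disc for nearby parameters avoiding $\cup_{i\ge 1}\BB^{2^i\phi}$, and conclude landing from local expansion. The paper's version is somewhat more direct: instead of transporting a tail segment $\tau_{a'}$ through $\psi_a\circ\psi_{a'}^{-1}$, it simply notes that some preimage $b(a')$ of $\infty$ (a bubble center on $B^\phi$) lies in $U(a')$, and this preimage moves holomorphically with $a$ so long as the relevant bubbles exist, i.e.\ so long as $a\notin\cup_{i\ge 1}\BB^{2^i\phi}$; once $b(a)\in U(a)$, expansion forces $B^\phi(a)$ to land at $x(a)$. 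One small caution about your transport step: the curve $\psi_a\circ\psi_{a'}^{-1}(\tau_{a'})$ is not \emph{a priori} a piece of $B^\phi(a)$, since $B^\phi(a)$ is defined through $h_a$, not through the linearizers $\psi_a$; what you actually need (and what your final paragraph supplies) is that a \emph{finite} initial portion of $B^\phi(a)$ depends continuously on $a$ and hence still enters $U(a)$, after which expansion takes over. With that clarification the argument is complete and matches the paper's.
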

\begin{proof}
Assume that $x(a)$ is a repelling periodic point with period $p$ and
let $U(a)$ be a neighborhood of $x$ so that the dynamics $g_a^p$ is
conjugate to $z\to \lambda_a z$ in $U(a)$. For $a=a'$ there is a
pre-image $b(a')$ of $\infty$ such that $b(a')\in B^\phi\cap U(a')$.
The last property is stable under small perturbation unless $a\in \cup_{i\ge1} \BB^{2^i\phi}$.
Therefore, if $a\not\in  \cup_{i\ge1} \BB^{2^i\phi}$ is close to
$a'$, then $B^\phi$ lands at $x(a)$ as there is a local expansion in
$U(a)\ni x(a)$.

The preperiodic case follows from the periodic case.
\end{proof}

\begin{lemma}
\label{lem:StabOfRaysFromLambdaLemma} For a rational angle $\phi$ the closure of $B^\phi$ depends holomorphically on $a$
in $\VV_2\setminus \cup_{i\ge1} \ovl{\BB^{2^i\phi}}$.
\end{lemma}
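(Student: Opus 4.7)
The plan is to write $\ovl{B^\phi(a)}$ as a concatenation of two pieces whose holomorphic dependence on $a$ comes from different sources, and then glue them together with a uniform convergence estimate at the landing point.

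First, I would fix $\phi=p/(2^s(2^t-1))$ rational and consider the set $\UU:=\VV_2\setminus \bigcup_{i\ge 1}\ovl{\BB^{2^i\phi}}$. By Lemma \ref{lem:RayInBubbleExists} and the observation that $a\in\ovl{\BB^{2^{i+1}\phi}}$ is exactly the condition ``$-a\in\ovl{B^{2^i\phi}}$'' (via Theorem \ref{thm:ParDynIsomV2} and the first two bullets of Theorem \ref{thm:ParBubblRay}), on $\UU$ all forward iterates $g_a^i(B^\phi)=B^{2^i\phi}$ exist and avoid the free critical point $-1$. Hence the conjugacy $h_a$ of Propositions \ref{prop:Conj} and \ref{prop:Conj2} extends uniquely along $B^\phi(1)$, and $B^\phi(a)=h_a^{-1}(B^\phi(1))$ is well-defined and depends holomorphically on $a\in\UU$ on each compact subarc.

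Next I would treat the landing endpoint. Since $\phi$ is rational, $B^\phi$ lands at a periodic or preperiodic point $x(a)$ (by Theorem \ref{thm:TreeOfPreim} and Proposition \ref{prop:LandOfRaysAtRatPts} at a reference parameter, and by Lemma \ref{lem:StabOfPerOrb} this landing persists throughout $\UU$). Because periodic and preperiodic points are solutions of algebraic equations $g_a^{n+p}(x)=g_a^n(x)$ that remain repelling or parabolic on $\UU$ (the ray portrait at $x(a)$ is stable by Lemma \ref{lem:StabOfPerOrb}), $x(a)$ is a holomorphic function of $a$ on $\UU$.

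The remaining, and main, step is to upgrade these two holomorphic motions to a holomorphic motion of the entire closure $\ovl{B^\phi(a)}=B^\phi(a)\cup\{x(a)\}$, uniformly on compact subsets $K\Subset\UU$. I would run the argument from the end of the proof of Proposition \ref{prop:LandOfRaysAtRatPts}: writing $B^\phi(1)$ as a concatenation passing through bubbles $E(i)$, I choose small discs $D(i)\subset E(i)$ surrounded by annuli $A(i)\subset E(i)$ with $\operatorname{mod}(A(i))\ge \varepsilon_0>0$, obtained inductively as conformal pullbacks of finitely many base data under $g_1^p$. Pulling back by the conformal map $h_a^{-1}$, the annuli $h_a^{-1}(A(i))$ separate $h_a^{-1}(D(i))\supset B^\phi(a)\cap E(i,a)$ from $J_a$ and still have modulus at least $\varepsilon_0$, so their diameters tend to $0$; combined with a Koenigs/Fatou linearization at the repelling (or a Fatou-flower chart at the parabolic) point $x(a)$, which depends holomorphically on $a$ on $\UU$, this gives uniform convergence of the tail of $B^\phi(a)$ to $x(a)$ on compact subsets of $\UU$. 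Holomorphic dependence of the tail plus the holomorphic landing point then yield holomorphic motion of $\ovl{B^\phi}$ on $\UU$.

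The main obstacle is exactly this last uniform-convergence step: I need the modulus lower bound $\varepsilon_0$ and the linearization radius at $x(a)$ to be locally uniform in $a\in\UU$, and I must verify that the parabolic case (where $x(a)$ has a neutral multiplier) does occur only where the Fatou-flower chart nevertheless controls the diameters of the annular nest. Once these uniform estimates are in place, the holomorphic motion extends across the landing point by the $\lambda$-lemma, concluding the proof.
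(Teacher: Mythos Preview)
Your approach is sound but vastly overcomplicated, and it reflects a misunderstanding of what the $\lambda$-lemma provides. The paper's proof is two sentences: by construction $B^\phi$ itself moves holomorphically on $\UU=\VV_2\setminus\cup_{i\ge1}\ovl{\BB^{2^i\phi}}$ (this is your first step, via $h_a$), and then the $\lambda$-lemma of Ma\~n\'e--Sad--Sullivan extends any holomorphic motion of a set to a holomorphic motion of its closure. That is the entire argument.

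The point you are missing is that the $\lambda$-lemma requires \emph{no} uniform estimates, no a priori landing, and no separate analysis of the endpoint. Once you know $B^\phi(a)$ moves holomorphically, the extension to $\ovl{B^\phi(a)}$ is automatic; in particular, since at a single hyperbolic reference parameter the closure adds exactly one point, the same is true throughout $\UU$, and that extra point moves holomorphically. Your steps 2 and 3 --- proving $x(a)$ is holomorphic via algebraic equations, and then establishing uniform convergence of the tail with annular moduli and linearizing charts --- are recovering by hand exactly what the $\lambda$-lemma gives for free. The ``main obstacle'' you flag (uniform moduli, parabolic linearization) is a genuine difficulty in a direct approach, but it simply does not arise in the paper's route. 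Your final sentence even invokes the $\lambda$-lemma, but only \emph{after} the uniform estimates; the correct use is to invoke it immediately after step 1 and stop.
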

\begin{proof}
By construction, $B^\phi$  moves holomorphically in $\VV_2\setminus \cup_{i\ge1} \ovl{\BB^{2^i\phi}}$. By the $\lambda$--lemma (see \cite{MC} for reference)
the motion of $B^\phi$ extends to its closure.
\end{proof}

\begin{lemma}
\label{lem:LandOfRaysFor}
Let $a$ be a parameter in $\MM_2$.
If $\ovl{\cup_{n\ge 0}g^n_a(-a)}$ does not intersect any ray in bubbles, then all rational rays in bubbles land
in the dynamical plane of $g_a$.
\end{lemma}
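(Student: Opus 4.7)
The plan is to split the conclusion into two independent steps: first, show that the hypothesis forces every rational ray in bubbles to exist for $g_a$; second, show that existence plus the hypothesis forces landing by a modulus/diameter contraction argument adapted from the proof of Proposition~\ref{prop:LandOfRaysAtRatPts}.

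\emph{Existence.} The family of rays in bubbles is $g_a^2$-invariant: applying $g_a$ shifts a geodesic in the bubble tree starting at $\infty$ to one starting at $0$, and applying $g_a$ once more returns a geodesic starting at $\infty$, namely another ray in bubbles. By Lemma~\ref{lem:RayInBubbleExists}, a rational ray $\gamma$ fails to exist for $g_a$ exactly when $\gamma$ contains $-1$ or one of its iterated preimages. If such a preimage $z$ (with $g_a^n(z)=-1$) lay on some rational ray in bubbles $\gamma$, then choosing $k\ge 0$ with $n+1+k$ even, the curve $g_a^{n+1+k}(\gamma)$ is itself a ray in bubbles and contains $g_a^k(-a)$; but this contradicts the hypothesis that $\overline{\bigcup_{k\ge 0}g_a^k(-a)}$ meets no ray in bubbles. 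Hence every rational ray in bubbles exists for $g_a$.

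\emph{Landing.} Fix a rational ray $B^\phi$ and write $B^\phi(1)=\bigcup_i \alpha_i$, where $\alpha_i$ is the segment lying in the $g_1$-bubble $E(i)$. Rationality of $\phi$ implies that $(E(i))_i$ is eventually periodic of some period $p$ under $g_1$. I would choose an initial family of discs $D(i)\subset E(i)$ enclosing $\alpha_i$ together with surrounding annuli $A(i)\subset E(i)$, and then propagate $(D(i),A(i))$ for large $i$ by conformal $g_1^p$-pullback, so that $\mathrm{mod}(A(i))\ge\varepsilon>0$ uniformly in $i$. By the existence step, $h_a^{-1}$ is defined on every $E(i)$; being conformal, it preserves these moduli. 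Exactly as at the end of the proof of Proposition~\ref{prop:LandOfRaysAtRatPts}, finite area of the Fatou set of $g_a$ forces $\mathrm{dist}(h_a^{-1}(D(i)),J_a)\to 0$, after which the uniform modulus lower bound on $h_a^{-1}(A(i))$ yields $\mathrm{diam}(h_a^{-1}(D(i)))\to 0$. Therefore $B^\phi(a)=\bigcup_i h_a^{-1}(\alpha_i)$ is a concatenation of shrinking arcs and lands at a point of $J_a$.

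The main obstacle is the bookkeeping in the existence step: to exploit the hypothesis one must carefully account for the fact that $g_a$ only preserves the family of rays in bubbles up to alternating the roles of $\infty$ and $0$, so one iterates by $g_a^2$ or chooses parities as above. The landing step is essentially a transcription of the modulus argument already present in the excerpt; the only remaining subtlety is that the extension of $h_a^{-1}$ across the $\basil$-points joining consecutive bubbles is automatic from existence, local connectivity of bubble boundaries (Proposition~\ref{prop:ExtCompLocalConn}), and Propositions~\ref{prop:Conj}--\ref{prop:Conj2}.
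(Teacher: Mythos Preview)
Your landing argument diverges from the paper's and has a real gap. The paper works directly in the $g_a$-plane: it equips $S=\widehat{\C}\setminus(\ovl{\bigcup_{n\ge0}g_a^n(-a)}\cup\{0,\infty\})$ with its hyperbolic metric, notes that $g_a\colon g_a^{-1}(S)\to S$ is a covering while the inclusion $g_a^{-1}(S)\hookrightarrow S$ contracts, and writes a periodic ray as $\bigcup_i\alpha_i$ with $g_a^p(\alpha_{i+1})=\alpha_i$. The hypothesis places $\alpha_2$ (hence every $\alpha_i$) inside $S$ with finite hyperbolic diameter, so Euclidean diameters tend to $0$. The decisive step is then that every accumulation point $x$ of the ray satisfies $g_a^p(x)=x$, because $g_a^p$ carries each shrinking arc onto the previous one; hence the connected accumulation set lies in the finite set $\{z:g_a^p(z)=z\}$ and is a single point.

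Your transcription of the modulus argument from Proposition~\ref{prop:LandOfRaysAtRatPts} breaks down in two places. First, each segment $\alpha_i\subset E(i)$ joins two $\basil$-points on $\partial E(i)$, so no disc $D(i)$ with a surrounding annulus $A(i)\subset E(i)$ can enclose it; in Proposition~\ref{prop:LandOfRaysAtRatPts} the discs only had to absorb the discrepancy between $B^\phi$ and an \emph{anchor} ray $R$ already known to land, and that anchor is exactly what is unavailable here. Second, and more fundamentally, even if you obtained $\mathrm{diam}\,h_a^{-1}(\alpha_i)\to0$, a concatenation of shrinking arcs need not land---it can oscillate indefinitely. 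You are missing precisely the fixed-point step the paper supplies; without it, or without an anchor ray, the argument does not close. Your existence step is essentially correct and in fact more explicit than the paper's own proof, which treats existence tacitly (for $a\in\MM_2$ a ray fails to exist only if $-1$ is a $\basil$-point, which forces the orbit of $-a$ through $\alpha(g_a)$ and hence onto rays in bubbles, contradicting the hypothesis).
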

\begin{proof}
By assumption the map \[g_{a}:
\widehat{\C}\setminus g_{a}^{-1}\left(\ovl{\cup_{n\ge 0}g^n_a(-a)}\cup{0}\cup\{\infty\}\right) \to \widehat{\C}\setminus \left(
\ovl{\cup_{n\ge 0}g^n_a(-a)}\cup{0}\cup\{\infty\}\right)
\] is a covering of hyperbolic surfaces while
\[ \widehat{\C}\setminus
g_{a}^{-1} \left(\ovl{\cup_{n\ge 0}g^n_a(-a)}\cup{0}\cup\{\infty\}\right)
\hookrightarrow \widehat{\C}\setminus \left(
\ovl{\cup_{n\ge 0}g^n_a(-a)}\cup{0}\cup\{\infty\}\right)
\] is a contracting inclusion.

Let $B^{\psi}$ be a periodic ray in bubbles. Then it is of the form $\cup_{i\ge0}
\alpha_i$ such that $\alpha_{i+1}$ is a pre-image of $\alpha_{i}$
under $g_{a_0}^{p}$. As $\alpha_2$ is disjoint from $\ovl{\cup_{n\ge 0}g^n_a(-a)}\cup{0}\cup\{\infty\}$
the curve $\alpha_2$ has a finite hyperbolic diameter in $S=\widehat{\C}\setminus \left(
\ovl{\cup_{n\ge 0}g^n_a(-a)}\cup{0}\cup\{\infty\}\right)$.
Hence the hyperbolic diameter of $\alpha_{i}$ in $S$ is bounded and, moreover, if $B^{\psi}$ does not accumulate at $
\ovl{\cup_{n\ge 0}g^n_a(-a)}$, then the hyperbolic diameter of $\alpha_{i}$ tends to $0$. This implies that the Euclidean diameter of
$\alpha_{i}$ tends to $0$. We conclude that every accumulation point
of $B^{\psi}$ is a solution of $g^p_{a_0}(z)=z$. As the set of solutions
is totally disconnected the accumulation set of $B^{\psi}$ is one point; i.e.
$B^{\psi}$ lands.

 The preperiodic case follows from the periodic
case.

\end{proof}

\subsection{Hyperbolic components and Misiurewicz parameters}
\label{subsec:MatCaptur}
 Let $\HH\subset\VV_1\setminus \WW_B$
be a hyperbolic component. The \textit{$\VV_2$-twin} $\HH^B$ of
$\HH$ is the set of matings of polynomials in $\HH$ with the
Basilica polynomial. It is known that all hyperbolic components of $\MM_2$ are $\VV_2$--twins of hyperbolic
components in $\VV_1$

For a hyperbolic component $\HH\in\MM$ parameters in $\ovl\HH$ are
parametrized by the multiplier $\lambda_\HH$ of the associated with
$\HH$ non-repelling periodic cycle. Similarly, $\HH^B$ is parametrized
by the \textit{multiplier} $\lambda_{\HH^B}$. The \textit{canonical homeomorphism}
\begin{equation}
\label{eq:V2TwinsOfMating} c\to m_\HH(c),\ \ \ \ c\in \ovl\HH,\
m_\HH(c)\in\ovl{\HH^B}
\end{equation}
 is defined so that it preserves the multiplier: $\lambda_\HH(c)=\lambda_{\HH^B}(m(c))$.

\begin{lemma}[The hyperbolic case]
\label{lem:MatHYpPolyn} Non--$\basil$ rational ray portraits of $c\in \HH$
and rational ray portraits of $m_\HH(c)\in\HH^B$ are in bijection preserving angles.
\end{lemma}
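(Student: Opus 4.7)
The plan is to deduce the bijection from the mating realization $g_{m_\HH(c)} = f_c \mcup f_B$ and then to track the correspondence between non-$\basil$ external rays of $f_c$ and rays in bubbles of $g_{m_\HH(c)}$.

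First I would invoke the classical hyperbolic mating theorem. For $c \in \HH \subset \VV_1\setminus\WW_B$ the polynomial $f_c$ is hyperbolic, $K_c$ is locally connected, and every rational external ray of $f_c$ lands. Since $\HH$ is not contained in the Basilica limb, the mating $f_c \mcup f_B$ has no Thurston obstruction, and by the results of Rees, Tan, and Luo (references [Re1], [Tan], [Lu] in the introduction) this mating is realized by a hyperbolic quadratic rational map in $\HH^B$. The canonical multiplier-preserving homeomorphism $m_\HH$ identifies this realization with $g_{m_\HH(c)}$.

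Next I would analyze how the formal mating identifications produce rays in bubbles. In the formal mating, $R_c^\phi$ and $R_B^{-\phi}$ are glued at their common landing point on $J_c\cup J_B$, and the superattracting $\{0,\infty\}$-cycle of $g_{m_\HH(c)}$ together with its preimages comes from the Fatou set of $f_B$. After conjugating by $\rho$ this is precisely the bubble structure encoded by the tree $T$ and the map $h_a$ of Section \ref{sec:V2}. The key observation is that Definition \ref{defn:V2Dupl} characterizes $B^\phi$ by the requirement that $\rho(B^\phi)$ and $R_B^{-\phi}$ share a landing point in $J_B$, and this is exactly the mating identification. Hence under the semi-conjugacy from the formal mating onto $g_{m_\HH(c)}$ the glued curve $R_c^\phi\cup R_B^{-\phi}$ maps to the ray in bubbles $B^\phi$, and its landing point in $J_{m_\HH(c)}$ is the mating image of the common landing point of $R_c^\phi$ and $R_B^{-\phi}$.

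With the ray-to-ray correspondence in hand, the bijection of portraits is immediate. If $\{R_c^{\phi_1},\dots,R_c^{\phi_k}\}$ is a non-$\basil$ ray portrait of $f_c$ landing at $z\in J_c$, then $\{B^{\phi_1},\dots,B^{\phi_k}\}$ is a ray portrait of $g_{m_\HH(c)}$ landing at the mating image of $z$, and the assignment $\phi_i\mapsto\phi_i$ preserves angles. Surjectivity follows from Proposition \ref{prop:LandOfRaysAtRatPts}: every rationally accessible non-$\basil$ point of $g_{m_\HH(c)}$ admits a nonempty ray-in-bubbles portrait, and pulling such a portrait back through the mating semi-conjugacy produces a non-$\basil$ portrait of $f_c$ at the corresponding point of $J_c$.

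The main obstacle will be step two: making rigorous the identification of the glued curve $R_c^\phi\cup R_B^{-\phi}$ (defined through the formal mating) with the ray in bubbles $B^\phi$ (defined through $\rho$ and $h_a$). This reduces to showing that $h_a$ agrees with the restriction of the mating semi-conjugacy to the union of bubbles; by the uniqueness statement in Proposition \ref{prop:Conj}, it suffices to observe that both maps conjugate $g_a$ to $g_1$ on $E_0(a)\cup E_\infty(a)$ with the normalization $h'_a(0)=h'_a(\infty)=1$, which is built into the mating construction. Once this is established, stability of repelling periodic points and their ray portraits throughout $\HH$ and $\HH^B$ (Lemma \ref{lem:StabOfPerOrb}) ensures the bijection does not depend on the chosen $c\in\HH$.
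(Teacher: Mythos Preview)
Your proposal is correct and follows the same approach as the paper: the paper's proof is the single line ``Follows from the mating construction as it preserves non-$\basil$ ray portraits,'' and what you have written is precisely a careful unpacking of that sentence. One minor remark: invoking Proposition~\ref{prop:LandOfRaysAtRatPts} for surjectivity is more than you need, since for hyperbolic $g_{m_\HH(c)}$ the mating already furnishes a genuine conjugacy on the Julia set, so every ray-in-bubbles portrait of $g_{m_\HH(c)}$ automatically pulls back to a non-$\basil$ portrait of $f_c$ without appealing to the tree-of-preimages argument.
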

\begin{proof}
Follows from the mating construction as it preserves non-$\basil$ ray portraits.
\end{proof}

\begin{cor}
\label{cor:HypCompAreDetermByRayPortr}
Different hyperbolic components in $\MM_2$ have different sets of the ray portraits of rationally accessible
points.
\end{cor}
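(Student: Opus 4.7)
The plan is to reduce the statement to the analogous, well-known fact about hyperbolic components of $\MM$ inside $\VV_1\setminus\WW_B$, and then transport it to $\VV_2$ via Lemma \ref{lem:MatHYpPolyn}. Recall that every hyperbolic component of $\MM_2$ is, by the results recalled before \eqref{eq:V2TwinsOfMating}, the $\VV_2$--twin $\HH^B$ of a unique hyperbolic component $\HH\subset\VV_1\setminus\WW_B$. Thus it suffices to show that the assignment $\HH\mapsto\HH^B$ is injective in a very strong sense: the set of non-$\basil$ rational ray portraits realized by dynamical planes of maps in $\HH$ already distinguishes $\HH$ from any other hyperbolic component in $\VV_1\setminus\WW_B$.

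First, I would extract a canonical ray portrait from $\HH$. Let $\{\phi_1,\phi_2\}$ be the external angles of $\HH$; these are rational, non-$\basil$ (since $\HH\not\subset\WW_B$), and land at the root of $\HH$ in parameter space. By Theorem \ref{th:char_ray_pair}, for every $c$ in the interior of the wake $\WW_\HH$ (in particular for every $c\in\HH$) the dynamical rays $R^{\phi_1},R^{\phi_2}$ land together at the characteristic periodic point of $f_c$, giving a genuine rational ray portrait $P=\{\phi_1,\phi_2\}$ realized in the dynamical plane. Two distinct hyperbolic components of $\MM\setminus\WW_B$ have distinct external angle pairs, so the portraits $P(\HH_1)$ and $P(\HH_2)$ are inequivalent whenever $\HH_1\neq\HH_2$.

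Now apply Lemma \ref{lem:MatHYpPolyn}: for $c\in\HH$, the non-$\basil$ rational ray portraits of $f_c$ are in angle-preserving bijection with the rational ray portraits of the $\VV_2$-twin $m_\HH(c)\in\HH^B$. Since $P(\HH)$ is non-$\basil$, it corresponds under this bijection to a rational ray portrait $P^B(\HH^B)$ of $m_\HH(c)$ with exactly the same angles $\{\phi_1,\phi_2\}$. Consequently, if $\HH_1^B\neq\HH_2^B$ then the corresponding angle pairs differ, and hence the sets of rational ray portraits of rationally accessible points realized in the dynamical planes of maps in $\HH_1^B$ and $\HH_2^B$ are different.

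There is essentially no obstacle beyond ensuring that the external angle pair of $\HH$ is indeed among the non-$\basil$ portraits to which Lemma \ref{lem:MatHYpPolyn} applies; this is immediate once $\HH\not\subset\WW_B$ because the only parameter rays with $\basil$ angles accumulate on $\WW_B$. A small cosmetic point is that the corollary speaks of ``sets of ray portraits'' while we distinguish $\HH_1^B$ from $\HH_2^B$ by exhibiting a single portrait that lies in one set but not the other; this is sufficient and is the direction in which the proof naturally runs.
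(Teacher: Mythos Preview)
Your approach is the same as the paper's: reduce to $\VV_1\setminus\WW_B$ via Lemma~\ref{lem:MatHYpPolyn} and invoke the corresponding fact for hyperbolic components of $\MM$. The paper simply cites that fact as ``well known''; you try to justify it via the characteristic angle pair, which is fine but leaves a small gap.

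The gap is the sentence ``the portraits $P(\HH_1)$ and $P(\HH_2)$ are inequivalent whenever $\HH_1\neq\HH_2$ \dots\ hence the sets of rational ray portraits \dots\ are different.'' Having $P(\HH_1)\neq P(\HH_2)$ does not by itself show that the \emph{sets} of all realized portraits differ: if $\HH_1\subset\WW_{\HH_2}$ then $P(\HH_2)$ is realized in the dynamical planes of maps in $\HH_1$ as well. What you need is the last clause of Theorem~\ref{th:char_ray_pair}: for periodic $\phi_1,\phi_2$, the dynamical rays $R^{\phi_1},R^{\phi_2}$ land together only for $c\in\WW(\phi_1,\phi_2)$. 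Since two distinct hyperbolic components cannot each lie in the other's wake, the characteristic portrait of at least one of them is absent from the dynamical planes of the other, and this distinguishes the sets. With that line added, your argument is complete.

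One cosmetic point: your justification that the external angles of $\HH$ are non-$\basil$ (``the only parameter rays with $\basil$ angles accumulate on $\WW_B$'') is not correct; $\basil$ rays land at Misiurewicz parameters throughout $\MM$. The right reason is simpler: external angles of a hyperbolic component are periodic, while $\basil$ angles $n/(3\cdot 2^k)$ with $n$ coprime to $6$ are strictly preperiodic.
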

\begin{proof}
Follows from Lemma \ref{lem:MatHYpPolyn} and the well known fact that different components in $\MM\setminus \WW_B$ have different sets of non-$\basil$
ray portraits.
\end{proof}

The \textit{root} $h$ of a hyperbolic component $\HH^B$ is a unique point $h$ on $\partial\HH^B$ such that $\lambda_{\HH^B}(h)=1$. The following proposition
says that the parabolic bifurcation in $\MM_2$ is the same as in $\MM\setminus\WW_B$.

\begin{proposition}
\label{prop:ParabolBirf}
Let $m(h)$ be a parabolic parameter in $\VV_2$.
\begin{itemize}
\item[(A)] There is a unique hyperbolic component $\HH^B$ of $\MM_2$ such that $m(h)$ is the root of $\HH^B$.
\end{itemize}
Let $\ovl q$ be the parabolic cycle in the dynamical plane of $m(h)$. Assume that $p$ is the period of $\ovl q$ and
$rp$ is the number of attracting parabolic petals attached to $\ovl q$.
\begin{itemize}
\item[(B)] In the dynamical planes of maps in $\HH^B$ there is a bounded attracting cycle $\ovl y$ (i.e. $\ovl y\not=\{\infty,0\}$)
with period $rp$ and there is a repelling periodic cycle $\ovl x$ with period $p$ such that:
\begin{itemize}
\item points in $\ovl x$ are on the boundaries
of periodic Fatou components that form the immediate attracting basin of $\ovl y$;
\item $\ovl x$ (of maps in $\HH^B$) and $\ovl q$ (of the map $g_{m(h)}$) have the same ray portraits.
\end{itemize}
\item[(C)] If $a\in \HH^B$ tends to $m(h)$, then $\ovl x$ and $\ovl y$ tends to $\ovl q$.
\item[(D)] Rational ray portraits of $m(h)$ and of maps in $\HH^B$ are in bijection preserving angles.
\item[(E)] If $r=1$, then $\HH^B$ is a unique hyperbolic component of $\MM_2$ containing $m(h)$ on its boundary.
\end{itemize}

Denote by $h$ be the root of $\HH$.
\begin{itemize}
\item[(F)] Repelling non-$\basil$ ray portraits of $h$ and $m(h)$ are in bijection preserving angles.
\item[(G)] Parabolic non-$\basil$ ray portraits of $h$ and $m(h)$ are in bijection preserving angles.
\end{itemize}

Assume that $r>1$ and let $\lambda_{\ovl q}$ be the multiplier of $\ovl q$. Then:
\begin{itemize}
\item[(H)] There is a unique hyperbolic component $\HH_2^B$ of $\MM_2$ such that $m(h)\in \ovl{\HH_2^B}$
and $\lambda_{\ovl q}=\lambda_{\HH^B_2}(m(h))$.
\item[(I)] In the dynamical planes of maps in $\HH_2^B$ there is a bounded attracting cycle $\ovl x$ (i.e. $\ovl x\not=\{\infty,0\}$)) with period $p$
and there is a repelling periodic cycle $\ovl y$ with period $rp$ such that
\begin{itemize}
\item points in $\ovl y$ are on the boundaries
of periodic Fatou components that form the immediate attracting basin of $\ovl x$;
\item the set of rays landing at points in $\ovl y$ (of maps in $\HH_2^B$) is in bijection preserving angles with
the set of rays landing at points in $\ovl q$ (of the map $g_{m(h)}$).
\end{itemize}
\item[(J)] The parameter $h$ is on the boundary of $\HH_2^B$.
\item[(K)] The components $\HH^B$ and $\HH_2^B$ are the only hyperbolic components in $\MM_2$ containing $m(h)$ on their boundaries.
\end{itemize}
\end{proposition}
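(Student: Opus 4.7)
The plan is to transport the parabolic bifurcation picture from $\VV_1$ (which is classical Douady--Hubbard) to $\VV_2$ via the canonical homeomorphism \eqref{eq:V2TwinsOfMating} wherever $\ovl\HH$ meets a mating/hyperbolic region, and to use Corollary \ref{cor:HypCompAreDetermByRayPortr} to handle all uniqueness claims. The hypothesis $h\in\MM\setminus\WW_B$ is crucial: it lets us match the parabolic root of $\HH$ with $m(h)$, and pulls non-$\basil$ ray portraits across mating by Lemma \ref{lem:MatHYpPolyn}.

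\textbf{Parts (B), (C), (F), (G).} By the $\VV_1$ theory of parabolic roots, for $c\in\HH$ near $h$ the parabolic cycle $\ovl q_h$ of $f_h$ (period $p$, with $rp$ petals) splits into a repelling $p$-cycle $\ovl x_c$ and an attracting $rp$-cycle $\ovl y_c$ on its immediate attracting basin; the non-$\basil$ rational ray portraits of $h$ and of any $c\in\HH$ near $h$ are in bijection preserving angles, and the repelling portrait at $\ovl x_c$ equals the portrait at $\ovl q_h$ (stability of repelling ray portraits). The mating canonical homeomorphism $m_\HH:\ovl\HH\to \ovl{\HH^B}$ provides, on a neighbourhood of the filled-in Julia set, a conjugacy carrying $\ovl x_c,\ovl y_c$ to cycles $\ovl x,\ovl y$ of $g_{m_\HH(c)}$ with the same periods and the same repelling ray portraits (as the rays involved are non-$\basil$, their $\VV_2$-twins are genuine rays in bubbles by Definition \ref{defn:V2Dupl}); by Lemma \ref{lem:MatHYpPolyn} the bijection of portraits is preserved. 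Passing to the limit $a=m_\HH(c)\to m(h)$ along $\HH^B$ gives (C), and then Proposition \ref{prop:LandOfRaysAtRatPts} applied at $m(h)$ gives that the parabolic cycle $\ovl q$ inherits exactly the same non-$\basil$ ray portraits, yielding (F), (G).

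\textbf{Parts (A), (D), (E).} Existence of $\HH^B$ in (A) follows from the construction of $\ovl{\HH^B}=m_\HH(\ovl\HH)$, which puts $m(h)$ on the boundary with multiplier $1$. Uniqueness: any hyperbolic component whose root equals $m(h)$ would, by part (B) applied to it, share the ray portrait of $\ovl q$ on its boundary repelling cycle; by Corollary \ref{cor:HypCompAreDetermByRayPortr} it must coincide with $\HH^B$. For (D), rational ray portraits of $m(h)$ at points different from the parabolic cycle are preperiodic and stable (Lemma \ref{lem:StabOfPerOrb}, Lemma \ref{lem:StabOfRaysFromLambdaLemma}) under $a\to m(h)$ along $\HH^B$, so they inherit the bijection from (F)-(G); at the parabolic cycle itself this is exactly (F)-(G). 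Part (E): when $r=1$ there is only one attracting petal per periodic point of $\ovl q$, hence by the classical local theory of parabolic bifurcation at most one hyperbolic component can have $m(h)$ on its boundary.

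\textbf{Parts (H)--(K).} When $r>1$, the satellite hyperbolic component $\HH_2\subset\VV_1\setminus\WW_B$ with $h\in\partial\HH_2$ and $\lambda_{\HH_2}(h)=\lambda_{\ovl q}$ exists by the classical parabolic satellite bifurcation; mating it gives $\HH_2^B:=m_{\HH_2}(\HH_2)$ with $m(h)\in\partial\HH_2^B$, proving existence in (H) and (J). In $\HH_2^B$ the roles of $\ovl x$ and $\ovl y$ from (B) are interchanged: the $p$-cycle becomes attracting and the $rp$-cycle becomes repelling on its boundary, giving (I) via the same mating-transport argument as in Step 2. Uniqueness in (H) and (K) again follows from Corollary \ref{cor:HypCompAreDetermByRayPortr}: any hyperbolic component touching $m(h)$ must, by local parabolic analysis, have its attracting cycle degenerate into $\ovl q$ with period either $p$ or $rp$, and in each case the repelling boundary cycle reproduces the portrait of $\ovl q$; the two portrait patterns single out exactly $\HH^B$ and $\HH_2^B$.

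\textbf{Main obstacle.} The delicate point is not the $\VV_1$-side, which is classical, but verifying that the local parabolic bifurcation picture at $m(h)\in\VV_2$ produces no \emph{extra} hyperbolic components beyond $\HH^B$ and $\HH_2^B$, even though $g_{m(h)}$ already carries the superattracting $\{0,\infty\}$-cycle on top of the parabolic cycle. The remedy is that any such spurious component would, by the portrait-splitting argument above, share ray portraits with $\HH^B$ or $\HH_2^B$, and Corollary \ref{cor:HypCompAreDetermByRayPortr} then forces coincidence; this is where one genuinely uses that portraits are a complete invariant for hyperbolic components in $\MM_2$.
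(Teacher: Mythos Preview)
Your overall strategy---transport the classical $\VV_1$ bifurcation picture through the multiplier homeomorphism $m_\HH$ and use Corollary \ref{cor:HypCompAreDetermByRayPortr} for uniqueness---is the same one the paper ultimately relies on for parts (F), (G), (J) and for all the uniqueness statements. However, two steps in your write-up are genuinely incomplete.

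\textbf{Circularity in the existence part of (A).} The proposition begins with an arbitrary parabolic parameter in $\VV_2$, which is merely \emph{named} $m(h)$; the point $h\in\VV_1$ is defined only \emph{after} $\HH^B$ has been found (``Denote by $h$ the root of $\HH$''). Your argument ``Existence of $\HH^B$ follows from the construction of $\ovl{\HH^B}=m_\HH(\ovl\HH)$'' presupposes that you already know which $\HH\subset\MM\setminus\WW_B$ to use. You do not: several distinct parabolic parameters in $\VV_1$ share the same period and multiplier data, and nothing you have written singles out the one whose mating image is the given parabolic point. The paper avoids this by working directly in $\VV_2$: it perturbs the parabolic cycle $\ovl q$ along a path $\gamma^+(t)$ on which the multiplier of the detached $rp$-cycle is real, \emph{defines} $\HH^B$ as the hyperbolic component containing $\gamma^+(t)$ for small $t>0$, and only then pulls back to $\HH$ via the known classification of hyperbolic components in $\MM_2$.

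\textbf{The parabolic ray portrait.} To get that $\ovl q$ carries exactly the ray portrait of $\ovl x$ you invoke Proposition \ref{prop:LandOfRaysAtRatPts}, but that proposition only guarantees that \emph{some} ray lands at each rationally accessible point; it says nothing about \emph{which} rays, so it cannot by itself show that the portrait of $\ovl x$ persists at the parabolic limit. The paper handles this with two ingredients: Lemma \ref{lem:LandOfRaysFor} (applied because $\ovl{\cup_{n\ge 0}g^n_{m(h)}(-m(h))}$ sits inside the petals and $\ovl q$) ensures every rational ray in bubbles lands for $g_{m(h)}$; then the explicit perturbation path $\gamma^+$, along which petals open into genuine Fatou components, gives the continuity of the parabolic portrait directly, while Lemma \ref{lem:StabOfPerOrb} handles the repelling portraits. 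Your limiting argument along $\HH^B$ together with Lemma \ref{lem:StabOfPerOrb} only controls the repelling side; you need the petal-opening argument (or an equivalent) for the parabolic side.

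Once you insert the Milnor-style perturbation path in $\VV_2$ to produce $\HH^B$ (and $\HH_2^B$ when $r>1$), the rest of your outline---in particular the use of Corollary \ref{cor:HypCompAreDetermByRayPortr} for (A), (E), (H), (K)---matches the paper's proof essentially verbatim.
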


\begin{proof}
The proof repeats arguments of the proof of \cite[Theorem 4.1]{Mi}, see also \cite[Lemma 4.4]{Mi}),
so we give only the sketch of the proof; and we will skip the calculations involving Taylor series.

Let us first verify (A), (B), (C), (D), (H), (I).
Parabolic points of a rational map $g$ are multiple
roots of equations $g^n(z)=z$; or, in other words, parabolic cycles are collisions of periodic cycles. If $g$ is a map in $\VV_2$,
then it has at most one
parabolic cycle that splits into two periodic cycles under a small perturbation because there is only one free critical point.

Let $\ovl q$, $p$, $r$ be as in the conditions of the proposition. Let $\UU$ be a sufficiently small neighborhood of $m(h)$.
For $a\in \UU\setminus\{m(h)\}$ the cycle $\ovl  q$ splits into two cycles with periods
$p$ and $rp$, call them $\ovl x$ and $\ovl y$; we assume that $\ovl x$ has period $p$ and $\ovl y$ has period $rp$.
Note that if $r=1$, then $\ovl x$ and $\ovl y$ interchange when
$a$ circles $m(h)$; i.e. $\ovl x$, $\ovl y$ are not distinguishable.

Let $\lambda_{\ovl x}(a)$, resp.\ $\lambda_{\ovl y}(a)$ be the multiplier of $\ovl x$, resp.\ $\ovl y$. When $a$ tends to $m(h)$
the multiplier $\lambda_{\ovl x}(a)$ tends to $\lambda_{\ovl q}$ while $\lambda_{\ovl y}(a)$ tends to $1$. As $\lambda_{\ovl y}(a)$ is a holomorphic
function it attains all values sufficiently close to $1$. Therefore, we can find a parameter path $\gamma^+(t)$ in $\UU$, where $t\in[0,\delta]$, starting at $m(h)$ such that $\lambda_{\ovl y}(\gamma(t))$
is real and is greater than $1$ for $t>0$. In the dynamical planes of $g_{\gamma^+(t)}$ attracting parabolic petals attached to $\ovl q$ become attracting Fatou components, the cycle
$\ovl y$ is attracting, and ray portraits of $\ovl q$ become ray portraits of $\ovl x$.

Define $\HH^B$
to be the component containing parameters on $\gamma^+(t)$ for $t>0$. Parts (B), (C) follows from the construction.
By Lemma \ref{lem:LandOfRaysFor} all rays in bubbles land in the dynamical plane of $g_{m(h)}$ because $\ovl{\cup_{n\ge 0}g^n_{m(h)}(-m(h))}\subset\{$attracting
petals$\}\cup\ovl q$. This implies that all ray portraits are stable on $\gamma^+(t)$:
parabolic ray portraits are stable by Part (B) while repelling portraits are stable by Lemma \ref{lem:StabOfPerOrb}. This shows Part (D).
If $h$ is also the root of a hyperbolic component $\HH'^B$, then maps in $\HH^B$ and in $\HH'^B$ have the same ray portraits;
it follows from Corollary \ref{cor:HypCompAreDetermByRayPortr} that $\HH'^B=\HH^B$. This finishes the proof of Part (A).

Assume that $r>1$; equivalently, $\lambda_{\ovl q}\not=1$. Then we can find a parameter path $\gamma^-(t)$ in $\UU$, where $t\in[0,\delta]$, starting at $m(h)$ such that $\lambda_{\ovl y}(\gamma(t))$
is real and is less than $1$ for $t>0$. In the dynamical planes of $g_{\gamma^-(t)}$ groups of
attracting parabolic petals attached to the same point in $\ovl q$ become attracting Fatou components, the cycle
$\ovl x$ is attracting, and rays in bubbles landing at points in $\ovl q$ land at points in $\ovl y$. Define $\HH^B_2$
to be the component containing parameters on $\gamma^-(t)$ for $t>0$. Parts (H), (I) are verified as above.

For maps in $\HH^B$ the periodic cycle $\ovl x$ is the unique periodic cycle with period $p$ on the boundaries of periodic Fatou components
that form the immediate attracting basin of $\ovl y$.
Let $\ovl x^{(1)}$
be the unique periodic cycle with period $p$ on the boundaries of periodic bounded Fatou components of maps in $\HH$.
By the mating construction ray portraits of $\ovl x$
and $\ovl x^{(1)}$ are in bijection preserving angles. Furthermore, periodic parabolic ray portraits of $h$ are in bijection preserving angles with ray portraits of $\ovl x^{(1)}$ for maps in $\HH_2$.
Therefore, Parts (F), (G) follows from Part (B).

If $r>1$, then $h$ is also on the boundary of a hyperbolic component $\HH'_2\not=\HH$. It follows that $\HH'_2$ and $\HH_2$ have the same ray portraits. Therefore,
$\HH'_2=\HH_2$ (Part $J$).

If a hyperbolic component $\HH'^B$ contains $m(h)$ on the boundary, then ray portraits of maps in $\HH'^B$
either in bijection preserving angles with ray portraits of maps in $\HH^B$ or in bijection preserving angles with ray portraits of maps in $\HH^B_2$.
Therefore, by Corollary \ref{cor:HypCompAreDetermByRayPortr}
either $\HH'^B=\HH^B$ or $\HH'^B=\HH^B_2$ (Parts (E) and (K)).
\end{proof}

It follows from Proposition \ref{prop:ParabolBirf} that the following definition is correct (does not depend on the choice of $m_\HH$).
\begin{definition}[The map $c\to m(c)$]
\label{defn:Map_cTo_ac}  If $c$ is
in the closure of a hyperbolic component $\HH$ of $\subset \MM\setminus \WW_B$, then $m(c)=m_\HH(c)\in \ovl{\HH^B}$ is
defined as in \eqref{eq:V2TwinsOfMating}.

If $c\in\MM\setminus \WW_B$ is a Misiurewicz
parameter, then $m(c)$ is the mating of $f_c$ and $f_B$.
\end{definition}

\begin{lemma}[Non--$\basil$ Misiurewicz case]
\label{lem:MatMisiurPolyn} Let $c\in \VV_1\setminus\WW_B$ be a
non-$\basil$ Misiurewicz parameter and $g_{m(c)}=f_c\mcup f_B$ be
the mating. Then non-$\basil$ ray portraits of $f_c$ and $g_{m(c)}$
are in bijection preserving angles.
\end{lemma}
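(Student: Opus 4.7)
The strategy is to realize $g_{m(c)}$ as the geometric mating $f_c \mcup f_B$ and track non-$\basil$ rays through the construction. Since $c\in\MM\setminus\WW_B$ is a non-$\basil$ Misiurewicz parameter, both $f_c$ and $f_B$ are postcritically finite with locally connected Julia sets, and the classical results on matings with the Basilica \cite{Re1}, \cite{Tan}, \cite{Lu} (an application of Thurston's theorem) show that the mating is realized by a rational map which, by Definition \ref{defn:Map_cTo_ac}, is precisely $g_{m(c)}$. This furnishes a continuous semi-conjugacy $\pi:(K_c\sqcup K_B)/\!\sim\;\to\widehat{\C}$ intertwining $f_c\sqcup f_B$ and $g_{m(c)}$, where $\sim$ glues the $\phi$-prime end of $K_c$ to the $(-\phi)$-prime end of $K_B$ for each $\phi\in\uncirc$. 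The map $\pi$ is injective away from the (countable) set of points identified with $\basil$ points of $K_B$.

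First I would verify the forward direction. Let $\{R^{\phi_1},\dots,R^{\phi_k}\}$ be a non-$\basil$ ray portrait of $f_c$ at a rationally accessible point $x\in J_c$. Non-$\basil$-ness of each $\phi_i$ guarantees that the gluing partners of $x$ in $K_B$ form a single non-$\basil$ preperiodic point, so $\pi(x)$ is a rationally accessible non-$\basil$ point of $g_{m(c)}$. The glued curve $R^{\phi_i}(f_c)\cup R^{-\phi_i}(f_B)$ projects under $\pi$ to a curve $\widetilde B^{\phi_i}\subset\widehat{\C}$ whose portion inside the Basilica Fatou structure of $g_{m(c)}$ coincides, after applying the M\"obius conjugacy $\rho$, with the image of $R^{-\phi_i}(f_B)$; unfolding Definition \ref{defn:V2Dupl} shows that $\widetilde B^{\phi_i}$ is homotopic (rel endpoints, avoiding the finite postcritical set) to the ray in bubbles $B^{\phi_i}$. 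Since $-m(c)$ is preperiodic, Lemma \ref{lem:LandOfRaysFor} yields that $B^{\phi_i}$ actually lands, and by the homotopy its landing point must be $\pi(x)$.

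For the converse, let $\{B^{\psi_1},\dots,B^{\psi_\ell}\}$ be a non-$\basil$ ray portrait of $g_{m(c)}$ at a point $z\in J_{m(c)}$. Non-$\basil$-ness of $z$ means that $z$ lies on no bubble boundary, so $\pi^{-1}(z)$ reduces to a single point $x\in J_c$. Applying the previous paragraph in reverse, each $B^{\psi_i}$ is the $\pi$-image of an external ray $R^{\psi_i}$ of $f_c$ landing at $x$, and none of these $\psi_i$ is $\basil$ (otherwise $B^{\psi_i}$ would itself be $\basil$). This produces the required bijection $\phi\leftrightarrow\phi$ between the non-$\basil$ portraits.

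The main technical obstacle is justifying the homotopy $\widetilde B^{\phi_i}\simeq B^{\phi_i}$: one must verify that the curve obtained by pushing forward a glued external-ray pair through the mating coincides, up to homotopy avoiding the finite postcritical set, with the ray in bubbles defined via $h_{m(c)}$ and $\rho$. This relies on uniform continuity of $\pi$ off the postcritical set (a consequence of postcritical finiteness of both polynomials) together with the observation that the entire Fatou set of $g_{m(c)}$ is the preperiodic preimage of the attracting $\{0,\infty\}$-cycle, so the bubble tree $T(m(c))$ matches, leaf-by-leaf, the $\pi$-image of the gluing equator after collapsing the $\basil$ identifications.
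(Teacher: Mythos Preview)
Your proposal is correct and follows essentially the same approach as the paper: the paper's proof is the single sentence ``Follows from the mating construction as it preserves non-$\basil$ ray portraits,'' citing the standard Rees--Tan--Luo results, and your argument is a detailed unpacking of exactly that assertion. You have correctly identified the mechanism (the semi-conjugacy $\pi$ from the formal mating, injective off $\basil$ points) and the one nontrivial technical step (that the $\pi$-image of a glued external ray agrees with the corresponding ray in bubbles $B^{\phi}$); the paper simply absorbs this into the phrase ``mating construction.''
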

\begin{proof}
Follows from the mating construction as it preserves non-$\basil$ ray portraits.
\end{proof}

\begin{lemma}[$\basil$ Misiurewicz case]
\label{lem:MatMisiurPolynTriplingCase} Let $c\in
\VV_1\setminus\WW_B$ be a $\basil$ Misiurewicz parameter and
$g_{m(c)}=f_c\mcup f_B$ be the mating. Consider a
ray portrait $P_y$. The ray portrait $P_y^B$ exists for $g_{m(c)}$ if and only if
\[m(c)\not \in \bigcup_{R^\phi\in P_y}\bigcup_{n>0}\BB^{2^n\phi}.\]
\end{lemma}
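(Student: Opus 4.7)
The forward implication is immediate from the first bullet of Theorem~\ref{thm:ParBubblRay}: if the combined portrait $P_y^B$ exists for $g_{m(c)}$, then every ray $B^\phi$ with $R^\phi\in P_y$ must land, which forces $m(c)\notin\bigcup_{n>0}\BB^{2^n\phi}$; taking the union over $R^\phi\in P_y$ yields the stated condition.

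For the backward implication assume the displayed condition. By Theorem~\ref{thm:ParBubblRay} each $B^\phi$ with $R^\phi\in P_y$ already lands at some point of $J_{m(c)}$; the content of the lemma is that these landing points coincide. The mating $g_{m(c)}=f_c\mcup f_B$ provides a natural candidate for the common landing point, namely the image $y^B\in J_{m(c)}$ of $y\in J_c$ under the mating semi-conjugacy, which is a well-defined rationally accessible point because $c$ is a $\basil$ Misiurewicz parameter in $\MM\setminus\WW_B$. To see that every $B^\phi$ with $R^\phi\in P_y$ lands exactly at $y^B$, I would pass to hyperbolic parameters. Choose a hyperbolic component $\HH\subset\MM\setminus\WW_B$ with $c\in\partial\HH$ and let $\HH^B\subset\MM_2$ be its $\VV_2$-twin, so that $m(c)\in\partial\HH^B$. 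For $a\in\HH^B$ close to $m(c)$, Lemma~\ref{lem:MatHYpPolyn} gives a bijection preserving angles between the non-$\basil$ ray portraits of $f_{c(a)}$ (with $c(a)\in\HH$) and those of $g_a$; in particular the rays $\{B^\phi\}_{R^\phi\in P_y}$ land at a common point $y^B(a)$ corresponding to $y$.

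The passage to the limit $a\to m(c)$ along a path inside $\HH^B$ completes the argument. Lemma~\ref{lem:StabOfRaysFromLambdaLemma} asserts that the closure of each $B^\phi$ moves holomorphically on $\VV_2\setminus\bigcup_{n\ge 1}\ovl{\BB^{2^n\phi}}$, and $\HH^B$ is disjoint from every open parameter ray in bubbles, so along such a path the motion of $B^\phi$ extends continuously. Meanwhile $y^B(a)\to y^B$ by the implicit function theorem applied to the algebraic equations defining the periodic and preperiodic orbit of $y^B$. Hence in the limit each $B^\phi$ lands at $y^B$, so $P_y^B$ exists. The main obstacle is the boundary case in which $m(c)$ sits on $\ovl{\BB^{2^n\phi}}\setminus\BB^{2^n\phi}$ for some $n$, where Lemma~\ref{lem:StabOfRaysFromLambdaLemma} alone does not extend the motion to $m(c)$; I would overcome this by combining Lemma~\ref{lem:StabOfPerOrb} on the stability of repelling ray portraits under a non-tangential approach from $\HH^B$ (which avoids every open parameter ray in bubbles) with the fact that the hypothesis together with Theorem~\ref{thm:ParBubblRay} already guarantees that $B^\phi$ lands at $m(c)$, so only the identification of its landing point with $\lim_{a\to m(c)} y^B(a)=y^B$ remains and follows from the in-component holomorphic motion.
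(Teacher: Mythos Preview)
Your argument is circular. Lemma~\ref{lem:MatMisiurPolynTriplingCase} is one of the ingredients in the proof of Theorem~\ref{thm:ParBubblRay}: it is invoked in the proof of Lemma~\ref{lem:CorrTheoremForSigma}, which in turn is used in Proposition~\ref{prop:IntransCase}, which is part of the proof of Theorem~\ref{thm:ParBubblRay}. So you cannot appeal to the first bullet of Theorem~\ref{thm:ParBubblRay} here, neither for the forward nor for the backward implication. The detour through hyperbolic approximation, Lemma~\ref{lem:StabOfRaysFromLambdaLemma}, and limits along paths in $\HH^B$ is therefore built on a result that is not yet available.

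The paper's argument is much more direct and avoids this problem entirely. Since $g_{m(c)}$ is \emph{by definition} the mating $f_c\mcup f_B$, the mating construction itself provides the semiconjugacy carrying each external ray $R^\phi$ of $f_c$ to the ray in bubbles $B^\phi$ of $g_{m(c)}$ and carrying $y$ to its image $y^B$. Thus the portrait $P_y$ survives to give $P_y^B$ precisely when every $B^\phi$ with $R^\phi\in P_y$ \emph{exists} in the dynamical plane of $g_{m(c)}$; landing at the correct common point is then automatic from the mating, not something to be established separately. By Lemma~\ref{lem:RayInBubbleExists}, $B^\phi$ exists iff it does not hit $-1$ or a preimage, i.e.\ iff $-m(c)\notin B^{2^n\phi}$ for all $n>0$, which by the definition of parameter rays in bubbles (Subsection~\ref{subsec:ParamRayInBubble}) is exactly $m(c)\notin\BB^{2^n\phi}$. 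This uses only material from Section~\ref{sec:V2} and the mating construction, nothing from Theorem~\ref{thm:ParBubblRay}.
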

\begin{proof}
The portrait $P_y$ survives during the mating if and only if for
every $R^{\phi}$ in $P_y$ the ray in bubbles $B^\phi$ exists in the
dynamical plane of $g_{m(c)}$. Recall that $B^\phi$ exists if and
only if it does not hit a pre-image of $-a$. Equivalently, $B^\phi$
exists if and only if there is no $n>0$ such that $-a\in
B^{2^n\phi}$. This is equivalent to the condition stated in the
lemma.
\end{proof}

Consider \[\Sigma=\bigcup_{\EE\subset V_2\setminus \MM_2} \ovl\EE,\]
as in Theorem \ref{thm:ParDynIsomV2} (the union is taken over all
components in $V_2\setminus \MM_2$). By definition, $\Sigma$ is an
infinite tree of closed discs; note that $\Sigma$ is neither closed
nor open. Recall that all parameter rays in bubbles are in $\Sigma$,
start at infinity, and have the cyclic order around infinity. For
rational angles $\phi_1$, $\phi_2$ with $0<\phi_1<\phi_2<0$ we
denote by $\Sigma(\phi_1,\phi_2)$ the set of parameters in $\Sigma$
that are between the rays $\BB^{\phi_1}$ and $\BB^{\phi_2}$ in the
anti-clockwise direction. If $\YY^B(\phi_1,\phi_2)$ exists, then
$\Sigma(\phi_1,\phi_2)=\Sigma \cap \intr(\YY^B(\phi_1,\phi_2))$.

\begin{lemma}[Theorem \ref{thm:TreeOfPreim} holds for $\Sigma$]
\label{lem:CorrTheoremForSigma} Assume rays $\RR^{\phi_1},$ $\RR^{\phi_2}$ land together.
Then the following are equivalent:
\begin{itemize}
\item $a\in \intr(\Sigma(\phi_1,\phi_2))$.
\item $Y^B(\phi_1,\phi_2)$ exists for $g_a$ and
$-a\in\intr(Y^B(\phi_1,\phi_2))$.
\end{itemize}
\end{lemma}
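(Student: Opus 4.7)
The plan is to exploit the homeomorphism $\Phi: \Sigma \to \bigcup \ovl E$ of Theorem \ref{thm:ParDynIsomV2}, $\Phi(a) = \rho(h_a(-a))$, which by construction carries each parameter ray $\BB^\phi$ onto the dynamic ray in bubbles $B^\phi$ of $g_1$, and each such $B^\phi$ co-lands with the external ray $R^{-\phi}$ of the Basilica $f_B$. Since $g_1$ is essentially the Basilica, the combinatorics of rays in bubbles is transparent there, and for any escaping $g_a$ the corresponding rays are obtained by lifting those of $g_1$ through $h_a^{-1}$. Both directions of the lemma will be obtained by this transfer.

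First I would establish the landing structure in the plane of $g_1$: the rays $B^{\phi_1}, B^{\phi_2}$ co-land and bound a puzzle piece not containing $0$. By Theorem \ref{th:char_ray_pair}, the assumption that $\RR^{\phi_1}, \RR^{\phi_2}$ co-land yields a hyperbolic $c \in \WW(\phi_1,\phi_2) \setminus \WW_B$ whose dynamic rays $R^{\phi_1}, R^{\phi_2}$ form a characteristic ray pair in $f_c$. Lemma \ref{lem:MatHYpPolyn} transfers this co-landing to $g_{m_\HH(c)}$, and the conjugation $h_{m_\HH(c)}$ on $\Omega(m_\HH(c))\to\Omega(1)$ transfers it further to $g_1$; the characteristic property transfers along the way, so the bounded region becomes an honest puzzle piece $Y^B(\phi_1,\phi_2)$ in the plane of $g_1$.

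For the forward direction I would fix $a_0 \in \intr(\Sigma(\phi_1,\phi_2))$ and lift the $g_1$-configuration along $h_{a_0}^{-1}$. The free critical point $-1$ of $g_1$ is a super-attracting fixed point lying in its own basin, disjoint from $\Omega(1)$, and its only preimage under $g_1$ is itself; hence the rays $B^{\phi_i}|_{g_1}$ are uniformly separated from the obstruction set of Propositions \ref{prop:Conj}--\ref{prop:Conj2}. Consequently $h_{a_0}^{-1}$ extends along $B^{\phi_1}\cup B^{\phi_2}$, producing the rays for $g_{a_0}$; co-landing and the forward-orbit puzzle-piece condition descend through the bubble conjugacy. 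Because $\Phi(a_0)$ sits strictly between the two rays in the plane of $g_1$, the lift places $-a_0$ in $\intr(Y^B(\phi_1,\phi_2))$. For the reverse direction (with $a$ in $\Sigma$), I would run the same transfer backwards: the configuration in $g_a$ is carried by $h_a$ back to $g_1$, placing $h_a(-a)$ in the interior of the corresponding $g_1$-puzzle piece, whence $\Phi(a)$ lies in the interior of the corresponding sector of $K_B$; bijectivity of $\Phi$ together with the definition of $\Sigma(\phi_1,\phi_2)$ then places $a$ in $\intr(\Sigma(\phi_1,\phi_2))$.

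The main obstacle will be checking that the lifted curves in $g_{a_0}$ truly form a puzzle piece: that the lift of each $B^{\phi_i}|_{g_1}$ remains a single simple curve, that the two lifts co-land at one Julia set point, and that the forward $g_{a_0}$-orbit of their union avoids the interior. This amounts to pushing the characteristic ray-pair property from $f_c$ through the mating to $g_1$ via Lemma \ref{lem:MatHYpPolyn}, and then through the partial conjugacy $h_{a_0}^{-1}$ controlled by Propositions \ref{prop:Conj} and \ref{prop:Conj2}; the fact that $\Phi(a_0)$ lies in the interior of the correct sector is what prevents $-a_0$ from obstructing the lift.
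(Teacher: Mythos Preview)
Your transfer via $h_{a_0}^{-1}$ correctly produces the rays $B^{\phi_1},B^{\phi_2}$ in the dynamical plane of $g_{a_0}$ (Lemma \ref{lem:RayInBubbleExists} applies once you check $-1$ and its preimages are avoided), but it does \emph{not} establish that they co-land. The conjugacy $h_a$ of Propositions \ref{prop:Conj}--\ref{prop:Conj2} is defined only on $\Omega(a)\cup\bigcup_{n\ge0}g_a^{-n}(\alpha(g_a))$; the landing point of $B^{\phi_i}$ is a repelling periodic (or preperiodic) point of the Julia set that is \emph{not} a $\basil$-point, hence lies outside the domain of $h_a$. So the statement ``co-landing descends through the bubble conjugacy'' is exactly the missing step, and the propositions you invoke do not supply it. Likewise, Lemma \ref{lem:MatHYpPolyn} gives you the picture for maps in hyperbolic components of $\MM_2$ (like $g_1$), but parameters $a_0\in\intr(\Sigma)$ are capture or $\basil$-Misiurewicz, not mating-hyperbolic, and there is no global conjugacy between $J_{a_0}$ and $J_1$ to carry the landing point across.

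The paper closes this gap differently: it works component-by-component in $\intr(\Sigma)\setminus\bigcup_{n\ge0}(\BB^{2^n\phi_1}\cup\BB^{2^n\phi_2})$, picks a $\basil$-Misiurewicz parameter $m(c)$ on the boundary of each component (where Lemma \ref{lem:MatMisiurPolynTriplingCase} controls ray portraits through the mating of $f_c$ with $f_B$), and then uses stability of repelling ray portraits (Lemma \ref{lem:StabOfPerOrb}) to propagate the co-landing from $m(c)$ into the whole component. The equivalence is then reduced, via Theorem \ref{thm:ParDynIsomV2} and the Correspondence Theorem \ref{th:char_ray_pair} in $\VV_1$, to the known picture for $f_c$. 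Your outline could be repaired along these lines, but as written the lifting step is a genuine gap.
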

\begin{proof}
Consider a component $\UU$ of $\intr(\Sigma)\setminus (\cup_{n\ge0}
\BB^{2^n\phi_1}\cup_{n\ge0} \BB^{2^n\phi_2})$. We need to verify the
claim for maps in $\UU$. Note that maps in $\UU$ are hyperbolic so all
rays in bubbles that exist land. We only need to verify that the
rays land in expected patterns.

Choose a $\basil$ Misiurewicz parameter $m(c)\in
\partial \UU \setminus (\cup_{n\ge0} \BB^{2^n\phi_1}\cup_{n\ge0}
\BB^{2^n\phi_2})$. By the stability of periodic rays (Lemma
\ref{lem:StabOfPerOrb}) it is sufficient to verify the claim for
$m(c)$.

By the second part of Theorem \ref{thm:ParDynIsomV2} we have:
\begin{itemize}
\item[(A)] $m(c)\in \Sigma(\phi_1,\phi_2)$
\end{itemize} if and only if
\begin{itemize}
\item[(B)] $c\in
\intr(\YY(\phi_1,\phi_2))$.
\end{itemize} It follows from Lemma
\ref{lem:MatMisiurPolynTriplingCase} that the following are equivalent:
\begin{itemize}
\item[(C)] $Y^B(\phi_1,\phi_2)$
exists for $g_{m(c)}$ and $-m(c)\in\intr(Y^B(\phi_1,\phi_2))$;
\item[(D)] $Y(\phi_1,\phi_2)$ exists for $f_c$ and
$c\in\intr(Y(\phi_1,\phi_2))$.
\end{itemize} By the
correspondence theorem for $\VV_1$ (Theorem \ref{th:char_ray_pair}) Claims (B) and (D) are equivalent. Therefore,
(A) is equivalent to (C).
\end{proof}

\subsection{The transitive case}
\label{subsect:IntrCase}

\begin{proposition}[Theorem \ref{thm:TreeOfPreim}, the transitive case]
\label{prop:IntransCase} Assume that transitive rays
$\RR^{\phi_1}$, $\RR^{\phi_2}$ with rational angles land together in
$\VV_1\setminus \WW_B$. Then $\YY^B(\phi_1,\phi_2)$ exists in
$\VV_2$ and for all $a\in\intr \YY^B(\phi_1,\phi_2)$ the bubble
puzzle piece $Y^B(\phi_1,\phi_2)$ exists and $-a\in
\intr(Y^B(\phi_1,\phi_2))$.

If $h$ is the landing point of $\RR^{\phi_1},$ $\RR^{\phi_2}$, then $m(h)$ is the landing point of
$\BB^{\phi_1},$ $\BB^{\phi_2}$.
\end{proposition}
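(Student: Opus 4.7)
The plan is to first pin down the dynamical picture in the plane of $g_{m(h)}$ and then promote it to the parameter plane using stability together with the already-proven Lemma~\ref{lem:CorrTheoremForSigma}. Since $\RR^{\phi_1},\RR^{\phi_2}$ are two distinct rays landing at $h\in\VV_1\setminus\WW_B$, and any $\basil$ parameter portrait in $\VV_1\setminus\WW_B$ consists of a single ray, both $\phi_1,\phi_2$ are non-$\basil$; consequently $h$ is either a non-$\basil$ Misiurewicz parameter or the root of a hyperbolic component $\HH\subset\MM\setminus\WW_B$, and $m(h)$ is defined by Definition~\ref{defn:Map_cTo_ac}.

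For the dynamical side I split into cases. If $h$ is Misiurewicz, Lemma~\ref{lem:MatMisiurPolyn} applied to $c=h$ carries the dynamical portrait $\{R^{\phi_1},R^{\phi_2}\}$ at the critical value $h$ of $f_h$ over to $\{B^{\phi_1},B^{\phi_2}\}$ at the critical value $-m(h)$ of $g_{m(h)}=f_h\mcup f_B$. If $h$ is a parabolic root of $\HH$, Proposition~\ref{prop:ParabolBirf}(G) transfers the parabolic non-$\basil$ portrait at the parabolic cycle of $f_h$ to one at the parabolic cycle $\ovl q$ of $g_{m(h)}$, so again $B^{\phi_1},B^{\phi_2}$ land together. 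This yields the final sentence of the proposition.

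To lift the dynamical statement to parameter rays landing at $m(h)$, I combine the holomorphic motion of the closure $\ovl{B^{\phi_j}}$ from Lemma~\ref{lem:StabOfRaysFromLambdaLemma} with stability of repelling landing points (Lemma~\ref{lem:StabOfPerOrb})---applied directly at $m(h)$ in the Misiurewicz case, and on the repelling side of $\HH^B$ using Proposition~\ref{prop:ParabolBirf}(B),(C) in the parabolic case---to show that in a small neighborhood $U$ of $m(h)$, off the bad set $\bigcup_{i\ge 1}(\BB^{2^i\phi_1}\cup\BB^{2^i\phi_2})$, the rays $B^{\phi_1},B^{\phi_2}$ continue to land together at a holomorphically varying point $x(a)$. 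Because $a\in\BB^{\phi_j}$ means $-a\in B^{\phi_j}(a)$, tracing $a$ along $\BB^{\phi_j}\cap\Sigma$ toward its endpoint forces the image $\psi(a)=\rho(h_a(-a))$ to approach the endpoint of $B^{\phi_j}$ on $\partial K_B$, whose unique $\psi$-preimage on $\partial\Sigma$ is $m(h)$. Hence $\BB^{\phi_1},\BB^{\phi_2}$ land together at $m(h)$, so $\YY^B(\phi_1,\phi_2)$ exists; the interior characterization $-a\in\intr Y^B(\phi_1,\phi_2)$ then follows by combining Lemma~\ref{lem:CorrTheoremForSigma} on $\Sigma\cap\intr\YY^B(\phi_1,\phi_2)$ with stability of the repelling portrait on $\MM_2\cap\intr\YY^B(\phi_1,\phi_2)$.

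The main obstacle is the parabolic subcase of the last paragraph: here $-m(h)$ is not rationally accessible but sits in a parabolic petal, so a direct Misiurewicz-style identification fails. The remedy is to exploit Proposition~\ref{prop:ParabolBirf}(B),(C) to convert the parabolic cycle into a repelling cycle upon passing into $\HH^B$ (or into $\HH^B_2$), run the stability argument on that side, and then identify the limit parameter uniquely as $m(h)$ via continuity of the homeomorphism $\psi$ up to $\partial\Sigma$ furnished by Theorem~\ref{thm:ParDynIsomV2}.
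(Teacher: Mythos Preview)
Your landing argument for the parameter rays $\BB^{\phi_1},\BB^{\phi_2}$ has a genuine gap. You write that as $a$ runs along $\BB^{\phi_j}\subset\Sigma$, the image $\psi(a)=\rho(h_a(-a))$ runs along the dynamic ray $B^{\phi_j}$ in the Basilica plane toward its landing point $x'\in J_B$, and that $x'$ has a ``unique $\psi$-preimage on $\partial\Sigma$'' equal to $m(h)$. But Theorem~\ref{thm:ParDynIsomV2} only gives a homeomorphism $\psi$ from $\Sigma$ onto $\bigcup_{E}\ovl E$, the union of closures of Fatou components of $K_B$ off the $1/2$-limb. Since $\phi_j$ is non-$\basil$, the landing point $x'$ is \emph{not} a $\basil$ point, hence not in the image of $\psi$ at all; there is no ``$\psi$-preimage'' to speak of. All you can conclude is that the sequence $a_n$ leaves every compact subset of $\Sigma$, i.e.\ accumulates on $\MM_2$, but you have no control over \emph{where} on $\MM_2$ it accumulates. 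This is exactly the hard part: proving that a rational parameter bubble ray lands, and lands at the expected point.

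The paper's proof is organized quite differently and avoids this trap. Following Milnor's strategy from~\cite{Mi}, it defines the open set $\XX\subset\VV_2$ of parameters $a$ for which $B^{\phi_1},B^{\phi_2}$ land together at a repelling transitive point with $-a\in\intr(Y^B(\phi_1,\phi_2))$, checks (via Lemma~\ref{lem:MatHYpPolyn}) that both $\XX$ and its complement have interior, and then analyzes $\partial\XX$ through an exhaustive case split (the landing point becomes non-repelling; a ray fails to exist; etc.). Using Proposition~\ref{prop:LandOfRaysAtRatPts}, Lemma~\ref{lem:StabOfPerOrb}, and transitivity, all cases are eliminated except: $a_0$ lies on $\BB^{\phi_1}\cup\BB^{\phi_2}$, or $a_0$ is the single parabolic parameter $m(h)$, or $a_0$ lies in a totally disconnected residual set. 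Since the accumulation set of each $\BB^{\phi_j}$ is connected and contained in this boundary, it must be a single point; the fact that $m(h)$ lies simultaneously on $\partial\XX$ and on $\partial(\VV_2\setminus\ovl\XX)$ then forces both rays to land at $m(h)$. This global open--closed argument is what your local stability-plus-$\psi$ approach is missing.

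Your final step, deducing $-a\in\intr(Y^B(\phi_1,\phi_2))$ on all of $\intr\YY^B(\phi_1,\phi_2)$ from Lemma~\ref{lem:CorrTheoremForSigma} on $\Sigma$ together with ``stability of the repelling portrait'' on $\MM_2$, is likewise incomplete: stability is local, and to propagate it across $\MM_2\cap\intr\YY^B(\phi_1,\phi_2)$ you must rule out parabolic or intransitive degenerations along the way---which is precisely the case analysis the paper carries out.
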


\begin{proof}
We will adopt arguments in \cite{Mi}.

Assume first that $\phi_1,\phi_2$ are periodic angles. Denote by
$\XX$ the set of parameters $a\in \VV_2$ such that the rays
$B^{\phi_1}$, $B^{\phi_2}$ land together in the dynamical plane of
$g_a$, the common landing point $\gamma(a)$ is transitive
repelling, and $-a\in\intr(Y^B(\phi_1,\phi_2))$.
 It is easy to see that $\XX$ is open. We need to show that the rays
$\BB^{\phi_1}$, $\BB^{\phi_2}$ land together at $m(h)\in\VV_2$ and
$\XX=\intr(\YY^B(\phi_1,\phi_2))$.

Observe that $\intr(\XX)$ and $\intr(\VV_2\setminus \XX)$ are not
empty. Indeed, it follows from Lemma \ref{lem:MatHYpPolyn} that a
hyperbolic component $\HH$ is in $\YY(\phi_1,\phi_2)$ if and only if
$\HH^{B}$ is in $\XX$. Hence the claim follows from the easy fact
that there are hyperbolic components inside as well as outside of
$\YY(\phi_1,\phi_2)$.

Consider a point $a_0\in \partial \XX$. By definition, one of the
following holds:
\begin{itemize}
\item[(A)] $\gamma(a_0)$ is not a repelling point;
\item[(B)] $Y^B(\phi_1,\phi_2)$ does not exist;
\item[(C)] $Y^B(\phi_1,\phi_2)$ exists but $-a_0\not\in\intr(Y^B(\phi_1,\phi_2))$;
\item[(D)] $\gamma(a_0)$ is not transitive.
\end{itemize}

If \textbf{Case (A)} occur, then $a_0$ is on the boundary of a hyperbolic component.
There are finitely many hyperbolic components such that the associated non-repelling cycle has the same period
as $\gamma(a)$. Therefore, we can write $a_0\in \cup_{i=1}^n \partial\ovl {\HH_i^B}$.
Denote by $\mathcal I$ the set of indifferent parameters (Siegel or Cremer parameters) on $ \cup_{i=1}^n\partial\ovl {\HH_i^B}$. Then either:
\begin{itemize}
\item[(A1)] $a_0\in \mathcal I$; or
\item[(A2)] $a_0$ is a parabolic parameter.
\end{itemize}
Note that $\mathcal I$ is totally disconnected because it is totally disconnected subset of a closed set $\cup_{i=1}^n \partial\ovl {\HH_i^B}$;
 we will exclude Case (A1) later.

 Consider \textbf{Case (A2)}. Let $\ovl q$ be the parabolic periodic cycle  in the dynamical plane of $g_{a_0}$; by the assumption,
$\gamma(a_0)$ is in $\ovl q$ (more precisely,  $\ovl q$ is a collision of the periodic  cycle containing $\gamma(a_0)$
with another periodic cycle).
By Lemma \ref{lem:LandOfRaysFor} all rational rays in bubbles land in the dynamical plane of $g_{a_0}$ because $\ovl{\cup_{n\ge 0}g^n_{a_0}(-a_0)}
\subset\{$attracting petals$\}\cup \ovl q$.
If $B^{\phi_1}$
 does not land at $\gamma(a_0)\in \ovl q$, then it lands at a periodic repelling point and,
by the stability of repelling portraits (Lemma \ref{lem:StabOfPerOrb}), $B^{\phi_1}$ does not land at $\gamma(a)$
 in a neighborhood of $a_0$. This contradicts the assumption that $a_0\in \partial \XX$. Therefore,
$B^{\phi_1}$ (and, similarly, $B^{\phi_2}$) lands at
$\gamma(a_0)\in\ovl q$. We claim that there is a unique parameter
with the above properties.

 Let us write
$a_0=m(c_0)$ as in Definition \ref{defn:Map_cTo_ac} so that $c_0$ is the root of a hyperbolic component, say $\HH$. By Proposition \ref{prop:ParabolBirf}, Part (G) the rays $R^{\phi_1}$ and $R^{\phi_2}$ land together at a parabolic periodic point in the dynamical
plane of $f_{c_0}$. Then $R^{\phi_1}$, $R^{\phi_2}$ separate $0$ and the critical value $c_0$ because this property depends only on the combinatorics of $R^{\phi_1}$, $R^{\phi_2}$.
This implies that $c_0$ is the landing point of the parameter rays $\RR^{\phi_1}$ and $\RR^{\phi_2}$ by the correspondence theorem in $\VV_1$. In the
inverse direction: by Lemma \ref{lem:MatHYpPolyn} the component $\HH^B$ is in $\XX$ and so its root $a_0=m(c_0)$ is on $\partial \XX$
by Proposition \ref{prop:ParabolBirf}, Part (B). As $R^{\phi_1}$, $R^{\phi_2}$ are transitive $\HH$ is not primitive. Therefore, there is
a component $\HH_2\not=\HH$ such that $c_0\in \ovl{\HH_2}$. It follows that $\HH_2\not\subset\YY(\phi_1,\phi_2)$; this implies that
$\HH_2^B\subset \VV_2\setminus\ovl \XX$. We get $a_0=m(c_0)\in \partial(\VV_2\setminus\ovl \XX)$.

We denote by $m(h)=a_0=m(c_0)$ the unique point in Case (A2).

Consider \textbf{Case (B)}. If $Y^B(\phi_1,\phi_2)$ does not exist, then
either:
\begin{itemize}
\item[(B1)] at least one of the rays $B^{\phi_1}$, $B^{\phi_2}$ does not
exist;
\item[(B2)] $B^{\phi_1}$, $B^{\phi_2}$ exist but exactly one of the rays $B^{\phi_1}$, $B^{\phi_2}$ does not not land at $\gamma(a_0)$;
\item[(B3)] $B^{\phi_1}$, $B^{\phi_2}$ exist but both rays $B^{\phi_1}$, $B^{\phi_2}$ do not land at $\gamma(a_0)$;
\end{itemize}

By Lemma \ref{lem:RayInBubbleExists} one of the rays $B^{\phi_1}$, $B^{\phi_2}$ does not
exist if and only if $a_0\in \BB^{2^n\phi_1}\cup \BB^{2^n\phi_2}$.
This describes \textbf{Case (B1)}.

Let us show that \textbf{Cases (B2), (B3), (C), and (D)} do not occur. The
rays $B^{\phi_1}$, $B^{\phi_2}$ are in one periodic orbit and so if
one of them lands at $\gamma(a_0)$, then so does another one. This
excludes Case (B2).

Assume $B^{\phi_1}$, $B^{\phi_2}$ do not land at $\gamma(a_0)$. Then by
Proposition \ref{prop:LandOfRaysAtRatPts} there is a ray $B^{\phi}$
landing at $\gamma(a_0)$. By Lemma \ref{lem:StabOfPerOrb} the ray
$B^{\phi}$ lands at $\gamma(a)$ in some neighborhood of $a_0$. This
contradicts the assumption that $a_0\in\partial \XX$ because in
$\XX$ the ray $B^\phi$ does not land at $\gamma(a)$. This excludes
Case (B3). Similarly, Case (D) is excluded: if $\gamma(a_0)$ is not transitive, then there is a new ray
$B^\phi$ landing at $\gamma(a_0)$ and, by Lemma \ref{lem:StabOfPerOrb}, the ray $B^\phi$ lands at $\gamma(a)$ in a neighborhood of $a_0$ which is
the contradiction of $a_0\in\partial \XX$.

 If $Y^B(\phi_1,\phi_2))$ exists but $-a_0\not\in\intr(Y^B(\phi_1,\phi_2))$, then $-a_0\in\partial
Y^B(\phi_1,\phi_2)$. Observe that $-a\not = \gamma(a)$ because
$\gamma(a)$ is periodic. Thus $-a_0\in B^{\phi_1}\cup B^{\phi_2}$,
but then these rays do not exist because they hit singularities. This exclude Case (C).

We have shown that \[\partial \XX\subset \cup_{n\ge
0}\BB^{2^n\phi_1}\cup_{n\ge 0}\BB^{2^n\phi_2}\cup \{m(h)\}\cup \mathcal I,\]
where $\mathcal I$ and $m(h)$ are defined in Case (A). By Lemma
\ref{lem:CorrTheoremForSigma} the set \[\left(\cup_{n\ge
0}\BB^{2^n\phi_1}\cup_{n\ge
0}\BB^{2^n\phi_2}\right)\setminus(\BB^{\phi_1}\cup \BB^{\phi_2}) \]
can intersect $\partial \XX$ only at $\basil$ Misiurewicz
parameters. Therefore:
\[\partial \XX\subset \BB^{\phi_1}\cup\BB^{\phi_2}\cup \{m(h)\}\cup\{\text{$\basil$ Misiurewicz parameters}\}\cup \mathcal I.\]
Also, by Lemma
\ref{lem:CorrTheoremForSigma}, we have $\BB^{\phi_1}\cup \BB^{\phi_2}\subset \partial\XX$. This implies that
the accumulation sets of $\BB^{\phi_1}$ and $\BB^{\phi_2}$ are in $\partial \XX$. Observe that the parameter rays
can not accumulate on points in hyperbolic components. We conclude that the accumulation sets of $\BB^{\phi_1}$ and $\BB^{\phi_2}$
are in $\mathcal L=\{m(h)\}\cup\{\text{$\basil$ Misiurewicz parameters}\}\cup \mathcal I$. Note that $\mathcal L$ is
totally disconnected as it is a union of the totally disconnected set $\mathcal{I}\subset\cup_{i=1}^n \partial\ovl {\HH_i^B}$
and a countable set that is disjoint from $\cup_{i=1}^n \partial\ovl {\HH_i^B}.$
We conclude that $\BB^{\phi_1}, \BB^{\phi_2}$ land at points in $\mathcal L$.
Recall that $m(h)\in\partial \XX$ and $m(h)\in \partial (\VV_2\setminus \ovl\XX)$. This can happen only if $\BB^{\phi_1}, \BB^{\phi_2}$
land at $m(h)$.

We obtain $\ovl\XX=\YY^B(\phi_1,\phi_2)$. Moreover, for $a\in\intr
(\YY^B(\phi_1,\phi_2))$ the rays
$B^{\phi_1}, B^{\phi_1}$ land together and $-a\in \intr(Y^B(\phi_1,\phi_2))$.

The proof in the \textbf{pre-periodic case} is similar to the periodic case,
so we will only explain the differences. Assume that $\phi_1,$
$\phi_2$ are pre-periodic with pre-period $s$. Let us use the induction on
$s$.

As above denote by $\XX$ the set of
parameters $a\in \VV_2$ such that the rays $B^{\phi_1}$,
$B^{\phi_2}$ land together in the dynamical plane of $g_a$, the
common landing point $\gamma(a)$ is transitive preperiodic repelling, and
$-a\in\intr(Y^B(\phi_1,\phi_2))$.

It is well known (follows from Theorem \ref{th:char_ray_pair}) that there are periodic angles
$\psi_1,\psi_2$ in the forward orbit of $\phi_1,\phi_2$ so that $\RR^{\psi_1}$, $\RR^{\psi_2}$ land together
and separate $\RR^{\phi_1}$, $\RR^{\phi_2}$ from $0$. We may assume that $0<\psi_1<$$\phi_1<\phi_2<$$\psi_2<0$
is the cyclic order. Further, if $\psi_1<2^k\phi_1<\psi_2$ for $s>k>0$, then $\psi_1<2^k\phi_2<\psi_2$, the rays $\RR^{2^k\phi_1}$, $\RR^{2^k\phi_2}$
land together, and, by the induction assumption, the rays  $\BB^{2^k\phi_1}$, $\BB^{2^k\phi_2}$ land together in $\VV_2$.
Denote by $\YY^B$ the closure of the connected component of
\[\VV_2\setminus\left(\bigcup_{\psi_1<2^k\phi_1<\psi_2}\left(\BB^{2^k\phi_1}\cup \BB^{2^k\phi_2}\right)
\cup \left(\BB^{\psi_2}\cup \BB^{\psi_1}\right)\right)\]
containing the accumulation set of $\BB^{\phi_1}$. It follows that $\XX\subset \YY^B$
and for all $a\in \intr(\YY^B)$ the puzzle $Y^B(\phi_1,\phi_2)$ exists.

 Consider $a_0\in\partial
\XX$; as in the periodic case we have Cases (A,1--2), (B,1--3), (C), (D).
Case (A) does not occur. Case (B1) occurs only if
$a_0\in (\BB^{\phi_1}\cup \BB^{\phi_2})\cap(\BB^{\psi_1}\cup
\BB^{\psi_2}) $. Cases (B2), (B3), and (D) do not occur because $Y^B(\phi_1,\phi_2)$ exists
in $\YY^B$. Case (C)
occurs if and only if $a_0\in (\BB^{\phi_1}\cup
\BB^{\phi_2})\setminus(\BB^{\psi_1}\cup \BB^{\psi_2})$ or $a_0$ is the Misiurewicz parameter
that is the mating of the Basilica polynomial and the common landing
point of $\RR^{\phi_1}$, $\RR^{\phi_2}$. In the last case $a_0$ is also on the boundary of $\VV_2\setminus \ovl\XX$.
\end{proof}

\subsection{More on small copies of the Mandelbrot set}
Recall that a small copy $\MM_\HH\subset\MM$ is primitive if $\HH$
is not attached to a hyperbolic component of smaller period.
Equivalently, the root of $\MM_\HH$ is not transitive. We need the
following well known fact about $\VV_1$.
\begin{proposition}
\label{prop:PrimSmallCopies} Let $\MM_\HH\subset\MM$ be a primitive
copy of the Mandelbrot set and let $p$ be the period of the renormalization for $\MM_\HH$. Then there is a sequence $\YY_n$ of
parapuzzle pieces bounded by transitive rays such that:
\begin{itemize}
\item $\YY_n\subset\intr(\YY_{n-1})$;
\item $\MM_\HH=\cap_n\YY_n$;
\item if $c\in \YY_{n-1}$, then $f_c^p:Y_{n-1} \to Y_{n}$ is
a quadratic like map.
\end{itemize}
\end{proposition}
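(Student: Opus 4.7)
The plan is to take $\YY_n$ to be Yoccoz parapuzzle pieces of depth $n$ around a basepoint $c_0\in\MM_\HH$ not contained in any proper subcopy of $\MM$ inside $\MM_\HH$; such $c_0$ exists because $\MM_\HH\cong\MM$ and $\MM$ contains non-infinitely-renormalizable parameters. Yoccoz's theorem on the triviality of fibers at non-infinitely-renormalizable parameters (used in the proof of Proposition \ref{prop:main}) then gives $\bigcap_n\YY_n=\MM_\HH$.

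The boundary of each such $\YY_n$ consists of parameter rays landing at iterated pre-images of the $\alpha$-fixed point $\alpha(c)$ of $f_c$, which is repelling and whose ray portrait is stable over $\YY_{n-1}$. For every $c$ in any wake containing $\MM_\HH$, this $\alpha$-fixed point carries exactly $q$ rays of period $q$ under doubling, forming a single grand orbit (with $q/r$ the rotation number of the wake). Hence the bounding rays of $\YY_n$, whose angles lie in the same single grand orbit even after any number of doublings and pre-images, are transitive in the sense of the paper.

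For the quadratic-like restriction, I would invoke the Douady--Hubbard construction reviewed in Subsection \ref{subset:QuadrLike}, which furnishes an analytic family $(f_c^p:U'_c\to U_c)$ of quadratic-like maps over a neighborhood of $\MM_\HH$ whose connectedness locus is $\MM_\HH$. Since $\MM_\HH$ is primitive, the thickening is genuine, i.e.\ $\ovl{U'_c}\subset U_c$. For sufficiently large $n$, $\YY_n$ lies inside this neighborhood, and the dynamical Yoccoz piece $Y_n(c)$ of depth $n$ can be arranged (by choosing the Yoccoz depth to match the renormalization depth and taking the pullback under $f_c^p$) to sit strictly inside the companion piece $Y_{n-1}(c)$ with the direction convention of the proposition, giving the quadratic-like restriction $f_c^p:Y_{n-1}\to Y_n$. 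The nesting $\YY_n\subset\intr(\YY_{n-1})$ is then the standard Yoccoz shrinkage.

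The main obstacle is ensuring compatibility between the Yoccoz pullback structure in the full dynamical plane of $f_c$ and the quadratic-like pullback structure on $U'_c$ coming from the renormalization. Concretely, one must verify that at the matched depth the Yoccoz piece $Y_n(c)$ is bounded by rays of $f_c$ whose landing points still lie in one grand orbit of doubling (transitivity) and simultaneously forms a genuine quadratic-like domain for $f_c^p$. This reduces to two classical inputs: the $f_c^p$-pullback preserves the grand-orbit structure of ray angles, so transitivity is inherited at every depth; and primitivity of $\MM_\HH$ ensures that the renormalization strip $Y^p_\HH$ sits strictly inside $W^0_\HH$ away from the root $\beta$-fixed point, so that the Yoccoz-refined pieces can be thickened into genuine quadratic-like domains.
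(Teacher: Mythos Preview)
Your approach has a genuine gap when $\MM_\HH$ is nested inside another copy $\MM_{\HH'}$ of the Mandelbrot set. You base the Yoccoz puzzle on the global $\alpha$-fixed point of $f_c$. But if $\MM_\HH\subsetneq\MM_{\HH'}$, the standard Yoccoz parapuzzle pieces around any $c_0\in\MM_\HH$ shrink only to the \emph{outermost} copy $\MM_{\HH'}$, not to $\MM_\HH$: in the dynamical plane the nested pieces stabilize around the small filled Julia set of the \emph{first} renormalization, so their parameter counterparts stabilize around $\MM_{\HH'}$. Your appeal to ``triviality of fibers at non-infinitely-renormalizable parameters'' is also misapplied here: triviality of the fiber at $c_0$ means the full nested family of parapuzzle pieces (including those coming from deeper renormalization levels) shrinks to $\{c_0\}$, not to $\MM_\HH$. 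Neither reading gives you $\bigcap_n\YY_n=\MM_\HH$ in the nested case.

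The paper fixes exactly this by choosing the periodic cycle one renormalization level above $\MM_\HH$: it takes the smallest copy $\MM_{\HH'}\supsetneq\MM_\HH$ (or $\MM$ itself if $\MM_\HH$ is maximal) and uses the repelling cycle $\overline{p}$ associated with $\HH'$, i.e.\ the ``relative $\alpha$'' at that level. The initial piece $Y_1$ is cut by rays landing on $\overline{p}$ and its preimages so that $Y_1$ contains the small Julia set $K_1\ni c_0$ but no other $K_i$ in the $p$-cycle; then $Y_n$ is the $f_{c_0}^{(n-1)p}$-pullback along the orbit of $K_1$. This guarantees both the nesting and $\bigcap_n\YY_n=\MM_\HH$ directly, and the rays on $\overline{p}$ are in a single cycle under doubling, hence transitive, just as in your argument for the global $\alpha$. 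Your construction coincides with the paper's only in the special case $\MM_{\HH'}=\MM$; to make it work in general you must pass to the relative $\alpha$-cycle, which is precisely the paper's move.
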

\begin{proof}
The statement is well known so we will give the sketch of the proof.

Consider $c_0$ in $\MM_\HH$. Let $K_1,\dots, K_p$ be the cycle of small filled in Julia sets associated
with $\MM_\HH$; we assume that $c_0\in K_1$.

Let $\MM_{\HH'}\not=\MM_\HH$ be the smallest copy of $\MM$
containing $\MM_\HH$. If $\MM_\HH$ is maximal, then take $\MM_{\HH'}:=\MM$.
Let $\ovl{p}$ be the periodic cycle associated with $\HH'$;
i.e. $\ovl{p}$ is attracting if and only if the parameter is in $\HH'$ (if $\MM_{\HH'}=\MM$, then $\ovl{p}$ is the $\alpha$--fixed point).

In the dynamical plane of $f_{c_0}$ the cycle $\ovl{p}$ is repelling. Then there is a puzzle piece $Y_1$ bounded by rays landing at
points in $\ovl{p}$ or their preimages such that $K_1\subset Y_1$ but $K_i\cap\subset Y_1=\emptyset$ for $i\not=1$.

Define $Y_n\supset K_1$ to be the pullback of $Y_1$ under $f_{c_0}^{(n-1)p}$ along the orbit of $K_1$. The sequence
$\YY_n$ satisfies the conditions of the proposition.
\end{proof}

\begin{proposition}
\label{prop:PrimSmallCopiesInV2} Let $\YY_n$ be a sequence as in
Proposition \ref{prop:PrimSmallCopies} such that $\YY_n\not\subset
\WW_B$. Then for all $a\in \cap_n\YY^B_n$ all rational rays in bubbles land
in the dynamical plane of $g_a$.
\end{proposition}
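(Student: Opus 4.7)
The plan is to verify the hypothesis of Lemma \ref{lem:LandOfRaysFor} for each $a\in\bigcap_n\YY_n^B$: that the postcritical closure $\ovl{\bigcup_{n\ge 0}g^n_a(-a)}$ does not meet any ray in bubbles.

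First I would transfer the primitive renormalization from $\VV_1$ to $\VV_2$. By Proposition \ref{prop:PrimSmallCopies}, for $c\in\YY_{n-1}$ the iterate $f_c^p\colon Y_{n-1}\to Y_n$ is quadratic-like, where $p$ is the period of $\MM_\HH$. Corollary \ref{cor:BubbleParAndPar} supplies the $\VV_2$-twin parapuzzle pieces $\YY_n^B$ and guarantees that the dynamic bubble puzzle pieces $Y_n^B$ exist for each $a\in\YY_{n-1}^B$; after a standard thickening (Subsection \ref{subset:QuadrLike}) this produces a quadratic-like renormalization $g_a^p\colon Y_{n-1}^B\to Y_n^B$ for every $a\in\bigcap_n\YY_n^B$. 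Put $K_1^B:=\bigcap_n Y_n^B$ and $K_i^B:=g_a^{i-1}(K_1^B)$ for $i=1,\ldots,p$; these are compact $g_a^p$-invariant sets containing $-a$, so $\ovl{\bigcup_{n\ge 0}g_a^n(-a)}\subset\bigcup_{i=1}^p K_i^B$, and primitivity of $\MM_\HH$ (preserved under the $\VV_2$-twin correspondence via matings) gives $K_i^B\cap K_j^B=\emptyset$ for $i\ne j$, with $p\ge 2$.

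Next I would show $\bigcup_i K_i^B$ avoids every ray in bubbles. A ray in bubbles, with its landing point removed, is contained in $\Omega(a)\cup\{\basil\text{-points}\}$, where $\Omega(a)$ is the basin of $\{0,\infty\}$ and the $\basil$-points form the grand orbit of $\alpha(g_a)$ in $J_a$. The set $\bigcup_i K_i^B$ is disjoint from $\Omega(a)$: each $z\in K_i^B$ has $g_a^{jp}(z)\in K_i^B$ for all $j$, so the forward orbit is bounded and cannot be attracted to $\{0,\infty\}$. It is also disjoint from the set of $\basil$-points: if $z\in K_i^B$ were a preimage of $\alpha(g_a)$, then $g_a^k(z)=\alpha(g_a)$ for some $k$, placing $\alpha(g_a)$ in $K_j^B$ for $j\equiv i+k\pmod{p}$; since $\alpha(g_a)$ is $g_a$-fixed, one would also have $\alpha(g_a)\in K_{j+1}^B$, contradicting disjointness (this uses $p\ge 2$). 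Hence $\ovl{\bigcup_{n\ge 0}g_a^n(-a)}$ is disjoint from every ray in bubbles, and Lemma \ref{lem:LandOfRaysFor} yields the claim.

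The main obstacle I expect is the first step: transferring the primitive quadratic-like structure cleanly from $\VV_1$ to $\VV_2$ so that the cycle $K_1^B,\ldots,K_p^B$ is genuinely well-defined for every $a\in\bigcap_n\YY_n^B$ with pairwise disjoint small filled Julia sets, not just for hyperbolic $a$. Corollary \ref{cor:BubbleParAndPar} and Theorem \ref{thm:ParBubblRay} secure the combinatorics, and holomorphic motion arguments on the boundary rays of the $Y_n^B$ together with the usual thickening give honest quadratic-like maps uniformly over $a\in\YY_{n-1}^B$.
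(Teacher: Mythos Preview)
Your overall strategy is the paper's: produce a cycle $K_1^B,\ldots,K_p^B$ of small filled Julia sets containing the postcritical closure, check that this cycle avoids all rays in bubbles, and apply Lemma~\ref{lem:LandOfRaysFor}. There is, however, a logical circularity in your first step. You invoke Corollary~\ref{cor:BubbleParAndPar} to obtain the $\VV_2$-twins $\YY_n^B$ and $Y_n^B$, but that corollary is a consequence of the full Theorem~\ref{thm:ParBubblRay}, whose proof (specifically the intransitive case, Proposition~\ref{prop:TransCase}) \emph{uses} Proposition~\ref{prop:PrimSmallCopiesInV2}. At this point in the argument only the transitive case, Proposition~\ref{prop:IntransCase}, is available. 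The fix is immediate: the pieces $\YY_n$ in Proposition~\ref{prop:PrimSmallCopies} are by construction bounded by transitive rays, so Proposition~\ref{prop:IntransCase} already gives existence of $\YY_n^B$, $Y_n^B$ and the quadratic-like maps $g_a^p:Y_{n-1}^B\to Y_n^B$.

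On the second step, your argument that $\alpha(g_a)\notin\bigcup_i K_i^B$ via ``$\alpha$ is fixed, the $K_i^B$ are disjoint, and $p\ge 2$'' is a clean alternative to the paper's route. The paper instead observes combinatorially that the dynamic pieces $Y_n$ (in the plane of $f_c$) do not contain the landing points of $R^{1/3},R^{2/3}$, and transfers this to $Y_n^B\not\ni\alpha(g_a)$. Both are valid; yours avoids the combinatorial bookkeeping, while the paper's makes explicit use of the hypothesis $\YY_n\not\subset\WW_B$. Note that your disjointness claim $K_i^B\cap K_j^B=\emptyset$ does not need to be imported from $\VV_1$ ``via matings'': it follows directly from the quadratic-like structure $g_a^p:Y_{n-1}^B\to Y_n^B$ and primitivity, once you have the pieces from Proposition~\ref{prop:IntransCase}.
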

\begin{proof}
By Proposition \ref{prop:IntransCase} the $\VV_2$--twins $\YY_{n}^B$
exist and $g_a^p:Y^B_{n-1}\to Y^B_n$ is quadratic like for $a\in
\YY^B_{n-1}$. Hence for $a_0\in\cap_n\YY^B_n$ the map $g_{a_0}$ has a
 cycle $K^B_1,\dots, K^B_p$ of connected small filled in Julia sets.

Observe that rays in bubbles do not intersect $\cup_i
K^B_i$. Indeed, $\cup_iK^B_i$ does not intersect the attracting basin
of the $\infty$--$0$ cycle so it is sufficient to verify that
$\cup_iK^B_i$ does not contain $\alpha(g_a)$ (the fixed point
$\ovl{E_\infty}\cap \ovl{E_0}$). This is so because $Y_n$ does not
contain the landing points of $R^{1/3}$, $R^{2/3}$.

The proposition follows from Lemma \ref{lem:LandOfRaysFor} because $\cup_i K^B_i \supset \ovl{\cup_{n\ge 0}g^n_a(-a)}$.

\end{proof}

\begin{lemma}
\label{lem:AccumulSet} If a rational non-$\basil$ ray $\RR^\phi$
lands in a parapuzzle piece $\YY$ and $\YY^B$ exists, then the
accumulation set of $\BB^\phi$ is in $\YY^B$.

If a rational non-$\basil$ ray $R^\phi$ lands in a puzzle piece $Y$
in the dynamical plane of $f_c$ and if $Y^B$ and $B^\phi$ exist in the
dynamical plane of $g_a$, then the accumulation set of $B^\phi$ is
in $Y^B$.
\end{lemma}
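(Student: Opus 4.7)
My plan is to prove both assertions by the same combinatorial argument: the ray $\BB^\phi$ (respectively $B^\phi$) is in fact contained in the closed disc $\YY^B$ (respectively $Y^B$), so its accumulation set is automatically contained there. The strategy relies on the tree structure of bubble rays emanating from a common basepoint together with the $\VV_2$-twin correspondence via the M\"obius conjugacy $\rho$ and the conformal conjugacy $h_a$ on bubbles.

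The first step is to establish a cyclic-order correspondence at the basepoint: the cyclic order at $a=\infty$ of the parameter bubble rays $\BB^\phi,\BB^{\phi_1},\dots,\BB^{\phi_k}$ in $\VV_2$ coincides with the cyclic order at $c=\infty$ of the external rays $\RR^\phi,\RR^{\phi_1},\dots,\RR^{\phi_k}$ in $\VV_1$. This follows directly from the definition of $\VV_2$-twins through $\rho\circ h_a$: both $\rho:g_1\to f_B$ and the bubble conjugacy $h_a$ preserve cyclic orientation at the relevant marked basepoints, and composing them produces the angle-preserving identification of rays in bubbles with external rays. The analogous statement in the dynamical plane of $g_a$ is proved identically, using the tree $T(a)$ of pre-images of $\{0,\infty\}$ together with $h_a$ and $\rho$.

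Using this correspondence, the hypothesis that $\RR^\phi$ lands in $\YY$ translates to the combinatorial fact that $\phi$ sits in the cyclic arc (bounded by the $\phi_i$) which corresponds to $\YY$. Therefore $\BB^\phi$ emanates at $a=\infty$ into the angular sector bounded by $\BB^{\phi_1},\dots,\BB^{\phi_k}$, i.e.\ into $\YY^B$. Now distinct bubble rays may share a common initial geodesic in the tree $T$, but they can branch only at $\basil$ points (iterated pre-images of $\alpha$), and after such a branching they diverge into disjoint sectors; in particular they never cross each other transversally. Consequently $\BB^\phi$ cannot leave $\YY^B$, so its accumulation set on $\MM_2$ lies in $\YY^B\cap\MM_2\subset\YY^B$. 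The dynamical version follows from the identical argument, working in the dynamical plane of $g_a$ with the tree $T(a)$ and the rays $B^\phi,B^{\phi_i}$.

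The main obstacle will be making precise the branching analysis at vertices where $\BB^\phi$ shares an initial segment with some boundary ray $\BB^{\phi_i}$: at the common branching vertex, which is a $\basil$ Misiurewicz parameter, one must verify that $\BB^\phi$ enters the interior of $\YY^B$ rather than a neighbouring sector. This is a combinatorial check using that $z\mapsto z^2$ preserves the cyclic order of angles, that the Basilica lamination $L_B$ encodes branchings consistently in both planes, and that the correspondence $\rho\circ h_a$ transports this combinatorial picture faithfully from $\VV_1$ to $\VV_2$.
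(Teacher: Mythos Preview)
Your proposal is correct and follows essentially the same approach as the paper. The paper's proof is much terser: it simply observes that either $\RR^\phi$ is one of the boundary rays of $\YY$ (so $\BB^\phi$ is a boundary ray of $\YY^B$), or $\RR^\phi$ lands in $\intr(\YY)$, in which case $\BB^\phi$ at some moment enters $\intr(\YY^B)$ and thereafter never leaves --- your branching analysis at $\basil$ vertices and the cyclic-order correspondence via $\rho\circ h_a$ are exactly the justifications the paper leaves implicit in those two clauses.
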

\begin{proof}
If $\RR^\phi$ lands in $\YY$, then either $\RR^\phi$ is in
$\partial\lfloor\YY\rfloor$ or $\RR^\phi$ lands in $\intr(\YY)$. In
the first case $\BB^\phi$ is in $\partial\lfloor\YY^B\rfloor$ and
the claim is obvious. In the second case there is a moment when
$\BB^\phi$ enters $\intr(\YY^B)$; after that $\BB^\phi$ always stays
in $\intr(\YY^B)$.

Similarly, the second (dynamical) part
of the proposition is verified.
\end{proof}

\subsection{The intransitive case}
\begin{proposition}
\label{prop:TransCase} Proposition \ref{prop:IntransCase} holds for
intransitive rays $R^{\phi_1},$ $R^{\phi_2}.$
\end{proposition}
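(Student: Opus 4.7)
The plan is to reduce to the already-established transitive case (Proposition \ref{prop:IntransCase}) via the primitive small-copy framework of Proposition \ref{prop:PrimSmallCopies}. Intransitive periodic rays $\RR^{\phi_1},\RR^{\phi_2}$ landing together in $\VV_1\setminus\WW_B$ do so at the root $h$ of a primitive hyperbolic component $\HH$, because two periodic rays of the same period land at a satellite root if and only if they lie in a single grand orbit, so ``intransitive'' is equivalent to ``primitive''. The preperiodic intransitive case will reduce to the periodic one by induction on the preperiod, exactly as in the last paragraph of the proof of Proposition \ref{prop:IntransCase}, so it suffices to treat the periodic case.

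By Proposition \ref{prop:PrimSmallCopies} I obtain a nested sequence $\YY_1\supset\YY_2\supset\cdots$ of parapuzzle pieces, each bounded by \emph{transitive} rays, with $\cap_n\YY_n=\MM_\HH$ and $f_c^p\colon Y_{n-1}\to Y_n$ quadratic-like. Applying the already-proved transitive case to each boundary ray pair of $\YY_n$ produces the $\VV_2$-twins $\YY_n^B$ and $Y_n^B$, with $-a\in\intr(Y_n^B)$ whenever $a\in\intr(\YY_n^B)$. Proposition \ref{prop:PrimSmallCopiesInV2} then ensures that for every $a\in\cap_n\YY_n^B$ every rational ray in bubbles, in particular $B^{\phi_1}(a)$ and $B^{\phi_2}(a)$, lands.

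I define $\XX\subset\VV_2$ to be the set of parameters for which $B^{\phi_1},B^{\phi_2}$ land together at a common repelling transitive point and $-a\in\intr(Y^B(\phi_1,\phi_2))$, and imitate the four-case boundary analysis of Proposition \ref{prop:IntransCase}. Nonemptiness and openness of $\XX$ follow from the center of $\HH^B$ via Lemma \ref{lem:MatHYpPolyn} together with Lemma \ref{lem:StabOfPerOrb}. The cases for $a_0\in\partial\XX$ are handled verbatim: (A1) is totally disconnected inside $\cup_i\partial\HH_i^B$; (A2) yields a parabolic parameter which, by Proposition \ref{prop:ParabolBirf}(F),(G), must coincide with $m(h)$, because $\{\phi_1,\phi_2\}$ is precisely its non-$\basil$ parabolic ray portrait; (B1) restricts $a_0$ to the rays $\BB^{2^n\phi_i}$; (B2),(B3),(D) are excluded by Proposition \ref{prop:LandOfRaysAtRatPts} and Lemma \ref{lem:StabOfPerOrb}; and (C) forces $a_0\in \BB^{\phi_1}\cup\BB^{\phi_2}$.

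By Lemma \ref{lem:AccumulSet} applied to every $\YY_n$, the accumulation sets of $\BB^{\phi_1}$ and $\BB^{\phi_2}$ lie in $\cap_n\YY_n^B=\MM_\HH^B$, and by the above case analysis inside the totally disconnected set $\{m(h)\}\cup\{\basil\text{-Misiurewicz parameters in }\MM_\HH^B\}\cup\mathcal I$; hence each is a single point. The main obstacle, distinguishing this proposition from its transitive analogue, is that primitivity of $\MM_\HH$ means $m(h)$ is the root of only one hyperbolic component of $\MM_2$ by Proposition \ref{prop:ParabolBirf}(E), so one cannot conclude $m(h)\in\partial(\VV_2\setminus\ovl\XX)$ from two hyperbolic components meeting at $m(h)$ as in the transitive proof. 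I dispatch this combinatorially: $m(h)$ is the unique point in $\MM_\HH^B$ whose non-$\basil$ parabolic portrait is $\{\phi_1,\phi_2\}$, while any $\basil$-Misiurewicz candidate or Siegel/Cremer candidate in $\mathcal I$ fails this characterization (the first by Lemma \ref{lem:MatMisiurPolyn}, the second because $\mathcal I$ consists of non-rationally-accessible parameters). This pins down $\BB^{\phi_1}, \BB^{\phi_2}\to m(h)$ and yields $\ovl\XX=\YY^B(\phi_1,\phi_2)$ as required.
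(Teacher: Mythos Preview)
Your proposal has two genuine gaps, and the second one is exactly where the paper's argument differs from the transitive case.

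First, your definition of $\XX$ requires the common landing point $\gamma(a)$ to be \emph{transitive}. But $\phi_1,\phi_2$ are by hypothesis in different grand orbits, so any point at which $B^{\phi_1}$ and $B^{\phi_2}$ both land is automatically intransitive; your $\XX$ is empty. The paper's definition of $\XX$ in the intransitive case drops the transitivity requirement (and hence case (D) disappears).

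Second, and more seriously, you claim case (B2) is excluded ``verbatim'' via Proposition~\ref{prop:LandOfRaysAtRatPts} and Lemma~\ref{lem:StabOfPerOrb}. In the transitive proof, (B2) was ruled out by the one-line observation that $B^{\phi_1},B^{\phi_2}$ lie in a single periodic orbit, so one lands at $\gamma(a_0)$ iff the other does. That observation is false for intransitive rays, and neither Proposition~\ref{prop:LandOfRaysAtRatPts} nor Lemma~\ref{lem:StabOfPerOrb} rules out the possibility that $B^{\phi_1}$ exists but fails to land at all in the dynamical plane of $g_{a_0}$. This is precisely the new difficulty, and it is where the paper deploys the primitive-copy machinery: one shows that such an $a_0$ must lie in the accumulation set of $\BB^{\phi_1}$, hence (by Lemma~\ref{lem:AccumulSet} applied to a transitive parapuzzle piece $\YY_t$ excluding the other $\RR^{2^n\phi_1}$) in $\cap_n\YY_n^B$; then Proposition~\ref{prop:PrimSmallCopiesInV2} forces $B^{\phi_1}$ to land for $g_{a_0}$, a contradiction. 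You invoke Propositions~\ref{prop:PrimSmallCopies} and~\ref{prop:PrimSmallCopiesInV2} only at the end, to localize the accumulation sets of the \emph{parameter} rays; you miss their essential role in excluding (B2) on the dynamical side.

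Your final paragraph, which attempts to pin down the landing point as $m(h)$ by a combinatorial characterization of its ray portrait, does not work as stated: you have not yet established that a periodic parameter bubble ray cannot land at a Cremer parameter in $\mathcal I$ or at a $\basil$-Misiurewicz parameter. Saying ``$\mathcal I$ consists of non-rationally-accessible parameters'' presupposes exactly what is being proved. Once (B2) is correctly excluded, the paper's original endgame (the unique parabolic parameter $m(h)$ lies on $\partial\XX$ and the rays land in the totally disconnected set) goes through without needing your alternative argument.
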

\begin{proof}
The proof is similar to the proof of Proposition \ref{prop:IntransCase} so we will explain only the differences.

 Assume $\phi_1$, $\phi_2$ are periodic. Denote by
$\XX$ the set of parameters $a\in \VV_2$ such that the rays
$B^{\phi_1}$, $B^{\phi_2}$ land together in the dynamical plane of
$g_a$, the common landing point $\gamma(a)$ is
repelling, and $-a\in\intr(Y^B(\phi_1,\phi_2))$.
 It is easy to see that $\XX$ is open. Consider a point $a_0\in \partial \XX$. By definition, one of the
following holds:
\begin{itemize}
\item[(A)] $\gamma(a_0)$ is not a repelling point;
\item[(B)] $Y^B(\phi_1,\phi_2)$ does not exist;
\item[(C)] $Y^B(\phi_1,\phi_2)$ exists but $-a_0\not\in\intr(Y^B(\phi_1,\phi_2))$.
\end{itemize}
Furthermore, Case (A) is subdivided into Cases (A1), (A2) and Case (B) is subdivided into Cases (B1)--(B3); see the proof of Proposition \ref{prop:IntransCase}.

Cases (A1), (A2), (B1), (B3), and (C) are the same as in  Proposition \ref{prop:IntransCase}.
Let us now show that Case (B2) does not occur.
 Recall that we consider the parameter $a_0\in\partial \XX$;
without loss of generality we may assume that $B^{\phi_1}$ does not
land at $\gamma(a_0)$. It follows that $B^{\phi_1}$ does not land at
any point because landing is stable under small perturbations (Lemma
\ref{lem:StabOfPerOrb}).

Note that $a_0$ is
in the accumulation set of a ray $\BB^{2^n\phi_1}$ for some $n\ge
0$; otherwise $\ovl{B^{\phi_1}}$ would be stable by Lemma \ref{lem:StabOfRaysFromLambdaLemma}.
Choose a parapuzzle piece $\YY_t$ bounded by transitive rays such that $\YY(\phi_1,\phi_2)\subset\YY_t$
and $\YY_t$ does not contain rays in $\cup_{n\ge0}\RR^{2^n\phi_1}\setminus \{\RR^{\phi_1}\}$.

 By Proposition
\ref{prop:IntransCase} the $\VV_2$--twin $\YY^B_t$ exists
and, moreover, $\XX\subset \YY_t^B$. By Lemma \ref{lem:AccumulSet} the parapuzzle piece $\YY^B_t$
does not contain accumulation set of $\BB^{2^n\phi_1}\not=\BB^{\phi_1}$. Therefore, $a_0$ is in
$\ovl{\BB^{\phi_1}}$.

Denote by $h\in \VV_1$ the landing point of $\RR^{\phi_1}$. As
$R^{\phi_1}$ is intransitive $h$ is the root of a small primitive copy
$\MM_\HH$ of the Mandelbrot set. Let $\YY_n$ be a sequence as in
Proposition \ref{prop:PrimSmallCopies} for $\MM_\HH$. Then $a_0\in
\cap_n\YY^B_n$. By Proposition \ref{prop:PrimSmallCopiesInV2} the
ray $B^{\phi_1}$ lands in the dynamical plane of $g_{a_0}$; this
contradicts the assumption on $a_0$.

\end{proof}

It follows from Propositions \ref{prop:IntransCase} and
\ref{prop:TransCase} that:
\begin{cor}
\label{cor:V1TwinsExists} For a parapuzzle piece $\YY$ in
$\VV_1\setminus\WW_B$ the $\VV_2$--twin $\YY^B$ exists and $a\in
\intr(\YY^B)$ if and only if $Y^B$ exists in the dynamical plane of $g_a$
and $a\in \intr(Y^B)$.
\end{cor}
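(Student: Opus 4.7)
The plan is to reduce the statement to Propositions \ref{prop:IntransCase} and \ref{prop:TransCase}, applied pair-by-pair to the ray pairs that form $\partial \lfloor \YY \rfloor$. First I would recall that a parapuzzle piece $\YY\subset\VV_1\setminus\WW_B$ is a closed topological disc bounded by finitely many pairs $\RR^{\phi_{i}^1},\RR^{\phi_{i}^2}$ of rational parameter rays landing together, together with an equipotential arc (which is irrelevant for the $\VV_2$-twin, since $\VV_2$ has no parameter equipotential). Each such pair is either transitive or intransitive, so Proposition \ref{prop:IntransCase} or Proposition \ref{prop:TransCase} applies and guarantees that $\BB^{\phi_{i}^1},\BB^{\phi_{i}^2}$ land together in $\VV_2$, bounding a parapuzzle piece $\YY^B(\phi_i^1,\phi_i^2)$. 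The twin $\YY^B$ is then built as the intersection (or the appropriate boolean combination) of these bubble parapuzzle pieces, exactly as in the formula given at the end of the proof of Corollary \ref{cor:BubbleParAndPar}.

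Next I would verify the equivalence. Fix $a$. If $a\in\intr(\YY^B)$, then for every bounding pair $(\phi_i^1,\phi_i^2)$ we have $a\in\intr(\YY^B(\phi_i^1,\phi_i^2))$, so by the relevant Proposition the dynamic rays $B^{\phi_i^1},B^{\phi_i^2}$ land together and $-a\in\intr(Y^B(\phi_i^1,\phi_i^2))$. Combining these dynamic ray pairs gives the dynamic piece $Y^B$ with $-a\in\intr(Y^B)$, which is what it means for $Y^B$ to exist for $g_a$ with $a\in\intr(Y^B)$ (under the natural correspondence $a\leftrightarrow -a$ implicit throughout Section \ref{sec:ThmParBubblRay}). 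Conversely, if $Y^B$ exists in the dynamical plane of $g_a$ with $-a\in\intr(Y^B)$, then each bounding pair $B^{\phi_i^1},B^{\phi_i^2}$ lands together and separates $-a$ from $0$ in the prescribed pattern; applying Propositions \ref{prop:IntransCase}/\ref{prop:TransCase} in reverse for each pair places $a$ in $\intr(\YY^B(\phi_i^1,\phi_i^2))$, and taking the common intersection yields $a\in\intr(\YY^B)$.

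The only non-routine step is making sure that the pairs $(\phi_i^1,\phi_i^2)$ fit together combinatorially, i.e.\ that the resulting $\YY^B$ is a single topological disc (not several). This is where one uses that $\YY$ itself is a disc and that $\LL_B$-considerations are irrelevant because $\YY\subset\VV_1\setminus\WW_B$, so none of the bounding rays are $\basil$ rays that would interact with the Basilica limb. The main (mild) obstacle I anticipate is therefore bookkeeping: one has to check that the ray pairs in $\lfloor\YY\rfloor$ are nested/disjoint in the same cyclic pattern around $0$ as their $\VV_2$-twins are around $0\in\VV_2$, which is immediate from the fact that both identifications preserve angles and that the doubling map governs both sides. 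Once this is observed the corollary drops out by assembling the two propositions, with no further analytic input required.
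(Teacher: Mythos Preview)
Your proposal is correct and matches the paper's approach: the paper states the corollary as an immediate consequence of Propositions \ref{prop:IntransCase} and \ref{prop:TransCase} without further proof, and your plan simply spells out the intended pair-by-pair decomposition (exactly the formula used in the proof of Corollary \ref{cor:BubbleParAndPar}). Your remark about the $a\leftrightarrow -a$ correspondence is apt, since the propositions are stated with $-a\in\intr(Y^B(\phi_1,\phi_2))$ while the corollary writes $a\in\intr(Y^B)$; this is indeed just the paper's shorthand.
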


\subsection{Triviality of fibers at Misiurewicz parameters}

\begin{lemma}
\label{lem:TrivOfFibersAtMis} If $a\in\MM_2$ is a non-$\basil$ Misiurewicz
parameter, then there is a sequence $\YY^B_n$ of parameter bubble
pieces such that $a\in\intr(\YY^B_n)$ and $\cap\YY^B_n=\{a\}$.

If $a\in\MM_2$ is a $\basil$ Misiurewicz
parameter, then there are sequences $\YY^B_n$ and $\YY'^B_n$ of parameter bubble
pieces and two components $\EE$, $\EE'$ of $\VV_2\setminus\MM_2$ such that $a\in\intr(\YY^B_n\cup \YY'^B_n\cup\EE\cup\EE')$
and $\cap\YY^B_n=\cap\YY'^B_n=\{a\}$.
\end{lemma}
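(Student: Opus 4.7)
\textit{Proof plan.} The strategy is to transfer Yoccoz's triviality-of-fibers theorem from $\VV_1$ to $\VV_2$ via the bijection between bubble parapuzzle pieces in $\VV_2$ and parapuzzle pieces in $\VV_1\setminus\WW_B$ furnished by Corollary \ref{cor:V1TwinsExists}.

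For the non-$\basil$ case, the first step is to identify $a=m(c)$ for a unique non-$\basil$ Misiurewicz parameter $c\in\MM\setminus\WW_B$, using Theorem \ref{thm:ParDynIsomV2} together with Lemma \ref{lem:MatMisiurPolyn}. Yoccoz's theorem in $\VV_1$ then produces a nested sequence $\YY_n$ of parapuzzle pieces with $c\in\intr(\YY_n)$ and $\cap\YY_n=\{c\}$; since $c\notin\ovl{\WW_B}$, for all $n$ sufficiently large one may arrange $\YY_n\subset\VV_1\setminus\WW_B$. Applying Corollary \ref{cor:V1TwinsExists}, the $\VV_2$-twins $\YY^B_n$ are nested bubble parapuzzle pieces, and by Theorem \ref{thm:ParBubblRay} the parameter rays bounding $\YY_n$ and landing at $c$ have $\VV_2$-twins landing at $m(c)=a$, so $a\in\intr(\YY^B_n)$.

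The key step is to verify $\cap\YY^B_n=\{a\}$. Suppose $a^*\in\cap\YY^B_n$; by Corollary \ref{cor:V1TwinsExists} the critical value $-a^*$ then lies in $\cap_n Y^B_n(a^*)$. The bounding rays of $Y^B_n$ land at preperiodic repelling points depending continuously on the parameter by Lemmas \ref{lem:StabOfPerOrb} and \ref{lem:StabOfRaysFromLambdaLemma}. On the one hand, in the dynamical plane of $g_a$ the pieces $Y^B_n(a)$ shrink to $-a$, since under the mating $g_a=f_c\mcup f_B$ this shrinking corresponds to the classical shrinking of Yoccoz puzzle pieces around the Misiurewicz value of $f_c$ in $\VV_1$ (together with the hyperbolic shrinking inside the Basilica Fatou components). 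On the other hand, the holomorphic motion of rays in bubbles shows that $Y^B_n(a^*)$ is Hausdorff close to $Y^B_n(a)$ for $a^*$ close to $a$. Combining these facts forces $-a^*=-a$ for any accumulation point $a^*$ of $\YY^B_n$, hence $a^*=a$.

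For the $\basil$ case, $a=m(c)$ has $c\in\partial\WW_B$ a $\basil$ Misiurewicz parameter, and by Theorem \ref{thm:ParDynIsomV2} two distinct components $\EE,\EE'$ of $\VV_2\setminus\MM_2$ meet at $a$. The approach is to approximate $c$ in $\VV_1\setminus\WW_B$ from each of the two sides of the $\basil$ parameter ray ending at $c$, obtaining two Yoccoz sequences $\YY_n,\YY'_n\subset\VV_1\setminus\WW_B$ shrinking to $c$; their $\VV_2$-twins $\YY^B_n,\YY'^B_n$ yield the required sequences, and $\YY^B_n\cup\YY'^B_n\cup\EE\cup\EE'$ covers a neighborhood of $a$ because the rays in bubbles bounding these parapuzzle pieces either land at $a$ or belong to $\partial\EE\cup\partial\EE'$. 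The main obstacle throughout is the shrinking step in the non-$\basil$ case: the parapuzzle pieces $\YY^B_n$ are parameter objects cut out by rays that themselves move with the parameter, so one cannot simply identify $\YY^B_n$ with a single dynamical set; the holomorphic motion of rays from Lemma \ref{lem:StabOfRaysFromLambdaLemma} is essential to control this variation and thereby reduce parameter-space shrinking to dynamical-plane shrinking.
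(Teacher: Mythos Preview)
Your key shrinking step has a genuine gap. You argue that $Y^B_n(a)\to\{-a\}$ in the dynamical plane of $g_a$ and then invoke holomorphic motion to say $Y^B_n(a^*)$ is Hausdorff-close to $Y^B_n(a)$ when $a^*$ is close to $a$. But for a point $a^*\in\cap_n\YY^B_n$ you have no a priori bound on $|a^*-a|$---that is exactly what must be proved---and even for $a^*$ near $a$ the Hausdorff distance between $Y^B_n(a^*)$ and $Y^B_n(a)$ is not small \emph{uniformly in $n$}. The holomorphic motion of $\partial Y^B_n$ is defined only over $\intr(\YY^B_n)$, a domain that itself shrinks with $n$, so you cannot transport the dynamical shrinking at the single parameter $a$ to any conclusion at $a^*$. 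Nothing prevents the pieces $Y^B_n(a^*)$ from failing to shrink if $a^*$ happens to be, say, infinitely renormalizable.

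The paper avoids this entirely by choosing the puzzle pieces with care rather than invoking generic Yoccoz pieces: take $Y_0$ around the repelling periodic point $p=f_c^{k}(c)$ so small that it meets the postcritical set only in $p$, and let $Y_n\ni c$ be the pullback of $Y_0$ under $f_c^{k+nq}$. The point is that then $g_{a'}^{\,q}:g_{a'}^{\,k}(Y^B_{n+1})\to g_{a'}^{\,k}(Y^B_n)$ has degree one for \emph{every} $a'\in\intr(\YY^B_n)$, so by the Schwarz lemma the nested sets $g_{a'}^{\,k}(Y^B_n)$ intersect in a single periodic point $p^B$. From $-a'\in\cap_n Y^B_n(a')$ one gets $g_{a'}^{\,k}(-a')=p^B$, i.e.\ $g_{a'}$ is Misiurewicz with the same itinerary as $g_a$, and Thurston rigidity gives $a'=a$. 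No shrinking of $Y^B_n(a')$ is ever asserted; only the univalent-branch structure is used.

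Two smaller points on the $\basil$ case: a $\basil$ Misiurewicz parameter $c$ does not lie on $\partial\WW_B$; and the two sequences $\YY^B_n,\YY'^B_n$ arise not from two sides of a single ray at one parameter $c$, but from Yoccoz puzzles around the \emph{two distinct} $\basil$ Misiurewicz parameters $c,c'\in\MM\setminus\WW_B$ that are the endpoints of a leaf of $\LL_B$ and satisfy $m(c)=m(c')=a$.
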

\begin{proof}
Assume that $g_a$ is a non-$\basil$ Misiurewicz parameters; we write $a=m(c)$
as in Definition \ref{defn:Map_cTo_ac}.

By definition, in the dynamical plane of $f_c$  the critical value
$c$ maps after $k$ iterations to a periodic point, say $p$ of period
$q$. Let us choose a puzzle piece $Y_0$ containing $p$ and not
containing any other point of the post-critical set of $f_c$. Then
$f_c^{k+nq}(c)=p\in Y_0$; define $Y_n\ni c$ to be the pullback of
$Y_0$ under $f_c^{k+nq}$ along the orbit of $c$. We get $c\in
\intr(\YY_n)$ and $\cap_n\YY_n=\{c\}$ by triviality
of fibers for  Misiurewicz parameters in $\VV_1$.

By Corollary \ref{cor:V1TwinsExists} the pieces $\YY_n^B$, $Y_n^B$,
$Y_0^B$ exist and $a\in\intr(\YY^B_n)$. We need to show that $\cap_n \YY_n^B=\{a\}$.

Assume that $a'$ is in $\cap_n \YY^B_n$. Then in the dynamical plane
of $g_{a'}$ the critical value $-a'$ is in $Y^B_n$. Moreover,
$g_{a'}^{k+nq}(-a')\in Y^B_0$ for all $n$. We claim that $g_{a'}^{k+nq}(-a')=g_{a'}^{k+(n-1)q}(-a')=p^B$ is a periodic point. Indeed, $g_{a'}^q:g_{a'}^k(Y^B_{n+1})\to
g_{a'}^k(Y^B_{n})$ has degree $1$ because it is so for
$f_c^q:f_c^k(Y_{n+1})\to f_c^k(Y_{n})$. Further,
$g_{a'}^k(Y^B_{n+1})\subset g_{a'}^k(Y^B_{n})\subset
g_{a'}^k(Y^B_{0})$, so there is exactly one (periodic) point, call it $p^B$, that does not
escape from $Y^B_{n+1}$ under iteration of $g_{a'}^q$.
 Therefore, $a'=a$ as there is only one Misiurewicz parameter with this property (Thurston's rigidity theorem).

If $a\in\VV_2$ is a $\basil$ Misiurewicz parameter, then there are two $\basil$ Misiurewicz parameters
$c$, $c'$ in $\VV_1$ such that $a=m(c)=m(c')$. Let $\YY_n$ (resp.\ $\YY'_n$) be a sequence parapuzzle pieces around $c$ (resp.\ around $c'$)
as above. Then bubble parapuzzle pieces $\YY^B_n$, $\YY'^B_n$ contain $a$ on the boundaries. The same argument as in the non-$\basil$ case shows
that if $a'\in \cap\YY^B_n$ or $a'\in \cap\YY'^B_n$, then $a'=a$. Note that $a$ is on the boundary of exactly two components of $\VV_2\setminus\MM_2$,
denoted by $\EE$, $\EE'$. It is clear that $a\in\intr(\YY^B_n\cup \YY'^B_n\cup\EE\cup\EE')$.
\end{proof}

\subsection{Proof of Theorem \ref{thm:ParBubblRay}}
\label{subsect:ProofOfLandThm} We have showed that if $\RR^{\phi_1}$,
$\RR^{\phi_2}$ land together at $c$ and $\phi_1\not=\phi_2$, then the rays
$\BB^{\phi_1}$, $\BB^{\phi_2}$ land together at $m(c)$.

Assume now that a non-$\basil$ ray $\RR^{\phi}$ lands alone at $\gamma'$. Then, by Lemma
\ref{lem:TrivOfFibersAtMis}, the ray $\BB^{\phi}$ lands.  We need to show that $\BB^{\phi}$ lands alone at
$m(\gamma')=f_{\gamma'}\mcup f_B$.

 By Corollary \ref{cor:V1TwinsExists} the parameter $\gamma'$ is in a parapuzzle piece $\YY\subset \VV_1\setminus\WW_B$
 if and only if $a$ is in
$\YY^B$. By Lemma \ref{lem:TrivOfFibersAtMis}
parapuzzle pieces around the parameter $a$ shrink to $\{a\}$. Combining with Lemma \ref{lem:AccumulSet} we get $a=m(\gamma')$.

Suppose that $a$ is also the landing point of
a rational ray $\BB^\psi$. Denote by $\gamma''$ the landing points of
$\RR^\psi$ respectively. The same argument as above shows that $a=m(\gamma'')$. Then $\gamma'=\gamma''$ because
$m$ is injective on non-$\basil$ Misiurewicz parameters.

We have shown that all parameter non-$\basil$ rational rays land. By Lemma
\ref{lem:StabOfRaysFromLambdaLemma} the dynamic ray $\ovl{B^{\phi}}$
is stable unless the parameter is on $\cup_n\ovl{\BB^{2^n\phi}}$. We
conclude that $B^{\phi}$ lands unless $a\in B^{2^n\phi}$ as it is so for at least one
parameter in every component of $\VV_2\setminus
\cup_n\ovl{\BB^{2^n\phi}}$.

\subsection{The map $c\to m(c)$}

Recall that the map $c\to m(c)$ in Definition \ref{defn:Map_cTo_ac} is defined
for the parameters on the closures of hyperbolic components and for Misiurewicz parameters.
\begin{proposition}
\label{prop:V2ParabBirf}The equality $m(c)=m(c')$ holds if and only if $c=c'$ or
$c$, $c'$ are $\basil$ Misiurewicz parameters such that the external angles of $c$, $c'$ are identified by $L_B$.
\end{proposition}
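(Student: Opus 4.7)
The plan is to split into cases based on the dynamical type of $a := m(c) \in \MM_2$, using that Definition \ref{defn:Map_cTo_ac} together with Proposition \ref{prop:ParabolBirf} shows $m$ preserves type (hyperbolic, parabolic, or Misiurewicz, with the $\basil$/non-$\basil$ distinction preserved within the Misiurewicz class---$\basil$ Misiurewicz parameters of $\VV_2$ are exactly the matings of $\basil$ Misiurewicz polynomials by Theorem \ref{thm:ParDynIsomV2}). In each case I show the fiber $m^{-1}(a)$ is exactly as claimed.

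For the first case, suppose $a$ is hyperbolic or parabolic. If $a$ lies in a single hyperbolic component $\HH^B \subset \MM_2$, then $m_\HH : \ovl\HH \to \ovl{\HH^B}$ is a homeomorphism (it preserves multipliers by construction), so $c$ is recovered uniquely within $\ovl\HH$; and if a different $\ovl{\HH'}$ also contained a preimage, Lemma \ref{lem:MatHYpPolyn} combined with Corollary \ref{cor:HypCompAreDetermByRayPortr} would give $\HH'^B = \HH^B$, which in turn forces $\HH' = \HH$ since distinct hyperbolic components of $\MM \setminus \WW_B$ have distinct non-$\basil$ portraits. When $a$ is parabolic, parts (A), (E), (J), (K) of Proposition \ref{prop:ParabolBirf} identify the at most two hyperbolic components of $\MM_2$ touching $a$ as the $\VV_2$-twins of the at most two hyperbolic components of $\MM \setminus \WW_B$ meeting at a common parabolic preimage $h$, so $c = c' = h$.

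If $a$ is a non-$\basil$ Misiurewicz parameter, then by type preservation both $c$ and $c'$ are non-$\basil$ Misiurewicz parameters in $\MM \setminus \WW_B$. Theorem \ref{thm:ParBubblRay} yields an angle-preserving bijection between non-$\basil$ parameter ray portraits in $\VV_1 \setminus \WW_B$ and those in $\VV_2$, so $\PP_a^B$ determines $\PP_c$, which in turn determines $c$ uniquely among Misiurewicz parameters; hence $c = c'$.

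The delicate case, which I expect to be the main obstacle, is when $a$ is a $\basil$ Misiurewicz parameter of $\VV_2$. Here $c$ and $c'$ are $\basil$ Misiurewicz parameters in $\MM \setminus \WW_B$, each being the landing point of a unique $\basil$ parameter ray $\RR^\phi$, $\RR^{\phi'}$ (since, as noted in the text, any $\basil$ portrait in $\VV_1 \setminus \WW_B$ consists of exactly one ray). By the second part of Theorem \ref{thm:ParDynIsomV2}, the homeomorphism \eqref{eq:ParDynIsomV2} sends $m(c) = f_c \mcup f_B$ to the landing point of the external ray $R^{\phi}$ of $f_B$ on $\partial K_B$, and likewise sends $m(c')$ to the landing of $R^{\phi'}$. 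Therefore $m(c) = m(c')$ if and only if $R^\phi$ and $R^{\phi'}$ land at the same point of $J_B$, which by the very definition of the lamination model $K_B = \disk / L_B$ (Definition \ref{def:BasLamin}) is equivalent to $\phi$ and $\phi'$ being identified by $L_B$. The crucial sanity check supporting this case analysis is that $\basil$ and non-$\basil$ Misiurewicz parameters are intrinsically distinguished inside $\VV_2$ by the number of components of $\VV_2 \setminus \MM_2$ accumulating on them (Lemma \ref{lem:TrivOfFibersAtMis}), which rules out any cross-case collision in the fiber of $m$.
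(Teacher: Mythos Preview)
Your case analysis has a genuine gap: you treat the cases where $a = m(c)$ is hyperbolic, parabolic, or Misiurewicz, but the domain of $m$ in Definition~\ref{defn:Map_cTo_ac} is all of $\ovl\HH$ together with the Misiurewicz parameters, so it includes \emph{indifferent} (Siegel or Cremer) boundary points of hyperbolic components. Since $m_\HH$ preserves multipliers, such a $c$ maps to an indifferent $a \in \partial\HH^B$, which is neither hyperbolic nor parabolic nor Misiurewicz and hence falls outside all three of your cases.

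The paper handles this missing case by a separation argument: given distinct indifferent $c,c'$, one finds rays $\RR^\phi,\RR^\psi$ landing on the closure of a hyperbolic component $\HH$ so that $\ovl{\RR^\phi}\cup\ovl{\RR^\psi}\cup\HH$ separates $c$ from $c'$ in $\VV_1$; then Theorem~\ref{thm:ParBubblRay} shows $\ovl{\BB^\phi}\cup\ovl{\BB^\psi}\cup\HH^B$ separates $m(c)$ from $m(c')$ in $\VV_2$. Your first-case argument could plausibly be extended instead (an indifferent parameter lies on the boundary of exactly one hyperbolic component, on both sides, by the Fatou--Shishikura bound on non-repelling cycles), but as written the case is simply absent. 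The remaining cases --- hyperbolic, parabolic via Proposition~\ref{prop:ParabolBirf}, and the two Misiurewicz subcases --- are handled correctly and along the same lines as the paper.
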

\begin{proof}
The Misiurewicz case is well known. The statement is obvious if $c$ or $c'$ is in a hyperbolic component.
Further, if $c$ is parabolic, resp.\ indifferent (Siegel or Cremer parameter), then so is $m(c)$. Therefore, if $c$ is indifferent and
$c'$ is parabolic, then $m(c)\not=m(c')$.

The parabolic case, i.e. $c$, $c'$ are parabolic parameters, follows from Proposition \ref{prop:ParabolBirf}. Assume $c$, $c'$
are indifferent parameters. Then there are rays $R^\phi$, $R^\psi$, and a hyperbolic component $\HH$ such that
$R^\phi$, $R^\psi$ land at $\ovl\HH$ and $\ovl{R^\phi}\cup \ovl{R^\psi}\cup \HH$ separate $c,$ $c'$. By Theorem \ref{thm:ParBubblRay}
the set $\ovl{B^\phi}\cup \ovl{B^\psi}\cup \HH^B$ separate $m(c)$ and $m(c')$.
\end{proof}

\section{Proof of Theorem \ref{th:main2}}
\label{sec:ProofsOfMainThms}

\subsection{Small copies of the Mandelbrot set in $\VV_2$}
\label{subsec:DecorTilV2}

Let us now describe small copies of the Mandelbrot set in $\VV_2$.
Let $\MM_\HH$ be a small copy in $\MM\setminus\WW_B$. Consider the first
renormalization map $f_c^p:\YY^{p}_{\HH}\to \WW^{0}_{\HH}$. By
Theorem \ref{thm:ParBubblRay} the pieces
$\YY^{B}_{\HH},\WW^{B}_{\HH}$ exists and for $a\in \intr
(\WW^{B}_{\HH})$; we have $g_a^p:\YY^{B}_{\HH}\to \WW^{B}_{\HH}$.

Denote by $\BB^{\phi_1},$ $\BB^{\phi_2}$ the rays that form $\partial\WW^B_{\HH}$; we assume that $0<\phi_1<\phi_2<0$
is the cyclic order. As $\phi_1,\phi_2$ are periodic there are two
possibilities for $\BB^{\phi_1}$ (and for $\BB^{\phi_2}$) either it
is an infinite ray in bubbles or a ray within $\EE_\infty$.

\begin{lemma}
For the map $g_a^p:\YY^{B}_{\HH}\to \WW^{B}_{\HH}$ as above the ray
$\BB^{\phi_1}$ (resp.\ $\BB^{\phi_2}$) is infinite if and only if
$\MM_\HH$ is bounded by $\LL_B$ from the right (resp.\ from the
left).
\end{lemma}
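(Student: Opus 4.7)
The plan is to translate the question through the parameter-to-dynamics isomorphism \eqref{eq:ParDynIsomV2} and the M\"obius conjugacy $\rho$ into a combinatorial statement about the Basilica lamination $L_B$, then settle it with a depth argument. First, the map $a\mapsto\rho(h_a(-a))$ carries $\ovl{\EE_\infty}$ homeomorphically onto $\rho(\ovl{E_\infty(1)})$ (since $-a\in\ovl{E_\infty(a)}$ iff $h_a(-a)\in\ovl{E_\infty(1)}$) and by construction pulls back the dynamic ray in bubbles $B^{\phi_1}$ in $g_1$'s plane to $\BB^{\phi_1}$. Hence $\BB^{\phi_1}\subset\ovl{\EE_\infty}$ iff $B^{\phi_1}\subset\ovl{E_\infty(1)}$, and $\BB^{\phi_1}$ is an infinite ray in bubbles iff $B^{\phi_1}$ is an infinite geodesic in the tree $T$ for $g_1$. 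The dichotomy asserted just above the lemma---that for periodic $\phi_1$ these are the only two cases---reflects the fact that a finite ray in bubbles for $g_1$ must end in one of the two periodic bubbles $E_\infty(1),E_0(1)$, and since $\phi_1\in(0,1/3)\cup(2/3,1)$ sits on the $0$-side of the $\langle 1/3,2/3\rangle$-leaf, the case ``ends in $E_0(1)$'' is ruled out.

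Next, by the $\VV_2$-twin definition, $\rho(B^{\phi_1})$ and the Basilica external ray $R^{-\phi_1}$ share a common landing point on $J_B$. Thus $B^{\phi_1}\subset\ovl{E_\infty(1)}$ precisely when $R^{-\phi_1}$ lands on $\rho(\partial E_\infty(1))$, i.e., on the boundary of the Fatou component of $f_B$ containing $0=\rho(\infty)$. I would then prove the following combinatorial fact: for any periodic non-$\basil$ angle $\theta$ in the arc $(2/3,1/3)$ through $0$, $R^\theta$ lands on the boundary of the $0$-containing Fatou component of $f_B$ iff the doubling orbit of $\theta$ avoids the open arc $(5/6,1/6)$.

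The argument is a depth analysis in $L_B$: assign to $\theta$ the supremum of those $n$ for which $\theta$ lies in the open interior of some depth-$n$ leaf in the sense of Definition \ref{def:BasLamin}, with depth $0$ corresponding to being on the boundary of the immediate superattracting basin of $\{0,-1\}$. Because leaves of $L_B$ are iterated preimages of $\langle 5/6,1/6\rangle$ under $z\mapsto z^2$, the doubling map strictly decreases positive depth by $1$ (a depth-$n$ leaf doubles onto a depth-$(n-1)$ leaf, and maximality of $n$ is preserved by non-crossing). A doubling orbit that first enters $(5/6,1/6)$ at iterate $k$ thus has starting depth exactly $k+1$; but then periodicity $2^p\theta=\theta$ forces this to recur every $p$ steps, producing depths $k+1,\,k+p+1,\,k+2p+1,\dots$---unbounded. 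Hence the landing of $R^\theta$ is a buried periodic point of $J_B$ (in the complement of every Fatou boundary), and via the tree structure of $T$ the ray $B^{\phi_1}$ is an infinite geodesic. Conversely, an orbit that avoids $(5/6,1/6)$ has depth $0$ throughout, and since $-\phi_1$ lies in the $0$-side arc this places the landing on $\rho(\partial E_\infty(1))$, giving $B^{\phi_1}\subset\ovl{E_\infty(1)}$.

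Finally, the open arc $(5/6,1/6)$ is invariant under $\theta\mapsto 1-\theta$, so the doubling orbits of $\phi_1$ and $-\phi_1$ meet it together; by Proposition \ref{prop:bound} this is equivalent to $\MM_\HH$ being bounded from the right by $\LL_B$. The case of $\BB^{\phi_2}$ and ``from the left'' follows by the identical argument applied to $\phi_2$. The main obstacle is the depth analysis---in particular, justifying the strict decrease of positive depth under doubling and leveraging periodicity to force unbounded depth---which relies on the self-similar definition of $L_B$ as iterated preimages of $\langle 5/6,1/6\rangle$ together with the fact that the only periodic Fatou components of $f_B$ are at depth $0$.
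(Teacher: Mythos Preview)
Your proposal is correct and follows essentially the same route as the paper's proof: both reduce the question, via the parameter--dynamics identification and the conjugacy $\rho$, to whether the external ray $R^{\phi_1}$ (equivalently $R^{-\phi_1}$, by the $\theta\mapsto 1-\theta$ symmetry of $K_B$) lands on the boundary of the Fatou component $F_0\ni 0$ of the Basilica, and then identify this with the orbit condition of Proposition~\ref{prop:bound}. The paper states this last equivalence in one line, while you supply the depth argument that justifies it; your phrasing ``lies in the open interior of some depth-$n$ leaf'' should be read as ``lies in the short arc cut off by a depth-$n$ leaf,'' and with that reading the nested-leaf argument (each re-entry of the periodic orbit into $(5/6,1/6)$ produces a strictly deeper separating leaf) is exactly the combinatorial content behind the paper's terse claim.
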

\begin{proof}
 The ray $\rho(B^{\phi_1})$ (resp.\
$\rho(B^{\phi_2})$) in the dynamical plane of the Basilica is finite
if and only if $R^{-\phi_1}$ (resp.\ $R^{-\phi_2}$) land at the
closure of the (periodic) Fatou component $F_0$ containing $0$ (note
that $\rho(B^{\phi_1})$ and $\rho(B^{\phi_2})$ can not land at the
Fatou component containing $-1$ as $\HH\not\subset \WW_B$). This
is equivalent to the landing of
 $R^{\phi_1}$ (resp.\ $R^{\phi_2}$) at
$\ovl{F_0}$. The last is equivalent to the non-boundedness of $\MM_\HH$
from the right (resp.\ from the left).
\end{proof}

It follows from Proposition \ref{prop:bound2} that at least one of $\BB^{\phi_1},$ $\BB^{\phi_2}$ is an infinite ray and
if $B^{\phi_1}$
or $B^{\phi_2}$ is a finite ray, then $\MM_\HH$ is a maximal
primitive copy.

\begin{definition}[Small copies: the first case] If $\MM_\HH$ is bounded by $\LL_B$ from both
sides, then the \emph{$\VV_2$--twin $\MM^B_\HH$} of $\MM_\HH$ is the
closure of the set of parameters $a\in \intr(\YY^B_\HH)$ such that
in the dynamical plane the critical value $-a$ does not escape from
$Y^B_\HH$ under iteration of $g_a^p:Y^B_\HH\to W^B_\HH$.
\end{definition}
As in the quadratic case perturbing the map $g_a^p:Y^B_\HH\to
W^B_\HH$ (thickening $Y^B_\HH, W^B_\HH$) we always can obtain a
quadratic-like map. Therefore, by the straightening theorem
$\MM^B_\HH$ is canonically homeomorphic to $\MM$ as well as to $\MM_\HH$.

We will refer to $g_a^p:Y^B_\HH\to W^B_\HH$ as \textit{the first
renormalization map}. As in $\VV_1$ for $a\in
\intr(\YY^B_{(n-1)\HH})$ we define $Y^B_{n\HH}$ to be the preimage
of $Y^B_{(n-1)\HH}$ under $g_a^p:Y^B_\HH\to W^B_\HH$; we will refer
to $g_a^p:Y^B_{n\HH}\to Y^B_{(n-1)\HH}$ as the \textit{$n$--th
renormalization map}. Let us note that if $\MM_\HH$ is bounded by
$\LL_B$ from both sides, then $Y^B_\HH$ (and so $Y^B_{n\HH}$ for
$n\ge 1$) does not intersect the periodic bubble $E_\infty$.
However, if $\MM_\HH$ is bounded by $\LL_B$ from only one side, then
all $\YY^B_{(n\HH)}$ have $\infty$ on the boundaries. We modify the
map $g_a^p:Y^B_\HH\to W^B_\HH$ as follows. Let $\dot W_\HH^B$ be
$W_\HH^B$ without an open neighborhood of $\infty$ bounded by an
internal equipotential in $E_\infty$ and let $\dot Y_\HH^B$ be the pre-image of
$\dot W_\HH^B$ under $g_a^p:Y^B_\HH\to W^B_\HH$ for $a\in\intr(\dot\WW_\HH)$.
\begin{definition}[Small copies: the second case]
If $\MM_\HH$ is bounded by $\LL_B$ from exactly one side, then the
\emph{$\VV_2$--twin} $\MM^B_\HH$ of $\MM_\HH$ is the closure of the set of
parameters $a\in \intr(\dot\YY^B_\HH)$ such that in the dynamical plane
the critical value $-a$ does not escape from $\dot Y^B_\HH$ under
iteration of $g_a^p:\dot Y^B_\HH\to \dot W^B_\HH$.
\end{definition}
Again, the straightening theorem implies that $\MM^B_\HH$ is
canonically homeomorphic to $\MM$ as well as to $\MM_\HH$. We will refer to $g_a^p:\dot
Y^B_\HH\to \dot W^B_\HH$ as \textit{the first renormalization map} (of $\MM^B_\HH$).

If a parameter $a\in\MM_2$ is not in any $\MM^B_\HH$ and is not in the closure of the main hyperbolic component of
$\MM_2$, then the critical tableau used in
\cite{AY} is not periodic, i.e. $g_a$ is not renormalizable, and by
\cite[Theorem 9.1]{AY} the intersection of parapuzzle pieces
containing the parameter $a$ is $\{a\}$. In other words, triviality of the fiber holds at $a$.

\begin{definition}[The $\VV_2$--twins of renormalizable
parameters] \label{defn:V2TwinsOfRenormPar} The $\VV_2$--twin of
$c\in\MM_\HH$ is $m(c)=\chi_\HH(c)$, where $\chi_\HH:\MM_\HH\to
\MM^B_\HH$ is the canonical homeomorphism.
\end{definition}

For a non-parabolic non-Misiurewicz parameter $a\in\MM_\HH^B$ define $T$ to be the set of pre-images of
$\ovl{\WW_\HH^B\setminus \YY_\HH^B}$ (resp.\ of
$\ovl{\dot\WW_\HH^B\setminus \dot\YY_\HH^B}$) under iteration of the
first renormalization map. The \textit{decoration tiling} $\TT$ of
$\MM^B_\HH$ is the parameter counterpart to $T$; see Figure \ref{figure:bubble}.

\begin{figure}
\includegraphics[width=11cm]{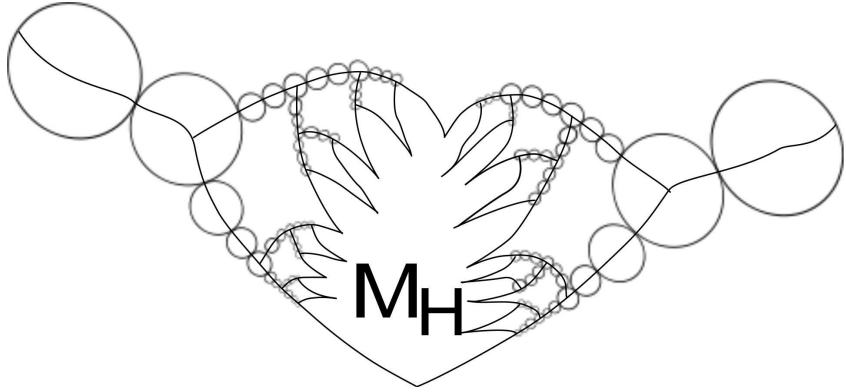}
\caption{A decoration tiling in $\VV_2$ (compare to Figure
\ref{figure:renormal}).
  }
  \label{figure:bubble}
\end{figure}

\begin{proposition}[The decoration theorem in $\VV_2$]
\label{prop:DecorTilV2} For a small copy $\MM_\HH^B\subset \MM_2$ of
the Mandelbrot set the diameter of pieces in the decoration tiling
$\TT$ of $\MM_\HH$ tends to $0$.
\end{proposition}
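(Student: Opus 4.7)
The plan is to transport the decoration theorem from $\VV_1$ to $\VV_2$ via the straightening map associated with the first renormalization map of $\MM^B_\HH$. The idea is that the first renormalization map $g^p_a:Y^B_\HH\to W^B_\HH$ (or its dotted version $g^p_a:\dot Y^B_\HH\to \dot W^B_\HH$) is, after the standard thickening procedure (see Subsection \ref{subset:QuadrLike}), a quadratic-like family over a neighborhood $\UU$ of $\MM^B_\HH$ in $\VV_2$, and its $\MM_\ff$--set is precisely $\MM^B_\HH$. By Douady--Hubbard straightening, there is a continuous map $\chi:\UU\to\VV_1$ whose restriction $\chi|_{\MM^B_\HH}:\MM^B_\HH\to\MM$ is a homeomorphism (canonical and, after a suitable tubing, independent of it on $\MM^B_\HH$). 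In the dynamical planes, a tubing $T$ intertwines $g^p_a$ with the restriction of $f_{\chi(a)}$ to a neighborhood of its filled-in Julia set via a quasiconformal conjugacy.

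First I would observe that under this conjugacy the generalized puzzle pieces $Y^B_{n\HH}$, $W^B_{\HH}$ (and their dotted analogues) correspond to the generalized puzzle pieces $Y^{np}_{n\HH'}$, $W^0_{\HH'}$ of the degenerate first renormalization map of the main hyperbolic component of $\MM$ (described in the generalized setting of Definition \ref{defn:GenRenormMap}). Consequently, the set $T$ of pullbacks of $\ovl{W^B_\HH\setminus Y^B_\HH}$ (respectively of $\ovl{\dot W^B_\HH\setminus \dot Y^B_\HH}$) under iteration of the first renormalization map in $\VV_2$ is mapped by the dynamical conjugacy onto the analogous dynamical partition in $\VV_1$, and, by the commutation \eqref{eq:TubBottch} of the tubing with B\"ottcher coordinates, the parameter counterparts match up: $\chi$ sends the decoration tiling $\TT$ of $\MM^B_\HH$ bijectively to a generalized decoration tiling $\TT'$ of $\MM$ associated with a generalized renormalization map as in Definition \ref{defn:GenRenormMap}.

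By Theorem \ref{th:decor2} applied to $\TT'$, the diameters of pieces in $\TT'$ tend to zero in $\VV_1$. Now $\chi:\UU\to\chi(\UU)$ is a continuous map, and after shrinking $\UU$ to a compact neighborhood of $\MM^B_\HH$ on which $\chi$ restricts to a homeomorphism onto its image, the inverse $\chi^{-1}$ is uniformly continuous on the compact set $\chi(\ovl\UU)$. Since all sufficiently deep pieces of $\TT$ lie in $\UU$, uniform continuity of $\chi^{-1}$ transports the diameter estimate: for every $\varepsilon>0$ there is $\delta>0$ such that $\chi^{-1}$ maps sets of diameter $<\delta$ to sets of diameter $<\varepsilon$, and all but finitely many pieces of $\TT'$ have diameter $<\delta$ by Theorem \ref{th:decor2}.

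The main obstacle I expect is the precise matching in the intermediate step: identifying the decoration tiling of $\MM^B_\HH$ with a generalized decoration tiling of $\MM$. One has to make sure that the markings $\Gamma(X)$ are chosen consistently so that $\chi$ really sends boundary components of tiles to boundary components, and, when $\MM_\HH$ is bounded from only one side by $\LL_B$, that the truncation producing $\dot W^B_\HH$ (coming from an equipotential inside $E_\infty$) corresponds on the $\VV_1$ side to the equipotential truncation used in the definition of $W^0_{\HH'}$. Once this bookkeeping is done, the rest is a routine combination of the straightening theorem, Theorem \ref{th:decor2}, and uniform continuity on a compact set.
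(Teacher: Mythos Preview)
Your overall strategy---transport the decoration theorem from $\VV_1$ via the straightening map---is exactly the paper's, and your identification of $\chi(\TT)$ with a generalized decoration tiling of $\MM$ is the right intermediate step. But there is a genuine gap in the final passage.

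You assume you can shrink $\UU$ to a compact neighborhood of $\MM^B_\HH$ on which $\chi$ is a \emph{homeomorphism}, and then invoke uniform continuity of $\chi^{-1}$. Douady--Hubbard theory does not give you this. The straightening map $\chi$ is canonical and injective only on $\MM^B_\HH$; off the connectedness locus it depends on the tubing and need not be injective (the paper says explicitly ``$\chi$ may not be a homeomorphism''). So $\chi^{-1}$ may not exist on any neighborhood, and your uniform-continuity argument breaks down.

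The paper repairs this with a contradiction argument that uses only (i) continuity of $\chi$ on all of $\UU$ and (ii) the homeomorphism property on $\MM^B_\HH$ alone. Suppose there is a sequence $\TT_n$ of tiles with diameters bounded below; pass to a subsequence accumulating at two distinct points $a_1\neq a_2$. A short escape-time argument shows $a_1,a_2\in\MM^B_\HH$: any parameter outside $\MM^B_\HH$ has a neighborhood meeting only finitely many tiles (those with bounded escape time). Now since $\chi|_{\MM^B_\HH}$ is injective, $\chi(a_1)\neq\chi(a_2)$, and by mere continuity of $\chi$ the images $\chi(\TT_n)$ accumulate at both $\chi(a_1)$ and $\chi(a_2)$, contradicting Theorem~\ref{th:decor2}. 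This avoids any appeal to an inverse of $\chi$ off the copy.

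A secondary point: the paper works with the \emph{second} renormalization map $g_a^p:Y^B_{2\HH}\to Y^B_\HH$ rather than the first, and builds the tubing so that it agrees with $h_a$ along the bubble rays (equation~\eqref{eq:PropDecorTil2}). This compatibility is what makes the combinatorial-equivalence verification for $X^1,X^2$ go through via~\eqref{eq:TubBottch}; with the first renormalization map the boundaries touch along rays all the way to infinity and the bookkeeping you flag as an obstacle becomes harder.
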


\begin{proof}
We will consider the case when $\WW_\HH^B$ is bounded by infinite
rays; if one of two rays that bound $\WW_\HH^B$ is finite, then one
need to modify Figure \ref{figure:bubble2} accordingly; all other
arguments are the same.

Let $g_a^p:Y^B_{2\HH}\to Y^B_{\HH}$ be the second renormalization map
of $\MM_\HH^B$. It is sufficient to prove the proposition for pieces
in $\YY^B_\HH$.

Let us construct a tubing for $g_a^p:Y^B_{2\HH}\to Y^B_{\HH}$, where
$a\in \intr(Y^B_{2\HH})$; see Figure \ref{figure:bubble2} and its
caption.

 Denote by $B^{\phi_1},B^{\phi_2}$ the periodic and by
$B^{\psi_1},B^{\psi_2}$ the pre-periodic rays that bounds $Y^B_\HH$;
we assume that $0<\phi_1<\psi_1<\psi_2<\phi_2<0$ is the cyclic
order. As $B^{\phi_1},B^{\phi_2}$, $B^{\psi_1},B^{\psi_2}$ are infinite, the rays in the pair $B^{\phi_1}$,
$B^{\psi_1}$ as well as in the pair $B^{\phi_2}$, $B^{\psi_2}$ are
coincide till $Y^B_\HH$. Choose $a_0\in \intr(\YY^B_{2\HH})$ and
choose in the dynamical plane of $g_{a_0}$ points $x\in
B^{\phi_1}\cap B^{\psi_1}$, $y\in B^{\phi_2}\cap B^{\psi_2}$ that
are sufficiently close to $Y^B_\HH$.

For an $a\in \intr{\YY^B_\HH}$ define $x(a)$, $y(a)$ to be the
images of $x(a_0)$, $y(a_0)$ under $h_a^{-1}\circ h_{a_0}$.

 In the dynamical plane of $g_a$ choose a smooth Jordan curve
$\gamma(a)$ such that:

\begin{itemize}
\item $\gamma(a)$ surrounds $Y^B_\HH$ and is sufficiently close to $\partial Y^B_\HH$;
\item $\gamma(a)\cap(B^{\phi_1}\cap B^{\psi_1}) =\{x\}$ and $\gamma(a)\cap(B^{\phi_2}\cap B^{\psi_2})
=\{y\}$;
\item the pre-image $\gamma_2(a)$
of $\gamma(a)$ along $g_a^p:Y^B_\HH\to Y^B_{2\HH}$ is inside the
open disc surrounded by $\gamma(a)$ and containing $Y^B_\HH$; and
\item $\gamma(a)$ depends smoothly on $a$.
\end{itemize}

Choose a smooth embedding $T$ of $Q_R\times \intr(\YY^B_\HH)$, where
$R>1$, into $\C\times \intr(\YY_\HH^B)$ over $\intr(\YY_\HH^B)$ such
that:
\begin{itemize}
\item $T_a(Q_R)$, is the closed annulus bounded by
$\gamma(a)$ and $\gamma_2(a)$;
\item for all $z$ in
\begin{equation}
\label{eq:PropDecorTil11}T_a(Q_R)\cap (B^{\phi_1}\cup B^{\psi_1}\cup
B^{\phi_2}\cup B^{\psi_2})\end{equation} we have
\begin{equation}
\label{eq:PropDecorTil2} T_{a'}\circ T^{-1}_{a''}=h_{a'}^{-1} \circ
h_{a''};
\end{equation}
i.e. $T_a$ and $h_a$ agree on \eqref{eq:PropDecorTil11}.
\end{itemize}

By the straightening theorem we get the map \[\chi=\chi(T):\intr(\YY^B_{2\HH})\to \VV_1.\]

\begin{figure}
\includegraphics[width=12cm]{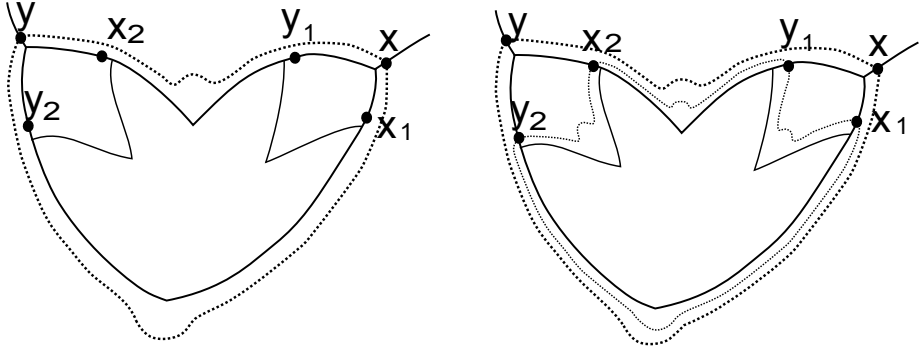}
\caption{Illustration to the construction of a tubing in the proof
of Proposition \ref{prop:DecorTilV2}. The copy $\MM^B_\HH$ is
encoded by the second renormalization map $g_a^p:Y^B_{2\HH}\to
Y^B_\HH$. The rays in bubbles that bound $ Y^B_{2\HH}, Y^B_\HH$ are
drawn.
 Left: a curve $\gamma=\gamma(a)$ surrounds $Y^B_\HH$ and
intersects the rays  in bubbles at $x$ and $y$. Right: the pre-image
of $\gamma$ under $g_a^p:Y^B_{2\HH}\to Y^B_\HH$ surrounds
$Y^B_{2\HH}$ and intersects the rays in bubbles at
$x_1,x_2,y_1,y_2$, where $g_a^p(x_1)=g_a^p(x_2)=x$ and
$g_a^p(y_1)=g_a^p(y_2)=y$. We defined $T_a(Q_R)$ to be the closed
annulus bounded by $\gamma(a)$ and $\gamma_2(a)$.
  }
  \label{figure:bubble2}
\end{figure}

We will now show that $\chi(\TT)$ is a generalized decoration tiling
of $\chi(\MM^B_{\HH})=\MM$. Note that $\chi$ may not be a
homeomorphism, but it is a homeomorphism on $\MM^B_{\HH'}$ and on
$\partial \YY^B_{n\HH'}$ as we will show.

 Define: \[\XX^{1}=\chi(\YY^B_{\HH}),\ \ \XX^{2}=\chi(\YY^B_{2\HH}),\]
\[X^{1}(\chi(a))=\chi(a)(Y^B_{\HH}(a)),\ \ X^{2}(\chi(a))=\chi(a)(Y^B_{2\HH}(a)).\]
We need to show that $f_c:X^{1}(c)\to X^{2}(c)$ for $c=\chi(a)\in\intr(\XX^2)$,
$X^{1}(c)$, $ X^{2}(c)$ are generalized puzzle pieces, and
$X^{1}(c),X^{2}(c)$ are the dynamical counterparts of $\XX^{1},
\XX^{2}$.

The first claim follows directly from the
construction of the straightening map because $g_a^p$ and $f_{\chi(a)}$ are
quasiconformally conjugate in neighborhoods of their filled in Julia sets, Subsection \ref{subset:QuadrLike}.
This also implies that the forward orbit of $\partial X^{1}(c)$, $\partial X^{2}(c)$ does not intersect
the interiors of $X^{1}(c)$, $X^{2}(c)$. Because $X^{1}(c)$, $X^{2}(c)$ contain the critical value, markings for
$X^{1}(c)$, $X^{2}(c)$ are irrelevant. Let us now show that
$X^{1}(c')$, $X^{2}(c')$ are combinatorially equivalent to
$X^{1}(c'')$, $X^{2}(c'')$.

By
\eqref{eq:PropDecorTil2} and \eqref{eq:TubBottch} we have:
\[\chi(a')\circ (h_{a'}^{-1}\circ
h_{a''})=\chi(a')\circ T_{a'}\circ
T_{a''}^{-1}=B_{\chi(a')}^{-1}\circ B_{\chi(a'')}\circ \chi(a'')\]
or
\begin{equation}
\label{eq:PropDecorTilV2} \chi(a')\circ (h_{a'}^{-1}\circ
h_{a''})=B_{\chi(a')}^{-1}\circ B_{\chi(a'')}\circ \chi(a'')
\end{equation}
on $(B^{\phi_1}\cup B^{\phi_2}\cup B^{\psi_1}\cup B^{\psi_2})\cap T_{a''}(Q_R)$. As $g_a^{p}:\partial\YY^B_{2\HH}\to \partial\YY^B_{\HH}$ is a
covering and as all points in \[\partial Y^B_{\HH}\cup \partial Y^B_{2\HH}\setminus \{\text{the landing points of rays}\}\]
are preimages of $(B^{\phi_1}\cup B^{\phi_2}\cup B^{\psi_1}\cup B^{\psi_2})\cap T_{a''}(Q_R)$, taking pullbacks of
\eqref{eq:PropDecorTilV2} under iteration of
$g_a^{p}:\partial\YY^B_{2\HH}\to \partial\YY^B_{\HH}$ we obtain that
\eqref{eq:PropDecorTilV2} holds on $\partial Y^B_{\HH}(a'')\cup \partial Y^B_{2\HH}(a'')\setminus J_{a''}$.
Therefore, $B_{\chi(a')}^{-1}\circ B_{\chi(a'')}$ maps \[\partial X^1(\chi(a''))\cup\partial  X^2(\chi(a''))=\chi(a'')\left(\partial Y^B_{\HH}(a'')\cup \partial Y^B_{2\HH}(a'')\right)\]
homeomorphically to $\partial X^1(\chi(a'))\cup\partial  X^2(\chi(a'))$ except at finitely many points in the Julia sets that are the landing points of rays.

This
shows that $X_1(c''), X_2(c'')$ are combinatorially equivalent
to $X_1(c'),$ $X_2(c')$. Similarly, $\XX_1$, $\XX_2$ are the parameter counterparts to $X_1(c''), X_2(c'')$.
We conclude that $\chi(\TT)$
is a generalized decoration tiling.

Assume now that the diameter of pieces in $\TT$ does not tend to
$0$. Then there is an infinite sequence $\TT_1,\TT_2,\dots $ of
pieces in $\TT$ and there are points $a_1\not=a_2$ such that $a_1,a_2$ are the
limiting points of $\{T_i\}$. We claim that $a_1,a_2\in
\MM_{\HH}^B$. Indeed, if, say, $a_1\not\in \MM_{\HH}^B$, then in the
dynamical plane of $g_{a_1}$ the critical value $-a_1$ escapes from
$Y^B_{2\HH}$ under iteration of $g_{a_1}^p:Y^B_{2\HH}\to Y^B_{\HH}$, say in $N$ iterations. Then
for all $a$ close to $a_1$ the critical value $-a$ escapes from
$Y^B_{2\HH}$ under iteration of $g_{a}^p:Y^B_{2\HH}\to Y^B_{\HH}$ in less or equal than $N+2$ iterations.
Observe that there are only finitely many pieces in $\TT$ with bounded by $N+2$ escaping time. Whence $a_1$ (and $a_2$) is in
$\MM_{\HH}^B$.

As $\chi$ is a homeomorphism on $\MM_\HH^B$, it follows that
$\chi(a_1)\not=\chi(a_2)$. As $\chi$ is continuous the sequence
$\{\chi(T_i)\}$ has two limiting points $\chi(a_1)$ and $\chi(a_2)$. This
contradicts Theorem \ref{th:decor2} that says that the diameter of
pieces in $\TT$ tends to $0$.
\end{proof}

\subsection{No ghost limbs in $\VV_2$}

No ghost limb theorem in $\VV_1$ says that if a parameter $c$ is in
$\WW_\HH\cap \MM$, then either $c\in\ovl \HH$ or $c$ is in a limb
attached to $\ovl\HH$.

The next proposition is a $\VV_2$--analogy of no ghost limb theorem in $\VV_1$;
for our convenience, \textit{we say that $\WW_\HH$ is $\VV_2$ if $\HH$ is the main hyperbolic component of $\MM_2$.}

\begin{proposition}
\label{prop:NoGhostLimsV2}
 If $a\in \intr(\WW^B_\HH)$, then either
$a\in\ovl {\HH^B}$ or $a\in\WW^B_{\HH'}$ for a component $H'^B\in
\WW^B_\HH$ attached to $\HH^B$.
\end{proposition}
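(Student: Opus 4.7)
The plan is to reduce the statement to the classical no ghost limb theorem in $\VV_1$ (applied to the sector $\WW_\HH$) via the $\VV_2$-twin correspondence of parapuzzle pieces from Corollary \ref{cor:V1TwinsExists}, together with the parameter--dynamics dictionary at parabolic boundary points supplied by Proposition \ref{prop:ParabolBirf} and Theorem \ref{thm:ParBubblRay}. First I would set up the dictionary near $\partial\HH^B$: since $\HH^B\subset\intr(\WW^B_\HH)$ is a hyperbolic component of $\MM_2$, the parent component $\HH$ lies in $\VV_1\setminus\WW_B$, and the canonical multiplier-preserving homeomorphism $m_\HH:\ovl\HH\to\ovl{\HH^B}$ sends each parabolic $h\in\partial\HH$ to a parabolic $m(h)\in\partial\HH^B$. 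For every hyperbolic component $\HH'$ of $\MM$ attached to $\HH$ at some $h$, Proposition \ref{prop:ParabolBirf} and Theorem \ref{thm:ParBubblRay} guarantee that $\HH'$ has a $\VV_2$-twin $\HH'^B$ attached to $\HH^B$ at $m(h)$, and that the rays $\RR^{\psi_1},\RR^{\psi_2}$ bounding $\WW_{\HH'}$ correspond to bubble rays $\BB^{\psi_1},\BB^{\psi_2}$ bounding $\WW^B_{\HH'}$.

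Next I would argue by contradiction: assume $a_0\in\intr(\WW^B_\HH)$ with $a_0\notin\ovl{\HH^B}$ and $a_0\notin\WW^B_{\HH'}$ for every $\HH'$ attached to $\HH$. The plan is to produce a separating Jordan curve $\Gamma$ consisting of two co-landing rational bubble rays $\BB^{\psi_1},\BB^{\psi_2}$ together with a short arc in $\ovl{\HH^B}$ through a parabolic point $m(h)\in\partial\HH^B$, that isolates $a_0$ from the rest of $\ovl{\HH^B}$ inside $\WW^B_\HH$. Because $\partial\HH^B$ is a locally connected Jordan curve (transferred from $\partial\HH$ via the multiplier parametrization) and parabolic parameters are dense on $\partial\HH^B$, such a curve can always be found. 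By Theorem \ref{thm:ParBubblRay} the corresponding rays $\RR^{\psi_1},\RR^{\psi_2}$ co-land at $h\in\partial\HH$, and by the classical no ghost limb in $\VV_1$ the sub-sector of $\WW_\HH$ bounded by $\RR^{\psi_1}\cup\RR^{\psi_2}$ and not meeting $\HH$ is precisely $\WW_{\HH'}$ for some $\HH'$ attached to $\HH$ at $h$. The $\VV_2$-twin $\WW^B_{\HH'}$ of this sector then contains $a_0$ by construction, contradicting the hypothesis.

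The main obstacle will be the separation step: producing the parabolic $m(h)\in\partial\HH^B$ and the pair of rational bubble rays isolating $a_0$. The cleanest route is to perform the separation entirely in $\VV_1$, where the classical no ghost limb theorem gives $\WW_\HH\cap\MM=\ovl\HH\cup\bigcup_{\HH'}(\WW_{\HH'}\cap\MM)$ and, more strongly, every access from $\WW_\HH\setminus\ovl\HH$ to $\partial\HH$ is contained in some $\ovl{\WW_{\HH'}}$ with $\HH'$ attached to $\HH$. Transferring this rational-ray decomposition of $\WW_\HH$ to $\VV_2$ via Corollary \ref{cor:V1TwinsExists} yields the corresponding decomposition of $\WW^B_\HH$, and the proposition follows.
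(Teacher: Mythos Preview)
Your approach has a genuine gap at the separation step. You assert that, given $a_0\in\intr(\WW^B_\HH)\setminus\ovl{\HH^B}$, one can always find a pair of co-landing rational bubble rays $\BB^{\psi_1},\BB^{\psi_2}$ (landing at some parabolic $m(h)\in\partial\HH^B$) together with an arc in $\ovl{\HH^B}$ that separates $a_0$ from $\ovl{\HH^B}$. But density of parabolic points on $\partial\HH^B$ and local connectivity of $\partial\HH^B$ do not by themselves yield such a separation: they say nothing about how the bubble rays $\BB^{\psi}$ sit in $\WW^B_\HH$ away from $\partial\HH^B$. The existence of such a separating pair is precisely the content of the proposition, so this step is circular. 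Your ``cleanest route'' has the same defect: the classical $\VV_1$ no-ghost-limb theorem gives $\intr(\WW_\HH)\cap\MM\subset\ovl\HH\cup\bigcup_{\HH'}\WW_{\HH'}$, but $\intr(\WW_\HH)$ itself is \emph{not} covered by $\ovl\HH\cup\bigcup_{\HH'}\WW_{\HH'}$ (points on external rays landing at irrational-multiplier points of $\partial\HH$ lie in neither), so there is no set-theoretic decomposition of $\intr(\WW_\HH)$ to transfer. Corollary~\ref{cor:V1TwinsExists} transfers individual parapuzzle pieces, not the residual region, and you have no a priori correspondence attaching the arbitrary point $a_0\in\VV_2$ to anything in $\VV_1$.

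The paper's argument avoids this by working directly in the dynamical plane of $g_a$ rather than by transfer from $\VV_1$. If $a\notin\ovl{\HH^B}$, the periodic cycle $\ovl p$ associated with $\HH^B$ is repelling for $g_a$; then either $a$ lies on a periodic parameter bubble ray (in which case the claim is immediate), or periodic rays in bubbles land at $\ovl p$. In the latter case those rays cut the dynamical plane into finitely many sectors, and one selects the two rays $B^{\phi_1},B^{\phi_2}$ bounding the sector containing the critical value $-a$; the correspondence theorem then places $a$ in $\YY^B(\phi_1,\phi_2)$, which Proposition~\ref{prop:V2ParabBirf} identifies with some $\WW^B_{\HH'}$. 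The dynamical landing of rays at the repelling cycle is exactly the mechanism that manufactures your separating curve; your proposal invokes such a curve without supplying its source.
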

\begin{proof}
Consider a periodic cycle $\ovl p=\{ p_1,\dots,p_n\}$ associated
with $\HH^B$. If $a\in \intr(\WW^B_\HH)$ but $a\not\in\ovl{\HH^B}$, then
$\ovl p$ is a repelling cycle and either there are periodic rays in
bubbles landing at $\ovl p$ or $a$ is in a parameter ray in bubbles with a periodic angle.

If $a$ is in a parameter ray, then the
statement is obvious.

If there are periodic rays landing at $\ovl p$, then choose two rays
$B^{\phi_1},$ $B^{\phi_2}$ landing at $\ovl p$ and separating $-a$
from all other rays landing at $\ovl p$. Then $a$ is in the interior
of the parapuzzle piece $\YY^B(\phi_1,\phi_2)$ bounded by
$\BB^{\phi_1},$ $\BB^{\phi_2}$. It follows from Proposition \ref{prop:V2ParabBirf} that $\YY^B(\phi_1,\phi_2)$ is $\WW^B_{\HH'}$ for some $\HH'^B\in \WW^B_\HH$
attached to $\HH^B$.
\end{proof}

\subsection{Triviality of fibers for hyperbolic components}
Recall that by a \textit{combinatorial disc} in $\VV_2$ we mean a closed
topological disc $\DD$ such that $\partial \DD$ is in a finite union of
parameter rational rays in bubbles and simple curves in the closures
of hyperbolic components. A sequence $a_n$\textit{ tends to $a_\infty$ combinatorially} if for
every combinatorial disc $\DD$ such that $a_\infty\in \intr(\DD)$ all but finitely many
$a_n$ are in $\DD$.

Later we will prove Yoccoz's results for $\VV_2$ which say that a sequence $a_n$ tends combinatorially to $a_\infty$
if and only if $a_n$ tends to $a_\infty$ in the usual topology (Theorem
\ref{YoccResInV2}).

\begin{proposition}
\label{prop:TrivOfFibersAtParabMis} Theorem \ref{YoccResInV2} holds
for the closures of hyperbolic components in $\MM_2$.
\end{proposition}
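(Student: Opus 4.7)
The implication that usual convergence implies combinatorial convergence is immediate from the definition, so the content is the converse. For each $a_\infty\in\ovl{\HH^B}$ it suffices to produce, for every $\varepsilon>0$, a combinatorial disc $\DD$ with $a_\infty\in\intr(\DD)$ and $\mathrm{diam}(\DD)<\varepsilon$. Parametrize $\ovl{\HH^B}$ by the multiplier homeomorphism $\lambda:\ovl{\HH^B}\to\ovl{\disk}$ and set $\lambda_\infty=\lambda(a_\infty)$. If $|\lambda_\infty|<1$ then $a_\infty$ lies in the open topological disc $\HH^B\subset\VV_2$ and any sufficiently small Jordan loop in $\HH^B$ around $a_\infty$ is the boundary of a combinatorial disc; so we may assume $a_\infty\in\partial\HH^B$.

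Pick rationals $\lambda_1,\lambda_2\in\uncirc$ on the two arcs of $\uncirc$ adjacent to $\lambda_\infty$, with $|\lambda_i-\lambda_\infty|$ small, and set $p_i=\lambda^{-1}(\lambda_i)$, parabolic parameters on $\partial\HH^B$ tending to $a_\infty$. Let $\alpha\subset\ovl{\HH^B}$ be the image under $\lambda^{-1}$ of a short smooth arc in $\ovl\disk$ from $\lambda_1$ to $\lambda_2$ passing through $\intr(\disk)$ very close to $\lambda_\infty$; by uniform continuity of $\lambda^{-1}$ on the compact set $\ovl{\HH^B}$, $\mathrm{diam}(\alpha)\to 0$. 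By Proposition \ref{prop:ParabolBirf} (parts (A), (D), (G)) each $p_i$ is the landing point of a nonempty rational ray portrait in bubbles; pick one, $\BB^{\phi_i}$, landing from the side of $\VV_2\setminus\ovl{\HH^B}$. The curve $\BB^{\phi_1}\cup\alpha\cup\BB^{\phi_2}$ is a Jordan curve on the Riemann sphere $\VV_2$ passing through $\infty$, and the complementary component $\DD$ not containing $\infty$ is a combinatorial disc with $a_\infty\in\intr(\DD)$.

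It remains to bound $\mathrm{diam}(\DD)$. The part $\DD\cap\ovl{\HH^B}$ is contained in $\alpha$ together with the arc of $\partial\HH^B$ between $p_1$ and $p_2$ through $a_\infty$, and thus lies in an arbitrarily small neighborhood of $a_\infty$ once $\lambda_i$ is close enough to $\lambda_\infty$. Outside $\ovl{\HH^B}$, by Proposition \ref{prop:NoGhostLimsV2} every component of $\VV_2\setminus\ovl{\HH^B}$ meeting $\DD$ is contained in a wake $\WW^B_{\HH'}$ of a hyperbolic component $\HH'^B$ attached to $\HH^B$. The decoration theorem in $\VV_2$ (Proposition \ref{prop:DecorTilV2}) applied to the small copy $\MM^B_\HH$ of which $\HH^B$ is the main hyperbolic component (or to $\MM_2$ itself, when $\HH^B$ is the main component of $\MM_2$) ensures that for every $\delta>0$ only finitely many such wakes exceed diameter $\delta$. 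Using the density of parabolic parameters on $\partial\HH^B$ together with Theorem \ref{thm:ParBubblRay}, we may refine the choice of $p_i$ and $\phi_i$ so that the rays $\BB^{\phi_i}$ land just beyond the finitely many exceptional large wakes on the $a_\infty$-side, excluding them from $\DD$.

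The main obstacle is the last geometric step: although Proposition \ref{prop:DecorTilV2} controls the Euclidean diameters of decoration pieces of $\MM^B_\HH$, one must identify the wakes of satellite components attached to $\HH^B$ with regions bounded by these decoration pieces, and verify that whenever a finite set of large wakes lies between a candidate $p_i$ and $a_\infty$ it can always be bypassed by perturbing the ray angle $\phi_i$ to a nearby rational.
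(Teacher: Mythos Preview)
Your overall setup (multiplier parametrization, arc $\alpha$ in $\ovl{\HH^B}$, rays at nearby parabolic points) matches the paper's, but the heart of your argument has a real gap. Proposition \ref{prop:DecorTilV2} does not say that the wakes $\WW^B_{\HH'}$ attached to $\HH^B$ have diameters tending to $0$: the decoration tiling of $\MM^B_\HH$ consists of pieces covering $\WW^B_\HH\setminus\MM^B_\HH$ (the ``escape'' region around the copy), whereas the limbs $\WW^B_{\HH'}\cap\MM_2$ sit \emph{inside} $\MM^B_\HH$ and are invisible to that tiling. So your step ``only finitely many such wakes exceed diameter $\delta$'' is exactly the statement you are trying to prove (shrinking of limbs near $a_\infty$), and invoking the decoration theorem here is a category error. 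Your parenthetical about ``applying the decoration theorem to $\MM_2$ itself'' when $\HH^B$ is the main component has the same problem, and no such result is available. You flag this as an ``obstacle'' at the end, but it is in fact the whole point.

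The paper avoids wake-diameter bounds altogether. Instead of building one small $\DD$, it builds a \emph{nested} sequence $\FF_n$ of combinatorial discs (by letting the parabolic cutoff points $b_n,c_n$ march monotonically toward $a_\infty$) and shows $\bigcap_n\FF_n=\{a_\infty\}$: any hypothetical $a'\neq a_\infty$ in the intersection would, by Proposition \ref{prop:NoGhostLimsV2}, lie in some specific wake $\WW^B_{\HH'}$, but that single wake is attached at a fixed parabolic point and hence is cut off by $\FF_n$ once $n$ is large enough. No size estimate on any wake is needed. You also miss that the \emph{primitive parabolic} case (where $a_\infty$ is the root of $\HH^B$ and two external rays land there) requires genuinely more: your disc $\DD=\BB^{\phi_1}\cup\alpha\cup\BB^{\phi_2}$ fails to separate $a_\infty$ from the exterior of $\WW^B_\HH$, and the paper handles this with an additional sequence of ray pairs $\BB^{\phi'_n},\BB^{\psi'_n}$ approaching the root from outside, together with \cite[Theorem 9.1]{AY} and the separation of small copies by rational ray pairs.
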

\begin{proof}
We will use ideas from \cite{Sch}. The claim is obvious for the
interiors of hyperbolic components.

 Let $a$ be a
parameter on $\partial \HH^B\subset \MM_2$. We will construct a sequence $\{\FF_n^B\}$ of
nested combinatorial topological discs such that
$a\in\intr(\FF_n)$ and $\cap_n \FF_n=a$. This will imply the claim of the proposition.

 \textbf{Indifferent case.} Suppose that $a$ is an indifferent (non--parabolic) point point on $\partial \HH$. Choose monotone sequences $\{b_n\}$, $\{c_n\}$ on $\partial \HH$ (recall that
$\partial \HH$ is a Jordan curve) such that $b_n,c_n$ are parabolic points and
 $\{b_n\}$, $\{c_n\}$ converge from different sides to $a$. Choose periodic rays $\BB^{\phi_n}$, $\BB^{\psi_n}$ landing at $b_n$, $c_n$ respectively and a simple curve
 $\gamma_n$ in $\ovl {\HH^{B}}$ connecting $b_n$, $c_n$ such that $\gamma_n$ tends to $a$, does not contain $a$, and $\gamma_{n-1}$ and $a$
 are in different components of $\ovl {\HH^{B}}\setminus\gamma_{n}$; i.e. $\gamma_n$ is a monotone sequence. We define
 $\FF_n$ to be the closed topological disc bounded by $\BB^{\phi_n}\cup \BB^{\psi_n}\cup \gamma_n$ and containing $a$. By definition, $\FF_n$ is
 a combinatorial neighborhood of $a$. It follows from the construction that $(\cap_n \FF_n)\cap \ovl{\HH^B}=a$.

Assume that $a'\in \cap_n \FF_n $. We claim that $a'=a$. Indeed,
$a'$ can not be in $\WW^B_{\HH'}$ for a component $H'^B\in
\WW^B_\HH$ attached to $\HH^B$ because $\WW^B_{\HH'}$ intersects
only finitely many sets in $\FF_n$. By Proposition
\ref{prop:NoGhostLimsV2} we have $a'\in \HH^B$ and so $a=a'$.

\textbf{Parabolic non-primitive case.} Suppose that $a\in
\ovl{\HH^B}\cap\ovl{\HH'^B}$, where $\HH'^B$ is a hyperbolic
component attached to $\HH^B$. Choose four monotone sequences
$\{b_n\}$, $\{c_n\}$, $\{d_n\}$, $\{f_n\}$ on $\partial \HH^B\cup
\partial \HH'^B$ consisting of parabolic points and converging to
$a$ from different sides of $\partial\HH^B\cup \partial \HH'^B$.
Choose simple curves $\beta_n$, $\gamma_n$ in $\ovl{\HH^B}$,
$\ovl{\HH'^B}$ respectively connecting pairs of points in $b_n$, $c_n$,
$d_n$, $f_n$ such that $\beta_n$, $\gamma_n$ converge to $a$, do not
contain $a$, and $a$, $\beta_{n-1},$ $\gamma_{n-1}$ are in different
components of
$\ovl{\HH^B}\cup\ovl{\HH'^B}\setminus(\beta_n\cup \gamma_n)$. We define
 $\FF_n\ni a$ to be the closed topological disc bounded by periodic rays landing at $b_n$, $c_n$, $d_n$, $f_n$ and by $\beta_n$, $\gamma_n$.
 As in the indifferent case $a\in\intr{F_n}$ and $\cap F_n=\{a\}$.

\textbf{Parabolic primitive case.} Suppose that $\HH^B$ is a primitive (i.e. it is not attached to a hyperbolic component of smaller period), $\HH^B$
is not the main hyperbolic component of $\MM_2$, and $a$ is the root of $\HH^B$. Let $\{\BB^{\phi_\infty}$, $\BB^{\psi_\infty}\}$ be the pair of periodic rays landing at $a$; we assume that $0<\phi_\infty<\psi_\infty<0$
 is the cyclic order. Choose a sequence of pairs $\{\BB^{\phi'_n}$, $\BB^{\psi'_n}\}$ landing together rational rays such that $\phi'_n\to \phi_\infty$ and
 $\psi'_n\to \psi_\infty$ and $0<\phi'_n<\phi_\infty<\psi_\infty<\psi'_n<0$ is the cyclic order, this sequence exists by Theorem \ref{thm:ParBubblRay} and the well known fact that the same statement is true for $\VV_1$.

Let us remind that $\BB^{\phi_\infty}$ or $\BB^{\psi_\infty}$ is possibly a finite ray in $\EE_\infty$. Denote by $\UU_n$ the open disc
in $\EE_\infty$ bounded by the internal equipotential of height $1/2^{n}$; we have $\UU_i\subset \UU_{i+1}$ and $\EE_\infty=\cup_n \UU_n$.
  Choose sequences $\{\BB^{\phi_n}\}$, $\{\BB^{\psi_n}\}$, $\{\gamma_n\}$ as in the indifferent case. Define
 $\widetilde{\FF_n}$ to be the closed topological disc bounded by $\BB^{\phi_n}\cup \BB^{\psi_n}\cup \gamma_n\cup \BB^{\phi'_n}\cup \BB^{\psi'_n}$  and containing $a$.
Define $\FF_n:=\widetilde{\FF_n}\setminus \UU_n$.

Consider $a'\in\cap_n\intr{\FF_n}$. If $a'\in \WW^B_\HH$, then the same
argument as in the indifferent case shows that $a'=a$.
Let us assume $a'\not\in\WW^B_\HH$.

By construction, all except possibly $\EE_\infty$ components of $\VV_2\setminus \MM_2$ intersect at most finitely many $\FF_n$.
A given point in $\EE_\infty$ is also in at most finitely many $\FF_n$ because $\EE_\infty=\cup_n \UU_n$.
Therefore, $a'\in\partial\MM_2$.

If $a'$ is not in any copy of the Mandelbrot set, then by
\cite[Theorem 9.1]{AY} there is a sequence $\YY_n^B$ of parapuzzle
pieces containing $a'$ such that $diam(\YY_n^B)$ tends to $0$.
Hence for big $n$ the piece $\YY_n^B$ does not intersect $\ovl{\HH^B}$.
Therefore, $\YY_n^B$ is separated from $\HH^B$ by rational rays. This implies that for big $m$ the sets $\YY_n^B$ and $\FF_m$ do not
intersect; this contradicts the definition of $a'$.

Assume that $a'$ is in a small copy $\MM_{\HH'}^B$ of the Mandelbrot
set. Then either $a'\in \MM_{\HH'}^B$ or $a'\not\in \MM_{\HH'}^B$. If
$a'\in\MM_{\HH'}^B$, then $a'=m(c')$ and $a=m(c)$ as in
Definition \ref{defn:Map_cTo_ac} and there are landing together rational rays
$\RR^{\phi_1},$ $\RR^{\phi_2}$ separating $c'$ and $c$. Therefore,
$\BB^{\phi_1},$ $\BB^{\phi_2}$ land together and separate $a'$ and
$a$.

If $a'\not\in\MM_{\HH'}^B$, then the rays $\BB^{\phi'_n}$,
$\BB^{\psi'_n}$ must separate $\MM_{\HH'}^B$ and $\HH^B$ for
sufficiently big $n$ as this is true in $\VV_1$ for $\RR^{\phi'_n}$,
$\RR^{\psi'_n}$, $\MM_{\HH'}$, and $\HH$.

\textbf{The special case.}
Assume $a$ is the root of the main hyperbolic component or $a=0$.
Choose sequences $\{\BB^{\phi_n}\}$, $\{\BB^{\psi_n}\}$, $\{\gamma_n\}$ as in the indifferent case.
 Let $\UU_n$ be the open disc
in $\EE_\infty$ bounded by the internal equipotential of height $1/2^{n}$ (see the primitive parabolic case).
 Define
 $\widetilde{\FF_n}$ to be the closed topological disc bounded by $\BB^{\phi_n}\cup \BB^{\psi_n}\cup \gamma_n$ and containing $a$.
Define $\FF_n:=\widetilde{\FF_n}\setminus \UU_n$. The same arguments as above shows that $\cap_n \FF_n=\{a\}$.

\end{proof}

\subsection{Yoccoz's results in $\VV_2$}
\label{subsec:CombV2YoccRes}

\begin{theorem}[Yoccoz's results in $\VV_2$]
\label{YoccResInV2} Assume $a'$ is not infinitely renormalizable. Then a
sequence $a_n$ tends to $a'$ combinatorially if and
only if $a_n$ tends to $a'$ in the usual topology.
\end{theorem}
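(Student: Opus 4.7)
The forward implication is immediate: if $a_n\to a'$ in the Euclidean topology and $\DD$ is a combinatorial disc with $a'\in\intr(\DD)$, then $\intr(\DD)$ is Euclidean-open, so all but finitely many $a_n$ lie in $\DD$. The content is the converse. For this it is enough to produce, for every $\varepsilon>0$, a combinatorial disc $\DD_\varepsilon$ with $a'\in\intr(\DD_\varepsilon)$ and Euclidean diameter less than $\varepsilon$; combinatorial convergence then forces $a_n\in\DD_\varepsilon$ eventually, yielding usual convergence. The plan is a case analysis on the position of $a'$.

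The base cases are already in hand. If $a'$ lies in the closure of a hyperbolic component of $\MM_2$, Proposition \ref{prop:TrivOfFibersAtParabMis} directly supplies shrinking combinatorial neighborhoods. If $a'$ is a Misiurewicz parameter, Lemma \ref{lem:TrivOfFibersAtMis} supplies shrinking bubble parapuzzle pieces (together, in the $\basil$ case, with the closures of the two escape components whose boundaries contain $a'$); bubble parapuzzle pieces are combinatorial discs by definition. If $a'\in\VV_2\setminus\MM_2$ lies in an escape component $\EE$, then $\ovl{\EE}$ is parametrized by the closure of a bubble of $K_B$ via Theorem \ref{thm:ParDynIsomV2}; density of rational angles lets one enclose $a'$ between two rational bubble rays landing together at nearby $\basil$-Misiurewicz parameters, yielding arbitrarily small combinatorial discs around $a'$. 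Finally, if $a'\in\MM_2$ is not in the closure of any hyperbolic component and not in any small copy of $\MM$, then by \cite[Theorem 9.1]{AY} a sequence of bubble parapuzzle pieces around $a'$ shrinks to $\{a'\}$.

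The essential case, and the main obstacle, is when $a'$ lies in at least one but only finitely many small copies of the Mandelbrot set. Let $\MM_{\HH_*}^B$ be the innermost such copy containing $a'$; non-infinite-renormalizability guarantees that this copy exists. Under the canonical homeomorphism $\chi_{\HH_*}\colon \MM_{\HH_*}^B\to \MM$, the image $\chi_{\HH_*}(a')$ falls into one of the previous base cases inside $\VV_1$, so Yoccoz's results there yield parapuzzle pieces $\PP_n$ around $\chi_{\HH_*}(a')$ with Euclidean diameters tending to zero. I would pull these back combinatorially through $\chi_{\HH_*}$ to generalized bubble parapuzzle pieces inside $\MM_{\HH_*}^B$, and then realize them in $\VV_2$ as genuine combinatorial discs by adjoining the finitely many pieces of the decoration tiling of $\MM_{\HH_*}^B$ that touch their boundary. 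The decisive input is Proposition \ref{prop:DecorTilV2}: only finitely many pieces of the tiling exceed any prescribed diameter, so the resulting combinatorial discs---bounded by rational bubble rays and arcs in closures of hyperbolic components---have Euclidean diameters tending to zero. The hard point is precisely this last step, since $\chi_{\HH_*}$ is only a homeomorphism on $\MM_{\HH_*}^B$ itself and one must appeal to the decoration theorem in $\VV_2$ to bound the Euclidean size of the combinatorial disc on its ``outer'' side.
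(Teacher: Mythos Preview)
Your case analysis and the ingredients you invoke (Proposition \ref{prop:TrivOfFibersAtParabMis}, Lemma \ref{lem:TrivOfFibersAtMis}, \cite[Theorem 9.1]{AY}, the straightening $\chi_{\HH_*}$, Yoccoz's results in $\VV_1$, and the decoration theorem Proposition \ref{prop:DecorTilV2}) match the paper exactly. The only substantive difference is in the execution of the renormalizable case.

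You try to build shrinking combinatorial discs around $a'$ by pulling back Yoccoz pieces and then ``adjoining the finitely many pieces of the decoration tiling that touch their boundary''. This last step is the weak point: it is not clear which tiles you adjoin, why the result is again a topological disc with boundary of the required form, or why only finitely many tiles of size $\ge\varepsilon$ are involved for a given pulled-back piece (the Yoccoz pieces and the decoration tiles come from different initial ray configurations, so the usual nested-or-disjoint dichotomy for puzzle pieces does not apply automatically). It can be made to work, but as written it is a sketch rather than an argument.

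The paper sidesteps this by not constructing combinatorial discs at all in this case. It works directly with the sequence $(a_n)$ and splits it in two. If $a_n\in\MM_\HH^B$, apply the straightening homeomorphism $\chi_\HH$: combinatorial convergence is preserved, Yoccoz in $\VV_1$ gives usual convergence of $\chi_\HH(a_n)$, and $\chi_\HH^{-1}$ is continuous. If $a_n\notin\MM_\HH^B$, let $\TT_n$ be the decoration tile containing $a_n$ and pick any $b_n\in\TT_n\cap\MM_\HH^B$. Since $a'$ is not a Misiurewicz tip, no single tile repeats infinitely often, so $\operatorname{diam}(\TT_n)\to 0$ by Proposition \ref{prop:DecorTilV2}; hence $|a_n-b_n|\to 0$, $b_n\to a'$ combinatorially, the in-copy case gives $b_n\to a'$, and therefore $a_n\to a'$. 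This two-line reduction replaces the disc-building and avoids the bookkeeping you would otherwise need.
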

\begin{proof}
The proof is the combination of \cite[Theorem 9.1]{AY}, Propositions \ref{prop:TrivOfFibersAtParabMis}, \ref{prop:DecorTilV2},
and Lemma \ref{lem:TrivOfFibersAtMis}.

The statement is obvious for points in hyperbolic components.
The case when $a$ is a Misiurewicz parameter follows from Lemma \ref{lem:TrivOfFibersAtMis}. The case when $a$
is on the boundary of a hyperbolic component in $\MM_2$ follows from Propositions \ref{prop:TrivOfFibersAtParabMis}.
\cite[Theorem 9.1]{AY} covers the case when $a$ is not
renormalizable and is not in the closure of a hyperbolic component of
$\MM_2$.

Let us verify the last case: $a$ is in a small copy $\MM_\HH^B$ of
the Mandelbrot set and $a$ is neither a Misiurewicz parameter nor on
the boundary of a hyperbolic component of $\MM_2$.

Denote by $\chi_\HH:\MM_\HH^B\to\MM_\HH$ the straightening map. We will
consider two cases.

\textbf{Case 1.} If $a_n\in \MM_\HH^B$, then $\chi(a_n)$ tends to
$\chi(a')$ combinatorially. By Yoccoz's results for $\MM$ we see that
$\chi(a_n)$ tends to $\chi(a')$ in the usual topology. As $\chi$ is a homeomorphism,
$a_n$ tends to $a'$ in the usual topology.

\textbf{Case 2.} If $a_n\not\in \MM_\HH^B$, then we may assume (by
removing finitely many points in $a_n$) that all $a_n$ are in
$\WW^B_{\HH}$ (resp.\ in $\dot \WW_\HH^B$ if $\WW_\HH^B$ is bounded by a finite ray). Consider the decoration tiling $\TT$ for $\MM_\HH^B$;
denote by $\TT_n$ the piece in $\TT$ containing $a_n$; choose a point $b_n$
in $\TT_n\cap \MM_\HH^B$.

As $a'$ is not a Misiurewicz parameter, a piece in $\TT$ may occur only finitely many times
in $\{\TT_n\}$. By Proposition \ref{prop:DecorTilV2} the diameter of
pieces in $\{\TT_n\}$ tends to $0$. Thus the distance between $a_n$
and $b_n$ tends to $0$. As a consequence, $b_n$ tends to $a'$ combinatorially. By Case 1 the sequence $b_n$ tends to $a'$ in the usual
topology. As the distance between $a_n$ and $b_n$ tends to $0$ the
sequence $a_n$ tends to $a'$ in the usual topology.
\end{proof}

\begin{cor}
\label{cor:DiamCompInV2}  The set $\MM_2$ is locally connected.
\end{cor}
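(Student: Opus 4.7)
The plan is to apply the topological characterization of compact connected locally connected subsets of $\sph$ recalled in Subsection \ref{subs:LocalConnSubs}: $\MM_2$ is locally connected if and only if (a) the boundary of every component of $\VV_2\setminus\MM_2$ is locally connected, and (b) for every $\eps>0$, only finitely many components of $\VV_2\setminus\MM_2$ have diameter $\ge\eps$. Condition (a) is immediate from Theorem \ref{thm:ParDynIsomV2}: each component $\EE$ of $\VV_2\setminus\MM_2$ is homeomorphic to a Basilica Fatou component outside the $1/2$-limb, and its boundary is a Jordan curve by Proposition \ref{prop:ExtCompLocalConn}.

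For (b), I will argue by contradiction. Suppose there is a sequence of distinct components $\EE_n$ with $\mathrm{diam}(\EE_n)\ge\eps$. Choose $a_n,b_n\in\ovl{\EE_n}$ with $|a_n-b_n|\ge\eps/2$. Passing to a subsequence, $a_n\to a$ and $b_n\to b$ with $|a-b|\ge\eps/2$. Because distinct complementary components have closures meeting only at isolated $\basil$-Misiurewicz parameters, both $a$ and $b$ lie in $\MM_2$. The rest of the plan is to rule out every possible type of limiting point $a$.

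If $a$ is not infinitely renormalizable, Theorem \ref{YoccResInV2} furnishes a combinatorial disc $\FF\ni a$ with $\mathrm{diam}(\FF)<|a-b|/4$. Its boundary $\partial\FF$ is a finite union of rational parameter rays in bubbles and simple arcs in closures of hyperbolic components; the latter lie in $\MM_2$ and cannot meet $\EE_n$. For large $n$ we have $a_n\in\intr(\FF)$ but $b_n\notin\FF$, so the connected set $\ovl{\EE_n}$ meets $\partial\FF$, and by pigeonhole some single parameter ray $\BB^\phi\subset\partial\FF$ meets infinitely many $\ovl{\EE_n}$. Picking $x_n\in\BB^\phi\cap\ovl{\EE_n}$ and extracting a convergent subsequence, the distinctness of the $\EE_n$ forces the limit to be the landing point $p$ of $\BB^\phi$, a Misiurewicz or parabolic parameter, again non-infinitely renormalizable. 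I then recurse at $p$ paired with a point $c\in\ovl{\EE_n}$ at distance $\ge\eps/4$ from $x_n$, using finer and finer combinatorial discs: this drives down to the triviality-of-fibers guaranteed by Lemma \ref{lem:TrivOfFibersAtMis} and Proposition \ref{prop:TrivOfFibersAtParabMis}, producing the contradiction.

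If $a$ is infinitely renormalizable, then $a\in\bigcap_k\MM^B_{\HH_k}$ for a nested sequence of small copies with parapuzzle neighborhoods $\YY^B_{n_k\HH_k}$. For large $k$, $\YY^B_{n_k\HH_k}\not\ni b$, while $a_n\in\intr(\YY^B_{n_k\HH_k})$ for large $n$. Since $a_n\notin\MM_2\supset\MM^B_{\HH_k}$, the point $a_n$ lies in a decoration tile of the generalized decoration tiling from Subsection \ref{subsec:DecorTilV2}, whose diameter is small by Proposition \ref{prop:DecorTilV2}. The component $\EE_n$ either lies inside that tile (too small) or extends beyond it by crossing $\partial\YY^B_{n_k\HH_k}$ through an interior point of a parameter ray; in the latter case $\EE_n$ coincides with one of the finitely many components encountered along that ray on $\partial\YY^B_{n_k\HH_k}$ close to $a$, contradicting the distinctness of the $\EE_n$.

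The main obstacle is the bookkeeping in both cases to ensure that the recursive application of Yoccoz's theorem and the decoration theorem actually terminates: a priori a large component $\EE_n$ can straddle a parameter ray on the boundary of a combinatorial disc or decoration tile and thereby escape any single small neighborhood. The key is that Jordan-domain-ness of each $\EE_n$ (Theorem \ref{thm:ParDynIsomV2}) together with the fact that $\partial\EE_n\subset\MM_2$ forces only finitely many $\EE_n$ to meet any specific ray near its landing point with diameter bounded below, which after finitely many recursion steps reduces every limiting type to the triviality-of-fibers cases already established, yielding the desired contradiction.
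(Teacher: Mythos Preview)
Your overall skeleton matches the paper's: use the characterization from Subsection~\ref{subs:LocalConnSubs}, get local connectivity of each complementary boundary from Theorem~\ref{thm:ParDynIsomV2}, and for the shrinking of complementary diameters split into the non-infinitely-renormalizable case (Theorem~\ref{YoccResInV2}) and the renormalizable case (Proposition~\ref{prop:DecorTilV2}). But the obstacle you yourself flag---that a large $\EE_n$ might straddle the boundary rays of a combinatorial disc or decoration tile, forcing an uncontrolled recursion---is exactly what the paper dispatches with a single geometric observation you do not make: \emph{a component of $\VV_2\setminus\MM_2$ can intersect at most three pieces of any decoration tiling} (see Figure~\ref{figure:bubble}; this is the $\VV_2$-analogue of Proposition~\ref{prop:DecTilWithBasil}). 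With that bound in hand, if big components accumulated at some $a\in\MM^B_\HH$, each would be contained in the union of at most three decoration tiles of $\MM^B_\HH$, all of which are eventually small by Proposition~\ref{prop:DecorTilV2}. No recursion, no ray-crossing bookkeeping.

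Your recursive scheme in the non-infinitely-renormalizable case has a genuine circularity. The claim that ``only finitely many $\EE_n$ meet any specific ray near its landing point with diameter bounded below'' is precisely the assertion that complementary components shrink near the (rational) landing point $p$---a special case of what you are proving. Passing from $a$ to $p$ and then to a further $p'$ does not terminate on its own, since each new combinatorial disc introduces new boundary rays with new landing points. The paper never enters this loop: for non-infinitely-renormalizable $a$ it invokes Theorem~\ref{YoccResInV2} directly (whose proof already supplies genuinely shrinking neighbourhoods via \cite[Theorem~9.1]{AY}, Lemma~\ref{lem:TrivOfFibersAtMis}, and Proposition~\ref{prop:TrivOfFibersAtParabMis}), and for $a$ lying in any small copy it uses the three-pieces bound together with Proposition~\ref{prop:DecorTilV2}.
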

\begin{proof}
Recall that the boundary of a component of $\VV_2\setminus \MM_2$ is
locally connected (Theorem \ref{thm:ParDynIsomV2}). We need
to show that the diameter of components in $\VV_2\setminus \MM_2$ tends
to $0$.

By Theorem \ref{YoccResInV2} big components (with diameters at least some
fixed $\varepsilon>0$) of $\VV_2\setminus\MM_2$ can not accumulate
at non infinitely renormalizable parameters. But by Proposition
\ref{prop:DecorTilV2} and the observation that a component of
$\VV_2\setminus \MM_2$ can intersect only three pieces in a
decoration tiling (see Figure \ref{figure:bubble}) it follows that
big components of $\VV_2\setminus\MM_2$ can not accumulate at a
small copy $\MM_\HH^B$ of the Mandelbrot set.
\end{proof}

\subsection{Proof of Theorem \ref{th:main2}}
\label{subsect:ThmMain2} Recall that by $\LL_B$ we denote a
lamination of the Basilica embedded into $\VV_1$, by
$\pi:\VV_1\to\VV'_2$ the quotient map, by $\MM'_2=\pi(\MM)$ the
image of $\MM$ in $\VV'_2$, and
$\cMM=\MM\cup\LL_B\cup\WW_B=\pi^{-1}(\MM'_2)$.

By \textit{small copies }of $\MM$ in $\MM'_2$ we mean the images of small copies of $\MM\setminus\WW_B$ under $\pi$. It follows that
small copies of $\MM$ in $\MM'_2$ are canonically homeomorphic to $\MM$.

\subsection*{Step 1}Let us construct a bijection $\omega$ between $\MM'_2$ and
$\MM_2$.

If $c\in \ovl\HH\subset (\MM\setminus \WW_B)$ or $c\in \MM\setminus \WW_B$ is a Misiurewicz parameter,
then define $\omega(\pi(c)):= m(c)$ as in Definition \ref{defn:Map_cTo_ac}.

If $c$ is in a small copy $\MM_\HH$, then we define
$\omega(\pi(c)):=\chi_\HH(c)$, where $\chi_{\HH}:\MM_\HH\to
\MM_\HH^B$ is the canonical homeomorphism.

 If
$c\in \VV_1\setminus \WW_B$ is not in the closure of a hyperbolic component, is not
a Misiurewicz parameter, and is not an infinitely renormalizable
parameter, then define
$\omega(\pi(c))$ to be the unique point in $\cap_\YY\YY^B$, where the intersection is taken over all puzzle
pieces containing $c$.

All above cases are compatible; this defines the bijection
$\omega:\MM'_2\to \MM_2$. Moreover, $\omega$ preserves the combinatorics:
\begin{equation}
\label{eq:thm2CombIsCompart}
\omega(\pi(\YY_t\cap\MM))=\YY_t^B\cap\MM_2
\end{equation}
for a parapuzzle piece $\YY_t$ in $\VV_1\setminus \WW_B$.

\subsection*{Step 2} The bijection $\omega:\MM'_2\to\MM_2$ is a homeomorphism.

We need to prove that if $c_n\in\MM'_2$ tends to $c_\infty\in\MM'_2$, then
$\omega(c_n)$ tends to $\omega(c_\infty)$.

\subsection*{Case 1} Assume $c_\infty$ belongs to at most
finitely many small copies of the Mandelbrot set; then the same is
true for $\omega(c_\infty)$. By \eqref{eq:thm2CombIsCompart} the
sequence $\omega(c_n)$ tends to $\omega(c_\infty)$ combinatorially. By Theorem \ref{YoccResInV2} the sequence
$\omega(c_n)$ tends to $\omega(c_\infty)$ in the usual topology.

\subsection*{Case 2} Assume $c_\infty$ and all $c_n$ belong to a single small copy $\pi(\MM_\HH)$.
Then the statement follows from the definition of $\omega$ because
$\omega$ coincides  with the canonical homeomorphism from $\pi(\MM_\HH)$
to $\omega(\pi(\MM_\HH))=\MM_\HH^B$ and $\pi$ restricted to $\MM_\HH$ is a homeomorphism .
\subsection*{Case 3} Assume $c_\infty$ belongs to infinitely many copies
of the Mandelbrot set (i.e., $c_\infty$ is infinitely
renormalizable), $\MM_{\HH}\subset\MM\setminus\WW_B$ is a small copy
containing $\pi^{-1}(c_\infty)$, and assume that all $\pi^{-1}(c_n)$
do not intersect $\MM_\HH$. Recall that $\pi^{-1}(c_n)$ is either a
single point or a pair of $\basil$ Misiurewicz points connected by a
leaf of $\LL_B$.

Let $\TT_n$ be a component of $\MM \backslash \MM_\HH$ that
intersects $\pi^{-1}(c_n)$.
 Let $a_n$ be the
intersection of $\MM_\HH$ and $\TT_n$.
 As $\pi^{-1}(c_\infty)$ belongs to infinitely many copies of $\MM$, it follows that
 $\pi^{-1}(c_\infty)\neq a_n$ for all $n$. Therefore, the diameter of $\pi^{-1}(c_n)$ tends to $0$
 (Corollary \ref{cor:main}) and only finitely many $\pi^{-1}(c_n)$ intersect $\TT_k$ for each fixed $k$.
 Hence by the decoration theorem the distance between $\pi^{-1}(c_n)$ and $a_n$ tends to
 $0$, and so
 the sequence $a_n$ tends to $\pi^{-1}(c_\infty)$.
  By Case $2$ we obtain that $\omega(\pi(a_k))$ tends to
 $\omega(c_\infty)$.

  By removing finitely many elements in the sequence $c_n$
we may assume that $\TT_n$ does not contain the main hyperbolic
component of $\MM$ for all $n$. Then $a_n$ is a single Misiurewicz
point. It follows from \eqref{eq:thm2CombIsCompart} that a
decoration of $\pi(\TT_n)$ is in a unique piece, say $\TT^B_n$, of
the decoration tiling $\TT$ of $\MM^B_\HH$. As the diameter of
pieces in $\TT$ tends to $0$ the distance between $\omega(a_n)$ and
$\omega(c_n)$
 tends to $0$. We conclude that $\omega(c_n)$ tends to $\omega(c_\infty)$.

\subsection*{Step 3} We claim that the homeomorphism $\omega:\MM'_2\to\MM_2$ preserves the orientation
of the complement. Let us first show how to identify the
components of $\VV_1\setminus \cMM$ with the components of
$\VV_2\setminus \MM_2$. Let $\UU$ be a component of $\VV_1\setminus
\cMM$. Then $U:=(1/z\circ\BB)(\UU)$ is a component of
$\opdisk\setminus L_B$. Under the quotient map
$\disk\to\disk/L_B\approx K_B$ the component $U$ is mapped to the
component of $\intr(K_B\setminus\{1/2\text{-limb}\})$. By Theorem
\ref{thm:ParDynIsomV2} the components of
$\intr(K_B\setminus\{1/2\text{-limb}\})$ are identified with the
components of $\VV_2\setminus \MM_2$; denote by $\UU^B$ the
component in $\VV_2\setminus \MM_2$ corresponding to $\UU$ under the
above identifications.

We claim that if $x\in\partial\pi(\UU)$, then $\omega(x)\in \partial
\UU^B$ and, moreover, $\omega$ preserves the orientation of $\UU$.
The above claims are obvious (the last part of Theorem
\ref{thm:ParDynIsomV2}) for the set
\[Mis(U)=\{\pi(y):\ y\text{ is a $\basil$ Misiurewicz parameter on }\partial\UU\};\]
as $Mis(U)$ is dense in $\partial \pi(\UU)$ the statement holds for all
points on $\partial \pi(\UU)$.

\subsection*{Step 4} By Proposition \ref{prop:ExtensOfhomeom} the homeomorphism $\omega$ extends to a homeomorphism from $\VV'_2$ onto $\VV_2$.

\section{Matings with laminations in the dynamical planes}
\label{sec:Thm3}

Consider a quadratic polynomial $z^2+c$, where
$c\not\in \WW_B$. Let $L_B$ be a $z^2$-invariant lamination of the
Basilica. The composition $B_{c}^{-1}\circ 1/z$ embeds $L_B$ into
the dynamical plane of $f_{c}=z^2+c$; denote by

\[L^{c}_B=B_{c}^{-1}\circ (1/z)(L_B)=\bigcup_{l\in L_B} \ovl{(1/z \circ B_c)^{-1}(l|_{\mathbb{D}})}
\] the embedding.

It follows from Definition \ref{def:BasRepr} that $L^c_B$ has the
following description:
\begin{equation}
\label{eq:DescrOfLeavesInL^c}
L^{c}_B=\langle 1/3,2/3\rangle\cup_{n\ge0}f_{c}^{-n}(\langle 5/6,1/6\rangle).
\end{equation}
For instance, if $l$ is a leaf of $L^c_B$ and $l\not=\langle 1/3,2/3\rangle$,
then pre-images of $l$ under $f^n_{c}$ are also leaves of $L^c_B$.

\subsection{The set $\widehat{K_{c}}=K_{c}\cup L^c_B$}
Define
\[\widehat{K_{c}}=K_{c}\cup L^c_B.\]

\begin{proposition}
\label{prop:DynamLeaves}
If $c\in\MM_\HH$ and $\MM_\HH$ is bounded by $\LL_\HH$ from both sides, then
the diameter of components
in $\C\setminus \widehat{K_{c}}$ tends to $0$ and the diameter of
leaves in $L_B^{c}$ tends to $0$.
\end{proposition}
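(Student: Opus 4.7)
The plan is to carry over the proof of Corollary \ref{cor:main} from the parameter plane into the dynamical plane of $f_c$, using the same two-case split (Yoccoz versus decoration tilings). I argue by contradiction: suppose there exists $\varepsilon>0$ together with either a sequence of components $U_n$ of $\C\setminus\widehat{K_c}$ or a sequence of leaves $l_n\in L^c_B$ of diameter at least $\varepsilon$, accumulating at some point $z_0$. First I observe that $z_0\in K_c$: outside $K_c$ the B\"ottcher coordinate $B_c$ conjugates $f_c$ to $z\mapsto z^2$ on $\{|z|>1\}$, and by Definition \ref{def:BasRepr} the leaves of $L_B$ have diameters tending to $0$; pulling back via $B_c^{-1}\circ(1/z)$ shows the same for leaves of $L^c_B$ and for components of $\C\setminus\widehat{K_c}$ lying in the basin of infinity, so no large-diameter accumulation can occur away from $K_c$.

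Now split on whether $z_0$ is infinitely renormalizable in the dynamical sense. If there is some $n$ with $z_0\notin K^{np}_{n\HH}$, then $z_0$ is combinatorially non-renormalizable beyond level $n$, and Yoccoz's triviality-of-fibers theorem supplies a nested sequence of (ordinary) puzzle pieces $P_m\ni z_0$ with $\operatorname{diam}(P_m)\to 0$. Choosing $m$ so that $\operatorname{diam}(P_m)<\varepsilon/2$ forces any element of the accumulating sequence that is sufficiently close to $z_0$ to lie inside $P_m$, contradicting the lower bound on its diameter.

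If instead $z_0\in\bigcap_{n\ge 0}K^{np}_{n\HH}$, every nested sub-copy $\MM_{n\HH}$ is bounded by $\LL_B$ from both sides (Proposition \ref{prop:bound2}), so the dynamical construction performed inside the proof of Proposition \ref{prop:DecTilWithBasil} applies at level $n$: it produces a generalized renormalization map $f_c^p\colon X^{(n+1)p}\to X^{np}$ and an associated dynamical generalized decoration tiling $T_n$. That proof explicitly established the combinatorial estimate that every component of $\C\setminus\widehat{K_c}$ and every leaf of $L^c_B$ meets at most three pieces of $T_n$. Combining this with Theorem \ref{th:decor2}, which gives $\operatorname{diam}(\TT_n\text{-piece})\to 0$ and hence $\operatorname{diam}(T_n\text{-piece})\to 0$ via the straightening conjugacy, I conclude that any $U_k$ (or $l_k$) close enough to $z_0$ is contained in at most three pieces of $T_n$ for $n$ large, forcing its diameter below $\varepsilon$ and producing the contradiction.

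The main obstacle is not a single hard step but ensuring the two cases jointly cover every $z_0\in K_c$ and that the ``at most three pieces'' bound is genuinely available at every depth. The first is automatic once one recognizes that the dichotomy above is exhaustive. The second hinges on each nested $\MM_{n\HH}$ being bounded from both sides by $\LL_B$: this is supplied by Proposition \ref{prop:bound2}, which tells us that any proper sub-copy of the initial two-sided bounded $\MM_\HH$ is itself two-sided bounded, legitimizing the recursive use of the construction from Proposition \ref{prop:DecTilWithBasil}.
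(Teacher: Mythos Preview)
Your Case~2 contains a genuine gap. Theorem~\ref{th:decor2} is a statement about the \emph{parameter} decoration tiling $\TT$; it says nothing about the dynamical tiling $T$ in the plane of $f_c$. Straightening gives either a quasiconformal conjugacy between two \emph{dynamical} pictures or a homeomorphism between two \emph{parameter} pictures, never a map from a parameter tiling to a dynamical one, so the inference ``$\operatorname{diam}(\TT\text{-piece})\to 0$ hence $\operatorname{diam}(T\text{-piece})\to 0$ via the straightening conjugacy'' has no content. A dynamical analogue of the decoration theorem does appear later (fourth bullet of Proposition~\ref{prop:DynamYoccResAndDecorThm}), but it is proved by hyperbolic contraction and under the stronger hypothesis $c\in\MM_{\HH''}\subsetneq\MM_{\HH'}\subsetneq\MM_\HH$; invoking it here would be either circular or inapplicable. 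There is also some notational confusion: the $Y^{np}_{n\HH}$ are iterated pullbacks of one renormalization domain for the single copy $\MM_\HH$, not domains for a tower of distinct sub-copies ``$\MM_{n\HH}$''. Finally, your dichotomy in Case~1 is not quite right: $\bigcap_n Y^{np}_{n\HH}$ is only the small filled Julia set $K_1$ containing the critical value, whereas Yoccoz's dynamical argument gives shrinking puzzle pieces only off the full grand orbit $\bigcup_i K_i$; if $z_0\in K_2,\dots,K_p$ you are in neither of your two cases.

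The paper sidesteps all of this with a short hyperbolic-contraction argument. The hypothesis that $\MM_\HH$ is bounded from both sides forces every component of $\C\setminus\widehat{K_c}$ and every leaf of $L^c_B$ to be disjoint from the cycle $\bigcup_i K_i$ of small filled Julia sets. By \eqref{eq:DescrOfLeavesInL^c} all but two components are iterated $f_c$-preimages of a single component $V_1$, which has finite hyperbolic diameter in $S=\C\setminus\bigcup_i K_i$ since the post-critical set lies in $\bigcup_i K_i$. The covering $f_c\colon\C\setminus f_c^{-1}(\bigcup_i K_i)\to S$ together with the contracting inclusion into $S$ then drives the hyperbolic (hence Euclidean) diameters of the preimages to $0$; the leaves are handled identically.
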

\begin{proof}
In the proof we will use the hyperbolic contraction. Let $K_1,K_2,
\dots,$ $K_p$ be the periodic cycle of small filled in Julia sets
associated with $\MM_\HH$. It follows from the boundedness condition
that no point in a component of $\C\setminus (\widehat{K_{c}})$  is
in $\cup_i K_i$.

By construction, the composition $B_{c}^{-1}\circ (1/z)$ identifies
components of $\opdisk\setminus L_B$ and $\widehat{\C}\setminus \widehat{K_c}$. By Definition \ref{def:BasRepr}:
\[\opdisk\setminus L_B=U_{0}\cup U_{-1}\cup_{n\ge 0} (z^{-2^n})(U_1),\]
where $U_0$, $U_{-1}$, and $U_1$ are the components of
$\opdisk\setminus L_B$ that are mapped under the quotient map
$\disk/L_B \approx K_B$ onto the components of $\intr(K_B)$
containing $0,$ $-1$, and $1$ respectively.

Define $V_0$, $V_{-1}$, and $V_1$ to be the images of $U_0$, $U_{-1}$, and $U_1$ under the composition $B_{c}^{-1}\circ (1/z)$.
We get:
\[\widehat{\C}\setminus \widehat{K_{c}}=V_{0}\cup V_{-1}\cup_{n\ge 0} f^{-n}_c(V_1).\]

As the critical orbit is in $\cup_i K_i$ the map
\[f_{c}:\C\setminus f^{-1}_{c}\left(\bigcup_i K_i\right)\longrightarrow \C\setminus \bigcup_i K_i\]
is a covering of hyperbolic surfaces, and
\[\C\setminus f^{-1}_{c}\left(\bigcup_i K_i\right)\hookrightarrow \C\setminus\bigcup_i K_i
\]
is a contracting inclusion.

As $V_1$ is disjoint from $\cup_i K_i\cup\{\infty\}$ we see that $V_1$ has a
finite hyperbolic diameter in $S=\C\setminus \cup_i K_i$.
Then the hyperbolic diameter of components in $\C\setminus
\widehat{K_{c}}$ is uniformly bounded; moreover, the hyperbolic
diameter of components in $\C\setminus \widehat{K_{c}}$ that are in
a fixed compact subset of $S$ tends to $0$.  Therefore, the Euclidean
diameter of components in $\C\setminus \widehat{K_{c}}$ tends to
$0$.

Similarly, the Euclidean diameter of leaves in $L_B^c$ tends to $0$.
\end{proof}

 We say that two non-equal points are equivalent under $L_B^{c}$ if these points are
in a leaf of $L_B^{c}$.

\begin{cor}
\label{cor:DynEquivRelatLBIsClosed}
The equivalence relation $L_B^{c}$ is closed, canonical on $K_{c}$, and satisfies the condition of
Moore's theorem.
\end{cor}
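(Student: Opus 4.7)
The plan is to mirror the three-step template used in Corollary \ref{cor:LamRel} in the parameter plane, replacing Corollary \ref{cor:main} by its dynamical counterpart Proposition \ref{prop:DynamLeaves}. The three items to check are (a) closedness of the relation $L_B^c$ as a subset of $\sph\times\sph$, (b) the Moore hypotheses (connected, non-separating classes, non-triviality), and (c) the claim that the restriction to $K_c$ does not depend on the choice of $L_B$.

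For closedness, I would take a sequence of $L_B^c$-equivalent pairs $(x_n,y_n)$ with $x_n\neq y_n$ and $(x_n,y_n)\to(x_\infty,y_\infty)$. Either infinitely many of the pairs lie on a common leaf $l$, in which case passing to the limit inside the closed arc $l$ gives $(x_\infty,y_\infty)\in l\times l$, or the leaves $l_n\ni (x_n,y_n)$ are eventually pairwise distinct. In the second case Proposition \ref{prop:DynamLeaves} forces $\operatorname{diam}(l_n)\to 0$, so $x_\infty=y_\infty$ and the pair is equivalent trivially. Either way $(x_\infty,y_\infty)\in L_B^c$, so $L_B^c$ is closed. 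This step is the only nontrivial one; everything hangs on having Proposition \ref{prop:DynamLeaves} available, which is why the standing hypothesis on $c$ (so that leaves of $L_B^c$ stay away from the critical orbit) is essential.

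For the Moore conditions, I would note that, by Definition \ref{def:BasRepr}, every leaf of the $z^2$-invariant lamination $L_B$ is a simple arc in $\disk$, and the embedding $B_c^{-1}\circ(1/z)$ is a conformal isomorphism of $\opdisk$ onto $\C\setminus K_c$, so each leaf of $L_B^c$ is a simple arc in $\sph$. Hence every equivalence class is either a point of $K_c$ or the closure of such an arc, both of which are connected and non-separating. Non-triviality is obvious: any two points of $\intr K_c$ that do not lie on a common leaf are inequivalent. Together with closedness, this verifies the hypotheses of Moore's theorem.

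For canonicity on $K_c$, I would observe that two points of $K_c$ are $L_B^c$-equivalent precisely when they are the landing points of a pair of dynamic rays $R^{a/(3\cdot 2^n)}$, $R^{b/(3\cdot 2^n)}$ such that $L_B$ pairs the corresponding boundary angles. By Definition \ref{def:BasLamin} the pairings of boundary angles are intrinsic to the Basilica combinatorics and do not depend on how individual leaves are drawn in the interior of $\disk$; two different $z^2$-invariant representatives of $L_B$ therefore induce the same identification on $K_c$. Since the landing points of these rational rays depend only on the angles (via $B_c$), the restriction of $L_B^c$ to $K_c$ is canonical, completing the proof.
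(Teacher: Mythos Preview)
Your approach is essentially the same as the paper's: both deduce the corollary from Proposition \ref{prop:DynamLeaves} (diameters of leaves tend to $0$) together with elementary properties of leaves. Your write-up is in fact more detailed than the paper's one-line proof.

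There is, however, one small gap. In your Moore-conditions step you assert that ``every equivalence class is either a point of $K_c$ or the closure of such an arc.'' This requires knowing that distinct leaves of $L_B^c$ have distinct endpoints in $K_c$; otherwise two leaves could share a landing point, the equivalence class would be a union of arcs meeting at that point, and such a union can separate the sphere. The needed fact is exactly the observation the paper singles out: $\basil$ rays land alone in the dynamical plane of $f_c$ whenever $c\notin\WW_B$, so distinct leaves land at distinct points. You should insert this before concluding that classes are single arcs. (Minor side remarks: singleton classes need not lie in $K_c$---points of $\C\setminus K_c$ off every leaf are also singletons; and your non-triviality sentence about $\intr K_c$ is slightly misphrased since leaves do not enter $\intr K_c$, though the conclusion is of course fine.)
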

\begin{proof}
Follows from Proposition \ref{prop:DynamLeaves} and the observation
that leaves in $L^{c}_B$ land at different points ($\basil$ rays
land alone as $c\not\in\WW_B$).
\end{proof}

By Moore's theorem $\widehat{\C}/L_B^{c}$ is topologically a sphere
$\sph$. Denote by $\pi_{c}$ the quotient map $\widehat{\C}\to
\widehat{\C}/L_B^{c}\thickapprox\sph$ and by $K'^B_{m(c)}=\pi_c(K_c)$ the image
of $K_c$ in $\sph$. We have
$\pi_c^{-1}(K'^B_{m(c)})=\widehat{K_{c}}$.

\subsection{Combinatorial convergence in dynamical planes}
\label{subs:CombTopologyInDynamPlanes}
Recall that by $m(c)$ we denote the image of $c$ in $\VV_2=\MM\mcup L_B$.

By a \textit{combinatorial disc} in the dynamical plane of $g_a:\widehat{\C}\to \widehat{\C}$ we mean a closed
topological disc $D$ such that $\partial D$ is in a finite union
of rational rays in bubbles and simple curves in
the closures of attracting Fatou components.

By a \textit{combinatorial disc} in the dynamical plane of $f_c:\C\to \C$ we mean a closed
topological disc $D$ such that $\partial D$ is in a finite union
of rational rays, simple curves that are disjoint from $K_c$, and simple curves in
the closures of bounded attracting Fatou components (i.e. Fatou components of $K_c$).

A sequence $x_n$ in the dynamical plane of $g_a$ or of $f_c$ \textit{tends to $x_\infty$ combinatorially} if for
every combinatorial disc $D$ such that $x_\infty\in \intr(D)$ all but finitely many $x_n$ are in $D$.
A point $x$ in the dynamical plane of $f_c$ or of $g_{m(c)}$
is \textit{renormalizable} if it belongs to a small connected filled in Julia set;
$x$ is \textit{infinitely renormalizable} if it belongs to infinitely many small connected filled in Julia sets.

It follows from Theorem \ref{thm:ParBubblRay} that a puzzle piece $Y$ exists for
$f_c$ if and only if $Y^B$ exists for $g_{m(c)}$.
Consider a copy $\MM_\HH\ni c$ bounded by $\LL_B$ from both sides. Then $m(c)\in \MM_\HH^B$. Denote by $\{K_i\}_{i\in I}$ the grand orbit
of small filled in Julia sets of $f_c$ associated with $\MM_\HH$; the map $f:I\to I$ is defined so that
$f(K_i)=K_{f(i)}$. There is a unique periodic cycle of $f:I\to I$; all other filled in Julia sets are preimages of that in the
cycle. For $g_{m(c)}$ we have the associated ($\VV_2$-twin) grand orbit $\{K_i^B\}_{i\in I}$ of small filled in Julia sets such that
$K_i^B$ is in $Y^B$ if and only if $K_i$ is in $Y$ for any puzzle piece $Y$.

We define \textit{the filled in Julia set} $K_{m(c)}$ of $g_{m(c)}$ to be the complement of the attracting basin
of the $\infty$--$0$ cycle. The following proposition is the dynamical version of Theorems \ref{YoccResInV2} and \ref{th:decor}.

\begin{proposition}
\label{prop:DynamYoccResAndDecorThm} Let $\MM_\HH\subset\MM\setminus
\WW_B$ be a copy of the Mandelbrot set bounded by $\LL_B$ from both
sides and assume that $c\in\MM_{\HH''}\subsetneq\MM_{\HH'}\subsetneq
\MM_\HH$. Denote by $K_i$, $K^B_i$, $K'_j$, $K'^B_j$ the grand
orbits of small filled in Julia sets associated with  $\MM_\HH$,
$\MM^B_\HH$,  $\MM_{\HH'}$, $\MM^B_{\HH'}$ and parametrized by $i\in
I$ and $j\in J$. Then the following holds in the dynamical planes of
$f_c$ and $g_{m(c)}$.
\begin{itemize}
\item The filled in Julia set $K_{m(c)}$ is locally connected.
\item If $x_\infty\not\in \cup_{j\in J} K'_j$, then a sequence $x_n$ tends to $x_\infty$ combinatorially if and only if
$x_n$ tends to $x_\infty$ in the usual topology. If, in addition,
$x_\infty\in J_{c}$, then the intersection of puzzle pieces
containing $x_\infty$ in the interior is $\{x_\infty\}$.
\item If $x_\infty\not\in \cup_{j\in J} K'^B_j$, then a sequence $x_n$ tends to $x_\infty$ combinatorially if and only if
$x_n$ tends to $x_\infty$ in the usual topology. If, in addition, $x_\infty\in J^B_{m(c)}$ and $x_\infty$ is not a $\basil$ point,
then the intersection of puzzle pieces containing $x_\infty$ in the interior is $\{x_\infty\}$.
\item Let $f^p_{c}:Y_\HH^p\to W_\HH^0$ be the first renormalization map. Then the diameter of preimages of $\ovl{W_\HH^0\setminus Y_\HH^p}$
under $f_{c}^p:Y_\HH^p\to W_\HH^0$ tends to $0$.
\item Let $g^p_{m(c)}:Y^B_\HH\to W^B_\HH$ be the first renormalization map. Then the diameter of preimages of $\ovl{W^B_\HH\setminus Y^B_\HH}$
under $g_{m(c)}^p:Y_\HH^B\to W_\HH^B$ tends to $0$.
\end{itemize}
\end{proposition}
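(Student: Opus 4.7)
The plan is to treat the five items as a single package, driven by the hyperbolic-contraction technique of Proposition \ref{prop:DynamLeaves} on the $f_c$-side and then transferred to the $\VV_2$-side via the straightening of Subsection \ref{subset:QuadrLike}. Items 4 and 5 are the dynamical decoration theorems, items 2 and 3 are the dynamical Yoccoz rigidity statements, and item 1 follows from Whyburn's criterion (Subsection \ref{subs:LocalConnSubs}) applied to $K_{m(c)}$.

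For item 4 I would mimic Proposition \ref{prop:DynamLeaves}. The hypothesis $c\in\MM_{\HH''}\subsetneq\MM_{\HH'}\subsetneq\MM_\HH$ guarantees that the critical orbit of $f_c^p$ never leaves $Y_{2\HH}^p$, so the postcritical set of the first renormalization map $f_c^p:Y_\HH^p\to W_\HH^0$ is contained in $\cup_i K_i\subset Y_{2\HH}^p$. Consequently
\[
f_c^p:\,Y_\HH^p\setminus f_c^{-p}\Bigl(\bigcup_i K_i\Bigr)\;\longrightarrow\; W_\HH^0\setminus \bigcup_i K_i
\]
is a covering of hyperbolic surfaces and the inclusion of the domain into the target is a contraction. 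The piece $\ovl{W_\HH^0\setminus Y_\HH^p}$ has finite hyperbolic diameter in the target, so its iterated $f_c^p$-preimages have uniformly bounded hyperbolic diameter and, outside any neighborhood of $\bigcup_i K_i$, shrink exponentially in the Euclidean metric; this yields item 4. Item 5 is obtained by the same contraction argument, applied after thickening $Y_\HH^B, W_\HH^B$ into a genuine quadratic-like pair: its straightening is hybrid-conjugate to $f_{c_0}:Y_\HH^p\to W_\HH^0$ for some $c_0\in\MM_\HH$, and the hybrid conjugacy, being quasiconformal near the cycle of small filled-in Julia sets, transfers the postcritical-set trapping to $\bigcup_i K_i^B$ and hence the contracting inclusion to $\widehat{\C}\setminus\bigcup_i K_i^B$.

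Items 2 and 3 follow by combining items 4--5 with classical puzzle rigidity. For item 2, if $x_\infty\notin\bigcup_j K'_j$, then $x_\infty$ is either eventually carried outside the $\MM_{\HH'}$-renormalization strip (Yoccoz non-renormalizable case, handled by item 4) or it belongs to a nested sequence of pieces of the decoration tiling associated with the renormalization $f_c^p:Y_{\HH'}^p\to W_{\HH'}^0$; by item 4 applied at the $\MM_{\HH'}$ level, these pieces shrink to $\{x_\infty\}$, so combinatorial convergence (being eventually in each piece of the nest) coincides with usual convergence, and the intersection of puzzle pieces containing $x_\infty$ in the interior is $\{x_\infty\}$. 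For item 3 the same argument uses item 5 together with Aspenberg--Yampolsky's triviality of fibers at non-renormalizable points \cite[Theorem 9.1]{AY}, applied at the $\MM_{\HH'}^B$ renormalization level; the exclusion of $\basil$ points parallels the two-access phenomenon at $\basil$ Misiurewicz parameters treated in Lemma \ref{lem:TrivOfFibersAtMis} and in the parabolic primitive case of Proposition \ref{prop:TrivOfFibersAtParabMis}.

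Finally, for item 1 I would apply Whyburn's characterization. The components of $\widehat{\C}\setminus K_{m(c)}$ are the bubbles of $g_{m(c)}$; by Proposition \ref{prop:ExtCompLocalConn} each such bubble has a locally connected simple closed curve as boundary. Since $\MM_\HH\subset\MM\setminus\WW_B$ is bounded by $\LL_B$ from both sides, transferring Proposition \ref{prop:DynamLeaves} across the hybrid conjugacy at the $\MM_\HH^B$ level shows that the Euclidean diameters of these bubbles tend to $0$, so Whyburn's criterion applies. The main obstacle I expect is the bookkeeping in item 3 near the small filled-in Julia sets $K_j'^B$ and at $\basil$ landing points: the dynamical fibers at $\basil$ points are genuinely non-trivial (two accesses through two distinct bubbles), so the combinatorial topology has to be set up via bubble-ray combinatorial discs in a way compatible with Proposition \ref{prop:ExtensOfhomeom}, and the \cite{AY} argument has to be invoked at precisely the right renormalization depth so that it is not derailed by the leaves of $L_B^{m(c)}$ accumulating on the $K_j'^B$.
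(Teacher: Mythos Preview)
Your treatment of the $f_c$ side (items 2 and 4) via hyperbolic contraction on the complement of the small filled Julia sets is correct and matches the paper. The gap is on the $g_{m(c)}$ side. Your plan for items 1, 3, 5 is to transfer the $f_c$ estimates across the hybrid conjugacy coming from the straightening of $g_{m(c)}^p:Y^B_\HH\to W^B_\HH$. This cannot work for item 1: the hybrid conjugacy is only defined on (a thickening of) $Y^B_\HH$, while the components of $\widehat{\C}\setminus K_{m(c)}$ are all the bubbles of $g_{m(c)}$, most of which (starting with $E_\infty, E_0, E_{-2}$) lie outside $Y^B_\HH$ and are invisible to the conjugacy. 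There is no way to conclude that their diameters tend to $0$ by looking only inside the renormalization domain. A related symptom: your proposal never uses the deepest copy $\MM_{\HH''}$, which is precisely what makes the global argument go through.

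The paper's proof avoids the transfer entirely and runs the hyperbolic contraction of Proposition \ref{prop:DynamLeaves} directly on $g_{m(c)}$. Because $m(c)\in\MM^B_{\HH''}$, the orbit of the free critical value $-m(c)$ is trapped in the periodic cycle $K''^B_1,\dots,K''^B_q$ of small filled Julia sets associated with $\MM^B_{\HH''}$; hence the full postcritical set of $g_{m(c)}$ lies in $P=\{0,\infty\}\cup_i K''^B_i$. Then
\[
g_{m(c)}:\widehat{\C}\setminus g_{m(c)}^{-1}(P)\longrightarrow \widehat{\C}\setminus P
\]
is a covering of hyperbolic surfaces and the inclusion of domain into target is contracting. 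All three $\VV_2$ items follow at once: item 1 because the bubbles are iterated $g_{m(c)}$-preimages of $E_\infty$; item 5 because the decoration tiles are iterated $g_{m(c)}^p$-preimages of $\ovl{W^B_\HH\setminus Y^B_\HH}$; and item 3 because for $x_\infty\notin\cup_j K'^B_j$ the forward orbit visits $\intr(\ovl{W^B_{\HH'}\setminus Y^B_{\HH'}})$ infinitely often, so pulling back gives a nest of bubble puzzle pieces around $x_\infty$ that shrink by the same contraction. No straightening and no appeal to \cite{AY} are needed here; the third level of nesting $\MM_{\HH''}$ is exactly what replaces them.
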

\begin{proof}
We will prove the first, the third, and the fifth claims; the second and the fourth claims are proved in the same way
as the third and the fifth claims.

Let \[K''^{B}_1,K''^{B}_2,\dots, K''^{B}_q\] be the periodic cycle
of small filled-in Julia sets associated with $\MM_{\HH''}^B$. Then
\[g_{m(c)}:\widehat{\C}\setminus g_{m(c)}^{-1}\left(\{0\}\cup\{\infty\}\bigcup_{i=1}^q K''^B_i\right)\longrightarrow \widehat{\C}\setminus \left(\{0\}\cup\{\infty\}\bigcup_{i=1}^q K''^B_i\right)\]
is a covering of hyperbolic surfaces, and
\[\widehat{\C}\setminus g_{m(c)}^{-1}\left(\{0\}\cup\{\infty\}\bigcup_{i=1}^q K''^B_i\right)\hookrightarrow  \widehat{\C}\setminus \left(\{0\}\cup\{\infty\}\bigcup_{i=1}^q K''^B_i\right)\]
is a contracting inclusion.

Recall that the boundaries of components in $\widehat{\C}\setminus
K^B_{m(c)}$ are locally connected (Proposition
\ref{prop:ExtCompLocalConn}). By the hyperbolic contraction the
diameter of components in $\widehat{\C} \setminus K^B_{m(c)}$ tends
to $0$. Therefore, $K^B_{m(c)}$ is locally connected. Similarly, the
fifth claim is verified.

Let us prove the third claim. The statement is obvious if $x_\infty\not\in J^B_{m(c)}$. Assume $x_\infty\in J^B_{m(c)}$ and
$x_\infty$ is not a $\basil$ point.
If $x_\infty\not\in \cup_{j\in J} K'^B_i$, then there are infinitely many $m_1,m_2,\dots$ such that
$g_{m(c)}^{m_i} (x)$ is in the interior of $\ovl{W^B_{\HH'}\setminus Y^B_{\HH'}}$. Define $Y_i^B\ni x_\infty$ to be the pullback of $\ovl{W_\HH\setminus Y_\HH}$
under $g_{m(c)}^{m_i}$ along the orbit of $x_\infty$. By the hyperbolic contraction the diameter of $Y_i^B$ tends to $0$.

If $x_\infty$ is a $\basil$ point, then there are two sequences $Y^B_i$, $Y'^B_i$ of puzzle pieces constructed as above and two bubbles $E$, $E'$ such that
the diameters of $Y^B_i$, $Y'^B_i$ tends to $0$ and $x_\infty$ is in the interior of $E\cup E'\cup Y^B_i\cup Y'^B_i$ for all $i$.
\end{proof}

As a corollary of Proposition \ref{prop:DynamYoccResAndDecorThm} and \ref{prop:DynamLeaves} we have:

\begin{cor}
\label{cor:DynCounterpartToThm1}
The boundary of a component of
$\widehat{\C}\setminus\widehat{K_{c}}$ or of $\sph\setminus
K'^B_{m(c)}$ is locally connected. The sets $\widehat{K_{c}}$,
$K'^B_{m(c)}$ are locally connected.

All spaces $\sph=\sph(L_B^c)$ are homeomorphic by homeomorphisms preserving $K'^B_{m(c)}$
\end{cor}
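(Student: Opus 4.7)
The plan is to mirror the parameter-plane argument of Proposition \ref{prop:main} in the dynamical plane, using Propositions \ref{prop:DynamLeaves} and \ref{prop:DynamYoccResAndDecorThm} as the dynamical analogues of the decoration theorem and Yoccoz's results.

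First I would establish that the diameter of components $U$ of $\widehat{\C}\setminus\widehat{K_c}$ tends to $0$ and that the diameter of leaves in $L_B^c$ tends to $0$. Proposition \ref{prop:DynamLeaves} gives this directly when the relevant small filled-in Julia sets lie in a copy $\MM_\HH$ bounded by $\LL_B$ from both sides; the dynamical counterpart of the boundedness analysis (Propositions \ref{prop:bound}, \ref{prop:bound2}) shows that every small filled-in Julia set that can sit on $\partial U$ must be contained in such a doubly-bounded copy, because leaves of $L_B^c$ separate $U$ from both sides of any small Julia set carrying the critical value. For accumulation at finitely renormalizable points, triviality of fibers from Proposition \ref{prop:DynamYoccResAndDecorThm} rules out big components and big leaves, exactly as in Corollary \ref{cor:main}.

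Second, I would prove that $\partial U$ is locally connected at each $x\in\partial U$. If $x\notin K_c$ then $x$ either lies on a single leaf of $L_B^c$ or is approached only by leaves of vanishing diameter, so local connectivity is immediate from the first step. If $x\in K_c$, then by the above $x$ is not infinitely renormalizable with both-sides-bounded nested copies, so Proposition \ref{prop:DynamYoccResAndDecorThm} gives a shrinking sequence of puzzle pieces $P_i\ni x$ with $\mathrm{diam}(P_i)\to 0$. As in the proof of Proposition \ref{prop:main}, one bounds the number of accesses from $U$ to $x$ (at most two, via the branch-theorem analogue), fixes one access along an external ray $R^\phi$, and checks that the leaves in $L_B^c\cap\partial U$ with endpoint angles $\varepsilon$-close to $\phi$ eventually lie in $P_i$; since leaves of $L_B^c$ at $x$ are dense in $\partial U$ at $x$, this yields a connected neighborhood of $x$ in $\partial U$ inside $P_i$.

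Third, local connectivity of $K'^B_{m(c)}=\pi_c(\widehat{K_c})$ follows because $\widehat{K_c}$ is compact, connected, and now known to be locally connected, and the continuous image of such a set is locally connected (\cite[Theorem 3--22]{HY}, already cited in Subsection \ref{subs:LocalConnSubs}).

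Fourth, for the uniqueness of $\sph(L_B^c)$ up to a homeomorphism preserving $K'^B_{m(c)}$, I would apply Proposition \ref{prop:ExtensOfhomeom}: two choices of $L_B^c$ (coming from different $z^2$-invariant laminations $L_B$) yield the same identifications on $K_c$, since these only depend on which pairs of $\basil$-preperiodic angles are glued, and the canonical bijection on components of $\widehat{\C}\setminus\widehat{K_c}$ is orientation-preserving because components of $\opdisk\setminus L_B$ are canonically labelled by their images in $K_B\setminus\{1/2\text{-limb}\}$.

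The main obstacle is the second step at points $x\in K_c\cap \partial U$, where one must simultaneously control (i) that $x$ is not infinitely renormalizable of the doubly-bounded type, (ii) the number of accesses from $U$, and (iii) the accumulation of non-$S_\varepsilon$ leaves of $L_B^c$ on $x$. As in Proposition \ref{prop:main}, (iii) is the delicate point: one must combine triviality of fibers from Proposition \ref{prop:DynamYoccResAndDecorThm} with the dynamical shrinking of leaves to ensure that any leaf accumulating at $x$ but not in $S_\varepsilon$ would force a second external ray to land at $x$, contradicting the access count.
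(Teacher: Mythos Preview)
Your proposal is correct and follows the paper's approach exactly: the paper's proof consists of the single sentence ``The proof is the same as the proof of Proposition \ref{prop:main},'' and you have carried out precisely that translation, with Proposition \ref{prop:DynamLeaves} playing the role of Corollary \ref{cor:main} and Proposition \ref{prop:DynamYoccResAndDecorThm} playing the role of Yoccoz's results. One small simplification: under the standing hypotheses on $c$ (inherited from those two propositions), the shrinking of leaves and complementary components comes directly from the hyperbolic contraction in Proposition \ref{prop:DynamLeaves}, so your first step need not split into a boundedness case and a finitely-renormalizable case; the analogue of ``no infinitely renormalizable parameter on $\partial\EE$'' is simply that no $K'_j$ meets $\partial U$, which follows from the leaf separation already used in Proposition \ref{prop:DynamLeaves}.
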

\begin{proof}
The proof is the same as the proof of Proposition \ref{prop:main}.
\end{proof}

By definition, the restriction of the equivalence relation $L_B^c$ to $K_c$ is invariant under $f_c$.
We define $g'_{m(c)}:K'^B_{m(c)} \to K'^B_{m(c)}$ to be the quotient dynamics
\[f_{c}/L_B^c:K_{c}/L_B^c \to K_{c}/L_B^c.\]

We extend $g'_{m(c)}$ to a branched cover over $\sph$ such that
$g'_{m(c)}$ has one superattractive periodic cycle with period $2$
that attracts all points in $\sph\setminus  K'^B_{m(c)}$, and
locally at the superattractive periodic cycle $g'_{m(c)}$ is
conjugate to $z\to z^2$. We will leave it as an exercise to show
that this extension exists (compare to Proposition
\ref{prop:ExtensOfhomeom}).

\subsection{Proof of Theorem \ref{th:main3}}
Consider a parameter $c\in\MM\setminus \WW_B$ such that $c$ is not on the boundary of a hyperbolic
component or $c$ satisfies the assumption of Proposition \ref{prop:DynamYoccResAndDecorThm}.

The postcritically finite case of Theorem \ref{th:main3} is well known, see \cite{Re1}, \cite{Tan}. The non-renormalizable non-hyperbolic
case of Theorem \ref{th:main3} is in \cite{AY}. If $c$ is finitely many times renormalizable and is not hyperbolic, then
by Theorem \ref{YoccResInV2} triviality of the fiber holds at $m(c)$; the same arguments as in \cite{AY} proves Theorem \ref{th:main3} in the non-infinitely
renormalizable non-hyperbolic case.

\begin{proposition}
\label{prop:ConjBetwJulSets}
Under the assumption of Proposition \ref{prop:DynamYoccResAndDecorThm}
the maps $g_{m(c)}:K^B_{m(c)}\to K^B_{m(c)}$ and $g'_{m(c)}:K'^B_{m(c)} \to K'^B_{m(c)}$ are topologically conjugate.

The conjugacy is unique if it preserves the combinatorics of puzzle pieces
and coincides with the straightening maps on small filled in Julia sets
\end{proposition}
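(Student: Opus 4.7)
The plan is to repeat, at the dynamical level, the four-step scheme used in Subsection~\ref{subsect:ThmMain2} for the parameter spaces. First I would construct a bijection $\omega:K'^B_{m(c)}\to K^B_{m(c)}$ that is compatible with the combinatorics of puzzle pieces and with the straightening conjugacies on small filled-in Julia sets; then upgrade $\omega$ to a homeomorphism using the dynamical Yoccoz-type statement of Proposition~\ref{prop:DynamYoccResAndDecorThm}; then extend it across $\sph$ via Proposition~\ref{prop:ExtensOfhomeom}; and finally check that it intertwines $g'_{m(c)}$ and $g_{m(c)}$, together with uniqueness.

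To build $\omega$ I would split $K'^B_{m(c)}$ into three types of points. (a) On the grand orbit $\bigcup_{i\in I}\pi_c(K_i)$ of small filled-in Julia sets, the straightening theorem applied to the renormalization $f^p_c:Y^{(n+1)p}_{(n+1)\HH}\to Y^{np}_{n\HH}$ and to its $\VV_2$--twin $g^p_{m(c)}:Y^B_{(n+1)\HH}\to Y^B_{n\HH}$ (Proposition~\ref{prop:DynamYoccResAndDecorThm}) gives a canonical conjugacy onto the corresponding $K^B_i$; this also handles the (pre)images of bounded attracting Fatou components of $f_c$, since these are just the innermost limits of this process. (b) On non-renormalizable points of $\pi_c(J_c)$, triviality of fibers (second/third bullets of Proposition~\ref{prop:DynamYoccResAndDecorThm}) assigns the unique point in $\bigcap_Y Y^B$, where $Y$ ranges over the puzzle pieces of $f_c$ whose interior contains a representative of the point and $Y^B$ is the dynamic $\VV_2$--twin (which exists exactly when $Y$ does, by the dynamical analogue of Corollary~\ref{cor:BubbleParAndPar}). (c) For points on the boundary of $K'^B_{m(c)}$ that come from $\basil$--pairs identified by a leaf of $L^c_B$, both preimages under $\pi_c$ are $\basil$ preperiodic points in $\VV_1$ whose rays in bubbles land at a common $\basil$ point of $g_{m(c)}$ by Theorem~\ref{thm:ParDynIsomV2}, which prescribes $\omega$. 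All three rules agree on overlaps and $\omega$ sends the puzzle partition to the puzzle partition.

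Continuity of $\omega$ is then established by a case analysis that mirrors Step~2 of the proof of Theorem~\ref{th:main2}. If $x_\infty$ belongs to only finitely many small filled-in Julia sets, combinatorial convergence upgrades to usual convergence on both sides (Proposition~\ref{prop:DynamYoccResAndDecorThm}, bullets two and three), so $\omega(x_n)\to\omega(x_\infty)$ because $\omega$ preserves combinatorics. If all $x_n$ and $x_\infty$ lie in a single small copy, continuity is inherited from the straightening homeomorphism. For $x_\infty$ infinitely renormalizable, I would pick the deepest small Julia set $K_\HH$ containing $\pi_c^{-1}(x_\infty)$, replace each $x_n$ by a $\basil$ preperiodic approximant $a_n$ lying in the same piece of the decoration tiling of $K_\HH$; the fourth and fifth bullets of Proposition~\ref{prop:DynamYoccResAndDecorThm} yield $|x_n-a_n|\to 0$ and $|\omega(x_n)-\omega(a_n)|\to 0$, reducing to the previous cases. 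Corollary~\ref{cor:DynCounterpartToThm1} ensures both $K'^B_{m(c)}$ and $K^B_{m(c)}$ are locally connected, and the orientation of the complement is pinned down on the dense set of $\basil$ preperiodic points on each boundary by Theorem~\ref{thm:ParDynIsomV2}, so Proposition~\ref{prop:ExtensOfhomeom} extends $\omega$ to a homeomorphism of $\sph$.

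To conclude, I would check that $\omega\circ g'_{m(c)}=g_{m(c)}\circ\omega$: both dynamics preserve the puzzle partition and agree with the straightening conjugacies on each small $K_i$, hence the identity holds on the dense set of (pre)periodic points where fibers are trivial, and extends by continuity. Uniqueness is then immediate: any conjugacy preserving the puzzle combinatorics must equal $\omega$ on small filled-in Julia sets by hypothesis, on rationally-accessible points by stability of repelling ray portraits, and elsewhere by density. The main obstacle I expect is precisely the infinitely renormalizable case of continuity, because one has to synchronize the decoration tilings of $f_c$ and of $g_{m(c)}$ across nested small copies whose straightenings have been chosen independently; this is the dynamical counterpart of Case~3 in the proof of Theorem~\ref{th:main2} and is the place where the pair of decoration theorems in Proposition~\ref{prop:DynamYoccResAndDecorThm} together with the matching of $\VV_2$--twins are essential.
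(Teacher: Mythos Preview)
Your proposal follows essentially the same four-step scheme as the paper's proof: define $\omega$ case-by-case (small filled-in Julia sets via straightening, $\basil$ points via the Basilica identification, everything else via shrinking puzzle pieces), prove continuity by the three-case analysis mirroring Theorem~\ref{th:main2}, and deduce the conjugacy and uniqueness. Two small inaccuracies are worth flagging. First, in your Case~3 you speak of picking ``the deepest small Julia set $K_\HH$ containing $\pi_c^{-1}(x_\infty)$'', but if $x_\infty$ is infinitely renormalizable there is no deepest one; the paper instead works with the \emph{fixed} outermost level $K_i$ coming from the hypothesis $c\in\MM_{\HH''}\subsetneq\MM_{\HH'}\subsetneq\MM_\HH$ and uses its decoration tiling, which is all that is needed. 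Second, for the $\basil$ points you invoke Theorem~\ref{thm:ParDynIsomV2}, which is a parameter-space statement; the dynamical analogue is simply the map $h_a^{-1}\circ\rho^{-1}$ from Proposition~\ref{prop:Conj}. With these adjustments your argument coincides with the paper's.
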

\begin{proof}
The proof will follow the proof of Theorem \ref{th:main2}.

We will use the same notations as in Proposition \ref{prop:DynamYoccResAndDecorThm}.
Define a bijection $\omega: K'^B_{m(c)} \to K^B_{m(c)}$ as follows.

For $\pi(K_i)$, where $i\in I$, define $\omega: \pi(K_i)\to K_i^B$ to be the dynamical straightening map.

If $x\in K_c$ is the landing point a $\basil$ ray $R^\phi$, then denote by $x'$ the landing point of $R^\phi$
in the dynamical plane of $f_B$ and define $\omega (\pi(x))$ to be $h_a^{-1}(\rho^{-1}(x'))$.

If $x\not\in \cup_{i\in I} K_i$ and $x$ is not the landing point of a $\basil$ ray $R^\phi$, then define
$\omega(\pi(x))$ to be a unique point in $\cap_{Y\ni x} Y^B$, where the intersection is taken over all puzzle pieces containing $x$.

All above cases are compatible; this defines the bijection
$\omega: K'_{m(c)}\to K_{m(c)}$. Furthermore, $\omega$ preserves the combinatorics:
\begin{equation}
\label{eq:thm3CombIsCompart}
\omega(\pi( Y_t\cap K_c))= Y_t^B\cap K_{m(c)}
\end{equation}
for a puzzle piece $Y_t$ in the dynamical plane of $f_c$.

It is also clear that $\omega$ is a  conjugacy between $g_{m(c)}:K^B_{m(c)}\to K^B_{m(c)}$ and $g'_{m(c)}:K'^B_{m(c)} \to K'^B_{m(c)}$.
Indeed, by \eqref{eq:thm3CombIsCompart} the map $\omega$ conjugates the dynamics of puzzle pieces, thus the non renormalizable case follows
from Proposition \ref{prop:DynamYoccResAndDecorThm}. On $\cup_{i\in I} K_i$ the map $\omega$ coincides with the dynamical straightening map which is,
in particular, a topological conjugacy.

We need to show that $\omega$ is continuous: if $x_n\in K'_{m(c)}$ tends to $x_\infty\in K'_{m(c)}$, then
 $\omega(x_n)$ tends to $\omega(x_\infty)$.
Consider three cases.

\textbf{Case 1.} Assume $x_\infty\not\in \cup_{j\in J}\pi(K'_j)$, then $\omega(x_\infty)\not\in \cup_{j\in J}\pi(K'^B_j)$.
By \eqref{eq:thm3CombIsCompart} the
sequence $\omega(x_n)$ tends to $\omega(x_\infty)$
combinatorially. By Proposition \ref{prop:DynamYoccResAndDecorThm} the sequence
$\omega(c_n)$ tends to $\omega(c_\infty)$ in the usual topology.

\textbf{Case 2.} Assume $x_\infty$ and all $x_n$ belong to $\pi(K_i)$ for $i\in I$.
Then $\omega(x_n)\in K^B_i$ and the statement follows from the definition of $\omega$ because
$\omega\circ \pi$ coincides  with the dynamical straightening map from $K_i$
to $K^B_i$.

\textbf{Case 3.} Assume $x_\infty\in \pi(K'_j)\subset \pi(K_i)$ for
$i\in I$ and $j\in J$; assume also that all $x_n$ are not in
$\pi(K_i)$. Let $f_{c}^p:Y^p_\HH\to W^0_\HH$ be the first
renormalization map and let $K_1$ the periodic filled in Julia set
in $Y^p_\HH$. Define $T$ to be the set of preimages of
$\ovl{W^0_\HH\setminus Y^p_\HH}$ under $f_{c}^p:Y^p_\HH\to W_\HH^0$;
i.e. $T$ is the dynamic decoration tiling. By Proposition
\ref{prop:DynamYoccResAndDecorThm} the diameter of pieces in $T$
tends to $0$.

 Recall that $K_i$ is a preimage of $K_1$; let $n$ be the minimal number such that $f_{c}^n(K_i)=K_1$. Define $W(i)$, resp.\ $T(i)$,
to be the preimage of $W_\HH^0$, resp.\ of pieces in $T$, under
$f_{c}^n$ along the orbit of $K_i$. By shifting the sequence $x_n$
we may assume that all $x_n$ are in $\pi(W(i))$.

Recall that $\pi^{-1}(x_n)$ is either a single point or a pair of
$\basil$ points; denote by $T_n$ a piece in $T(i)$ intersecting $\pi^{-1}(x_n)$
 and by $a_n$ the point in the intersection
of $T_n$ and $K_i$. As the diameter of pieces in $T(i)$ tends to $0$ (because this is so for pieces in $T$) the distance
between $\pi^{-1}(x_n)$ and $a_n$ tends to $0$. By Proposition \ref{prop:DynamLeaves} the diameter
of $\pi^{-1}(x_n)$ tends to $0$. We conclude that $a_n$ tends to $\pi^{-1}(x_\infty)$.
By Case 2 the sequence $\omega(\pi(a_n))$ tends to $\omega(x_\infty)$.

Similar to $W(i)$ and $T(i)$ we define $W^B(i)$ and $T^B(i)$ in the dynamical plane of $g_{m(c)}$. It follows from Proposition \ref{prop:DynamYoccResAndDecorThm}
that the diameter of pieces in $T^B(i)$ tends to
$0$. Therefore, the distance between $\omega(\pi(a_n))$ and $\omega(x_n)$ tends to $0$. This implies that $\omega(x_n)$ tends to $\omega(x_\infty).$
\end{proof}

Recall that $g'_{m(c)}:\sph\to \sph$ is defined so that it has a
unique superattractive two cycle, call it $a$-$b$-cycle, the
dynamics of $g'^2_{m(c)}$ is locally conjugate to $z\to z^2$ at $a$
and at $b$, and all points in $\sph\setminus K'_{m(c)}$ are
attracted by the $a$-$b$-cycle. Therefore, we can extend $\omega$ to
$K'_{m(c)} \cup V_a\cup V_b$, where $V_a$, $V_b$ are small
neighborhoods of $a$, $b$ respectively. As
\[g'_{m(c)}:(\sph\setminus (K'_{m(c)}\cup
g'^{-1}_{m(c)}(\{a\}\cup\{b\})))\to (\sph\setminus
(K'_{m(c)}\cup\{a\}\cup\{b\}))\]
 is a covering we can extend
$\omega$ to $\sph\setminus K'_{m(c)}$ such that \[\omega \circ
g'_{m(c)}=g'_{m(c)}\circ\omega .\] This gives a conjugacy $\omega$
between $g'_{m(c)}$ and $g_{m(c)}$ such that the restriction of
$\omega$ to $K'_{m(c)}$ or to $\sph\setminus K'_{m(c)}$ is a
homeomorphism. We will leave it as an exercise to show that $\omega$
is a homeomorphism on $\sph$ (follows from local connectivity of
$\sph\setminus K'_{m(c)}$).

Let us now argue that if $K_c$ is locally connected, then $f_c\mcup L_B$ is the same as $f_c\mcup f_B$. Define $\Omega_{m(c)}=\sph\setminus K'^B_{m(c)}$.
By construction, $\Omega_{m(c)}$ is homeomorphic, say by $h$, to $\intr(K_B)$. We need to show that
$h$ extends to a homeomorphism between $\ovl{\Omega_{m(c)}}$ and $K_B$.
Let $E_1,E_2,E_3,\dots$ be an infinite sequence
of components in $\intr(K_B)$ such that the intersection $\ovl{E_i}\cap\ovl{E_{i+1}}$ consists of one point; in particular, $E_i\not=E_{j}$ if
$i\not=j$. We claim that $h$ extends to a homeomorphism from $\ovl{\cup E_i}$ onto  $\ovl{\cup h(E_i)}$. Indeed, it follows from the local
connectivity of $E_i$ that $h$ extends to $\partial E_i$ for every $i$. Therefore, it is sufficient to show that the sequence $E_i$ accumulates at
one point. If $E_i$ were accumulating at two points, say $x,y$, then the sequence $\pi^{-1}(E_i)$ of components in $\widehat{\C}\setminus \widehat{K_c}$
would accumulate at $\pi^{-1}(x)$ and $\pi^{-1}(y)$ which is the contradiction to the local connectivity of $K_c$; details are left to the reader.

%
%
%





\end{document}